\numberwithin{equation}{section}
\newtheorem{prop}{Proposition}[section]
\newtheorem{thm}[prop]{Theorem}
\newtheorem{lemm}[prop]{Lemma}
\newtheorem{coro}[prop]{Corollary}
\newtheorem{claim}[prop]{Claim}
\newtheorem*{claim*}{Claim}
\theoremstyle{definition}
\newtheorem{defi}[prop]{Definition}
\newtheorem{rmk}[prop]{Remark}
\newcommand{\CC}{\mathbb{C}}
\newcommand{\DD}{\mathbb{D}}
\newcommand{\NN}{\mathbb{N}}
\newcommand{\RR}{\mathbb{R}}
\newcommand{\cA}{\mathcal A}
\renewcommand{\cD}{\mathcal D}
\newcommand{\cF}{\mathcal F}
\renewcommand{\cH}{\mathcal H}
\renewcommand{\cL}{\mathcal L}
\newcommand{\cM}{\mathcal M}
\def\ft{\mathfrak{t}}
\def\fn{\mathfrak{n}}
\DeclareMathOperator{\tr}{tr}
\DeclareMathOperator{\Span}{span}
\DeclareMathOperator{\supp}{supp}
\newcommand{\pa}[2]{\frac{\partial #1}{\partial #2}}
\newcommand{\paop}[1]{\pa{}{#1}}
\DeclareMathOperator{\Div}{div}
\DeclareMathOperator{\loc}{loc}
\newcommand{\lambdabar}{\bar{\lambda}}
\newcommand{\Lambdabar}{\bar{\Lambda}}
\newcommand{\ep}{\epsilon}
\let\oldmarginpar\marginpar
\renewcommand\marginpar[1]{\-\oldmarginpar[\raggedleft\footnotesize #1]%
{\raggedright\footnotesize #1}}
\DeclareMathOperator{\dist}{dist}
\DeclareMathOperator{\dvol}{dvol}
\setlist[enumerate]{leftmargin = 2em}
\DeclareMathOperator{\Ric}{Ric}
\title[Ginzburg-Landau solutions on manifolds with boundary]{Asymptotics for the Ginzburg-Landau equation on manifolds with boundary under homogeneous Neumann condition}
\author{Da Rong Cheng}
\begin{document}
\maketitle 

\begin{abstract} On a compact manifold $M^{n}$ ($n\geq 3$) with boundary, we study the asymptotic behavior as $\ep$ tends to zero of solutions $u_{\ep}: M \to \CC$ to the equation $\Delta u_{\ep} + \ep^{-2}(1 - |u_{\ep}|^{2})u_{\ep} = 0$ with the boundary condition $\partial_{\nu}u_{\ep} = 0$ on $\partial M$. Assuming an energy upper bound on the solutions and a convexity condition on $\partial M$, we show that along a subsequence, the energy of $\{u_{\ep}\}$ breaks into two parts: one captured by a harmonic $1$-form $\psi$ on $M$, and the other concentrating on the support of a rectifiable $(n-2)$-varifold $V$ which is stationary with respect to deformations preserving $\partial M$. Examples are given which shows that $V$ could vanish altogether, or be non-zero but supported only on $\partial M$.
\end{abstract}

\tableofcontents
\section{Introduction}
\subsection{Background and statement of main result}
Let $M^{n}\ (n \geq 3)$ be an oriented, smooth, compct Riemannian manifold, with $\partial M \neq 0$. The main goal of the present paper is to study the limiting behavior as $\ep \to 0$ of solutions to the following boundary value problem for the Ginzburg-Landau equation on $M$. 
\begin{equation}\label{GLNeumann}
\left\{
\begin{array}{cl}
- \Delta u &= \ep^{-2}(1 - |u|^{2})u \text{ in }M,\\
\partial_{\nu}u &= 0 \text{ on }\partial M,
\end{array}
\right.
\end{equation}
where the functions $u$ are complex-valued. Notice that solutions to \eqref{GLNeumann} correspond to critical points of the Ginzburg-Landau functional, defined to be
\begin{equation}
E_{\ep}(u) = \int_{M} e_{\ep}(u) \dvol,\ \text{where } e_{\ep}(u) =  \frac{|\nabla u|^{2}}{2} + \frac{(1 - |u|^{2})^{2}}{4\ep^{2}}.
\end{equation}
Specifically, $u$ is a solution to \eqref{GLNeumann} if and only if 
\begin{equation}\label{GLvariation}
\delta E_{\ep}(u)(\zeta) \equiv \int_{M} \langle \nabla u, \nabla \zeta \rangle + \frac{|u|^{2} - 1}{\ep^{2}}u \cdot \zeta \dvol = 0,
\end{equation}
for all $\zeta \in C^{1}(\overline{M}; \CC)$. Basic elliptic theory shows that if $u \in W^{1, 2}\cap L^{4}(M;\CC)$ satisfies \eqref{GLvariation} for all $\zeta \in C^{1}(\overline{M}; \CC)$, then $u$ is in fact smooth on $\overline{M}$. 

The Ginzburg-Landau functional in dimensions three or higher ($n \geq 3$) is known to be closely related to the $(n-2)$-volume functional as $\ep \to 0$. Among the large number of works along this line, we mention the result of Bethuel, Brezis and Orlandi (\cite{bbo}), which concerns the case where $M = \Omega$ is a simply-connected domain in $\RR^{n}$ and the boundary condition in \eqref{GLNeumann} is replaced by a sequence of Dirichlet boundary data $g_{\ep}: \partial \Omega \to \CC$ arranged to blow up on a smooth $(n-3)$-dimensional submanifold $S$ of $\partial \Omega$. What they showed is that if for each $\ep$ there is a solution $u_{\ep}$ to 
\begin{equation}\label{GLDirichlet}
\left\{
\begin{array}{cl}
- \Delta u_{\ep} &= \ep^{-2}(1 - |u_{\ep}|^{2})u_{\ep} \text{ in }\Omega,\\
u_{\ep} &= g_{\ep} \text{ on }\partial \Omega,
\end{array}
\right.
\end{equation}
and if $\left| \log\ep \right|^{-1}E_{\ep}(u_{\ep})$ is bounded uniformly in $\ep$, then, after possibly taking a subsequence, the measures
\begin{equation}
\mu_{\ep} = \frac{e_{\ep}(u_{\ep})}{\left| \log\ep \right|} dx
\end{equation}
converges weakly to the volume measure of a rectifiable $(n-2)$-varifold which is stationary in the interior of $\Omega$. We remark here that prior to \cite{bbo}, Lin and Rivi\`ere (\cite{lr}) have studied the case where the solutions $u_{\ep}$ are assumed to minimize $E_{\ep}$ subject to the Dirichlet boundary condition in \eqref{GLDirichlet}. They proved that in this case, the limit measure supports an integral $(n-2)$-current which minimizes volume among integral currents with boundary $S$. 


Recently, Stern has succeeded in adapting the Bethuel-Brezis-Orlandi result to closed manifolds and combining it with a min-max construction to prove the existence of a non-zero stationary rectifiable $(n-2)$-varifold (\cite{stern1}, \cite{stern2}). His work can roughly be divided into two parts: First, he showed that if for each $\ep > 0$ there is a solution $u_{\ep}$ to
\begin{equation}\label{GLclosed}
-\Delta u_{\ep} = \ep^{-2}(1 - |u_{\ep}|^{2})u_{\ep} \text{ on }M,
\end{equation}
and if $E_{\ep}(u_{\ep}) = O(\left| \log\ep \right|)$ as $\ep \to 0$, then there exists a sequence of $S^{1}$-valued harmonic maps $\varphi_{\ep}$ on $M$ and a subsequence $\ep_{k}$ such that, denoting $\tilde{u}_{\ep} = \varphi_{\ep}^{-1} u_{\ep}$, the measures $\nu_{k} = \left| \log\ep_{k} \right|^{-1} e_{\ep_{k}}(\tilde{u}_{k})\dvol$ converge weakly to $\|V\|$, the volume measure of a stationary rectifiable $(n-2)$-varifold $V$. Moreover, the mass of $V$ is given by
\begin{equation}
\|V\|(M) = \lim\limits_{k \to \infty} \left| \log\ep_{k} \right|^{-1}\left( E_{\ep_{k}}(u_{k}) - \frac{\| h_{\ep_{k}} \|^{2}_{2; M}}{2}\right),
\end{equation} 
where $h_{\ep_{k}}$ is the harmonic part in the Hodge decomposition of the $1$-form $u_{\ep_{k}} \times du_{\ep_{k}}$. Then, via a two-parameter min-max construction, he produced a sequence of solutions $\{u_{\ep}\}$ to which the above convergence result can be applied to get a limit varifold and showed that this varifold is non-zero. We refer the reader to \cite{stern1} and \cite{stern2} for the details.

In light of Stern's work, it's natural ask whether one can follow a similar strategy when $\partial M \neq \emptyset$ to prove the existence of a rectifiable $(n-2)$-varifold which is stationary with free boundary, i.e. stationary with respect to deformations preserving $\partial M$. From a variational point of view, the correct boundary value problem to study for the above purpose is the homogeneous Neumann problem \eqref{GLNeumann}, and the principal result of this paper, stated below as Theorem \ref{GLgeneralconvergence}, establishes a version of the Bethuel-Brezis-Orlandi result for solutions to \eqref{GLNeumann} under a convexity assumption on $\partial M$. This extends the first part of Stern's work to manifolds with boundary.

Below, we will assume that our manifold $M^{n}\ (n \geq 3)$ is isometrically embedded in a closed Riemannian manifold $\widetilde{M}$ of the same dimension, and that the latter is isometrically embedded in an Euclidean space $\RR^{N}$. Furthermore, we assume that $\partial M$ is convex in the sense that if $\nu$ denotes the unit normal of $\partial M$ pointing into $M$, then we have
\begin{equation}\label{convexboundary}
\langle \nabla_{\xi}\nu(p), \xi \rangle \leq 0\text{ for all } \xi \in T_{p}\partial M \text{ and for all } p \in \partial M.
\end{equation}
We now state our main theorem.

\begin{thm}\label{GLgeneralconvergence}
Under the above assumptions, take a sequence $\{\ep_{k}\}$ of parameters converging to zero and suppose for each $k$ there is a solution $u_{k}$ to \eqref{GLNeumann} with $\ep = \ep_{k}$. Moreover, assume that there exists a constant $K_{0}$ independent of $k$ such that 
\begin{equation}\label{GLenergyupperbound}
E_{\ep_{k}}(u_{k}) \leq K_{0}\left| \log\ep_{k} \right| \text{ for all }k,
\end{equation}
and define the following Radon measures 
\begin{equation}\label{energymeasure}
\mu_{k} = \left| \log\ep_{k} \right|^{-1} e_{\ep_{k}}(u_{k})\dvol.
\end{equation}
Then, the exists a closed, countably $(n-2)$-rectifiable set $\Sigma \subset \overline{M}$ with $H^{n-2}(\Sigma) < \infty$, and a harmonic $1$-form $\psi$ on $M$ with $\psi(\nu) \equiv 0$ on $\partial M$, such that, passing to a subsequence if necessary, the following statements hold.
\begin{enumerate}
\item[(1)] In the sense of measures on $\overline{M}$, 
\begin{equation}\label{GLlimitdecomp}
\mu \equiv \lim\limits_{k \to \infty}\mu_{k} = \frac{|\psi|^{2}}{2}\dvol+ \mu_{s}.
\end{equation}
where $\mu_{s}$ is a nonnegative measure with $\supp(\mu_{s}) = \Sigma$.
\item[(2)] Denoting by $B^{N}_{r}(p)$ the ball in $\RR^{N}$ centered at $p$ with radius $r$, the density $\Theta(\mu_{s}, p) \equiv \lim_{r \to 0}r^{2-n}\mu_{s}(B^{N}_{r}(p) \cap \overline{M})$ exists for all $p \in \Sigma$. Moreover, the rectifiable $(n-2)$-varifold $V \equiv (\Sigma, \Theta(\mu_{s}, \cdot))$ satisfies
\begin{equation}\label{GLlimitstationary}
\delta V(X) = \int_{\Sigma} \Div_{T_{y}\Sigma} X(y) \Theta(\mu_{s}, y)dH^{n-2}(y) = 0,
\end{equation}
for all $C^{1}$-vector fields $X$ on $\widetilde{M}$ satisfying $\langle X(y), \nu(y) \rangle = 0$ for $y \in \partial M$.
\end{enumerate}
\end{thm}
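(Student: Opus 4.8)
The plan is to follow the scheme of Bethuel--Brezis--Orlandi \cite{bbo} and Stern \cite{stern1}, using the convexity hypothesis \eqref{convexboundary} to push the analysis up to $\partial M$. As standing a priori facts: each $u_k$ is smooth on $\overline{M}$ by elliptic regularity, the maximum principle together with $\partial_\nu u_k=0$ gives $|u_k|\le 1$, and interior and boundary gradient estimates give $\ep_k\|\nabla u_k\|_{L^\infty}\le C$. Two tools drive everything. The first is the stress-energy tensor $T_{\ep}(u)=e_{\ep}(u)\,g-du\otimes du$, which is divergence free for solutions of \eqref{GLNeumann} and satisfies $T_{\ep}(u)(\nu,X)=e_\ep(u)\langle\nu,X\rangle-du(\nu)\cdot du(X)=0$ along $\partial M$ for every $X$ tangent to $\partial M$, since there $\langle\nu,X\rangle=0$ and $du(\nu)=\partial_\nu u=0$. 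The second is a monotonicity formula: a quantity comparable to $e^{\Lambda r}r^{2-n}\!\int_{B^N_r(p)\cap M}e_{\ep}(u_k)\,\dvol$ (plus lower-order corrections from the potential term) is almost monotone in $r$ for $\ep_k\le r\le r_0$, uniformly in $k$, both for interior $p$ and for $p\in\partial M$; the interior statement comes from contracting $\Div T_{\ep_k}=0$ with a radial field, and at a boundary point one instead uses the radial field modified to be tangent to $\partial M$, the extra term being controlled by the second fundamental form of $\partial M$ with a favorable sign precisely because of \eqref{convexboundary} (equivalently, one works with the even reflection of $u_k$ across $\partial M$ in a Fermi collar, where convexity yields the needed one-sided bound on the reflected metric). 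Combined with the gradient bound this gives the clearing-out lemma: there is $\eta_0>0$ such that $r^{2-n}\!\int_{B^N_r(p)\cap M}e_{\ep_k}(u_k)\,\dvol\le\eta_0|\log\ep_k|$ with $\ep_k\le r\le r_0$ implies $|u_k|\ge 1/2$ on $B^N_{r/2}(p)\cap\overline{M}$.

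For part (1): since $-\Delta u_k=\ep_k^{-2}(1-|u_k|^2)u_k$, the $1$-form $j_k:=u_k\times du_k$ is coclosed, so the Hodge decomposition adapted to $\partial_\nu u_k=0$ reads $j_k=d^*\xi_k+h_k$ with $h_k$ a harmonic field, $h_k(\nu)=0$ on $\partial M$, and $\|h_k\|_{L^2}^2\le\|j_k\|_{L^2}^2\le 2E_{\ep_k}(u_k)\le 2K_0|\log\ep_k|$. The space of such harmonic fields is finite dimensional, so after passing to a subsequence $|\log\ep_k|^{-1/2}h_k\to\psi$ in every norm, with $\psi$ harmonic, $\psi(\nu)=0$, and $\tfrac12|\log\ep_k|^{-1}|h_k|^2\to\tfrac12|\psi|^2$ smoothly. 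Passing to a further subsequence so that $\mu_k\to\mu$ as Radon measures on $\overline{M}$, I define $\Sigma$ as the set of $p\in\overline{M}$ where energy concentrates at the critical rate, say $\liminf_k\big((2r)^{2-n}\mu_k(B^N_{2r}(p)\cap\overline{M})\big)\ge\eta_0$ for all small $r$. The clearing-out lemma makes $\Sigma$ closed and shows that on $\overline{M}\setminus\Sigma$ one has $|u_k|\ge 1/2$ with $d^*\xi_k$ locally bounded in $L^2$ independently of $k$ (by elliptic estimates for $j_k$ where $|u_k|\ge 1/2$), so that there $e_{\ep_k}(u_k)=\tfrac12|j_k|^2+o(|\log\ep_k|)=\tfrac12|h_k|^2+o(|\log\ep_k|)$ locally and hence $\mu=\tfrac12|\psi|^2\,\dvol$ off $\Sigma$. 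Letting $\mu_s$ be the restriction of $\mu$ to $\Sigma$ gives \eqref{GLlimitdecomp}, with $\supp\mu_s=\Sigma$.

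For part (2): monotonicity gives that $\Theta(\mu_s,p)=\lim_{r\to 0}r^{2-n}\mu_s(B^N_r(p)\cap\overline{M})$ exists for each $p\in\Sigma$ and is bounded above (in terms of $\Lambda,K_0$) and below (by $\eta_0$ times a dimensional constant), and the lower bound together with $\mu_s(\overline{M})<\infty$ forces $H^{n-2}(\Sigma)<\infty$. A tangent-measure analysis --- blowing up on $\Sigma$, using monotonicity to see tangent measures are cones, then dimension reduction combined with the classification of stationary Ginzburg--Landau energy limits in low dimension --- shows every tangent measure of $\mu_s$ is supported on an $(n-2)$-plane with constant density $\Theta_0$, so $\mu_s$ is $(n-2)$-rectifiable and $V=(\Sigma,\Theta(\mu_s,\cdot))$ is a genuine rectifiable varifold. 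Now take a $C^1$ vector field $X$ on $\widetilde{M}$ with $\langle X,\nu\rangle=0$ on $\partial M$; contracting $\Div T_{\ep_k}=0$ with $X$ and using $T_{\ep_k}(X,\nu)=0$ on $\partial M$ gives $\int_M\big(e_{\ep_k}(u_k)\Div X-\langle du_k\otimes du_k,\nabla X\rangle\big)\dvol=0$. Dividing by $|\log\ep_k|$ and letting $k\to\infty$, the first term tends to $\int\Div X\,d\mu$, and, using the clearing-out lemma and the vortex structure of the concentrated energy supplied by the tangent-measure analysis, $|\log\ep_k|^{-1}(du_k\otimes du_k)\,\dvol$ converges weakly to $\psi\otimes\psi\,\dvol+(\Id-P_{T_y\Sigma})\,d\mu_s(y)$, where $P_{T_y\Sigma}$ is orthogonal projection onto $T_y\Sigma$. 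The harmonic part of the limiting identity is $\int\langle\tfrac12|\psi|^2 g-\psi\otimes\psi,\nabla X\rangle\,\dvol$, which vanishes because the stress-energy tensor $\tfrac12|\psi|^2 g-\psi\otimes\psi$ of the harmonic form $\psi$ is divergence free and its boundary contribution $\tfrac12|\psi|^2\langle X,\nu\rangle-\psi(X)\psi(\nu)$ vanishes by $\psi(\nu)=0$ and $\langle X,\nu\rangle=0$; what remains is $\int_\Sigma\Div_{T_y\Sigma}X(y)\,\Theta(\mu_s,y)\,dH^{n-2}(y)=\delta V(X)$, proving \eqref{GLlimitstationary}.

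The two places where real work is required are: first, the boundary versions of the monotonicity formula and the clearing-out lemma, where the terms involving the second fundamental form of $\partial M$ produced by contracting $\Div T_{\ep_k}=0$ (or by the reflection construction) have to be handled, and it is exactly the sign in \eqref{convexboundary} that makes them harmless --- without it the monotone quantity could fail to converge as $r\to 0$ at points of $\partial M$ and the whole structure near the boundary would collapse; and second, the identification of the weak limit of $|\log\ep_k|^{-1}du_k\otimes du_k$, i.e.\ that the concentrated energy is asymptotically that of a codimension-two vortex transverse to $\Sigma$, which is the technical heart and would be obtained as in \cite{bbo} by combining monotonicity, $\eta$-regularity, and a blow-up analysis, with the interior arguments of \cite{stern1} providing the manifold modifications.
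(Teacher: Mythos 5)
Your overall scheme is the right one, and your treatment of the stationarity of the $\psi$-part is a clean and valid alternative to the paper's argument: you observe that $T(\psi)=\tfrac12|\psi|^2 g-\psi\otimes\psi$ is divergence-free for a harmonic $1$-form and that the boundary term vanishes under $\psi(\nu)=0$, $\langle X,\nu\rangle=0$, whereas the paper instead uses the homotopy formula to realize each pullback $\varphi_t^*\psi$ as $\psi+d\alpha_t$ and exploits minimality of $\|\psi\|_2$ in its cohomology class (Lemma \ref{regpartstationary}). Both are fine. However, there are three places where what you write would not actually close.

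First, the identification $e_{\ep_k}(u_k)=\tfrac12|j_k|^2+o(|\log\ep_k|)=\tfrac12|h_k|^2+o(|\log\ep_k|)$ locally on $\overline{M}\setminus\Sigma$ requires $\|d^*\xi_k\|_{L^2(K)}=o(|\log\ep_k|^{1/2})$ for compact $K\subset\overline{M}\setminus\Sigma$, not merely a $k$-independent $L^2$ bound. Your justification "by elliptic estimates for $j_k$ where $|u_k|\ge 1/2$" does not give this. What is really needed is that $dj_k=2\rho_k\,d\rho_k\wedge d\varphi_k$ and $\nabla\rho_k$ tend to zero in $C^m_{\loc}$ at a quantitative rate. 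This is exactly what the Bethuel--Brezis--H\'el\'ein-type inductive estimates of the paper's Theorems \ref{interiorhigher} and \ref{boundaryhigher} supply, and it occupies most of Section 6. Without that layer of estimates (or some substitute of comparable strength, e.g.\ a direct $W^{1,p}$ decay argument for $\rho_k$), the equality $\mu=\tfrac12|\psi|^2\dvol$ off $\Sigma$ is not established.

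Second, your route to rectifiability --- ``blowing up on $\Sigma$, tangent measures are cones, dimension reduction combined with the classification of stationary Ginzburg--Landau energy limits in low dimension shows tangent measures are planar with constant density'' --- asks for more than is available. The paper points out (Remark \ref{simplyconnected}(2)) that it is \emph{not} known that the density $\Theta(\mu_s,\cdot)$ is quantized, which is exactly what a dimension-reduction/classification argument needs in order to pin down tangent cones. The paper side-steps this entirely: it proves existence of $\Theta(\mu_s,p)\in[\underline{\eta},CK_0]$ everywhere on $\Sigma$ from the monotonicity formula and $\eta$-ellipticity (Propositions \ref{strongdensityupperbound}, \ref{densitybounds}), and then invokes Preiss' theorem to conclude rectifiability without any classification of blow-ups. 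You should replace your argument with Preiss, which you in fact already have the hypotheses for.

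Third, your statement that $|\log\ep_k|^{-1}(du_k\otimes du_k)\,\dvol\rightharpoonup\psi\otimes\psi\,\dvol+(\Id-P_{T_y\Sigma})\,d\mu_s(y)$ is precisely what needs proof, and "supplied by the tangent-measure analysis" and "vortex structure" does not supply it (especially at boundary points, which you never separate out). The paper's route is softer: first pass to a limit to get $\mu_{ij}=\psi_i\psi_j\dvol+\alpha_{ij}$ with $\alpha_{ij}\ll H^{n-2}\llcorner\Sigma$ and hence $\alpha_{ij}=A_{ij}\mu_s$; then derive from the stationarity identity itself, by blowing up at Lebesgue points of $A_{ij}$ (Proposition \ref{blownupstationary}), that $I-A(p)$ has at least two zero eigenvalues, and combine with the trace/eigenvalue constraints (Lemma \ref{Aproperties}) to conclude $I-A(p)=P_{T_p\Sigma}$. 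At boundary points this blow-up is delicate: one must use that $T_{p_0}\Sigma\subset T_{p_0}\partial M$ and carefully decompose the test field into tangential and normal parts, extracting the off-diagonal entries $A_{kn}$ separately. Your proposal omits this boundary case entirely, and it is one of the genuinely new parts of the result over \cite{bbo} and \cite{stern1}.
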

\begin{defi}\label{stationaryterminology} We take this opportunity to introduce the following terminology.
\begin{enumerate}
\item[(1)] The class of vector fields considered in conclusion (2) will be referred to as the class of \textit{admissible vector fields}. Note that the deformations of $\widetilde{M}$ generated by an admissible vector field always preserve $\partial M$, and hence $\overline{M}$. 
\item[(2)] Borrowing the terminology from \cite{martinxin}, we say that a varifold is stationary \textit{with free boundary} if it satisfies \eqref{GLlimitstationary} for all admissible vector fields.
\end{enumerate}
\end{defi}

\begin{rmk}\label{simplyconnected} 
Regarding the conclusions of Theorem \ref{GLgeneralconvergence}, we note the following.
\begin{enumerate}
\item[(1)] Unless further topological constraints are put on $M$, we cannot rule out the absolutely continuous part on the right-hand side of \eqref{GLlimitdecomp}. In fact, one may take $M = S^{1} \times S^{2}_{+}$ with the standard product metric $g = ds^{2} + dr^{2} + \sin^{2}r d\theta$, $\ep_{k} = e^{-k^{2}}$ and $u_{k}(s, r, \theta) = (1 - k^{2}\ep_{k}^{2})^{1/2}e^{iks}$. Then, similar to Remark 1.3 of \cite{stern1} one verifies that $u_{k}$ is a solution to \eqref{GLNeumann} with $\ep = \ep_{k}$, but the measures $\mu_{k} =\left| \log\ep_{k} \right|^{-1}e_{\ep_{k}}(u_{k})\dvol$ converge to a constant (non-zero) multiple of the volume measure of $M$. In particular, the singular part $\mu_{s} = \|V\|$ in \eqref{GLlimitdecomp} vanishes altogether in this case.

\item[(2)] As in \cite{bbo} and \cite{stern1}, \cite{stern2}, we do not know if the density $\Theta(\mu_{s}, \cdot)$ is an integer (up to a constant multiple) at $H^{n-2}$-a.e. $p \in \Sigma$. However, by the work of Lin and Rivi\`ere (\cite{lr}), this would be true at $H^{n-2}$-a.e. interior points $p \in \Sigma \cap M$ provided we assume in addition that each $u_{\ep}$ minimizes $E_{\ep}$ on compact subsets of $M$.

\item[(3)] Note that any $(n-2)$-varifold supported on $\partial M$ and stationary with respect to tangential deformations is also stationary with free boundary. Thus \eqref{GLlimitstationary} alone does not rule out the possibility that $\|V\|(\partial M) > 0$. In fact, in Section 8 we construct a sequence of solutions such that the varifold $V$ yielded by Theorem \ref{GLgeneralconvergence} satisfies $V \neq 0$ but $\|V\|(M) = 0$. In other words, all the energy is concentrating on the boundary.

\end{enumerate}
\end{rmk}
\begin{rmk}\label{bpwoverlap}
We mention a few previous or recent results besides \cite{bbo}, \cite{stern1} and \cite{stern2} that are related to Theorem \ref{GLgeneralconvergence}. All the results below assume an energy upper bound similar to \eqref{GLenergyupperbound}.
\begin{enumerate}
\item[(1)] Bethuel, Orlandi and Smets (\cite{bosring}) considered solutions to a slightly more general class of equations on a domain $\Omega\subset \RR^{n}$, but without imposing a boundary condition. They established a version of \eqref{GLlimitdecomp} for the limit measure restricted to $\Omega$. Moreover, the singular part was shown to define a rectifiable $(n-2)$-varifold which is stationary in the interior, i.e. \eqref{GLlimitstationary} holds for all $X$ compactly supported in $\Omega$. The same authors also studied the parabolic Ginzburg-Landau equation on the whole space (\cite{bospara}).

\item[(2)] The Neumann problem \eqref{GLNeumann} was previously studied by Chiron (\cite{chiron}) on a domain $\Omega \subset \RR^{n}$. (In fact, he worked with the same type of equations as in \cite{bosring}.) A principal result there was a boundary $\eta$-ellipticity theorem (cf. Sections 1.2 and 5 below). In addition, by locally reflecting across $\partial \Omega$, he invoked the interior estimates in \cite{bosring} to get a rectifiable varifold $V$ in $\overline{\Omega}$ and inferred that the sum of $V$ and its reflection is stationary. However, unless $\partial\Omega$ is totally geodesic, the reflected metric is generally not $C^{1}$, and thus the results in \cite{bosring} may not directly apply. For this reason, below we have taken slightly more care when using reflection to deduce finer properties of the solutions near the boundary.

\item[(3)] Very recently, Bauman, Phillips and Wang (\cite{bpw}) studied solutions on a simply-connected domain $\Omega \subset \RR^{n}$ under the "weak anchoring" boundary condition:
\begin{equation*}
\partial_{\nu}u + \lambda_{\ep}(u_{\ep} - g_{\ep}) = 0 \text{ on }\partial \Omega,
\end{equation*}
where $\lambda_{\ep} = K\ep^{-\alpha}$ for some $K > 0$, $\alpha \in [0, 1)$, and $\{g_{\ep}\}$ is a prescribed bounded sequence in $C^{2}(\partial \Omega; S^{1} )$. They derived interior and boundary estimates to show that the limit measure is rectifiable in $\overline{\Omega}$ and defines a stationary varifold in the interior. Moreover, as in \cite{bbo}, the solutions were shown to converge to an $S^{1}$-valued harmonic map away from the support of the varifold.
\end{enumerate}
\end{rmk}
\subsection{Outline of proof}
The proof of Theorem \ref{GLgeneralconvergence} is modeled on the arguments in \cite{bbo}, with some devices adapted from \cite{bbh}, \cite{bos} and \cite{bospara}. Our main contribution would be the analysis of $\{u_{k}\}$ near boundary points of $M$, since most of the necessary interior estimates were already done in \cite{bbo} or \cite{stern2}. Note that by the assumption \eqref{GLenergyupperbound}, passing to a subsequence if necessary, we may assume that $\mu_{k}$ converges in the sense of measures on $\overline{M}$ to a limit measure $\mu$. The proof then goes through the following steps.
\vskip 2mm
\noindent\textbf{Step 1: $\eta$-ellipticity and local estimates.}
We begin with a version of the so-called "$\eta$-ellipticity" theorem that applies to boundary points of $M$. Roughly speaking, we show that there exists a threshold $\eta_{0}$ depending only on $\overline{M}$ and the isometric embedding $\widetilde{M} \to \RR^{N}$ such that for small enough balls and for sufficiently large $k$, if
\begin{equation}\label{GLenergyeta}
r^{2-n}\int_{B_{r}^{N}(p) \cap \overline{M}} e_{\ep_{k}}(u_{k}) \dvol \leq \eta \left| \log(r^{-1}\ep_{k}) \right|,
\end{equation}
for some $\eta < \eta_{0}$, then in fact $|u|$ is close to $1$ in a smaller ball. Consequently, we may locally write $u_{k} = |u_{k}| e^{i\varphi_{k}}$, with $\varphi_{k}$ single-valued, and estimate $|u_{k}|$ and $\varphi_{k}$ separately as in \cite{bbh} and \cite{bos} to show that, decreasing the threshold $\eta_{0}$ above if necessary, we obtain estimates on the derivatives of $\{u_{k}\}$ within balls verifying \eqref{GLenergyeta}. (It is in order to obtain boundary estimates at this stage that we use the convexity assumption on $\partial M$.)  

\vskip 2mm
\noindent\textbf{Step 2: Definitions and properties of $\Sigma$ and $\psi$.}
We define the set $\Sigma$ in the conclusion of Theorem \ref{GLgeneralconvergence} to be 
\begin{equation}\label{GLsigmadefinition}
\Sigma = \{ p \in \overline{M}| \ \Theta(\mu, p) \equiv \lim\limits_{r \to 0}r^{2-n}\mu(B_{r}^{N}(p) \cap \overline{M}) > 0 \},
\end{equation}
and show that in fact 
\begin{equation}\label{GLdensitybounds}
CK_{0} \geq \Theta(\mu, p) \geq \underline{\eta} > 0, \text{ everywhere on }\Sigma,
\end{equation}
where $\underline{\eta}$ is a threshold determined by the previous step, and $C$ is a universal constant. The existence of the limit in \eqref{GLsigmadefinition} as well as the upper bound in \eqref{GLdensitybounds} are consequences of a monotonicity formula, whereas the lower bound follows from the estimates in the previous step. Note that the density lower bound and the assumption \eqref{GLenergyupperbound} imply that $H^{n-2}(\Sigma) < \infty$. 

Next, again using the estimates in Step 1, we show that up to taking a further subsequence, the $1$-forms $\left| \log\ep_{k} \right|^{-1}u_{k} \times du_{k} \equiv \left| \log\ep_{k} \right|^{-1}\left(u^{1}_{k}du^{2}_{k} - u^{2}_{k}du^{1}_{k}\right)$ converge in $C^{m}_{\loc}(M \setminus \Sigma)$ for all $m$ and in $C^{1}_{\loc}(\overline{M} \setminus \Sigma)$. Moreover, the limit 1-form $\psi$ is actually smooth across $\Sigma$ (here we need to use the fact that $H^{n-2}(\Sigma) < \infty$) and has all the asserted properties. A consequence of the convergence of $\left| \log\ep_{k} \right|^{-1}u_{k} \times du_{k}$ to $\psi$ is that $\mu  = (|\psi|^{2}/2)\dvol$ when restricted to $\overline{M} \setminus \Sigma$.

\vskip 2mm
\noindent\textbf{Step 3: Rectifiability and stationarity of $\mu\ \llcorner \Sigma$.}
Defining $\mu_{s} \equiv \mu - (|\psi|^{2}/2)\dvol = \mu\ \llcorner \Sigma$, then clearly conclusion (1) of Theorem \ref{GLgeneralconvergence} holds. Moreover, the previous step implies that $\Theta(\mu_{s}, p) = \Theta(\mu, p) \in [\eta_{0}, CK_{0}]$ everywhere on $\Sigma$. We then invoke Preiss' theorem (\cite{preiss}) to conclude that $\Sigma$ is an $(n-2)$-rectifiable set in $\RR^{N}$, and that $\mu_{s} = \Theta(\mu_{s}, \cdot) H^{n-2}\ \llcorner \Sigma$.

To obtain the identity \eqref{GLlimitstationary}, we choose an admissible vector field $X$ and substitute $\zeta = \langle X, \nabla u_{k} \rangle$ in \eqref{GLvariation} to obtain
\begin{equation}
\int_{M} e_{\ep_{k}}(u_{k}) \Div_{g}X - \langle \nabla_{\nabla u_{k}}X, \nabla u_{k} \rangle \dvol_{g} = 0.
\end{equation}
Introducing a local orthonormal frame $\{e_{i}\}_{i\leq n}$ on $\overline{M}$, we shall see that as $k \to \infty$, the above identity yields
\begin{equation}\label{GLfreelystationary}
\int_{\Sigma} (\delta_{ij} - A_{ij}(p)) \langle \nabla_{e_{i}}X, e_{j} \rangle(p) \Theta(\mu_{s}, p)dH^{n-2}(p) = 0,
\end{equation}
for some $H^{n-2}$-measurable functions $(A_{ij})_{i, j \leq n}$ on $\Sigma$. Finally, we show that for $H^{n-2}$-a.e. $p \in \Sigma$, the matrix $\delta_{ij} - A_{ij}(p)$ represents orthogonal projection onto the approximate tangent plane to $\Sigma$ at $p$, and hence by \eqref{GLfreelystationary}, the rectifiable $(n-2)$-varifold $(\Sigma, \Theta(\mu_{s}, \cdot))$ is stationary with free boundary. Our approach to Step 3 is similar to that in \cite{bospara}, Part II, Section 5.
\subsection{Organization of the paper} The remainder of this paper is organized as follows: 

In Section 2, we set up various notations and introduce some terminology. In Section 3, we recall some basic properties of solutions to \eqref{GLNeumann}, among which is a reflection lemma (Lemma \ref{GLreflection}) which will be used repeatedly to help us obtain estimates near the boundary. Then, Section 4 is devoted to the monotonicity formula, which relate the Ginzburg-Landau energy of a solution at different scales. 

In Sections 5 and 6, we carry out Step 1 and Step 2 in the proof of Theorem \ref{GLgeneralconvergence}. Specifically, Section 5 establishes the $\eta$-ellipticity theorem and some of its consequences. Sections 6.1 and 6.2 derives higher-order estimates. In Section 6.3, we define $\Sigma$ and obtain $\psi$ as a limit away from $\Sigma$. We then verify that $\psi$ has all the asserted properties. After that, Section 7 is devoted to Step 3 of the proof of Theorem \ref{GLgeneralconvergence}. Finally, in Section 8 we give an example where the limit varifold $V$ is non-zero but supported on $\partial M$.

Certain details have been relegated to the appendices. In Appendix A we adapt some devices from \cite{bbh} to our setting. In Appendix B, we prove an upper bound for the Green matrix of a class of second order elliptic systems, which is important for the boundary estimates in Section 5. In Appendix C, we gather some known results concerning the Hodge Laplacian with respect to a Lipschitz metric, and also explain how the results in Appendix B apply in this setting. In Appendix D, we give a proof of the fact that, near a boundary point, the metric components in terms of Fermi coordinates (introduced in Section 2.2) can be estimated depending only on the geometry of $M$ and $\partial M$.
\vskip 2mm
\noindent\textbf{Acknowledgements.} This paper is an extension of my thesis work, and I'd like to thank my advisor Richard Schoen for his guidance and encouragements along the way. Thanks also go to Andre Neves for several enlightening conversations, and to Changyou Wang for having kindly responded to my questions concerning \cite{bpw}. I also thank the Forschungsinstitut f\"ur Mathematik at ETH Z\"urich, where part of the work was carried out.
\section{Notation and terminology}
\subsection{Basic conventions}
Below we summarize the general terms and notations to be used in this paper. Some additional notation and terminology are introduced in Sections 2.2 through 2.4.
\begin{center}
\begin{tabular}{ll}
$\DD$ & The open unit disk in $\RR^{2} \simeq \CC$.\\
$B^{k}_{r}(p)$ & The open ball in $\RR^{k}$ centered at $p$ with radius $r$.\\
\ & (We often drop $k$ if $k = n = \dim(M)$, $r$ if $r = 1$ and $p$ if $p = 0$.)\\
$B^{+}_{r}(p)$ & The p[en upper half ball $B_{r}(p) \cap \{x^{n} > 0\}$ for $p \in \{x^{n} = 0\}$.\\
$T_{r}(p)$ & The set $B_{r}(p) \cap \{x^{n} = 0\}$ for $p \in \{x^{n} = 0\}$.\\
$D, \partial$ & Usual partial derivative.\\
$\nabla^{g}$ & Covariant derivative with respect to the metric $g$.\\
$\Delta_{g}$ & Laplace-Beltrami operator (or the Hodge Laplace operator) with respect to $g$.\\
\ & (In the two previous notations, we drop $g$ when this is not misleading.)\\
$|f|_{0; A}$ & The $C^{0}$-norm of $f$ on $A$.\\
$[f]_{0, \mu; A}$ & The $C^{0, \mu}$ H\"older semi-norm of $f$ on $A \subset \RR^{n}$.\\
$|f|_{k, 0; A}$ & The $C^{k}$-norm $\sum_{i = 0}^{k}|D^{i}f|_{0; A}$ for $A \subset \RR^{n}$.\\
$|f|_{k, \mu; A}$ & The $C^{k, \mu}$-norm $|f|_{k, 0; A} + [D^{k}f]_{0, \mu; A}$ for $A\subset \RR^{n}$.\\
$\|f\|_{p; A}$ & The $L^{p}$-norm of $f$ on $A$.\\
$\|f\|_{k, p; A}$ & The $W^{k, p}$-norm $\sum_{i =0}^{k}\|D^{i}f\|_{p; A}$ for $A \subset \RR^{n}$.\\
$H^{s}(E)$ & The $s$-dimensional Hausdorff measure of the set $E \subset \RR^{N}$.\\
$\mu\ \llcorner K$ & The restriction of the measure $\mu$ to $K$, defined by $(\mu\ \llcorner K)(E) = \mu(E \cap K)$.\\
$\dvol$ or $\dvol_{g}$ & The volume measure with respect to the metric $g$.\\
$d_{g}(\cdot, \cdot)$ or $d(\cdot, \cdot)$ & The distance on $\widetilde{M}$ with respect to $g$.\\
$d_{0}(\cdot, \cdot)$ & Euclidean distance on $\RR^{N}$.\\
$\langle \cdot,\ \cdot \rangle$ & An innerproduct induced by $g$.\\
$\cdot$ & Standard innerproduct on Euclidean space.\\
\end{tabular}
\end{center}
In addition, for two complex numbers $u = u^{1} + iu^{2}$ and $v = v^{1} + iv^{2}$, we let 
\begin{equation*}
u\times v = u^{1}v^{2} - u^{2}v^{1} = -\text{Im}(u\overline{v}).
\end{equation*}
Finally, in what follows, a constant is said to be \textit{universal} if it depends only on $M, \widetilde{M}$, the metric on $\widetilde{M}$, and the isometric embedding $\widetilde{M} \to \RR^{N}$.

\subsection{Classes of metrics}
A good portion of the results we discuss in this paper are local, and hence we will often be working on a ball or half ball in $\RR^{n}$ and equipped with a metric $g$ in one of the classes defined below. 
\begin{defi}\label{interiorclass}
Fix $\mu \in (0, 1]$ and $r, \lambda, \Lambda > 0$, we define $\cM_{\lambda, \Lambda; r}$ to be the class of metrics $g = (g_{ij})$ with Lipschitz continuous components on $B_{r}$, satisfying the following estimates.
\begin{enumerate}
\item[(l1)]  $g_{ij}(0) = \delta_{ij}$ and $r\left[g_{ij}\right]_{0, 1; B_{r}} \leq \Lambda$ for all $i, j$.
\item[(l2)] $\lambda |\xi|^{2} \leq g_{ij}(x)\xi^{i}\xi^{j} \leq \lambda^{-1} |\xi|^{2},\text{ for all }x\in B_{r},\ \xi \in \RR^{n}.$
\end{enumerate}
Similarly, we define $\cM_{\mu, \lambda, \Lambda; r}$ to consist of metrics $g = (g_{ij})$ with components in $C^{1, \mu}(B_{r})$, such that
\begin{enumerate}
\item[(h1)]  $g_{ij}(0) = \delta_{ij}$ and $r|\partial_{k}g_{ij}|_{0; B_{r}} + r^{1 + \mu}\left[\partial_{k}g_{ij}\right]_{0, \mu; B_{r}} \leq \Lambda$ for all $i, j, k$.
\item[(h2)] $\lambda |\xi|^{2} \leq g_{ij}(x)\xi^{i}\xi^{j} \leq \lambda^{-1} |\xi|^{2},\text{ for all }x\in B_{r},\ \xi \in \RR^{n}.$
\end{enumerate}
\end{defi}

\begin{defi}\label{boundaryclass}
For $\mu \in (0, 1]$ and $r, \lambda, \Lambda > 0$, we let $\cM^{+}_{\lambda, \Lambda; r}$ (resp., $\cM^{+}_{\mu, \lambda, \Lambda; r}$) consist of metrics $g = (g_{ij}) \in \cM_{\lambda, \Lambda; r}$ satisfying conditions (l1) and (l2) (resp., (h1) and (h2)) in the previous definition, but with $B_{r}$ replaced by $B_{r}^{+}$. In addition, we require that 
\begin{enumerate}
\item[(1)] $g_{in}(x) = \delta_{in}$ for all $x \in T_{r}$ and for all $i$. 
\item[(2)] $g_{nn}$ and $g_{ij}$ for $i, j \leq n-1$ are all even functions in the $x^{n}$-variable, while $g_{in}$ for $i \leq n-1$ are odd functions.
\end{enumerate}
\end{defi}
\begin{rmk}
\begin{enumerate}
\item[(1)] Conditions (1) and (2) in Definition \ref{boundaryclass} implies that $\tau^{\ast}g = g$, where $\tau:(x', x^{n}) \to (x', -x^{n})$ denotes reflection across $T$. Consequently the inward unit normal to $T$ with respect to $g$ coincides with $\partial_{n}$. 

\item[(2)]We clearly have $\cM_{\mu, \lambda, \Lambda; r} \subset \cM_{\lambda, \Lambda; r}$ and $\cM^{+}_{\mu, \lambda, \Lambda; r} \subset \cM_{\lambda, \Lambda; r}^{+}$ for all $\mu \in (0, 1]$. 

\item[(3)]When $r = 1$, we drop the subscript $r$ from the notation for the classes.
\end{enumerate}
\end{rmk}
\begin{rmk}\label{singularmetric}
The motivation for introducing these classes is that we want to derive boundary estimates by reflection, but the metric is not $C^{1}$ across the boundary unless the boundary is totally geodesic.
\end{rmk}
\subsection{Local coordinates}
For $p \in \widetilde{M}$, the open geodesic ball on $\widetilde{M}$ centered at $p$ with radius $r$ is denoted by $\tilde{B}_{r}(p)$. On the other hand, if $p \in \partial M$, then we take geodesic normal coordinates $(x^{1}, \cdots, x^{n-1})$ of $\partial M$ centered at $p$ and let $x^{n} = t = \dist(\cdot, \partial M)$. These constitute a system of coordinates near $p$, sometimes referred to as Fermi coordinates. Borrowing the terminology in \cite{martinxin}, we define the Fermi distance $\tilde{r}$ to be $\sqrt{|x_{1}|^{2} + \cdots + |x_{n-1}|^{2} + t^{2}}$ and define the Fermi ball $\tilde{B}_{r}^{+}(p)$ to be $\{ q \in M|\ \tilde{r} < r \}$, which corresponds to the half ball $B_{r}^{+} \subset \RR^{n}$ under Fermi coordinates.

The metric components and their derivatives in Fermi coordinates can be estimated through the geometry of $M$ and $\partial M$. Namely, letting $R$ denote the curvature tensor of $M$ and $h$ the second fundamental form of $\partial M$ in $M$, then we have the following proposition, which is a well-known fact. Nonetheless, we give a proof in Appendix D. We also note here that similar estimates have been derived in \cite{martinxin}.

\begin{prop}[cf. \cite{martinxin}, Appendix A]
\label{fermiproperties}
Letting $B_{0} = \sup_{M}\left| R \right|_{g}$ and $A_{0} = \sup_{\partial M}|h|$. Then there exists a small radius $r_{0} < 1$ depending only on $n, B_{0}$ and $A_{0}$ such that for all $p \in \partial M$, the following holds.
\begin{equation}\label{fermicomparable}
\frac{1}{2}\delta_{ij}\xi^{i}\xi^{j} \leq g_{ij}(x)\xi^{i}\xi^{j} \leq 2 \delta_{ij}\xi^{i}\xi^{j}, \text{ for all }x \in B_{r_{0}}^{+}, \xi \in \RR^{n},
\end{equation}
where $g_{ij}$ are the components of the metric in terms of Fermi coordinates $(x^{1}, \cdots, x^{n-1}, t)$ centered at $p$.
Moreover, for each $k$ there is a constant $C$ depending only on $n, k, \{B_{j} \equiv \sup_{M}\left| \nabla^{j}R \right|_{g}\}_{1\leq j \leq k}$ and $\{A_{j} \equiv \sup_{\partial M} \left| \nabla^{j}h \right|_{g} \}_{1 \leq j \leq k}$, such that 
\begin{equation}\label{fermiderivative}
\left| \paop{x^{i_{1}}}\cdots \paop{x^{i_{k}}} g_{ij} \right| \leq C_{k} \text{ on }B_{r_{0}}^{+}.
\end{equation}
\end{prop}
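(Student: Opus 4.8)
The plan is to realize the Fermi coordinate map as the normal exponential map over $\partial M$ and to analyze the coordinate vector fields along the normal geodesics as Jacobi fields: this yields the zeroth- and first-order estimate \eqref{fermicomparable} directly, and the higher-order estimate \eqref{fermiderivative} by differentiating the Jacobi equation in the tangential variables and inducting on the order of differentiation.

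First I would set up the geometry. Fix $p \in \partial M$, let $x' = (x^{1}, \dots, x^{n-1}) \mapsto \gamma(x')$ be geodesic normal coordinates of $\partial M$ centered at $p$, and put $F(x', t) = \exp_{\gamma(x')}(t\,\nu(\gamma(x')))$, so that $(x^{1}, \dots, x^{n-1}, t)$ are exactly the Fermi coordinates and $x^{n} = t = \dist(\cdot, \partial M)$ near $p$. Since $t \mapsto F(x', t)$ is a unit-speed geodesic meeting $\partial M$ orthogonally, the Gauss lemma for the normal exponential map gives $g_{nn} \equiv 1$ and $g_{in} \equiv 0$ for $i \leq n-1$, so $g = dt^{2} + g_{t}$ with $g_{t}$ a metric on the slice $\{t = \mathrm{const}\}$. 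The coordinate fields $J_{i} := dF(\partial_{x^{i}})$, $i \leq n-1$, restricted to a fixed normal geodesic $c(t) = F(x', t)$, are Jacobi fields, $\nabla_{t}^{2}J_{i} + R(J_{i}, \dot{c})\dot{c} = 0$, with initial data $J_{i}(0) = \partial_{x^{i}}|_{\gamma(x')}$ (the $i$-th coordinate field of the geodesic normal chart on $\partial M$) and $\nabla_{t}J_{i}(0) = \nabla_{J_{i}}\nu = -S(0)J_{i}$, where $S(0)$ is the shape operator of $\partial M$ at $\gamma(x')$, so $|\nabla_{t}J_{i}(0)| \leq A_{0}|J_{i}(0)|$.

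For \eqref{fermicomparable}: the standard geodesic-normal-coordinate estimates on $\partial M$ — themselves obtained from the Jacobi equation along radial geodesics of $\partial M$, whose curvature is bounded in terms of $B_{0}$ and $A_{0}$ via the Gauss equation — show that on a ball of radius controlled by $n, B_{0}, A_{0}$ the matrix $\langle J_{i}(0), J_{j}(0)\rangle$ is comparable to the identity. A Gronwall comparison on $[0, t]$ for the Jacobi system, using $\sup_{M}|R|_{g} \leq B_{0}$, propagates this to small $t$, and since $g_{ij}(x', t) = \langle J_{i}, J_{j}\rangle$ together with $g_{nn} \equiv 1$, $g_{in} \equiv 0$, this yields \eqref{fermicomparable} on $B_{r_{0}}^{+}$ for $r_{0} = r_{0}(n, B_{0}, A_{0})$.

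For \eqref{fermiderivative} I would induct on the total order $k$. Working in a parallel frame along $c$ (so that differentiating a vector field is unambiguous), differentiating the Jacobi equation for $J_{i}$ in the tangential variables $x^{j_{1}}, \dots, x^{j_{l}}$ gives, for the componentwise tangential derivative $V$ of $J_{i}$, a linear second-order ODE $\ddot{V} + (\text{curvature})V = (\text{source})$ in $t$, whose source is a universal polynomial in lower-order tangential derivatives of the $J_{m}$ and of $g_{t}$ and in the components along $c$ of $R$ and its covariant derivatives up to order $l$ (hence controlled by $n,l$, the lower-order bounds, and the $B_{j}$), and whose data at $t = 0$ is determined by tangential derivatives of the geodesic-normal-coordinate fields of $\partial M$ and of its second fundamental form (controlled by the $B_{j}$ and $A_{j}$ through repeatedly differentiated Gauss equations). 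Gronwall then bounds $V$, hence the tangential derivatives of $g_{ij} = \langle J_{i}, J_{j}\rangle$; the $t$-derivatives are handled by trading each $\nabla_{t}^{2}$ for curvature terms via the Jacobi equation, and mixed derivatives combine the two. I expect the main obstacle to be precisely the bookkeeping in this last step: arranging the frame so that the differentiated Jacobi system closes, and verifying that every source term and every piece of $t=0$ data is genuinely controlled only by the asserted quantities — in particular that all intrinsic geometry of $\partial M$ entering through its geodesic normal chart is bounded in terms of $\{B_{j}\}$ and $\{A_{j}\}$ via the Gauss equation and its derivatives. The Gronwall estimates themselves are routine.
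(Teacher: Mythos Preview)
Your approach is correct and is a genuinely different route from the paper's. You work with Jacobi fields $J_i$ along normal geodesics and the second-order Jacobi equation, while the paper (following Hamilton's harmonic-coordinate method) works with the Hessian $\nabla^2 t$ of the distance function and the first-order Riccati-type equation obtained by differentiating $|\nabla t|^2 = 1$, namely $\nabla_t \nabla^2_{ij}t + R_{itjt} + g^{rs}\nabla^2_{ir}t\,\nabla^2_{js}t = 0$; an ODE comparison lemma then bounds $|\nabla^2 t|$, and by induction $|\nabla^q t|$ and $|\nabla^q \delta|$, from which the coordinate estimates follow. The two are of course equivalent in content (the Riccati equation is what one gets by factoring the Jacobi system), but the paper's formulation has a practical advantage for the higher-order step: $\nabla^q t$ and $\nabla^q \delta$ are tensor fields on $M$, so covariant differentiation in any direction is unambiguous and the induction closes cleanly without having to fix a frame that is simultaneously parallel along each normal geodesic and compatible with tangential differentiation. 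That frame issue is exactly the ``bookkeeping'' obstacle you flag; it is resolvable in your setup, but the Hamilton/Riccati approach sidesteps it entirely. Conversely, your Jacobi-field picture makes the geometric meaning of the initial data (shape operator) and of the $t=0$ slice (geodesic normal chart on $\partial M$) more transparent.
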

\begin{rmk}\label{fermirmk}
\begin{enumerate}
\item[(1)] Let $(g_{ij})$ be as in Proposition \ref{fermiproperties} and reflect each component evenly across $T$. Then it follows from Proposition \ref{fermiproperties} that the resulting metric lies in $\cM^{+}_{1, \lambda, \Lambda; r_{0}}$, where $\Lambda$ and $\lambda$ depend only on the parameters listed before \eqref{fermiderivative} (with $k = 1$). 

\item[(2)] By Proposition \ref{fermiproperties} one infers that there exists a threshold $\tilde{r}_{0}$ and an increasing function $c:(0, \tilde{r}_{0}) \to (0, 1/2)$ with $\lim_{r \to 0}c(r) = 0$, both universally determined, such that
\begin{equation*}
B^{N}_{(1 - c(r))r}(p) \cap \overline{M} \subset \tilde{B}^{+}_{r}(p) \subset B^{N}_{(1 + c(r))r}(p)\cap \overline{M},
\end{equation*}
for all $p \in \partial M$ and $r < \tilde{r}_{0}$. A similar relation holds between geodesic balls $\tilde{B}_{r}(p)$ and the balls $B^{N}_{r}(p) \cap \widetilde{M}$.
\end{enumerate}
\end{rmk}
\subsection{Differential forms}
The reference for the material summarized below is Chapter 7 of \cite{morreybook} or Chapter 5 of \cite{gms}, although our notation will sometimes be different. 

Given a Riemannian manifold $(M, g)$, possibly with non-empty boundary, we let $L^{p}_{r}(M)$ denote the space of $r$-forms $\omega$ such that
\begin{equation*}
\|\omega\|_{p; M}^{p} \equiv \int_{M} |\omega|_{g}^{p}\dvol_{g} < \infty.
\end{equation*}
We say that $\omega \in W^{m, p}_{r}(M)$ if and only if 
\begin{equation*}
\|\omega\|_{m, p; M}^{p}  \int_{M} \sum\limits_{0 \leq k \leq m}|\nabla^{k}\omega|_{g}^{p}\dvol_{g} < \infty.
\end{equation*}
For each $r \geq 1$ and $x \in M$, the innerproduct induced by $g$ on $\bigwedge^{r}T_{x}^{\ast}M$ will be denoted simply by $\langle \cdot,\ \cdot \rangle$. Integrating over $M$, we get a pairing between differential forms on $M$, which we denote by $(\cdot, \ \cdot)$. Letting $d$ denote the exterior differential, we denote by $d^{\ast}$ its formal adjoint with respect to $(\cdot,\ \cdot)$. Thus $d$ and $d^{\ast}$ are related by the fact that for all $\alpha \in C^{\infty}(\overline{M}; \bigwedge^{r}T^{\ast}M )$ and $\beta \in C^{\infty}_{c}(M; \bigwedge^{r}T^{\ast}M)$, we have
\begin{equation}\label{exteriorduality}
(d^{\ast}\alpha, \beta) = (\alpha, d\beta).
\end{equation}

Given a form $\omega \in C^{\infty}(M; \bigwedge^{r}T^{\ast}M)$, its tangential and normal traces on $\partial M$, denoted $\ft \omega$ and $\fn \omega$, respectively, are defined as in Section 7.5 of \cite{morreybook}. Both $\ft$ and $\fn$ maps from $W^{1, p}_{r}(M)$ to $L^{p}_{r}(\partial M)$, and we define $W^{1, p}_{r, \ft}(M)$ to be the subspace of $W^{1, p}_{r}(M)$  consisting of those forms $\omega$ with $\ft \omega = 0$. Next we let $\cH_{r, \ft}(M)$ denote the space of forms $\omega$ in $W^{1, 2}_{r, \ft}(M)$ such that 
\begin{equation}\label{harmonicdefi}
\cD_{g}(\omega, \zeta; M) \equiv \int_{M}\langle d\omega, d\zeta \rangle + \langle d^{\ast}\omega, d^{\ast}\zeta \rangle \dvol_{g} = 0, \text{ for all } \zeta \in W^{1, 2}_{r, \ft}(M).
\end{equation}
The Hodge decomposition theorem (cf. \cite{morreybook}, Theorem 7.7.7) then says that each form $\omega \in L^{2}_{r}(M)$ can be uniquely decomposed as
\begin{equation*}
\omega = h + d\alpha + d^{\ast}\beta,
\end{equation*}
where $h \in \cH_{r, \ft}(M)$, $\alpha \in W^{1, 2}_{r-1, \ft}(M)$ and $\beta \in W^{1, 2}_{r + 1, \ft}(M)$, with $d^{\ast}\alpha = 0$ and $d\beta = 0$.
The spaces $W^{1, p}_{r, \fn}(M)$ and $\cH_{r, \fn}(M)$ are defined similarly, and the above decomposition theorem also holds with $\ft$ replaced by $\fn$. Also, the $\|\cdot\|_{1, 2; M}$-closure of smooth, compactly supported $r$-forms in $W^{1, 2}_{r}(M)$ will be denoted $W^{1, 2}_{r, 0}(M)$. In the case $M = B_{1}$, we introduce one more class of differential forms and define
\begin{equation*}
W^{1, 2, +}_{r, \ft}(B_{1}) \equiv \{ \varphi \in W^{1, 2}_{r, \ft}(B_{1})|\ \tau^{\ast}\varphi = \varphi \},
\end{equation*}
where $\tau: (x', x^{n}) \to (x', -x^{n})$ denotes reflection across $\{x^{n} = 0\}$. 

\begin{rmk}\label{lipschitzhodge}
Most of the results in Chapter 7 of \cite{morreybook}, including the definitions of $\ft$ and $\fn$, and the decomposition Theorem 7.7.7, require only a Lipschitz metric, and hence apply when we equip $B_{1}$ with a metric from $\cM_{\lambda, \Lambda}$ or $\cM^{+}_{\lambda, \Lambda}$. In particular, this implies that $\cH_{r, \ft}(B_{1}) = \cH_{r, \fn}(B_{1}) = \{0\}$ for any metric $g \in \cM_{\lambda, \Lambda}$, as is the case when $g$ is smooth.
\end{rmk}

\section{Basic properties of solutions}

In this short section, we recall a few basic local and global properties of solutions to \eqref{GLNeumann}, beginning with the following standard estimates.
\begin{lemm}\label{GLbasicbounds}
Suppose $u$ solves \eqref{GLNeumann}. Then the following estimates hold.
\begin{enumerate} 
\item[(1)] $|u| \leq 1$ on $M$.
\item[(2)] $|\nabla u| \leq C\ep^{-1}$ on $M$, with $C$ a universal constant.
\end{enumerate}
\end{lemm}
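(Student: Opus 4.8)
The plan is to establish the two estimates in turn, using the maximum principle for the first and a scaling/rescaling argument combined with interior-boundary elliptic estimates for the second. For part (1), I would test the equation against a suitable truncation. Set $w = |u|^{2}$. A direct computation from $-\Delta u = \ep^{-2}(1-|u|^{2})u$ gives $-\frac{1}{2}\Delta w = -|\nabla u|^{2} + \ep^{-2}(1-|u|^{2})|u|^{2}$, so that $\Delta w \geq -2\ep^{-2}(1-w)w$, i.e. $\Delta w + 2\ep^{-2}(1-w)w \geq 0$. Consider the open set $U = \{x \in M : w(x) > 1\}$. On $U$ one has $(1-w)w < 0$, hence $\Delta w \geq -2\ep^{-2}(1-w)w > 0$ there, so $w$ is subharmonic on $U$. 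If $U$ is nonempty, then $w$ attains its maximum over $\overline{M}$ at some point $p$; $p$ cannot be an interior point of $U$ by the strong maximum principle (unless $w$ is constant on the component, in which case $\Delta w = 0$ there, forcing $(1-w)w = 0$, contradicting $w>1$), and it cannot be on $\partial M \cap \overline{U}$ because the Hopf boundary point lemma together with the Neumann condition $\partial_{\nu}w = 2\,\re(\bar u\,\partial_\nu u) = 0$ rules out a strict interior normal derivative. Hence $U = \emptyset$, i.e. $|u| \leq 1$ on $M$.

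For part (2), I would use the now-established bound $|u| \leq 1$ to control the right-hand side: $|\ep^{-2}(1-|u|^{2})u| \leq \ep^{-2}$ on all of $M$. Then rescale: fix $p \in \overline{M}$ and work in geodesic normal coordinates (or Fermi coordinates if $p \in \partial M$, using Proposition \ref{fermiproperties}) on a ball $\tilde B^{(+)}_{r_{0}}(p)$ with $r_{0}$ universal. Define $v(y) = u(\ep y)$ on the ball (or half-ball) of radius $\min(r_{0}/\ep, 1)$ in the rescaled coordinates; in these coordinates $v$ solves $-\Delta_{g_\ep} v = (1-|v|^{2})v$, where $g_\ep(y) = g(\ep y)$ has $C^{1}$-norm bounded universally (and lies in a fixed class $\cM^{+}_{\lambda,\Lambda}$-type family after the reflection of Remark \ref{fermirmk}), and the right-hand side is bounded by $1$ in $L^{\infty}$, while $|v| \leq 1$. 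Standard interior $W^{2,q}$ estimates (for all $q<\infty$) followed by Sobolev embedding, respectively the analogous estimates up to the boundary using the homogeneous Neumann condition $\partial_\nu v = 0$ (the coefficients being Lipschitz suffices for $W^{2,q}$ boundary regularity), give $|\nabla v| \leq C$ on a ball of half the radius, with $C$ universal. Undoing the scaling, $|\nabla u(p)| = \ep^{-1}|\nabla v(0)| \leq C\ep^{-1}$, and since $p$ was arbitrary this is the claim.

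The main obstacle is the boundary case of part (2): near $\partial M$ the metric in Fermi coordinates is only Lipschitz after even reflection (Remark \ref{singularmetric}), so one cannot directly quote Schauder estimates. I would instead invoke $L^{q}$ (Calderón--Zygmund / Agmon--Douglis--Nirenberg) estimates up to the boundary for the Neumann problem with Lipschitz coefficients, which do hold and give $W^{2,q}$ control for every finite $q$; then Morrey's embedding $W^{2,q} \hookrightarrow C^{1,\alpha}$ for $q > n$ yields the gradient bound. Alternatively, one can reflect the solution $u$ itself evenly across $\partial M$ using the reflection lemma announced as Lemma \ref{GLreflection}, so that the reflected $u$ solves an equation with Lipschitz coefficients on a full ball with no boundary condition, and again apply interior $W^{2,q}$ estimates. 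Either route is routine once the correct estimate is cited; no genuinely new difficulty arises, only the bookkeeping of ensuring all constants depend only on the universal data $n$, $B_{0}$, $A_{0}$.
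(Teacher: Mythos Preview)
Your argument for part (1) is correct and essentially matches the paper's: the paper sets $\varphi = |u|^{2} - 1$ and observes $\Delta_{g}\varphi - 2\ep^{-2}|u|^{2}\varphi \geq 0$ with $\partial_{\nu}\varphi = 0$, then applies the strong maximum principle and Hopf lemma directly to this linear inequality. Your version, working on the superlevel set $\{w>1\}$, reaches the same conclusion by the same mechanism.

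For part (2) your approach is correct but different from the paper's. You rescale by $\ep$ to obtain an equation with $O(1)$ right-hand side and metric coefficients uniformly in the class $\cM^{(+)}_{\lambda,\Lambda}$, then invoke $W^{2,q}$ estimates (or reflection plus interior estimates) and Sobolev embedding. The paper instead avoids rescaling by quoting the multiplicative Schauder estimate of Lemma~\ref{interiormulti} (and its boundary analogue Lemma~\ref{boundarymulti}, proved by the same reflection you mention): from $|u|\le 1$ and $|\Delta_{g}u|\le \ep^{-2}$ one gets directly
\[
|Du| \le C\bigl(|u|_{0} + |u|_{0}^{1/2}|\Delta_{g}u|_{0}^{1/2}\bigr) \le C(1+\ep^{-1}).
\]
Both routes are standard; the paper's is marginally shorter since the multiplicative estimate packages the rescaling internally, while yours makes the scaling explicit and relies on $L^{q}$ theory rather than Schauder, which sidesteps any worry about H\"older regularity of the reflected metric.
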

\begin{proof}
To prove assertion $(1)$, we observe that the function $\varphi \equiv |u|^{2} - 1$ satisfies
\begin{equation}\label{modulussquareeq}
\Delta_{g} \varphi  - 2\frac{|u|^{2}}{\ep^{2}}\varphi \geq 0.
\end{equation}
Moreover, we clearly have $\partial_{\nu}\varphi = 0$ on $\partial M$. Thus by the strong maximum principle and the Hopf lemma, either $\varphi$ is a constant (which must then be non-positive by \eqref{modulussquareeq}) or $\varphi$ has a non-positive maximum. In either case, we conclude that $|u| \leq 1$.

For $(2)$ we cover $M$ by geodesic or Fermi balls and use $(1)$ in conjunction with the multiplicative Schauder estimates in Lemmas \ref{interiormulti} and \ref{boundarymulti}. 
\end{proof}
The gradient estimate in Lemma \ref{GLbasicbounds}(2) can of course be localized, and we get the following.
\begin{lemm}\label{localbasicGLestimate}
Let $g \in \cM_{\lambda, \Lambda}$ and suppose $u : (B_{1}, g) \to \DD$ is a solution to 
\begin{equation}\label{GLequation}
- \Delta_{g} u = \ep^{-2}(1 - |u|^{2})u,
\end{equation}
where $\DD$ is the unit disk in $\CC$. Then
\begin{equation*}
|\nabla u|_{0; B_{3/4}} \leq C\ep^{-1},
\end{equation*}
where $C$ depends only on $n, \lambda$ and $\Lambda$.
\end{lemm}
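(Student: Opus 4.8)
The statement is the local version of the global gradient bound: given $u\colon(B_1,g)\to\DD$ solving \eqref{GLequation} with $g\in\cM_{\lambda,\Lambda}$, we want $|\nabla u|_{0;B_{3/4}}\le C\ep^{-1}$ with $C=C(n,\lambda,\Lambda)$. The plan is a rescaling-plus-interior-Schauder argument of Bogomol'nyi--type, exactly as in the closed case but keeping careful track of the fact that $g$ is only Lipschitz.

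First I would reduce to the case $\ep=1$. Fix $x_0\in B_{3/4}$ and set $\rho=\tfrac18$, so $B_\rho(x_0)\subset B_1$. Rescale by defining $v(y)=u(x_0+\ep y)$ on the ball $B_{\rho/\ep}(0)$, equipped with the pulled-back metric $g_\ep(y)=g(x_0+\ep y)$. Then $v$ solves $-\Delta_{g_\ep}v=(1-|v|^2)v$, still with $|v|\le1$, and the rescaled metric $g_\ep$ lies in a class with the same ellipticity constant $\lambda$ and a Lipschitz seminorm that only improves (it is multiplied by $\ep$); in particular, on $B_1(y)$ for any $y$ with $B_1(y)\subset B_{\rho/\ep}(0)$ the metric coefficients are uniformly Lipschitz with constant $\le\Lambda$ and elliptic with constant $\lambda$. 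So it suffices to prove: if $v\colon(B_2,g')\to\DD$ solves $-\Delta_{g'}v=(1-|v|^2)v$ with $g'\in\cM_{\lambda,\Lambda;2}$, then $|\nabla v|_{0;B_1}\le C(n,\lambda,\Lambda)$; then undoing the scaling gives $|\nabla u|(x_0)=\ep^{-1}|\nabla v|(0)\le C\ep^{-1}$, and since $x_0\in B_{3/4}$ was arbitrary we are done.

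For the unit-scale estimate I would invoke the multiplicative interior Schauder estimate (Lemma \ref{interiormulti} in the excerpt, applied componentwise to the two real components of $v$): writing the equation as $\Delta_{g'}v^j=f^j$ with $f^j=-(1-|v|^2)v^j$, we have $\|f\|_{0;B_{3/2}}\le 1$ since $|v|\le1$, and the zeroth-order term has the right structure. The multiplicative form of the estimate bounds a scale-invariant quantity like $|v|_{1,0;B_1}$ (or $[\nabla v]_{0,\alpha;B_1}$) in terms of $\|v\|_{0;B_{3/2}}\le1$ and $\|f\|_{0;B_{3/2}}\le1$, with constant depending only on $n,\lambda,\Lambda$ (and the Lipschitz seminorm is exactly what is controlled in $\cM_{\lambda,\Lambda}$). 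This yields $|\nabla v|_{0;B_1}\le C(n,\lambda,\Lambda)$. One must double-check that the cutoff balls $B_{3/2}\subset B_2$ used here survive the rescaling, i.e.\ that $\rho/\ep\ge 2$; if $\ep$ is not small this is false, but then the claim is trivial because $|\nabla u|\le C$ on all of $B_{3/4}$ already follows from standard interior elliptic estimates for bounded solutions, absorbing the constant into $C\ep^{-1}$.

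The main obstacle is bookkeeping rather than conceptual: one must verify that the rescaled metrics genuinely stay in a fixed class $\cM_{\lambda,\Lambda'}$ with constants independent of $\ep$ and $x_0$ (the translation to $x_0$ breaks the normalization $g_{ij}(0)=\delta_{ij}$, so strictly speaking one should first replace $g$ by $L^\ast g$ for a fixed linear map $L$ depending on $g(x_0)$, using uniform ellipticity to control $L$, and only then rescale), and that the lemma being quoted is stated in a multiplicative, scale-invariant form so that no hidden dependence on the radius or on $\ep$ creeps in. Everything else is the routine Bogomol'nyi rescaling already used in \cite{bbh}, \cite{bbo}, \cite{stern2}.
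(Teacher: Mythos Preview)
Your argument is correct and morally the same as the paper's, but you work harder than necessary. The paper applies Lemma~\ref{interiormulti} directly on $B_1$ without any rescaling: since $|u|\le 1$ and $|\Delta_g u|\le \ep^{-2}$, the multiplicative estimate \eqref{multigradsup} gives
\[
|Du|_{0;B_{3/4}} \le C\bigl(|u|_{0;B_1} + |u|_{0;B_1}^{1/2}\,|\Delta_g u|_{0;B_1}^{1/2}\bigr) \le C(1+\ep^{-1}),
\]
and then $|\nabla u|\le C|Du|$ since $g$ is uniformly elliptic. The point is that the \emph{multiplicative} form of the Schauder estimate already encodes the correct scaling in $\ep$ through the exponent $\tfrac12$ on $|f|$, so your explicit rescaling to $\ep=1$ (and the attendant bookkeeping about recentering the metric, handling the case of large $\ep$, etc.) is redundant---it is essentially re-proving the scale-invariance that Lemma~\ref{interiormulti} was designed to package. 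Either route is fine; the paper's is simply the two-line version.
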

\begin{proof}
Since $|\Delta_{g}u|_{0; B_{1}} \leq \ep^{-2}$, we apply Lemma \ref{interiormulti} with $s = 3/4$ to see that 
\begin{equation*}
|Du|_{0; B_{3/4}} \leq C\left( 1 + \ep^{-1} \right),
\end{equation*}
which gives the result since $\nabla u = g^{ij}\partial_{j}u \partial_{i}$.
\end{proof}

The following reflection lemma was already used in \cite{chiron} and will also be important to us. 
\begin{lemm}\label{GLreflection}
Let $g \in \cM^{+}_{\lambda, \Lambda}$ and suppose $u \in W^{1, 2}_{\text{loc}}(B_{1}^{+} \cup T; \DD)$ solves
\begin{equation}\label{halfGL}
\left\{
\begin{array}{cl}
-\Delta_{g} u &= \ep^{-2}(1 - |u|^{2})u \text{ in }B_{1}^{+},\\
\partial_{n} u &= 0 \text{ in }T.
\end{array}
\right.
\end{equation}
 Then, letting 
\begin{equation}
\tilde{u}(x', x^{n}) = \left\{
\begin{array}{cl}
u(x', x^{n}) & \ x^{n} > 0,\\
u(x', -x^{n}) & \ x^{n} < 0,
\end{array}
\right.
\end{equation}
we have that $\tilde{u}$ lies in $W^{1, 2}_{\text{loc}}(B_{1}; \DD)$ and solves 
\begin{equation}\label{wholeGL}
-\Delta_{g} \tilde{u} = \ep^{-1}(1 - |\tilde{u}|^{2})\tilde{u} \text{ on }B_{1}.
\end{equation}
\end{lemm}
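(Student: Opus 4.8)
\textbf{Proof proposal for Lemma \ref{GLreflection}.}

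The plan is to verify the two assertions — that $\tilde u \in W^{1,2}_{\mathrm{loc}}(B_1;\DD)$ and that $\tilde u$ solves \eqref{wholeGL} weakly (from which smoothness, and hence \eqref{wholeGL} in the classical sense, follows by the elliptic regularity remark made after \eqref{GLvariation}, applied to the metric $g \in \cM^+_{\lambda,\Lambda}$ which is merely Lipschitz but still admits the weak formulation). The membership in $\DD$ pointwise is immediate since the values of $\tilde u$ are among the values of $u$. For the Sobolev membership, the point is that $u$ being a weak solution of \eqref{halfGL} with the Neumann condition $\partial_n u = 0$ on $T$ already carries enough regularity up to $T$: by interior-and-boundary $W^{2,p}$ (or Schauder, after bootstrapping off the bound $|\Delta_g u| \le \ep^{-2}$) estimates for the Neumann problem on $B^+_{1/2}(q)$ for $q \in T$, one gets $u \in C^1(B^+_\rho \cup T_\rho)$ for every $\rho < 1$ with $\partial_n u = 0$ pointwise on $T$; then the even reflection $\tilde u$ is continuous across $T$ and its distributional gradient has no singular part on $T$ precisely because the normal derivative vanishes there, so $\tilde u \in W^{1,2}_{\mathrm{loc}}(B_1)$. (One could alternatively argue directly at the level of $W^{1,2}$: the trace of $u$ on $T$ is well-defined, the even extension of a $W^{1,2}$ function across a hyperplane is always in $W^{1,2}$ with gradient given by the reflected gradient, and no boundary condition is even needed for this step — the Neumann condition is what is needed for the \emph{equation} to extend, not for Sobolev regularity.)

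Next I would check that $\tilde u$ solves \eqref{wholeGL} weakly, i.e. $\int_{B_1} \langle \nabla_g \tilde u, \nabla_g \zeta\rangle_g + \ep^{-2}(|\tilde u|^2 - 1)\tilde u \cdot \zeta \,\dvol_g = 0$ for all $\zeta \in C^1_c(B_1;\CC)$. The key algebraic observation is that because $g \in \cM^+_{\lambda,\Lambda}$ satisfies conditions (1) and (2) of Definition \ref{boundaryclass}, the reflection $\tau(x',x^n) = (x',-x^n)$ is an isometry of $(B_1,g)$: $\tau^* g = g$, as noted in the Remark following Definition \ref{boundaryclass}. Hence $\dvol_g$, the energy density $e_{\ep}(\cdot)$, and the weak formulation are all $\tau$-invariant. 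Given a test function $\zeta$ on $B_1$, split the integral over $B_1$ into the parts over $B_1^+$ and $B_1^-$; on $B_1^-$ change variables by $\tau$ (using $\tau$-invariance of the metric and volume) to rewrite it as an integral over $B_1^+$ of the same integrand evaluated for $u$ against the test function $\zeta^\sharp(x) := \zeta(\tau(x))$. Adding, the total equals $\int_{B_1^+}$ of the weak-equation integrand for $u$ tested against $\zeta + \zeta^\sharp$, which is a $C^1$ function on $B_1^+ \cup T$ but \emph{not} compactly supported in $B_1^+$ — it need not vanish near $T$. So I would use the weak form of \eqref{halfGL} that incorporates the Neumann condition: for $u$ solving \eqref{halfGL},
\begin{equation*}
\int_{B_1^+} \langle \nabla_g u, \nabla_g \eta\rangle_g + \frac{|u|^2 - 1}{\ep^2} u\cdot \eta \,\dvol_g = 0
\end{equation*}
for \emph{all} $\eta \in C^1(\overline{B_1^+};\CC)$ vanishing near the spherical part $\partial B_1 \cap \{x^n \ge 0\}$ — this is exactly the natural weak formulation of the mixed Dirichlet-on-$\partial B_1$/Neumann-on-$T$ problem, with the $\partial_n u = 0$ condition encoded by the absence of a boundary term on $T$. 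Applying this with $\eta = \zeta + \zeta^\sharp$ (which indeed vanishes near the spherical boundary since $\zeta$ and hence $\zeta^\sharp$ do) gives the desired identity, and then interior elliptic regularity for the Lipschitz-coefficient system on $B_1$ promotes $\tilde u$ to a genuine solution of \eqref{wholeGL}.

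The main obstacle — and the only place requiring real care — is the handling of the boundary term on $T$, i.e. making rigorous that "no term appears from $T$ when folding the two halves together." Concretely, one must be sure that the symmetrized test function $\zeta + \zeta^\sharp$ is an admissible test function for the weak Neumann problem on $B_1^+$, and that the weak Neumann formulation for $u$ genuinely holds for test functions not compactly supported away from $T$; this is where the Neumann boundary condition $\partial_n u = 0$ is used, and it is the reason the lemma fails for a pure Dirichlet condition. A clean way to sidestep any subtlety is to first establish (via the up-to-the-boundary elliptic estimates mentioned above) that $u \in C^1(B_1^+ \cup T)$ with $\partial_n u = 0$ classically on $T$; then $\tilde u \in C^1(B_1)$ away from $T$ and Lipschitz across $T$, $\nabla_g \tilde u$ is $\tau$-invariant as an $L^2$ (indeed $L^\infty_{\mathrm{loc}}$) vector field, and the weak identity on $B_1$ reduces to integrating the classical identity $-\Delta_g u = \ep^{-2}(1-|u|^2)u$ against $\tau$-symmetrized test functions, with the only potential boundary contribution being $\int_T (\partial_n u)(\zeta + \zeta^\sharp)\,dH^{n-1} = 0$. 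I would present the argument in this second form for transparency, noting in passing that the first (purely $W^{1,2}$) route also works and requires no regularity beyond what is assumed.
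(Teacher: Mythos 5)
Your proposal is correct and follows essentially the same route as the paper's proof: split $\int_{B_1}$ into $\int_{B_1^+}$ and $\int_{B_1^-}$, change variables by $\tau$ using the $\tau$-invariance of $g$, fold the integrals into $\int_{B_1^+}$ tested against the symmetrized test function $\zeta + \tau^*\zeta$, and invoke the weak Neumann formulation of \eqref{halfGL}. Your version is more explicit about two points the paper leaves tacit — that the $W^{1,2}_{\mathrm{loc}}$ membership of the even extension requires no boundary condition at all, and that the weak form of \eqref{halfGL} must be (and is) taken with test functions that need not vanish near $T$ — but the core argument is identical.
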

\begin{proof}
The first assertion is standard. To prove that $\tilde{u}$ solves \eqref{wholeGL} we take $\zeta \in C^{1}_{c}(B_{1})$ and use the symmetry of $\tilde{u}$ and $g$ to compute
\begin{align*}
&\int_{B_{1}} \langle \nabla\tilde{u}, \nabla\zeta \rangle + \ep^{-2}(|\tilde{u}|^{2}-1)\tilde{u}\cdot \zeta \dvol_{g}\\
&= \int_{B_{1}^{+}} \langle \nabla u, \nabla\left( \zeta + \tau^{\ast}\zeta \right) \rangle + \ep^{-2}(|u|^{2} - 1)u\cdot (\zeta + \tau^{\ast}\zeta)  \dvol_{g} = 0.
\end{align*}
The second inequality follows because $u$ solves \eqref{halfGL}.
\end{proof}
\section{Energy monotonicity formulae}


In this section, we show that the energy of solutions to \eqref{GLequation} possesses certain monotonicity properties which will be essential in all the subsequent sections of this paper. For convenience, we formulate the results in local terms.  Thus, we will fix $\lambda, \Lambda > 0$ and work with solutions to \eqref{GLequation} on $(B_{1}, g)$, where $g\in \cM_{\lambda, \Lambda}$. Note that we allow for Lipschitz metrics here so that the results can be applied to boundary points after reflection (cf. Remark \ref{singularmetric}). Finally, the idea of the proofs given below has been widely used in many other contexts (e.g. \cite{bbo}, \cite{chiron}, \cite{bpw}, etc.) and we do not claim originality over it.

\begin{prop}[cf. Proposition \ref{globalmono} below]\label{lipschitzmonotonicity}
There exist constants $\chi$ and $C$, depending only on $n, \lambda$ and $\Lambda$ such that if $g \in \cM_{\lambda, \Lambda}$ and $u: (B_{1}, g)\to \CC$ is a solution to \eqref{GLequation}, then the following hold for $\rho \in (0, 1)$.
\begin{align}\label{lipschitzmonotonicity1}
& \paop{\rho}\left( e^{\chi\rho}\rho^{2-n}\int_{B_{\rho}(x)}e_{\ep}(u) \dvol \right)\\
\nonumber \geq &\ \frac{1}{1 + C\rho}\left[  e^{\chi\rho}\rho^{1-n}\int_{B_{\rho}} \frac{(1 - |u|^{2})^{2}}{2\ep^{2}} \dvol + e^{\chi\rho}\rho^{2-n}\int_{\partial B_{\rho}}|\nabla_{r}u|^{2} \sqrt{\det(g)}dH^{n-1} \right].
\end{align}
\begin{align}\label{lipschitzmonotonicity2}
& \paop{\rho}\left( e^{\chi\rho}\rho^{2-n}\int_{B_{\rho}(x)}\frac{|\nabla u|^{2}}{2} + \left( 1 + \frac{2}{(1 + C)(n-2)} \right)\frac{(1 - |u|^{2})^{2}}{4\ep^{2}} \dvol \right)\\
\nonumber \geq &\ \frac{1}{1 + C} e^{\chi\rho}\rho^{2-n}\int_{\partial B_{\rho}} \frac{1}{(n-2)}\frac{(1 - |u|^{2})^{2}}{2\ep^{2}} + |\nabla_{r}u|^{2} \sqrt{\det(g)}dH^{n-1} .
\end{align}
where $B_{\rho}$ denotes the Euclidean ball of radius $\rho$, and $H^{n-1}$ is the $(n-1)$-dimensional Hausdorff measure.
\end{prop}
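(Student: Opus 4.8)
The plan is to derive both monotonicity inequalities from the stationarity of $u$ with respect to inner variations, i.e. from the vanishing of $\delta E_\ep(u)(X)$ when one plugs $\zeta = \langle X, \nabla u\rangle$ into \eqref{GLvariation} for a suitable radial vector field $X$, keeping careful track of the error terms coming from the Lipschitz (rather than Euclidean or smooth) metric $g$. First I would record the pointwise ``stress-energy'' identity: using $\zeta = \langle X, \nabla u\rangle$ in \eqref{GLvariation} and integrating by parts, one obtains
\begin{equation*}
\int_{B_\rho} e_\ep(u)\Div_g X - \langle \nabla_{\nabla u}X, \nabla u\rangle \,\dvol_g = \int_{\partial B_\rho} \left( e_\ep(u)\langle X, \nu_g\rangle - \langle X, \nabla u\rangle\langle \nabla u, \nu_g\rangle\right) \sqrt{\det g}\,dH^{n-1},
\end{equation*}
where $\nu_g$ is the outward unit normal along $\partial B_\rho$. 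I would then choose $X = X_\rho$ to be (a fixed multiple of) the Euclidean radial field $r\,\partial_r$, or more precisely the field whose flow rescales Euclidean balls, so that the boundary term on the right produces exactly $\rho^{n-1}$ times the quantities $\rho^{2-n}\int_{\partial B_\rho}(\cdots)$ appearing on the right-hand sides of \eqref{lipschitzmonotonicity1}--\eqref{lipschitzmonotonicity2}.

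Next I would analyze the interior integrand $e_\ep(u)\Div_g X - \langle\nabla_{\nabla u}X,\nabla u\rangle$. For the Euclidean metric and $X = r\partial_r$ this equals $n\,e_\ep(u) - |\nabla u|^2 - |\nabla_r u|^2$; the point is that the ``bad'' quadratic terms organize into $(n-2)\tfrac{(1-|u|^2)^2}{4\ep^2}$ (the potential survives with a good sign when $n \geq 3$) plus a full-gradient-minus-radial-derivative piece $|\nabla u|^2 - |\nabla_r u|^2 \geq 0$. With $g \in \cM_{\lambda,\Lambda}$ only Lipschitz, the hypotheses (l1)–(l2) give $|g_{ij}(x) - \delta_{ij}| \leq \Lambda|x| \leq \Lambda\rho$ and uniform ellipticity, so $\Div_g X = \Div_0 X + O(\Lambda)$, $\sqrt{\det g} = 1 + O(\Lambda\rho)$, and the difference between the $g$-gradient terms and the Euclidean ones is controlled by $C\rho\,|\nabla u|^2$. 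These errors are exactly what is absorbed by the factors $\tfrac{1}{1+C\rho}$ on the right and — crucially — by the exponential prefactor $e^{\chi\rho}$: after dividing through by $\rho^{n-1}$ and regrouping, the $O(\Lambda)$ terms multiplying $\rho^{2-n}\int_{B_\rho} e_\ep(u)$ become, upon differentiating $e^{\chi\rho}\rho^{2-n}\int_{B_\rho}e_\ep(u)$ in $\rho$, a term $+\chi\, e^{\chi\rho}\rho^{2-n}\int_{B_\rho}e_\ep(u)$ which dominates the error provided $\chi = \chi(n,\lambda,\Lambda)$ is chosen large enough. This is the standard ``almost-monotonicity via exponential weight'' device.

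To get the cleaner second inequality \eqref{lipschitzmonotonicity2}, I would instead test with a vector field adapted to making the full potential term, not just its $(n-2)$-multiple, appear with a definite sign: concretely one adds to $X = r\partial_r$ a correction proportional to the gradient of a function of $r$ (equivalently, replaces $\zeta$ by $\langle X,\nabla u\rangle + \beta(r)\cdot(\text{something})$, or equivalently uses the Pohozaev-type identity obtained by also integrating \eqref{GLequation} against a radial cutoff of $u$ itself). The coefficient $1 + \tfrac{2}{(1+C)(n-2)}$ in front of $\tfrac{(1-|u|^2)^2}{4\ep^2}$ inside the derivative, and the factor $\tfrac{1}{n-2}$ on the boundary, are precisely what this rearrangement produces once the Lipschitz errors are folded into the constant $C$; I would carry out the bookkeeping and then, as before, choose $\chi$ large to absorb the remaining $O(\Lambda)$-terms.

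I expect the main obstacle to be purely the error analysis for the Lipschitz metric: unlike the smooth case, one cannot differentiate $g$, so every estimate must go through the zeroth- and first-order bounds in (l1)–(l2), and one has to be scrupulous that the ``good'' terms $|\nabla u|^2 - |\nabla_r u|^2$ and $\tfrac{(1-|u|^2)^2}{\ep^2}$ retain strictly positive coefficients after subtracting the metric errors — which forces working at scale $\rho$ small and choosing the thresholds (the $\tfrac{1}{1+C\rho}$, $\tfrac{1}{1+C}$ factors and $\chi$) in the right order. A secondary, more routine point is justifying the integration by parts and the differentiation under the integral sign given only $W^{1,2}_{\loc}$ (handled by the elliptic regularity remark after \eqref{GLvariation}, which makes $u$ smooth), and making sense of the boundary integrals over $\partial B_\rho$ for a.e.\ $\rho$ via the coarea formula.
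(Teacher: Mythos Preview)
Your approach to \eqref{lipschitzmonotonicity1} is essentially the paper's: test the inner-variation identity with the Euclidean radial field (the paper uses a compactly supported $X = \xi(r)\,r\nabla r$ with a cutoff $\xi$ and lets $\xi \uparrow \chi_{[0,1]}$ rather than integrating by parts directly on $B_\rho$, but this is cosmetic), compute $|\nabla r|^2 = 1 + O(r)$, $r\Delta r = (n-1) + O(r)$, $r\nabla^2 r(\nabla u,\nabla u) = |\nabla u|^2 - |\nabla_r u|^2 + O(r)|\nabla u|^2$ from the Lipschitz bound on $g$, and then absorb the $O(r)$ errors via the exponential weight, taking $\chi = (n-2)C_1 + C_2$.

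For \eqref{lipschitzmonotonicity2}, however, you are overcomplicating things. The paper does \emph{not} introduce a modified vector field or a separate Pohozaev-type test; it simply observes that \eqref{lipschitzmonotonicity2} follows from \eqref{lipschitzmonotonicity1} by adding
\[
\paop{\rho}\left( \frac{1}{(1+C)(n-2)}\, e^{\chi\rho}\rho^{2-n}\int_{B_\rho} \frac{(1-|u|^2)^2}{2\ep^2}\,\dvol\right)
\]
to both sides. On the left this manufactures exactly the extra coefficient $\tfrac{2}{(1+C)(n-2)}$ in front of the potential. On the right, expanding the $\rho$-derivative produces a boundary term $\tfrac{1}{(1+C)(n-2)} e^{\chi\rho}\rho^{2-n}\int_{\partial B_\rho}\tfrac{(1-|u|^2)^2}{2\ep^2}$, a nonnegative $\chi$-term, and a bulk term $-\tfrac{1}{1+C} e^{\chi\rho}\rho^{1-n}\int_{B_\rho}\tfrac{(1-|u|^2)^2}{2\ep^2}$ which cancels against the first bracketed term on the right of \eqref{lipschitzmonotonicity1} (after using $\tfrac{1}{1+C\rho}\geq \tfrac{1}{1+C}$). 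No new identity is needed --- it is a one-line algebraic rearrangement once \eqref{lipschitzmonotonicity1} is in hand.
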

\begin{proof}
Throughout this proof, $C, C_{1}, C_{2}, \cdots$ denote any constants depending only on $n, \lambda, \Lambda$, and $O(r^{a})$ denotes any term bounded in norm by $Cr^{a}$.

Taking a compactly supported $C^{1}$-vector field $X \in C^{1}_{0}(B_{1}; \RR^{n})$ and substituting $\zeta = \langle X, \nabla u \rangle$ into the first variation formula \eqref{GLvariation}, we get, after integrating by parts, 
\begin{equation}\label{innervariation}
\int_{B_{1}} \left[ e_{\ep}(u)\Div X - \langle  \nabla_{\nabla u}X, \nabla u \rangle \right] \dvol = 0.
\end{equation}
Next we let $r(x) = (x_{1}^{2} + \cdots + x_{n}^{2})^{1/2}$ and choose $X$ to be $\xi(r)r\nabla r$, where $\xi(r) = \zeta(r/\rho)$ and $\zeta$ is a decreasing cut-off function with $\zeta(t) = 1$ for $t \leq 1/2$ and $\zeta(t) = 0$ for $t \geq 1$. Then, by a direct computation we have
\begin{equation}\label{nablaX}
\nabla X = (r\xi' + \xi)\nabla r \otimes \nabla r + \xi r\nabla^{2}r;\ \Div X = (r\xi' + \xi)|\nabla r|^{2} + \xi r \Delta r.
\end{equation}
(The two identities above are only guaranteed to hold almost everywhere on $B_{1}$ since $g$ is only Lipschitz. Nonetheless, this does not affect the subsequent arguments.) Plugging \eqref{nablaX} into \eqref{innervariation} yields
\begin{align}\label{radialvariationlipschitz}
\int_{B_{1}}e_{\ep}(u)\left( (r\xi' + \xi)|\nabla r|^{2} \right.&\left.+ \xi r\Delta r \right)\\
\nonumber& -\left[ (r\xi' + \xi)|\nabla_{r}u|^{2} + \xi r\nabla^{2}r(\nabla u, \nabla u) \right] \dvol = 0.
\end{align}
We next compute the terms $|\nabla r|^{2}$, $r \Delta r$ and $r\nabla r^{2}(\nabla u, \nabla u)$. Recalling the assumptions on $g$, we have
\begin{equation}\label{nablar}
|\nabla r|^{2} = g^{ij}\partial_{i}r \partial_{j}r = 1 + (g^{ij} - \delta^{ij})\partial_{i}r\partial_{j}r = 1 + O(r).
\end{equation}
\begin{equation}\label{hessianr}
r\nabla^{2}_{i. j}r = r\partial_{ij}r + r\Gamma_{ij}^{k}\partial_{k}r = \left( \delta_{ij} - \frac{x_{i}x_{j}}{r^{2}} \right) + O(r).
\end{equation}
From \eqref{hessianr} we get
\begin{equation}\label{hessianrnablau}
r\nabla^{2}r(\nabla u, \nabla u) = |\nabla u|^{2} - |\nabla_{r}u|^{2} + O(r)|\nabla u|^{2},
\end{equation}
and that
\begin{equation}\label{laplacer}
r\Delta r = rg^{ij}\nabla^{2}_{i, j}r = n - |\nabla r|^{2} + O(r) = (n-1) + O(r).
\end{equation}
Plugging \eqref{nablar}, \eqref{hessianrnablau} and \eqref{laplacer} back into \eqref{radialvariationlipschitz} and simplifying, we get
\begin{align}\label{pohozaevwithO}
0 = &\int_{B_{1}}(1 + O(r))r\xi' e_{\ep}(u)\dvol + \int_{B_{1}}\xi\left( (n-2)e_{\ep}(u) + \frac{(1 - |u|^{2})^{2}}{2\ep^{2}} \right)\dvol-  \\
\nonumber & - \int_{B_{1}}r\xi' |\nabla_{r}u|^{2} \dvol + \int_{B_{1}}O(r)\xi e_{\ep}(u)\dvol.
\end{align}
Recalling the definition of $O(r^{a})$ and the fact that $\xi' \leq 0$, we get
\begin{align}\label{Oremoved}
&-\int_{B_{1}}r\xi' |\nabla_{r}u|^{2}\dvol + \int_{B_{1}}\xi \frac{(1 - |u|^{2})^{2}}{2\ep^{2}} \dvol \\
\nonumber \leq \ &(2-n) \int_{B_{1}}\xi e_{\ep}(u)\dvol - \int_{B_{1}} \left( 1 + C_{1}r \right)r\xi' e_{\ep}(u) \dvol + \int_{B_{1}}C_{2}r\xi e_{\ep}(u)\dvol.
\end{align}
Letting $\zeta$ increase to the characteristic function of $B_{1}$, we arrive at
\begin{align}
&\rho\int_{\partial B_{\rho}}|\nabla_{r}u|^{2}\sqrt{\det(g)}dH^{n-1} + \int_{B_{\rho}}\frac{(1 - |u|^{2})^{2}}{2\ep^{2}}\dvol\\
\nonumber  \leq&\  (2-n)\int_{B_{\rho}}e_{\ep}(u) + (1 + C_{1}\rho)\rho\int_{\partial B_{\rho}}e_{\ep}(u)\sqrt{\det(g)}dH^{n-1} + C_{2}\rho \int_{B_{\rho}} e_{\ep}(u)\dvol \\
\nonumber  =&\left( 1 + C_{1}\rho \right) \left[ (2-n)\int_{B_{\rho}}e_{\ep}(u)\dvol \right.\\
\nonumber &\left.+\ \rho\int_{\partial B_{\rho}}e_{\ep}(u)\sqrt{\det(g)}dH^{n-1} + \left( \frac{(n-2)C_{1} + C_{2}}{1 + C_{1}\rho}\rho \int_{B_{\rho}}e_{\ep}(u)\dvol \right)  \right].
\end{align}
Now we set $\chi = (n-2)C_{1} + C_{2}$ and multiply both ends of the above inequality by $(1 + C_{1}\rho)^{-1}e^{\chi\rho}\rho^{1-n}$ to get
\begin{align}
&\frac{1}{1 + C_{1}\rho}\left[ e^{\chi\rho}\rho^{1-n}\int_{B_{\rho}}\frac{(1 - |u|^{2})^{2}}{2\ep^{2}}\dvol +  e^{\chi\rho}\rho^{2-n}\int_{\partial B_{\rho}} |\nabla_{r}u|^{2}\sqrt{\det(g)}dH^{n-1} \right]\\
\nonumber \leq &\ e^{\chi\rho}(2-n)\rho^{1-n}\int_{B_{\rho}}e_{\ep}(u)\dvol\\
\nonumber & + e^{\chi\rho}\rho^{2-n}\int_{\partial B_{\rho}}e_{\ep}(u)\sqrt{\det(g)}dH^{n-1} + \chi e^{\chi\rho}\rho^{2-n}\int_{B_{\rho}}e_{\ep}(u)\dvol\\
\nonumber =&\  \paop{\rho}\left( e^{\chi\rho}\rho^{2-n}\int_{B_{\rho}}e_{\ep}(u)\dvol \right).
\end{align}
Thus we get \eqref{lipschitzmonotonicity1}. The inequality \eqref{lipschitzmonotonicity2} follows from adding the following term to both sides of \eqref{lipschitzmonotonicity1}.
\begin{equation}
\paop{\rho}\left( \frac{1}{(1 + C_{1})(n-2)}e^{\chi\rho}\rho^{2-n}\int_{B_{\rho}}\frac{(1 - |u|^{2})^{2}}{2\ep^{2}}\dvol \right).
\end{equation}
\end{proof}

Proposition \ref{lipschitzmonotonicity} controls the Ginzburg-Landau energy at all smaller scales in terms of the energy at a fixed scale. We will also need a formula that has the opposite effect. 
\begin{prop}\label{reversemonotonicity}
Under the assumptions of Proposition \ref{lipschitzmonotonicity}, there exists constants $\chi', C$ and $\rho_{0}$, all depending only on $n, \lambda$ and $\Lambda$, such that for $\rho < \rho_{0}$ there holds
\begin{align}\label{reversemonotonicity1}
& \paop{\rho}\left( e^{-\chi'\rho}\rho^{2-n}\int_{B_{\rho}(x)}\frac{|\nabla u|^{2}}{2} + \frac{(1 - |u|^{2})^{2}}{4\ep^{2}} \dvol \right)\\
\nonumber \leq &\frac{1}{1 - C\rho}\left[  e^{-\chi'\rho}\rho^{1-n}\int_{B_{\rho}} \frac{(1 - |u|^{2})^{2}}{2\ep^{2}} \dvol + e^{-\chi'\rho}\rho^{2-n}\int_{\partial B_{\rho}}|\nabla_{r}u|^{2} \sqrt{\det(g)}dH^{n-1} \right].
\end{align}
\end{prop}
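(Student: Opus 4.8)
The plan is to rerun the proof of Proposition \ref{lipschitzmonotonicity}, now reversing every inequality; this is possible because its starting point, the Pohozaev-type identity \eqref{pohozaevwithO}, is an exact equality. First I would isolate $-\int_{B_{1}} r\xi'|\nabla_{r}u|^{2}\dvol + \int_{B_{1}}\xi\,\frac{(1-|u|^{2})^{2}}{2\ep^{2}}\dvol$ on one side of \eqref{pohozaevwithO}. Writing $1 + O(r) = 1 + \theta_{1}$ with $|\theta_{1}| \le C_{1}r$, and the last term of \eqref{pohozaevwithO} as $\int_{B_{1}}\theta_{2}\,\xi\,e_{\ep}(u)\dvol$ with $|\theta_{2}| \le C_{2}r$, and using that $-r\xi' \ge 0$ and $e_{\ep}(u)\ge 0$, the bounds $1 + \theta_{1} \ge 1 - C_{1}r$ and $\theta_{2} \le C_{2}r$ yield the reverse of \eqref{Oremoved}:
\begin{align*}
&-\int_{B_{1}} r\xi'|\nabla_{r}u|^{2}\dvol + \int_{B_{1}}\xi\,\frac{(1-|u|^{2})^{2}}{2\ep^{2}}\dvol\\
&\qquad\ge -\int_{B_{1}}(1 - C_{1}r)\,r\xi'\,e_{\ep}(u)\dvol - (n-2)\int_{B_{1}}\xi\,e_{\ep}(u)\dvol - C_{2}\int_{B_{1}} r\,\xi\,e_{\ep}(u)\dvol.
\end{align*}

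Next I would let the cut-off $\zeta$ increase to the characteristic function of $B_{1}$, so that $\xi$ increases to the characteristic function of $B_{\rho}$ and $-\int_{B_{1}} r\xi'\,f\dvol \to \rho\int_{\partial B_{\rho}} f\,\sqrt{\det(g)}\,dH^{n-1}$ (the term carrying the factor $1 - C_{1}r$ converging similarly, with that $r$ becoming $\rho$). Rearranging, and then performing the same algebraic manoeuvre used in the proof of Proposition \ref{lipschitzmonotonicity} to reach \eqref{lipschitzmonotonicity1}, but with $1 + C_{1}\rho$ replaced throughout by $1 - C_{1}\rho$, one factors $(1 - C_{1}\rho)$ out of the left-hand side and divides by it to obtain, for $\rho < \rho_{0}$ (with $\rho_{0}$ chosen so that $C_{1}\rho_{0} < 1$),
\begin{align*}
(2-n)\int_{B_{\rho}} e_{\ep}(u)\dvol &+ \rho\int_{\partial B_{\rho}} e_{\ep}(u)\sqrt{\det(g)}\,dH^{n-1} - \chi'\rho\int_{B_{\rho}} e_{\ep}(u)\dvol\\
&\le \frac{1}{1 - C_{1}\rho}\left[\int_{B_{\rho}}\frac{(1-|u|^{2})^{2}}{2\ep^{2}}\dvol + \rho\int_{\partial B_{\rho}}|\nabla_{r}u|^{2}\sqrt{\det(g)}\,dH^{n-1}\right],
\end{align*}
where $\chi' \equiv \frac{(n-2)C_{1} + C_{2}}{1 - C_{1}\rho_{0}}$, and where I have used $\int_{B_{\rho}} e_{\ep}(u) \ge 0$ to replace the naturally appearing $\rho$-dependent coefficient $\frac{(n-2)C_{1}+C_{2}}{1 - C_{1}\rho}$ by the fixed constant $\chi'$. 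Multiplying through by the integrating factor $e^{-\chi'\rho}\rho^{1-n} > 0$ and recognizing the left-hand side, by a direct computation, as $\paop{\rho}\left(e^{-\chi'\rho}\rho^{2-n}\int_{B_{\rho}} e_{\ep}(u)\dvol\right)$, together with the identity $e_{\ep}(u) = \frac{|\nabla u|^{2}}{2} + \frac{(1-|u|^{2})^{2}}{4\ep^{2}}$, then gives exactly \eqref{reversemonotonicity1} with $C = C_{1}$.

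I do not anticipate a serious obstacle, since the whole argument is a sign-reversed copy of the proof of Proposition \ref{lipschitzmonotonicity}. The two points that do require care are: (i) keeping the directions of the inequalities straight when estimating the $O(r)$ terms — it is the negativity of $\xi'$ together with the non-negativity of $e_{\ep}(u)$ that make the estimates point the right way; and (ii) the limit $\zeta \uparrow$ (the characteristic function of $B_{1}$), which is valid for a.e.\ $\rho$ and is promoted to the stated differential inequality by the absolute continuity in $\rho$ of the relevant integrals, exactly as in Proposition \ref{lipschitzmonotonicity}. The restriction $\rho < \rho_{0}$ is genuinely needed so that $1 - C_{1}\rho$ stays positive and bounded away from zero; this is precisely why only a "reverse", small-scale statement can be obtained.
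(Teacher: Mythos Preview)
Your proposal is correct and follows essentially the same approach as the paper: both revisit the exact identity \eqref{pohozaevwithO}, use the two-sided nature of the $O(r)$ bounds to reverse every inequality in the derivation of \eqref{Oremoved}, let the cut-off increase to the characteristic function, and then choose $\chi'$ so that the integrating factor $e^{-\chi'\rho}\rho^{1-n}$ produces the desired derivative. The only cosmetic difference is the specific choice of constants (the paper takes $\rho_{0} = 1/(2C_{1})$ and $\chi' = 2[(n-2)C_{1}+C_{2}]$, while you take $\chi' = \tfrac{(n-2)C_{1}+C_{2}}{1 - C_{1}\rho_{0}}$), which is immaterial.
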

\begin{proof}
In the proof of Proposition \ref{lipschitzmonotonicity}, since the definition of $O(r^{a})$ involves a two-sided bound, we can use \eqref{pohozaevwithO} to derive the following inequality.
\begin{align}
&-\int_{B_{1}}r\xi' |\nabla_{r}u|^{2}\dvol + \int_{B_{1}}\xi \frac{(1 - |u|^{2})^{2}}{2\ep^{2}} \dvol \\
\nonumber \geq \ &(2-n) \int_{B_{1}}\xi e_{\ep}(u)\dvol - \int_{B_{1}} \left( 1 - C_{1}r \right)r\xi' e_{\ep}(u) \dvol - \int_{B_{1}}C_{2}r\xi e_{\ep}(u)\dvol.
\end{align}
Again let $\zeta$ increase to the characteristic function of $B_{1}$, and set $\rho_{0} = 1/2C_{1}$, we get, for $\rho < \rho_{0}$, that
\begin{align}
&\rho\int_{\partial B_{\rho}}|\nabla_{r}u|^{2}\sqrt{\det(g)}dH^{n-1} + \int_{B_{\rho}}\frac{(1 - |u|^{2})^{2}}{2\ep^{2}}\dvol\\
\nonumber  \geq&\  (2-n)\int_{B_{\rho}}e_{\ep}(u) + (1 - C_{1}\rho)\rho\int_{\partial B_{\rho}}e_{\ep}(u)\sqrt{\det(g)}dH^{n-1} - C_{2}\rho \int_{B_{\rho}} e_{\ep}(u)\dvol \\
\nonumber  = &\left( 1 - C_{1}\rho \right) \left[ (2-n)\int_{B_{\rho}}e_{\ep}(u)\dvol \right.\\
\nonumber &\left.+\ \rho\int_{\partial B_{\rho}}e_{\ep}(u)\sqrt{\det(g)}dH^{n-1} - \left( \frac{(n-2)C_{1} + C_{2}}{1 - C_{1}\rho}\rho \int_{B_{\rho}}e_{\ep}(u)\dvol \right)  \right].
\end{align}
We next set $\chi' = 2[(n-2)C_{1} + C_{2}]$ and multiply both ends of the above inequality by $(1 - C_{1}\rho)^{-1}e^{-\chi' \rho}\rho^{1-n}$ to obtain, for $\rho < \rho_{0}$, 
\begin{align}
& \frac{1}{1 - C_{1}\rho}\left[ e^{-\chi'\rho}\rho^{1-n}\int_{B_{\rho}}\frac{(1 - |u|^{2})^{2}}{2\ep^{2}}\dvol + e^{-\chi' \rho}\rho^{2-n}\int_{B_{\rho}}|\nabla_{r}u|^{2}\sqrt{\det(g)}dH^{n-1} \right]\\
\nonumber\geq\ & \paop{\rho}\left( e^{-\chi' \rho}\rho^{2-n}\int_{B_{\rho}}e_{\ep}(u)\dvol \right).
\end{align}
\end{proof}

For later use, we record below two corollaries of Proposition \ref{lipschitzmonotonicity} which apply to balls not centered at the origin.
\begin{coro}\label{monotonicitydecentralized}
Let $u$ and $g$ be as in Proposition \ref{lipschitzmonotonicity}. Then for all $x_{0} \in B_{1}$ and $\sigma \leq \rho < 1 - |x_{0}|$, the following inequalities hold.
\begin{equation}\label{monovarying}
\sigma^{2-n}\int_{B_{\sigma}(x_{0})}e_{\ep}(u)\dvol \leq C\rho^{2-n}\int_{B_{\rho}(x_{0})}e_{\ep}(u)\dvol.
\end{equation}
\begin{equation}\label{potentialdensitybound}
\int_{B_{\rho}(x_{0})} |x - x_{0}|^{2-n} \frac{(1 - |u(x)|^{2})^{2}}{\ep^{2}} \dvol \leq C\left( 1 - |x_{0}| \right)^{2-n}\int_{B_{1}}e_{\ep}(u)\dvol.
\end{equation}
In both inequalities, the constants $C$ on the right-hand side depend only on $n, \lambda, \Lambda$.
\end{coro}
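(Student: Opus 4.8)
The plan is to deduce \eqref{monovarying} and \eqref{potentialdensitybound} from the already-established centered case, Proposition \ref{lipschitzmonotonicity}, by an affine change of coordinates that normalizes the metric at $x_0$. Fix $x_0 \in B_1$ and set $s = 1 - |x_0|$. Let $L = g(x_0)^{-1/2}$, the symmetric positive-definite matrix with $\lambda^{1/2} I \le L \le \lambda^{-1/2} I$ afforded by (l2), and put $\Phi(y) = x_0 + Ly$, $\tilde g = \Phi^{\ast} g$, $v = u \circ \Phi$. Since $\|L\| \le \lambda^{-1/2}$, the map $\Phi$ carries $B_{r^{\ast}}$, with $r^{\ast} := \lambda^{1/2} s \le 1$, into $B_1$, and being a Riemannian isometry onto its image in $(B_1, g)$ it makes $v$ a solution of \eqref{GLequation} on $(B_{r^{\ast}}, \tilde g)$ with the \emph{same} parameter $\ep$. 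Moreover $\tilde g(0) = L^{T} g(x_0) L = I$, and a direct check of the defining inequalities gives $\tilde g \in \cM_{\lambda^{2}, \Lambda'; r^{\ast}}$ for some $\Lambda' = \Lambda'(n, \lambda, \Lambda)$: the ellipticity bound follows from $\lambda^{2} I \le L^{T} g(\cdot) L \le \lambda^{-2} I$ on $B_{r^{\ast}}$, and the (scale-invariant) Lipschitz bound from $r^{\ast} [\tilde g_{ij}]_{0,1; B_{r^{\ast}}} \le r^{\ast}\, n \|L\|^{3} \Lambda \le n \lambda^{-3/2} \Lambda$.

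Since the classes $\cM_{\lambda, \Lambda; r}$ and the equation \eqref{GLequation} behave compatibly under the dilation $x \mapsto rx$ (which replaces $\ep$ by $\ep/r$), and since both sides of \eqref{monovarying} as well as of the potential estimate below are invariant under such dilations, Proposition \ref{lipschitzmonotonicity} — applied to $v$ on $B_{r^{\ast}}$, after rescaling to $B_1$ — yields, for all $0 < \sigma' \le \rho' < r^{\ast}$,
\begin{equation*}
(\sigma')^{2-n} \int_{B_{\sigma'}(0)} e_{\ep}(v) \,\dvol_{\tilde g} \;\le\; C_{1}\,(\rho')^{2-n} \int_{B_{\rho'}(0)} e_{\ep}(v) \,\dvol_{\tilde g},
\end{equation*}
\begin{equation*}
\int_{B_{\rho'}(0)} |y|^{2-n}\, \frac{(1 - |v|^{2})^{2}}{\ep^{2}} \,\dvol_{\tilde g} \;\le\; C_{1}\,(\rho')^{2-n} \int_{B_{\rho'}(0)} e_{\ep}(v) \,\dvol_{\tilde g},
\end{equation*}
with $C_{1} = C_{1}(n, \lambda, \Lambda)$. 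The first is the monotonicity of $\rho \mapsto e^{\chi\rho}\rho^{2-n} \int_{B_{\rho}} e_{\ep}(v)$ read off from \eqref{lipschitzmonotonicity1}; the second comes from integrating \eqref{lipschitzmonotonicity1} in $\rho$ over $(0, \rho')$, discarding the nonnegative boundary contribution at $\rho = 0$ and the $\partial B_\rho$-term, applying Fubini to the left side, and absorbing $\tfrac12 \ep^{-2}(1 - |v|^{2})^{2} \le e_{\ep}(v)$ and the remaining constant into $C_1$.

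It remains to transport these estimates back through $\Phi$. From $\lambda^{1/2}|y| \le |Ly| \le \lambda^{-1/2}|y|$ one gets $B_{\lambda^{1/2} t}(x_0) \subset \Phi(B_{t}(0)) \subset B_{\lambda^{-1/2} t}(x_0)$ for $t < r^{\ast}$, together with $\int_{\Phi(A)} F \,\dvol_{g} = \int_{A} (F \circ \Phi) \,\dvol_{\tilde g}$. For \eqref{monovarying}: given $\sigma \le \rho < s$, if $\sigma > \lambda\rho$ then \eqref{monovarying} is immediate with $C = \lambda^{2-n}$, since $B_{\sigma}(x_0) \subset B_{\rho}(x_0)$ and $(\rho/\sigma)^{n-2} < \lambda^{2-n}$; otherwise put $\sigma' = \lambda^{-1/2}\sigma \le \lambda^{1/2}\rho = \rho' < r^{\ast}$ and combine the first displayed inequality with the inclusions $B_{\sigma}(x_0) \subset \Phi(B_{\sigma'}(0))$ and $\Phi(B_{\rho'}(0)) \subset B_{\rho}(x_0)$ to obtain \eqref{monovarying} with $C = C_{1}\lambda^{2-n}$. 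For \eqref{potentialdensitybound}: decompose $B_{\rho}(x_0) = B_{\lambda s/4}(x_0) \cup \big(B_{\rho}(x_0) \setminus B_{\lambda s/4}(x_0)\big)$; on the outer piece $|x - x_0|^{2-n} \le (\lambda s/4)^{2-n}$ and $\ep^{-2}(1 - |u|^{2})^{2} \le 4 e_{\ep}(u)$, so its contribution is $\le C(n, \lambda)\, s^{2-n} \int_{B_1} e_{\ep}(u) \,\dvol$; on the inner piece, use $B_{\lambda s/4}(x_0) \subset \Phi(B_{\rho'}(0)) \subset B_1$ with $\rho' = \tfrac12 \lambda^{1/2} s < r^{\ast}$, the bound $|x - x_0|^{2-n} \le \lambda^{(2-n)/2} |L^{-1}(x - x_0)|^{2-n}$ to pass to the $y$-integral, the second displayed inequality, and finally $(\rho')^{2-n} \int_{B_{\rho'}(0)} e_{\ep}(v) \,\dvol_{\tilde g} \le (\tfrac12 \lambda^{1/2} s)^{2-n} \int_{B_1} e_{\ep}(u) \,\dvol$. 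Adding the two pieces proves \eqref{potentialdensitybound}.

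The only real work is the bookkeeping in this last paragraph: the linear map $L$ sends Euclidean balls to ellipsoids, so each ball has to be sandwiched between slightly dilated concentric balls, with the distortion absorbed into the universal constant; this, together with the elementary verification that $\lambda^{2}$, $\Lambda'$, and hence $C_{1}$, depend only on $n, \lambda, \Lambda$, is all that is needed, and I do not foresee a genuine obstacle.
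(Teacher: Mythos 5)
Your argument is correct and follows essentially the same route as the paper: normalize the metric at $x_0$ by an affine map, verify the transformed metric stays in a controlled class $\cM_{\lambda', \Lambda'; r^\ast}$, apply the centered monotonicity (Proposition \ref{lipschitzmonotonicity}) after a dilation, and translate back through sandwiched balls. The only cosmetic differences are that you fix $L = g(x_0)^{-1/2}$ rather than leaving the factorization of $g(x_0)$ unspecified, you rederive the potential estimate directly by integrating \eqref{lipschitzmonotonicity1} via Fubini rather than citing \eqref{lipschitzmonotonicity2}, and for \eqref{potentialdensitybound} you split $B_\rho(x_0)$ into an inner and an outer annular piece rather than first reducing to $\rho < \lambda(1-|x_0|)$ — a slightly more uniform but equivalent way of handling the trivial range of $\rho$.
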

\begin{rmk}
In particular, it follows from \eqref{monovarying} that for all $x_{0} \in B_{3/4}$ and $\rho < 1/4$, there holds
\begin{equation}\label{energydensitybound}
\rho^{2-n}\int_{B_{\rho}(x_{0})}e_{\ep}(u)\dvol \leq C\cdot 4^{n-2}\int_{B_{1/4}(x_{0})} e_{\ep}(u)\dvol \leq C\int_{B_{1}}e_{\ep}(u)\dvol.
\end{equation}
\end{rmk}
\begin{proof}[Proof of Corollary \ref{monotonicitydecentralized}]
It suffices to establish \eqref{monovarying} in the case $\lambda^{-1} \sigma \leq \rho < \left( 1 - |x_{0}| \right)$ and \eqref{potentialdensitybound} in the case $\rho < \lambda\left( 1 - |x_{0}| \right)$, since otherwise the conclusions are easily seen to hold.

If $x_{0} = 0$, then \eqref{monovarying} and \eqref{potentialdensitybound} follow from integrating \eqref{lipschitzmonotonicity1} and \eqref{lipschitzmonotonicity2}, respectively. If $x_{0} \neq 0$, we can write $(g_{ij}(x_{0})) = (A^{-1})^{t}A^{-1}$ for some $A$ invertible and consider the coordinate transform $x = \psi(y) \equiv Ay + x_{0}$. Then since $g \in \cM_{\lambda, \Lambda}$ we easily see that
\begin{equation}
B_{\sqrt{\lambda}\rho}(x_{0}) \subset \psi(B_{\rho}) \subset B_{\sqrt{\lambda}^{-1}\rho}(x_{0}) \subset B_{1},
\end{equation}
for all $\rho < \sqrt{\lambda}\rho_{0}$, where $\rho_{0} = 1 - |x_{0}|$. Moreover, letting $\tilde{u}(y) = u(\psi(y))$ and $\tilde{g} = \psi^{\ast}g$, then clearly $\tilde{u}$ solves \eqref{GLequation} on $B_{\sqrt{\lambda}\rho_{0}}$ with respect to $\tilde{g}$. Moreover, since 
\begin{equation}\label{grelation}
(\tilde{g}_{ij}(y)) = A^{t} \left(g_{ij}(\psi(y)) \right)A,
\end{equation}
we see that $\tilde{g}_{ij}(0) = \delta_{ij}$, and that 
\begin{enumerate}
\item[(1)] $[\tilde{g}_{ij}]_{0, 1; B_{\sqrt{\lambda}\rho_{0}}} \leq C_{n, \lambda, \Lambda}$.
\item[(2)] $\lambda^{2}|\xi|^{2} \leq \tilde{g}_{ij}(y)\xi^{i}\xi^{j} \leq \lambda^{-2}|\xi|^{2}$, for all $y \in B_{\sqrt{\lambda}\rho_{0}}$ and $\xi \in \RR^{n}$.
\end{enumerate}
The fact that $\tilde{u}, \tilde{g}$ are not defined on the unit ball certainly does not affect the arguments in the proof of Proposition \ref{lipschitzmonotonicity}, and we infer that \eqref{lipschitzmonotonicity1} holds for $\tilde{u}$ and $\tilde{g}$ for $\rho \in (0, \sqrt{\lambda}\rho_{0})$ and for some constants $\chi$ and $C$ depending only on $n, \lambda$ and $\Lambda$. Integrating \eqref{lipschitzmonotonicity1} we obtain, for $\sigma \leq \rho < \sqrt{\lambda}\rho_{0}$
\begin{align*}
&e^{\chi' \sigma} \sigma^{2-n} \int_{B_{\sigma}} e_{\ep}(\tilde{u}) \dvol_{\tilde{g}} \leq e^{\chi' \rho} \rho^{2-n} \int_{B_{\rho}} e_{\ep}(\tilde{u}) \dvol_{\tilde{g}}\\
\Longrightarrow\ & e^{\chi' \sigma} \sigma^{2-n} \int_{\psi(B_{\sigma})} e_{\ep}(u) \dvol_{g} \leq e^{\chi' \rho} \rho^{2-n} \int_{B_{\rho}} e_{\ep}(u) \dvol_{g}\\
\Longrightarrow\ & e^{\chi' \sigma} \sigma^{2-n} \int_{B_{\sqrt{\lambda}\sigma}} e_{\ep}(u) \dvol_{g} \leq e^{\chi' \rho} \rho^{2-n} \int_{B_{\sqrt{\lambda}^{-1}\rho}} e_{\ep}(u) \dvol_{g}.
\end{align*}
The last inequality holds for all $\sigma \leq \rho < \sqrt{\lambda}\rho_{0}$, and thus we get \eqref{monovarying} whenever $\lambda^{-1}\sigma \leq \rho < \rho_{0}$.

The inequality \eqref{potentialdensitybound} is proved similarly, except that, instead of \eqref{lipschitzmonotonicity1}, we integrate \eqref{lipschitzmonotonicity2} applied to $\tilde{u}$ and $\tilde{g}$ and then undo the affine transformation. The details will be omitted.

\end{proof}

A second corollary of Proposition \ref{lipschitzmonotonicity} that we mention here is a Courant-Lebesgue type result. We will omit its proof since one can follow exactly the arguments of Proposition II.2 in \cite{bbo}. Note that the inequality (II.6) used in the proof there would now follow from \eqref{lipschitzmonotonicity1} instead of Lemma II.3 in \cite{bbo}. 
\begin{prop}[\cite{bbo}, Proposition II.2]
\label{courantlebesgue}
Under the assumptions of Proposition \ref{lipschitzmonotonicity}, there exists a radius $r \in (\ep^{1/2}, \ep^{1/4})$ such that
\begin{align}
&r^{3-n}\int_{\partial B_{r}}|\nabla_{\nu}u|^{2}\sqrt{\det(g)}dH^{n-1} + r^{2-n}\int_{B_{r}} \frac{(1 - |u|^{2})}{2\ep^{2}}\dvol_{g}\\
\nonumber \leq\ & \frac{C}{\left| \log\ep \right|}(\ep^{1/4})^{2-n}\int_{B_{\ep^{1/4}}}e_{\ep}(u)\dvol_{g} \leq \frac{C}{\left| \log\ep \right|}\int_{B_{1}}e_{\ep}(u)\dvol_{g},
\end{align}
where the constant $C$ depends on the same parameters as in Proposition \ref{lipschitzmonotonicity}.
\end{prop}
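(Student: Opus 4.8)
The plan is to carry out a Courant--Lebesgue type pigeonhole argument based on the monotonicity inequality \eqref{lipschitzmonotonicity1}, exactly as in Proposition~II.2 of \cite{bbo}. Write $a = \ep^{1/2}$, $b = \ep^{1/4}$, and abbreviate
\[
f(\rho) = e^{\chi\rho}\rho^{1-n}\int_{B_{\rho}}\frac{(1-|u|^{2})^{2}}{2\ep^{2}}\dvol_{g} + e^{\chi\rho}\rho^{2-n}\int_{\partial B_{\rho}}|\nabla_{r}u|^{2}\sqrt{\det(g)}\,dH^{n-1},
\]
so that $\rho f(\rho)$ is, up to the bounded factor $e^{\chi\rho}$ and up to the comparison $|\nabla_{\nu}u|^{2} = (1 + O(\rho))|\nabla_{r}u|^{2}$ coming from condition~(l2), precisely the quantity appearing on the left-hand side of the asserted inequality (with $\rho$ in place of $r$).

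First I would integrate \eqref{lipschitzmonotonicity1} over $\rho \in (a, b)$. Since its left-hand side is a total $\rho$-derivative and the boundary term at $\rho = a$ is nonnegative, and since $e^{\chi b}b^{2-n} \leq e^{\chi}$ once $\ep$ is small, this yields
\[
\int_{a}^{b}\frac{f(\rho)}{1 + C\rho}\,d\rho \;\leq\; e^{\chi b}b^{2-n}\int_{B_{b}}e_{\ep}(u)\dvol_{g} \;\leq\; C\,(\ep^{1/4})^{2-n}\int_{B_{\ep^{1/4}}}e_{\ep}(u)\dvol_{g} \;=:\; C\,E_{\ep}.
\]
For $\ep$ small enough that $1 + C\rho \leq 2$ on $(0, b)$, the factor $(1+C\rho)^{-1}$ on the left may be replaced by $\tfrac12$, so $\int_{a}^{b} f(\rho)\,d\rho \leq 2C\,E_{\ep}$.

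The pigeonhole step is then immediate. If we had $\rho f(\rho) > A\,|\log\ep|^{-1}E_{\ep}$ for every $\rho \in (a,b)$, where $A$ is a constant to be chosen, then dividing by $\rho$, integrating, and using $\int_{\ep^{1/2}}^{\ep^{1/4}}\rho^{-1}\,d\rho = \tfrac14|\log\ep|$ would give $\int_{a}^{b} f(\rho)\,d\rho > \tfrac{A}{4}E_{\ep}$, contradicting the previous bound as soon as $A > 8C$. Hence there is a radius $r \in (\ep^{1/2}, \ep^{1/4})$ with $r f(r) \leq A\,|\log\ep|^{-1}E_{\ep}$; unwinding the definition of $f$, using $e^{\chi r} \geq 1$, and absorbing the $\nabla_{r}$-versus-$\nabla_{\nu}$ discrepancy into the constant gives the first asserted inequality. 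The second asserted inequality $(\ep^{1/4})^{2-n}\int_{B_{\ep^{1/4}}}e_{\ep}(u)\dvol_{g} \leq C\int_{B_{1}}e_{\ep}(u)\dvol_{g}$ follows by integrating \eqref{lipschitzmonotonicity1} once more over $(\ep^{1/4}, s)$ and letting $s \uparrow 1$ (equivalently, from \eqref{monovarying}), which is legitimate since $\ep^{1/4} < 1$ for $\ep$ small.

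There is no genuine obstacle here beyond bookkeeping. The only points deserving a word of care are that \eqref{lipschitzmonotonicity1} is stated for metrics in $\cM_{\lambda,\Lambda}$, so it applies verbatim; that the factor $\sqrt{\det(g)}$ and the passage between $\nabla_{r}u$ and $\nabla_{\nu}u$ cost only universal constants by condition~(l2); and that the radii satisfy $\ep^{1/2} < \ep^{1/4} < 1$, so every scale invoked lies in the range of validity of \eqref{lipschitzmonotonicity1} once $\ep$ is small. For these reasons the argument is identical to that of \cite{bbo}, Proposition~II.2, with the input (II.6) there replaced by \eqref{lipschitzmonotonicity1}, and the routine details may be omitted.
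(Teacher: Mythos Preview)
Your proposal is correct and follows essentially the same approach as the paper: the paper explicitly omits the proof, stating that one follows exactly the arguments of Proposition~II.2 in \cite{bbo} with \eqref{lipschitzmonotonicity1} replacing inequality~(II.6) there, which is precisely the pigeonhole argument you have written out.
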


\section{The $\eta$-ellipticity theorem}


This section is devoted to the proof of the $\eta$-ellipticity theorem at boundary points, but for completeness we also state the interior version. The theorem roughly says that the modulus $|u|$ of a solution to the Ginzburg-Landau equation is close to $1$ at places where the scale invariant Ginzburg-Landau energy is small compared to $|\log\ep|$. The precise statements are given as follows. 

\begin{thm}[Interior version]
\label{interioreta}
There exist constants $\eta_{0}, \ep_{0}, s$ and $C$, depending on $n, \mu, \lambda$ and $\Lambda$, such that if $g \in \cM_{\mu, \lambda, \Lambda}$ and $u: (B_{1}, g) \to \DD$ solves \eqref{GLequation} with $ \ep < \ep_{0}$ and
\begin{equation}\label{smallenergyscaled}
\int_{B_{1}} e_{\ep}(u) \dvol \leq \eta \left| \log\ep \right|,
\end{equation}
for some $\eta < \eta_{0}$, then there exists a radius $r_{1} \in (\sqrt{\ep}/2, 1/4)$ such that
\begin{equation}
|u(x)| \geq 1 - C\eta^{s}, \text{ for all } x \in B_{r_{1}}.
\end{equation}
\end{thm}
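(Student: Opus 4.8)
## Proof Proposal for Theorem \ref{interioreta}

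The plan is to follow the scheme of Bethuel--Brezis--Orlandi \cite{bbo} (whose adaptation to manifolds is due to Stern \cite{stern2}); since the metric lies in $\cM_{\mu,\lambda,\Lambda}$ it contributes only lower-order terms to every estimate. The argument has three stages: pass to a favourable scale, establish the crude lower bound $|u|\geq\tfrac12$ on a small ball (the ``clearing-out''), and bootstrap it to $|u|\geq 1-C\eta^{s}$. For Stage~1 one applies the Courant--Lebesgue estimate of Proposition \ref{courantlebesgue}, which together with the hypothesis \eqref{smallenergyscaled} furnishes a radius $r_{\ast}\in(\sqrt{\ep},\ep^{1/4})$ at which
\[
r_{\ast}^{3-n}\int_{\partial B_{r_{\ast}}}|\nabla_{\nu}u|^{2}\sqrt{\det(g)}\,dH^{n-1}\;+\;r_{\ast}^{2-n}\int_{B_{r_{\ast}}}\frac{(1-|u|^{2})^{2}}{2\ep^{2}}\,\dvol\;\leq\;C\eta ,
\]
the point being that the factor $|\log\ep|^{-1}$ in Proposition \ref{courantlebesgue} exactly cancels the $|\log\ep|$ in \eqref{smallenergyscaled}, so the right-hand side carries \emph{no} logarithm. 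Rescaling by $x\mapsto r_{\ast}x$ reduces the problem to a solution $v:(B_{1},\tilde g)\to\DD$ of \eqref{GLequation} with $\tilde g\in\cM_{\mu,\lambda,\Lambda}$ and parameter $\tilde\ep=\ep/r_{\ast}\to 0$, satisfying $\int_{B_{1}}\tfrac{(1-|v|^{2})^{2}}{\tilde\ep^{2}}\,\dvol\leq C\eta$, while monotonicity (Corollary \ref{monotonicitydecentralized}) gives only $\int_{B_{1}}e_{\tilde\ep}(v)\,\dvol\leq C\eta|\log\tilde\ep|$.

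Stage~2 is the clearing-out and the technical core of the theorem: for $\eta,\ep_{0}$ small one must show $|v|\geq\tfrac12$ on $B_{1/2}$, equivalently $|u|\geq\tfrac12$ on $B_{r_{\ast}}$. Here the logarithm in the energy bound genuinely matters. The argument is by contradiction and compactness: from a putative sequence of counterexamples (parameters $\to 0$, scaled energy $\leq\eta_{k}|\log\tilde\ep_{k}|$ with $\eta_{k}\to 0$, but $|v_{k}(x_{k})|<\tfrac12$ at interior points $x_{k}$), one uses the almost-monotonicity of the scaled energy in the radius (Proposition \ref{lipschitzmonotonicity}) together with the pointwise bound $e_{\tilde\ep_{k}}(v_{k})\leq C\tilde\ep_{k}^{-2}$ (Lemma \ref{localbasicGLestimate}) to select a scale $\sigma_{k}\in[\tilde\ep_{k},\tfrac14]$ at which the scaled energy of $v_{k}$ over $B_{\sigma_{k}}(x_{k})$ is of bounded order; one then blows up at that scale, obtaining solutions $w_{k}$ of \eqref{GLequation} with parameters $\hat\ep_{k}=\tilde\ep_{k}/\sigma_{k}$, with scaled energy bounded at every scale and $|w_{k}(0)|\leq\tfrac12$. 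If $\hat\ep_{k}$ stays bounded below one gets $C^{2}_{\loc}$-convergence (Lemma \ref{localbasicGLestimate} plus elliptic estimates) to a bounded-energy entire solution with a modulus defect at $0$, excluded by the monotonicity formula for entire solutions; if $\hat\ep_{k}\to0$ one invokes the interior convergence and regularity theory of \cite{bbo} --- the limiting energy-concentration set being a stationary rectifiable variety --- and derives a contradiction between the surviving modulus defect (which by the potential inequality \eqref{potentialdensitybound} carries a definite amount of energy at every scale down to $\hat\ep_{k}$) and the boundedness of the energy. The $C^{1,\mu}$-regularity of the metric is what allows the rescaled equations to pass to the limit.

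For Stage~3, undoing the scaling gives $|u|\geq\tfrac12$ on $B_{r_{\ast}}$, so there $u=\rho e^{i\varphi}$ with $\rho=|u|\in[\tfrac12,1]$ and $\varphi$ single-valued. The function $w=1-|u|^{2}\in[0,\tfrac34]$ satisfies
\[
-\Delta_{g}w+\frac{2|u|^{2}}{\ep^{2}}\,w\;=\;2|\nabla u|^{2}\;\geq\;0 ,
\]
so $w$ is a nonnegative subsolution of $-\Delta_{g}+\tfrac{1}{2\ep^{2}}$, whose Green function concentrates at scale $\ep$ and decays exponentially. Hence the boundary values of $w$ on $\partial B_{r_{\ast}/2}$ contribute only a term $\leq Ce^{-c r_{\ast}/\ep}$ (negligible since $r_{\ast}\geq\sqrt{\ep}\gg\ep$), while the volume term is controlled, after separating the modulus and phase parts of $|\nabla u|^{2}$ and using $\Div_{g}(\rho^{2}\nabla\varphi)=0$, by a positive power of the scale-invariant potential $r_{\ast}^{2-n}\int_{B_{r_{\ast}}}\tfrac{(1-|u|^{2})^{2}}{\ep^{2}}\leq C\eta$ from Stage~1 (this is the standard local estimate for $1-|u|^{2}$ in the regime $|u|\geq\tfrac12$, cf.\ \cite{bbh}, \cite{bbo}). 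One concludes $\sup_{B_{r_{\ast}/2}}(1-|u|^{2})\leq C\eta^{s}$ for some universal $s\in(0,1)$, i.e.\ the assertion with $r_{1}=r_{\ast}/2\in(\sqrt{\ep}/2,1/4)$; shrinking $\eta_{0}$ ensures $C\eta^{s}<1$.

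The only step that is not soft, given the monotonicity formulae and local estimates already established in the paper, is the clearing-out of Stage~2 --- the passage from smallness of the $|\log\ep|$-normalized energy to a genuine pointwise lower bound on $|u|$. It rests on the principle that a Ginzburg--Landau configuration cannot sustain a modulus defect without paying energy of a definite order at every scale, equivalently on an energy-lower-bound (Liouville/monotonicity) obstruction for the blow-up limit; this is precisely the content of \cite{bbo} (and, in the manifold setting, \cite{stern2}), and is the part one imports rather than re-derives.
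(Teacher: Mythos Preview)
Your overall architecture diverges substantially from the paper's (and from \cite{bbo}, \cite{stern2}, which the paper cites for the interior case). The paper does \emph{not} argue by compactness and blow-up; instead it proves a quantitative \emph{energy decay estimate} (the interior analogue of Proposition~\ref{energydecay}): via a Hodge decomposition of $u\times du$ and Green-function bounds one shows that on any ball where the potential term $\int \tfrac{(1-|u|^{2})^{2}}{\ep^{2}}$ is small, the full energy on a $\delta$-fraction sub-ball is controlled by powers of the potential plus $\delta^{n}$ times the energy. This is combined with a ``good radius'' lemma (Proposition~\ref{goodradius}, which uses both monotonicity and the \emph{reverse} monotonicity of Proposition~\ref{reversemonotonicity}) to find $r_{0}\in(\sqrt{\ep},\delta)$ at which the decay estimate can be fed back into itself; an absorption argument then yields $r_{0}^{2-n}\int_{B_{r_{0}}}e_{\ep}(u)\leq C\eta^{(n+2)/3n}|\log\eta|^{2/3}$ with no logarithm in $\ep$. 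Monotonicity pushes this down to scale $\ep$ around every point of $B_{3r_{0}/4}$, and then a simple mean-value argument (Lemma~III.3 of \cite{bbo}, using only $|\nabla u|\leq C\ep^{-1}$) converts smallness of $\ep^{-n}\int_{B_{\ep}(x)}(1-|u|^{2})^{2}$ into the pointwise bound $|u(x)|\geq 1-C\eta^{s}$. There is no separate ``clearing-out to $|u|\geq\tfrac12$'' followed by a bootstrap; the power of $\eta$ comes directly out of the decay/absorption step.

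Your Stage~2 has a genuine gap. In the case $\hat\ep_{k}\to 0$ you invoke ``the interior convergence and regularity theory of \cite{bbo} --- the limiting energy-concentration set being a stationary rectifiable variety''. But that theory \emph{requires the $\eta$-ellipticity theorem as input}: rectifiability and the structure of the limit are obtained in \cite{bbo} (and in the present paper) only after one knows that small normalized energy forces $|u|$ near $1$. Invoking it here is circular. Moreover, the contradiction you sketch is not there even heuristically: a modulus defect at the origin together with $|\nabla w_{k}|\leq C\hat\ep_{k}^{-1}$ gives a \emph{lower} bound $c>0$ on the scaled energy at scale $\hat\ep_{k}$, which by monotonicity persists at larger scales; but you have arranged only an \emph{upper} bound of the same order, so no contradiction results. (The case $\hat\ep_{k}$ bounded below would require a Liouville theorem for bounded-energy entire solutions with a zero, which is not available in this form for $n\geq 3$.) Finally, even if Stage~2 worked, a compactness argument yields only ``$|u|\geq\tfrac12$ for $\eta$ small enough'' with no control on the threshold, so the explicit exponent $s$ in the conclusion would have to come entirely from your Stage~3 --- and that stage, as written, still needs the $|\log\ep|$-free energy bound at some scale, which is exactly what the paper's decay/absorption mechanism provides and your scheme does not.
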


\begin{thm}[Boundary version]
\label{boundaryeta}
There exist constants $\eta_{0}', \ep_{0}', s'$ and $C'$, depending on $n, \mu, \lambda$ and $\Lambda$, such that if $g \in \cM^{+}_{\mu, \lambda, \Lambda}$ and $u: (B_{1}^{+}, g) \to \DD$ solves \eqref{halfGL} with $ \ep < \ep_{0}'$ and
\begin{equation}\label{smallenergyscaledbdy}
\int_{B_{1}^{+}} e_{\ep}(u) \dvol \leq \eta \left| \log\ep \right|,
\end{equation}
for some $\eta < \eta_{0}'$, then there exists a radius $r_{1} \in (\sqrt{\ep}/2, 1/4)$ such that
\begin{equation}\label{etaellipticityconclusion}
|u(x)| \geq 1 - C'\eta^{s'}, \text{ for all } x \in B^{+}_{r_{1}}.
\end{equation}
\end{thm}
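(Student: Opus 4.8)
The plan is to reduce the boundary statement to the interior one by reflection, using Lemma \ref{GLreflection}, while being careful that the reflected metric $\tilde g$ is only Lipschitz (it lies in $\cM^{+}_{\lambda,\Lambda}$, not in any $\cM^{+}_{\mu,\lambda,\Lambda}$), so Theorem \ref{interioreta} cannot be invoked as a black box. First I would reflect $u$ across $T$ to obtain $\tilde u \in W^{1,2}_{\loc}(B_1;\DD)$ solving $-\Delta_{\tilde g}\tilde u = \ep^{-2}(1-|\tilde u|^2)\tilde u$ on $B_1$, with $\|\tilde u\|_\infty\le 1$, $|\nabla\tilde u|_{0;B_{3/4}}\le C\ep^{-1}$ (Lemma \ref{localbasicGLestimate}), and $\int_{B_1}e_\ep(\tilde u)\,\dvol_{\tilde g}\le 2\eta|\log\ep|$ (the symmetry doubles the energy bound). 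Then I would run the standard Bethuel--Brezis--Orlandi-type argument directly for $\tilde u$ on $(B_1,\tilde g)$, only using properties that survive for a Lipschitz metric.

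The core mechanism is the following: combine the Courant--Lebesgue estimate (Proposition \ref{courantlebesgue}, valid for $g\in\cM_{\lambda,\Lambda}$) to find a good radius $r\in(\ep^{1/2},\ep^{1/4})$ on whose sphere $\partial B_r$ the tangential energy of $\tilde u$ and the potential $\int_{B_r}\ep^{-2}(1-|\tilde u|^2)^2$ are bounded by $C\eta/|\log\ep|$ times $|\log\ep|$, i.e. by $C\eta$ after the scaling. On such a sphere, the smallness of the tangential Dirichlet energy forces, via a Sobolev/degree argument (rescale $B_r$ to unit size, where $\ep/r\le\ep^{1/2}$ and the rescaled metric still lies in $\cM_{\lambda',\Lambda'}$), that $\tilde u$ restricted to $\partial B_r$ has no topological degree and stays away from zero on a large portion of the sphere; this is where one imports the clearing-out devices adapted in Appendix A from \cite{bbh}. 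Then a comparison/Pohozaev argument using the monotonicity formula (Proposition \ref{lipschitzmonotonicity}, again only Lipschitz-metric needed) propagates the modulus bound inward: one shows $\int_{B_{r_1}}\ep^{-2}(1-|\tilde u|^2)^2\,\dvol$ is small for $r_1$ comparable to $r$, and finally the gradient bound $|\nabla\tilde u|\le C\ep^{-1}$ upgrades an integral smallness of $1-|\tilde u|^2$ to the pointwise bound $|\tilde u(x)|\ge 1-C'\eta^{s'}$ on $B_{r_1}$ by a standard interpolation (if $|\tilde u|$ dropped below $1-\delta$ at a point, the Lipschitz bound would force $1-|\tilde u|^2\gtrsim\delta$ on a ball of radius $\sim\delta\ep$, contradicting the integral smallness once $\delta$ is a suitable power of $\eta$). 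Restricting the resulting ball $B_{r_1}$ back to the upper half $B_{r_1}^+$ gives \eqref{etaellipticityconclusion}; one checks $r_1$ can be chosen in $(\sqrt\ep/2,1/4)$.

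The main obstacle, and the reason the interior theorem cannot simply be quoted, is controlling all the auxiliary PDE estimates under only a Lipschitz metric: in particular the $\eta$-ellipticity proof uses elliptic estimates for the equations satisfied by $|\tilde u|$ and, after locally writing $\tilde u=|\tilde u|e^{i\varphi}$, for the phase $\varphi$ (a divergence-form equation $\mathrm{div}(|\tilde u|^2\nabla\varphi)=0$), and these must be run with $W^{1,\infty}$ coefficients rather than Hölder ones — so I would phrase them as $W^{1,p}$/De Giorgi--Nash--Moser estimates valid for Lipschitz (indeed merely bounded measurable) coefficients, and use the multiplicative Schauder-type lemmas only where the metric is genuinely $C^{1,\mu}$, which after reflection is only true away from $T$. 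A secondary technical point is that to even write $\tilde u=|\tilde u|e^{i\varphi}$ with single-valued $\varphi$ one needs $|\tilde u|$ bounded below on a suitable annulus/ball first, so the argument is genuinely bootstrapped: Courant--Lebesgue $\Rightarrow$ degree-zero and lower bound on a sphere $\Rightarrow$ Hodge-type decomposition of $\tilde u\times d\tilde u$ on that sphere $\Rightarrow$ interior lower bound via monotonicity and the clearing-out lemma $\Rightarrow$ pointwise bound via the gradient estimate. Tracking how the exponent $s'$ and the thresholds $\eta_0',\ep_0'$ depend only on $n,\mu,\lambda,\Lambda$ (and not on $\ep$ or $u$) throughout this chain is the bookkeeping part of the proof.
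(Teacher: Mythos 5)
Your outer skeleton matches the paper's: reflect across $T$ via Lemma \ref{GLreflection}, use the Courant--Lebesgue/good-radius device (Proposition \ref{goodradius}) to pick a good scale $r_0\in(\ep^{1/2},\delta)$, obtain a quantitative energy decay at that scale, and finally upgrade the integral smallness of $(1-|u|^2)^2$ to the pointwise bound via the $|\nabla u|=O(\ep^{-1})$ gradient estimate (this last step is exactly Lemma III.3 of \cite{bbo}, which the paper cites at the end). Where your proposal diverges, and where the gap lies, is in the central step that replaces the interior Theorem~\ref{interioreta}: the energy decay estimate (Proposition \ref{energydecay}). You describe this as a degree/clearing-out argument on spheres with a "Hodge-type decomposition on that sphere," but the paper's argument (following \cite{bbo}) performs a Hodge decomposition of $u\times du$ \emph{on the ball} $B_{r_1}$, splits $\varphi$ into five pieces $\varphi_1,\dots,\varphi_5$, and the crucial piece $\varphi_1$ --- the Newtonian-type potential of the Jacobian term $d(\alpha\,u\times du)$, which is pointwise bounded by $C(1-|u|^2)^2/(\ep^2\beta^2)$ --- is estimated via a pointwise Green-matrix bound $|G(x,y)|\lesssim|x-y|^{2-n}$ for the Hodge Laplacian with the reflected (non-smooth) metric.

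The genuine gap is your claim that this can be done by appealing to "De Giorgi--Nash--Moser estimates valid for Lipschitz (indeed merely bounded measurable) coefficients." For \emph{scalar} equations this philosophy works, but the relevant operator here is a \emph{system} (the Hodge Laplacian on $2$-forms written in a local frame), and pointwise Green-matrix decay $|G(x,y)|\leq C|x-y|^{2-n}$ fails in general for elliptic systems with merely bounded measurable leading coefficients --- this is precisely the De Giorgi counterexample regime. The paper's fix, carried out in Appendix B (Theorem \ref{greensgrowth}, Corollary \ref{growthnearzero}, and then Lemma \ref{hodgegreengrowth}), is a perturbative construction of the Green matrix valid when the \emph{leading} coefficients $A^{ij}_{\alpha\beta}$ are $C^{0,\bar\mu}$ and the lower-order coefficients are merely $L^\infty$; Remark \ref{fitfundamentalsolution} then observes that after even reflection of a metric in $\cM^+_{\mu,\lambda,\Lambda}$, the leading coefficients of the Hodge form stay Lipschitz (hence Hölder) even though the lower-order coefficients jump across $T$, which is exactly the regularity hypothesis Appendix B requires. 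Your proposal neither isolates this point nor supplies a substitute mechanism; without the pointwise Green-matrix bound the $\varphi_1$ estimate in the proof of Proposition \ref{energydecay} does not go through, and the degree/clearing-out route you sketch in its place is not developed far enough to tell whether it could replace it. If you wish to pursue your alternate route, you would need to explain how to obtain quantitative energy decay at a definite smaller scale without ever invoking a Green-matrix-type potential estimate, which is not at all obvious.
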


The interior version of the $\eta$-ellipticity theorem for Euclidean metrics appears as Theorem 2 of \cite{bbo}. For the case of non-flat metrics, see \cite{stern2}. On the other hand, the boundary version for Euclidean metrics was previously established in \cite{bbo} for Dirichlet boundary conditions, in \cite{chiron} for the homogeneous Neumann condition, and in \cite{bpw} for the weak anchoring condition. We note that when proving the boundary version, both \cite{bbo} and \cite{chiron} made the simplifying assumption that after locally flattening the boundary, the resulting metric on the upper half plane is flat. Here we do not make this assumption.

The structure of the proof of Theorem \ref{boundaryeta} closely follows that of Theorem 3 in \cite{bbo}, the main difference being the estimate of $\varphi_{1}$ in Step 1 of the proof of Proposition \ref{energydecay} below, for which we need an upper bound on the Green matrix with non-smooth coefficients. This estimate is derived in Appendix B. 

\subsection{Energy decay estimate}
The main result of this section is the following.
\begin{prop}
\label{energydecay}
Let $g \in \cM^{+}_{\mu, \lambda, \Lambda}$ and suppose $u: (B_{1}^{+}, g) \to \DD$ is a solution to \eqref{halfGL}. Then there exists $\delta_{0} < 1/4$, depending on $n, \mu, \lambda$, and $\Lambda$, such that for $\delta < \delta_{0}$ there holds
\begin{align}\label{energydecayestimate}
\int_{B_{\delta}^{+}}e_{\ep}(u) \dvol_{g} \leq C &\left[ \left( \int_{B_{1}^{+}}\frac{(1 - |u|^{2})^{2}}{\ep^{2}} \dvol_{g} \right)^{1/3}\left( \int_{B^{+}_{1}}e_{\ep}(u) \dvol_{g} \right) \right.\\
\nonumber &+ \left.\left( \int_{B^{+}_{1}}\frac{(1 - |u|^{2})^{2}}{\ep^{2}}\dvol_{g} \right)^{2/3} + \delta^{n}\int_{B^{+}_{1}}e_{\ep}(u)\dvol_{g} \right],
\end{align}
where $C$ depends only on $n, \mu, \lambda$ and $\Lambda$. 
\end{prop}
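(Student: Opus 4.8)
The plan is to follow the scheme of \cite{bbo}, Theorem 3, adapted to the half-ball with a non-flat (merely $C^{1,\mu}$) metric, using the reflection Lemma \ref{GLreflection} to pass freely between the Neumann problem on $B_1^+$ and the equation \eqref{wholeGL} on $B_1$, while being careful that the reflected metric only lies in $\cM^+_{\mu,\lambda,\Lambda}$ (not in a smooth class). Writing $F \equiv \int_{B_1^+}\tfrac{(1-|u|^2)^2}{\ep^2}\dvol_g$ and $E \equiv \int_{B_1^+}e_\ep(u)\dvol_g$, the goal is the estimate
\begin{equation*}
\int_{B_\delta^+}e_\ep(u)\dvol_g \le C\Bigl(F^{1/3}E + F^{2/3} + \delta^n E\Bigr).
\end{equation*}
The starting point is that by the monotonicity formula (Proposition \ref{lipschitzmonotonicity}, or rather Corollary \ref{monotonicitydecentralized} applied after reflection) the scale-invariant energy is essentially controlled at all smaller scales, so it suffices to estimate the energy in a \emph{fixed} intermediate half-ball $B_{r}^+$ with $r$ comparable to $1/4$, and then absorb the $\delta^n E$ term via the monotonicity-type bound \eqref{energydensitybound}.

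The core of the argument is a splitting of $u$ on a good half-ball. First I would use the Courant--Lebesgue Proposition \ref{courantlebesgue} (applied on $B_1$ after reflection, which is legitimate since its proof needs only \eqref{lipschitzmonotonicity1}) to select a radius $r_0 \in (\ep^{1/2},\ep^{1/4})$ on whose boundary sphere the radial derivative and the potential term are small, of order $|\log\ep|^{-1}E$; this provides good Dirichlet data on $\partial B_{r_0}^+$. One then decomposes the phase/modulus: away from the (possibly empty) zero set one writes $u=\rho e^{i\varphi}$, and more globally one compares $u$ with a harmonic-type competitor. Concretely, following \cite{bbh}, \cite{bos}, I would write $u = |u|\,w$ where $w$ is $S^1$-valued where $|u|$ is bounded below, split $w \times dw$ into its Hodge components on $(B_{r_0}^+, g)$ relative to the tangential boundary condition — this is where the parity hypotheses (1),(2) in Definition \ref{boundaryclass} make the reflection compatible with the Hodge theory for Lipschitz metrics summarized in Section 2.4 — and write the leading harmonic piece as $\varphi_1$ solving a linear elliptic system with the $C^{1,\mu}$ coefficients $g^{ij}$. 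The energy of $u$ then decomposes into: (i) a term controlled by the Dirichlet-type energy of $\varphi_1$, (ii) error terms involving $1-|u|^2$, which are bounded by powers of $F$ via the potential-density estimate \eqref{potentialdensitybound}, and (iii) genuinely lower-order terms.

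The main obstacle — and the one place this differs substantively from \cite{bbo} — is estimating $\varphi_1$ in Step 1: one needs a decay estimate for solutions of the linear divergence-form system $\partial_i(g^{ij}\partial_j\varphi_1)=0$ on $B_{r_0}^+$ with Neumann (i.e.\ conormal) boundary data on $T_{r_0}$, of the form $\int_{B_\delta^+}|\nabla\varphi_1|^2 \lesssim \delta^n \int_{B_{r_0}^+}|\nabla\varphi_1|^2$ plus a boundary-data contribution, together with a bound on $\varphi_1$ in terms of its boundary values. For smooth flat metrics this is classical, but here the coefficients are only $C^{1,\mu}$ and, after reflection, only Lipschitz with the special parity structure; the required bound follows from an upper estimate on the Green matrix of this class of second-order elliptic systems, which is exactly what Appendix B supplies. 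With that Green-matrix bound in hand, $\varphi_1$ satisfies the desired $C^0$ and energy-decay estimates, and the convexity of $\partial M$ enters (through the sign condition on $\langle\nabla_\xi\nu,\xi\rangle$, equivalently through the structure of the reflected metric near $T$) to control the boundary contributions with the correct sign. Collecting (i)--(iii), choosing $\delta<\delta_0$ small to make the decay term dominate, and finally rescaling from $B_{r_0}^+$ back to $B_1^+$ using the monotonicity formula to restore the explicit constant and the $\delta^n E$ term, yields \eqref{energydecayestimate}. I expect the bookkeeping of the various error exponents ($F^{1/3}$, $F^{2/3}$) to be routine once the structure is set up as in \cite{bbo}, \cite{bbh}; the only genuinely new input is the boundary Green-matrix estimate and the verification that the parity conditions make all the Hodge-theoretic and elliptic machinery go through for the non-smooth reflected metric.
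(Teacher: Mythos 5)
Your proposal correctly identifies the high-level ingredients — reflection through Lemma \ref{GLreflection}, the role of the parity conditions in making the Hodge theory for Lipschitz metrics go through, and the boundary Green-matrix estimate of Appendix B as the one genuinely new technical input — but it misassembles the proof of this specific proposition in several ways that matter.

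First, the Courant--Lebesgue selection of an $\ep$-dependent radius $r_0 \in (\ep^{1/2},\ep^{1/4})$ does not belong here. That step is part of the proof of Theorem \ref{boundaryeta}, where Proposition \ref{goodradius} produces a good scale and \emph{then} Proposition \ref{energydecay} is applied on the rescaled half-ball. Proposition \ref{energydecay} itself is a fixed-scale inequality; the intermediate radius $r_1 \in [R/2,R]$ used in its proof is a \emph{universal} quantity (with $R = \min\{R_1,\lambda^2/6\}$ tied to the Green-matrix threshold), chosen by a shell-average pigeonhole that makes the boundary integrals on $\partial B_{r_1}$ controllable, entirely independently of $\ep$. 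You have conflated the two levels of the argument.

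Second, the $\delta^n E$ term on the right of \eqref{energydecayestimate} does not come from monotonicity — monotonicity alone would only produce $\delta^{n-2}$. It comes from pointwise $C^0$/$C^1$ interior bounds on the harmonic pieces: after decomposing $u\times du = d^\ast\varphi + d\xi$ on $B_{r_1}$ (after reflection) and splitting $\varphi = \varphi_1+\cdots+\varphi_5$ via the cut-off $\alpha = f(|u|)^2$ from \cite{bbo}, the pieces $\xi$, $\varphi_2$, $\varphi_4$ and the correction $\varphi_1 - \tilde\varphi$ all solve homogeneous elliptic systems, so their gradients are bounded in $C^0$ on a smaller ball; integrating over $B_\delta^+$ is what produces $\delta^n$. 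Also, the decomposition is of $u\times du$ itself (which is globally defined and needs no nonvanishing hypothesis), not of $w\times dw$ with $w = u/|u|$; the latter is ill-defined where $u$ vanishes, which is precisely the regime this proposition must handle.

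Third, the convexity of $\partial M$ is not used in Proposition \ref{energydecay} at all. The hypothesis is only $g\in\cM^+_{\mu,\lambda,\Lambda}$. Convexity first enters in Section 6.1 (Lemma \ref{gammareg} and Proposition \ref{boundaryetareg}), where the boundary term produced by integrating the B\^ochner inequality by parts against a test function needs a favorable sign. Attributing it a role here is a genuine misconception of where each hypothesis is consumed.

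What you would actually need to do, and what the paper does: after reflecting, fix $R$ by the Green-matrix threshold; pigeonhole for $r_1\in[R/2,R]$ with \eqref{goodshell}; Hodge-decompose $u\times du = d^\ast\varphi + d\xi$ on $B_{r_1}$ with $\varphi\in W^{1,2,+}_{2,\ft}(B_1)$; split $\varphi=\varphi_1+\cdots+\varphi_5$ using the cut-off $\alpha$; bound $\varphi_3,\varphi_5$ by $C\beta\|\nabla u\|_2$, $\varphi_2,\varphi_4$ by $C\delta^n\|\nabla u\|_2^2$, and $\varphi_1$ by representing the leading piece $\tilde\varphi$ with the Green matrix (Lemma \ref{hodgegreengrowth}) and invoking \eqref{potentialdensitybound}; then handle $|\nabla|u|^2|^2$ and $(1-|u|^2)|\nabla u|^2$ as in \cite{bbo}; and finally optimize $\beta = p_\ep^{1/6}$ to obtain the $F^{1/3}E$ and $F^{2/3}$ exponents. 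Your proposal has the spirit of this, and correctly flags the Green-matrix estimate as the key new ingredient, but the structure as you describe it would not yield \eqref{energydecayestimate}.
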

\begin{proof}
We begin by treating the gradient term in $e_{\ep}(u)$, which can be decomposed as follows.
\begin{equation}\label{gradientdecomposition}
|\nabla u|^{2} = \left( 1 - |u|^{2} \right) |\nabla u|^{2} + \frac{1}{4}\left| \nabla |u|^{2} \right|^{2} + |u \times \nabla u|^{2},
\end{equation}
where $u \times \nabla u \equiv u^{1}\nabla u^{2} - u^{2}\nabla u^{1}$ in terms of the components of $u = (u^{1}, u^{2})$. We will estimate these terms one by one, starting with the last term.
\vskip 2mm
\noindent\textbf{Step 1: Decomposition of $u \times \nabla u$.}
\vskip 1mm
We first use Lemma \ref{GLreflection} to extend $u$ to a solution on all of $B_{1}$ by even reflection across $T$. The reflected function will still be denoted by $u$. Next, for some $R < 3/4$ to be fixed later depending only on $n, \lambda, \Lambda$ and $\mu$, we choose a radius $r_{1} \in [R/2, R]$ satisfying
\begin{align}\label{goodshell}
\int_{S_{r_{1}}}|\nabla u|^{2}\sqrt{\det(g)}dH^{n-1} &\leq C \int_{B_{1}}|\nabla u|^{2}\dvol,\\
\nonumber\int_{S_{r_{1}}}\left( 1 - |u|^{2} \right)^{2} \sqrt{\det(g)}dH^{n-1} &\leq C\int_{B_{1}}\left( 1 - |u|^{2} \right)^{2}\dvol.
\end{align}
Note that since $R$ will later be chosen to depend only on predetermined parameters, we will not keep track of the $R$-dependences of the constants $C$. We now want to perform a Hodge decomposition on the one-form $u \times du$ restricted to $B_{r_{1}}$. To that end, we let $\xi$ be the solution to 
\begin{equation}\label{xiBVP}
\left\{
\begin{array}{crl}
\Delta \xi &= & 0 \text{ on $B_{r_{1}}$},\\
\nabla_{\nu}\xi & = & u \times \nabla_{\nu}u \text{ on $\partial B_{r_{1}}$},\\
\int_{B_{r_{1}}} \xi \dvol &= & 0.
\end{array}
\right.
\end{equation}
We see immediately by standard elliptic theory that
\begin{equation}\label{xiw12}
|\nabla \xi|_{0; B_{R/4}} \leq C\|\nabla \xi\|_{2; B_{r_{1}}}\leq C\| \nabla u \|_{2; S_{r_{1}}} \leq C\| \nabla u \|_{2; B_{1}},
\end{equation}
where we used \eqref{goodshell} in the last inequality. Consequently, for $\delta < R/4$,
\begin{equation}\label{xiestimate}
\int_{B_{\delta}} |\nabla \xi|^{2}\dvol \leq C\delta^{n} \int_{B_{1}}|\nabla u|^{2}\dvol.
\end{equation}
Next, consider the one-form $\chi_{B_{r_{1}}} (u\times du - d\xi)$ and compute
\begin{align*}
\int_{B_{r_{1}}} \langle u \times du - d\xi, d\zeta \rangle \dvol = &\int_{\partial B_{r_{1}}} \zeta (u \times \nabla_{\nu} u - \nabla_{\nu}\xi) \sqrt{\det(g)}dH^{n-1}\\
 &+ \int_{B_{r_{1}}} \zeta d^{\ast}(u \times du - d\xi) \dvol = 0,
\end{align*}
for all $\zeta \in W^{1, 2}_{0}(B_{1}; \RR)$. Therefore, by Theorem 7.7.7 of \cite{morreybook} and inequality \eqref{hodgecoercive}, there exists a two-form $\varphi \in W^{1, 2}_{2, \ft}(B_{1})$ such that
\begin{align}\label{phibasic}
d^{\ast}\varphi = (u \times du - d\xi)\chi_{B_{r_{1}}}&;\ d\varphi = 0.\\
\nonumber\| \varphi\|_{1, 2; B_{1}} \leq C (\|du\|_{2; B_{r_{1}}} + \|d\xi\|_{2; B_{r_{1}}}) &\leq C\|\nabla u\|_{2; B_{1}}.
\end{align}
Therefore, on $B_{r_{1}}$ we have
\begin{equation}\label{hodgedecomposition}
u \times du = d^{\ast}\varphi + d\xi.
\end{equation}
We also note that $\varphi$ belongs to the class $W^{1, 2, +}_{2, \ft}(B_{1})$, since both $u$ and $\xi$ are even functions in $x^{n}$ (cf. Remark \ref{symmetry}).

\vskip 2mm
\noindent\textbf{Step 2: Estimates for $d^{\ast}\varphi$.}
\vskip 1mm
To derive estimates on $\varphi$, we follow \cite{bbo} and define the cut-off function
\begin{equation*}
\alpha(x) =
\left\{
\begin{array}{cl}
f(|u(x)|)^{2} & \text{ in }B_{r_{1}}\\
1 & \text{ elsewhere.}
\end{array}
\right.,
\end{equation*}
where the function $f$ satisfies $|f'| \leq 4$ and is given as follows for some constant $\beta < 1/4$ to be determined later.
\begin{equation*}
f(t) = 
\left\{
\begin{array}{cl}
\frac{1}{t} & \text{ if }t\geq 1-\beta\\
1 & \text{ if }t \leq 1-2\beta.\\
\end{array}
\right.
\end{equation*}
Later in the proof, we will need to use the simple observation that $1 - \alpha \leq 4\beta$. Note that on $B_{1}$, we certainly have
\begin{equation*}
\Delta \varphi = d((1 - \alpha)d^{\ast}\varphi) + d(\alpha d^{\ast}\varphi).
\end{equation*}
Moreover, the second term on the right can be computed as in \cite{bbo}. Specifically, for any two-form $\zeta \in W^{1, 2}_{2, \ft}(B_{1})$, we have
\begin{equation}\label{laplaciancomputation}
\int_{B_{1}}\langle \alpha d^{\ast}\varphi, d^{\ast}\zeta \rangle= \int_{B_{r_{1}}}\langle \alpha u\times du, d^{\ast} \zeta \rangle - \int_{B_{r_{1}}} \langle \alpha d\xi, d^{\ast} \zeta \rangle  = (I) + (II)
\end{equation}
For $(I)$, we have
\begin{equation*}
(I)  = \int_{\partial B_{r_{1}}} \ft\alpha u \times du  \wedge \ft\ast\zeta + \int_{B_{r_{1}}}\langle d(\alpha u \times du), \zeta \rangle ,
\end{equation*}
While for $(II)$, we have
\begin{align*}
(II) &= \int_{B_{r_{1}}} \langle (1 - \alpha)d\xi, d^{\ast}\zeta \rangle - \int_{B_{r_{1}}}\langle d\xi, d^{\ast}\zeta \rangle\\
&= \int_{B_{r_{1}}} \langle (1 - \alpha)d\xi, d^{\ast}\zeta \rangle - \int_{\partial B_{r_{1}}}\ft d\xi \wedge \ft \ast \zeta.
\end{align*}
Therefore, putting everything together, $\varphi$ satisfies
\begin{align}\label{phiequation}
\int_{B_{1}}\langle d^{\ast}\varphi, d^{\ast}\zeta \rangle =& \int_{B_{r_{1}}}\langle d(\alpha u \times du), \zeta \rangle +  \int_{\partial B_{r_{1}}} \ft\alpha u \times du  \wedge \ft\ast\zeta\\
\nonumber & +\int_{B_{r_{1}}} \langle (1 - \alpha)d\xi, d^{\ast}\zeta \rangle - \int_{B_{r_{1}}}\langle d\xi, d^{\ast}\zeta \rangle
+ \int_{B_{1}}\langle (1 - \alpha)d^{\ast}\varphi, d^{\ast}\zeta \rangle\\
\nonumber \equiv&\ w_{1}(\zeta) + w_{2}(\zeta)\\
\nonumber & + w_{3}(\zeta) + w_{4}(\zeta) + w_{5}(\zeta),
\end{align}
for all $\zeta \in W^{1, 2}_{2, \ft}(B_{1})$, where the five $w_{i}$'s are bounded linear functionals on $W^{1, 2}_{2, \ft}(B_{1})$ defined by the five terms in the two lines above them, respectively. Given \eqref{phiequation}, we use Lemma \ref{hodgeinvertible} to decompose $\varphi$ into $\varphi_{1} + \cdots \varphi_{5}$, with each $\varphi_{i}$ being the unique solution in $W^{1, 2}_{2, \ft}(B_{1})$ to 
\begin{equation}\label{phiiequation}
\cD(\varphi_{i}, \zeta) = w_{i}(\zeta), \text{ for all }\zeta \in W^{1, 2}_{2, \ft}(B_{1}).
\end{equation}
Note also that $\varphi_{i} \in W^{1, 2, +}_{2, \ft}(B_{1})$ for $i = 1, \cdots, 5$. Below we estimate these $2$-forms one by one, with $\varphi_{1}$ requiring the most work. On the other hand, $\varphi_{2}$ to $\varphi_{5}$ are estimated in essentially the same way as in \cite{bbo} or \cite{bpw}. Nonetheless, we indicate the arguments for the reader's convenience.

We first handle $\varphi_{3}$ by taking itself as a test form in \eqref{phiiequation} to get
\begin{align*}
\cD(\varphi_{3}, \varphi_{3}) &= w_{3}(\varphi_{3}) = \int_{B_{r_{1}}} \langle (1 - \alpha)d\xi, d^{\ast}\varphi_{3} \rangle\\
&\leq \|(1 - \alpha) \nabla \xi\|_{2; B_{r_{1}}} \| d^{\ast}\varphi_{3} \|_{2; B_{1}} \leq C\beta \|\nabla u\|_{2; B_{1}}\| d^{\ast}\varphi_{3} \|_{2; B_{1}},
\end{align*}
where in the last inequality we used the estimate \eqref{xiw12} and the fact that $|1 - \alpha| \leq 4\beta$. Hence, since $\cD$ controls the $W^{1, 2}$-norm, we get 
\begin{equation}\label{phi3estimate}
\| \varphi_{3} \|_{1, 2; B_{1}} \leq C\beta \|\nabla u\|_{2; B_{1}}.
\end{equation}
A similar argument using the $W^{1, 2}$-estimate for $\varphi$ in \eqref{phibasic} in place of \eqref{xiw12} shows that, 
\begin{equation}\label{phi5estimate}
\| \varphi_{5}\|_{1, 2; B_{1}} \leq C\beta \| \nabla u \|_{2; B_{1}},
\end{equation}

Next, for $\varphi_{2}$, we notice that there holds
\begin{align*}
\cD(\varphi_{2}, \varphi_{2}) &= w_{2}(\varphi_{2}) = \int_{\partial B_{r_{1}}} \ft \alpha u\times du \wedge \ft \ast \varphi_{2}\\
&\leq \| \nabla u \|_{2; \partial B_{r_{1}}} \|\varphi_{2}\|_{2; \partial B_{r_{1}}} \leq C\| \nabla u\|_{2; B_{1}} \| \varphi_{2} \|_{1, 2; B_{r_{1}}}.
\end{align*}
Note that the last inequality follows from our choice of $r_{1}$ and the trace inequality. We therefore obtain 
\begin{equation}\label{phi2w12}
\| \varphi_{2}\|_{1, 2; B_{1}} \leq C\|\nabla u\|_{2; B_{1}}.
\end{equation}
To proceed, we note that since $w_{2}$ is defined by a boundary integral on $\partial B_{r_{1}}$, we have, in particular,
\begin{equation*}
\cD_{g}(\varphi_{2}, \zeta) = 0,\text{ for all }\zeta \in W^{1, 2, +}_{2, 0}(B_{r_{1}}).
\end{equation*}
Since we are assuming that $g \in \cM_{\mu, \lambda, \Lambda}^{+}$, we infer with the help of Lemma \ref{ellipticform} and Remark \ref{fitfundamentalsolution} that $\varphi_{2} \in C^{1, \mu}_{\text{loc}}(B_{r_{1}}^{+} \cup T_{r_{1}})$, with the following estimate
\begin{equation}
|\varphi_{2}|_{1, 0; B_{R/4}^{+}} \leq C\|\varphi_{2}\|_{1, 2; B_{1}}.
\end{equation}
Combining this with \eqref{phi2w12}, we infer that for $\delta < R/4$,
\begin{equation}
\| \varphi_{2}\|^{2}_{1, 2; B_{\delta}^{+}} \leq C \delta^{n} \|\nabla u\|^{2}_{2; B_{1}}.
\end{equation}
Next, we note that
\begin{equation*}
w_{4}(\zeta) = - \int_{B_{r_{1}}}\langle d\xi, d^{\ast}\zeta \rangle = - \int_{\partial B_{r_{1}}} \ft d\xi \wedge \ft \ast \zeta,
\end{equation*}
and therefore we can follow the arguments above to obtain
\begin{equation}
\|\varphi_{4}\|^{2}_{1, 2; B_{\delta}^{+}} \leq C\delta^{n}\|\nabla u\|_{2; B_{1}}^{2}, \text{ for }\delta < R/4.
\end{equation}

Finally we estimate $\varphi_{1}$. We first observe that, using \eqref{hodgecoercive} and recalling how we handled (I) in \eqref{laplaciancomputation}, we have
\begin{align*}
c_{0}\|\varphi_{1}\|_{1, 2; B_{1}}^{2} & \leq \cD_{g}(\varphi_{1}, \varphi_{1}) = w_{1}(\varphi_{1})\\
 &= -\int_{\partial B_{r_{1}}}\ft \alpha u \times du \wedge \ft \ast \varphi_{1} + \int_{B_{r_{1}}}\langle \alpha u \times du, d^{\ast}\varphi_{1} \rangle.
\end{align*}
From this and our choice of $r_{1}$ we infer that 
\begin{equation}\label{phi1w12bound}
\|\varphi_{1}\|_{1, 2; B_{1}} \leq C\|\nabla u\|_{2; B_{1}}.
\end{equation}
Next, since $u$ takes values in $\CC$, we have the following pointwise bound for $\omega_{1}$ on $B_{r_{1}}$ (see \cite{bbo}, equation (III.28)):
\begin{equation}\label{w1good}
|d(\alpha u \times du)| = \left| d\left( f(|u|)u \right) \times d\left( f(|u|)u \right) \right| \leq C\frac{(1 - |u|^{2})^{2}}{\ep^{2}\beta^{2}}.
\end{equation}
We now let $w = d(\alpha u \times du)\chi_{B_{r_{1}}}$ and let $\tilde{\varphi}$ be the unique solution in $W^{1, 2}_{0}(B_{1}; \RR^{N})$ to 
\begin{equation}\label{tildephipde}
\cD_{g}(\tilde{\varphi}, \zeta) = \int_{B_{r_{1}}}\langle w,  \zeta\rangle, \text{ for all } \zeta \in W^{1, 2}_{0}(B_{1}; \RR^{N}).
\end{equation}
Then, by the first part of Lemma \ref{hodgegreengrowth} we infer that, for $H^{n}$-a.e. $x \in B_{1}$, there holds
\begin{equation}
\tilde{\varphi}_{\gamma} (x) = \int_{B_{r_{1}}} w_{\alpha}(y)G_{\alpha\gamma}(x, y) dy.
\end{equation}
We now choose the radius $R$ introduced at the beginning of Step 1 to be 
\begin{equation}
R = \{ R_{1}, \lambda^{2}/6\},
\end{equation}
where $R_{1}$ is as in Lemma \ref{hodgegreengrowth}.
The bound \eqref{w1good} and the second part of Lemma \ref{hodgegreengrowth} then imply that for $x \in B_{r_{1}} \subset B_{R}$ we have
\begin{align*}
\left| \tilde{\varphi}_{\gamma}(x) \right| &\leq \int_{B_{r_{1}}}|w_{\alpha}(y)| |G_{\alpha\gamma}(x, y)|dy\\
&\leq C\int_{B_{r_{1}}} |x - y|^{2-n}\frac{(1 - |u(y)|^{2})^{2}}{\ep^{2}\beta^{2}}dy\\
&\leq C\int_{B_{r_{1}}} |x - y|^{2-n}\frac{(1 - |u(y)|^{2})^{2}}{\ep^{2}\beta^{2}}\dvol_{g}.
\end{align*}
We now use \eqref{potentialdensitybound} to estimate
\begin{align*}
\int_{B_{r_{1}}} |x - y|^{2-n}\frac{(1 - |u(y)|^{2})^{2}}{\ep^{2}\beta^{2}}\dvol_{g} & \leq \int_{B_{3r_{1}}(x)} |x - y|^{2-n}\frac{(1 - |u(y)|^{2})^{2}}{\ep^{2}\beta^{2}}\dvol_{g}\\
&\leq C\beta^{-2}\int_{B_{1}}e_{\ep}(u)\dvol_{g}.
\end{align*}
Combining the above two strings of inequalities gives 
\begin{equation}\label{tildephisup}
|\tilde{\varphi}|_{0; B_{r_{1}}} \leq C\beta^{-2}\int_{B_{1}}e_{\ep}(u)\dvol_{g}.
\end{equation}
Testing the system \eqref{tildephipde} against $\tilde{\varphi}$ itself, we obtain with the help of \eqref{hodgecoercive} and the estimates \eqref{w1good}, \eqref{tildephisup} that
\begin{align}
\| \tilde{\varphi} \|_{1, 2; B_{1}}^{2} &\leq C \|w_{1}\|_{1; B_{r_{1}}} |\tilde{\varphi}|_{0; B_{r_{1}}}\\
&\leq C\beta^{-4} \left( \int_{B_{1}} \frac{(1 - |u|^{2})^{2}}{\ep^{2}}\dvol_{g} \right)  \left( \int_{B_{1}}e_{\ep}(u) \dvol_{g}\right).
\end{align}
Next, since $u$ is an even function across $T$, we have by uniqueness that $r^{\ast}\tilde{\varphi} = \tilde{\varphi}$, and therefore the difference $\varphi_{1} - \tilde{\varphi}$ lies in $W^{1, 2, +}_{2, \ft}(B_{1})$. Moreover, it satisfies
\begin{equation*}
\cD_{g}(\varphi_{1} - \tilde{\varphi}, \zeta) = 0, \text{ for all }\zeta \in W^{1, 2, +}_{2, 0}(B_{1}).
\end{equation*}
Using Lemma \ref{ellipticform}, Remark \ref{fitfundamentalsolution}, \eqref{phi1w12bound} and interior elliptic estimates, we infer that
\begin{align*}
|\varphi_{1} - \tilde{\varphi}|^{2}_{0; B_{1/2}^{+}} &\leq C\| \varphi_{1} - \tilde{\varphi} \|^{2}_{1, 2; B_{1}^{+}} \leq C\|\varphi_{1}\|^{2}_{1, 2; B_{1}^{+}} + C\|\tilde{\varphi}\|^{2}_{1, 2; B_{1}^{+}}\\
&\leq C\|\nabla u\|_{2; B_{1}}^{2} +C\beta^{-4} \left( \int_{B_{1}} \frac{(1 - |u|^{2})^{2}}{\ep^{2}} \dvol_{g}\right)  \left( \int_{B_{1}}e_{\ep}(u) \dvol_{g}\right).
\end{align*}
Hence, for all $\delta < R/4$, we have
\begin{align*}
\| \varphi_{1} \|^{2}_{1, 2; B_{\delta}^{+}}  &\leq C\|\tilde{\varphi}\|^{2}_{1, 2; B_{\delta}^{+}} + C\| \varphi_{1} - \tilde{\varphi} \|^{2}_{1, 2; B_{\delta}^{+}}\\
&\leq C(1 + \delta^{n})\beta^{-4} \left( \int_{B_{1}} \frac{(1 - |u|^{2})^{2}}{\ep^{2}} \dvol_{g} \right)  \left( \int_{B_{1}}e_{\ep}(u) \dvol_{g}\right) + C\delta^{n}\| \nabla u \|^{2}_{2; B_{1}}.
\end{align*}
Combining the above estimates for $\xi$ and for $\varphi_{i}$, $i = 1, \cdots, 5$, we arrive at
\begin{align}\label{phaseestimate}
\| u \times du \|_{2; B_{\delta}^{+}}^{2} \leq&\ C(1 + \delta^{n})\beta^{-4} \left( \int_{B_{1}} \frac{(1 - |u|^{2})^{2}}{\ep^{2}} \dvol_{g} \right)  \left( \int_{B_{1}}e_{\ep}(u) \dvol_{g}\right)\\\nonumber
&+ C\left( \beta^{2} + \delta^{n} \right)\|\nabla u\|^{2}_{2; B_{1}}.
\end{align}

\vskip 2mm
\noindent\textbf{Step 3: Estimates for $\left| \nabla |u|^{2} \right|^{2}$ and $(1 - |u|^{2})|\nabla u|^{2}$.}
\vskip 1mm

The estimates in this step are done exactly the same as in \cite{bbo}, and thus we'll merely state the conclusions and refer the reader to \cite{bbo} for details. Specifically, the following hold.
\begin{equation}\label{modulusestimate}
\int_{B_{r_{1}}}\left| \nabla |u|^{2} \right|^{2}\dvol_{g} \leq \beta^{2}\|\nabla u\|^{2}_{2; B_{1}} + C\beta^{-2}\int_{B_{1}}\frac{(1 - |u|^{2})^{2}}{\ep^{2}}\dvol_{g}.
\end{equation}
\begin{equation}\label{remainingestimate}
\int_{B_{r_{1}}} (1 - |u|^{2})|\nabla u|^{2}\dvol_{g} \leq \beta^{2}\|\nabla u\|^{2}_{2; B_{1}} + C\beta^{-2}\int_{B_{1}}\frac{(1 - |u|^{2})^{2}}{\ep^{2}}\dvol_{g}. 
\end{equation}

\vskip 2mm
\noindent\textbf{Step 4: Conclusion}
\vskip 1mm

Putting together \eqref{phaseestimate}, \eqref{modulusestimate} and \eqref{remainingestimate}, we conclude that for $\delta < R/4$,
\begin{align*}
\int_{B_{\delta}} e_{\ep}(u) \dvol_{g} \leq&\ C \left( \beta^{2} + \delta^{n} \right) \| \nabla u \|^{2}_{2; B_{1}} + C\beta^{-2}\int_{B_{1}} \frac{(1 - |u|^{2})^{2}}{\ep^{2}}\dvol_{g}\\
&+ C\beta^{-4} \left( \int_{B_{1}} \frac{(1 - |u|^{2})^{2}}{\ep^{2}} \dvol_{g} \right)  \left( \int_{B_{1}}e_{\ep}(u) \dvol_{g}\right).
\end{align*}
We can now finish the proof as in \cite{bbo} by distinguishing two cases. If 
\begin{equation*}
p_{\ep} \equiv \int_{B_{1}}\frac{(1 - |u|^{2})^{2}}{\ep^{2}}\dvol_{g} \leq (1/8)^{6}, 
\end{equation*}
then we may choose $\beta = p_{\ep}^{1/6}$ and get the desired estimate \eqref{energydecayestimate}. On the other hand, if $p_{\ep} \geq (1/8)^{6}$, then \eqref{energydecayestimate} holds obviously. Hence the proof of Proposition \ref{energydecay} is complete. (The inequality \eqref{energydecayestimate} was stated for half balls, but since $u$ is even across $T$, it doesn't matter whether we use half balls or whole balls.)
\end{proof}

\subsection{Proof of Theorem \ref{boundaryeta}}
The proof consists of applying Proposition \ref{energydecay} at a suitable scale, to be chosen with the help of the following result.
\begin{prop}[\cite{bbo}, Lemma III.1]
\label{goodradius}
Let $g \in \cM_{\lambda, \Lambda}$ and let $u: (B_{1}, g) \to \CC$ solve \eqref{GLequation}. Assume in addition that \eqref{smallenergyscaled} holds for some $\eta$. Then, for $\delta < \min\{1/4, \rho_{0}\}$ (with $\rho_{0}$ given by Proposition \ref{reversemonotonicity}) and $\ep < \delta^{2}$, there exists a radius $r_{0} \in (\ep^{1/2}, \delta)$ such that the following three inequalities hold.
\begin{equation}\label{potentialbound}
r_{0}^{2-n}\int_{B_{r_{0}}}\frac{(1 - |u|^{2})^{2}}{2\ep^{2}}\dvol \leq K\eta \left| \log\delta \right|,
\end{equation}
\begin{equation}\label{boundfrommonotonicity}
\int_{\delta r_{0}}^{r_{0}}\left[ r^{1-n} \int_{B_{r}}\frac{(1 - |u|^{2})^{2}}{2\ep^{2}}\dvol +r^{2-n} \int_{\partial B_{r}}|\nabla_{r}u|^{2} \sqrt{\det(g)}dH^{n-1} \right] dr \leq K\eta \left| \log\delta \right|,
\end{equation}
\begin{equation}\label{boundfromreversemono}
r_{0}^{2-n}\int_{B_{r_{0}}} e_{\ep}(u) \dvol \leq K(\delta r_{0})^{2-n}\int_{B_{\delta r_{0}}}e_{\ep}(u)\dvol +  K \eta \left| \log\delta \right|,
\end{equation}
where $K$ is a constant depending only on $n, \lambda$ and $\Lambda$.
\end{prop}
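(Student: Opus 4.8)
The plan is to produce $r_{0}$ by a mean-value (pigeonhole) argument over dyadic scales, reading off each of the three inequalities from the monotonicity formulae \eqref{lipschitzmonotonicity1} and \eqref{reversemonotonicity1} (the first gives \eqref{boundfrommonotonicity} and \eqref{potentialbound}, the reverse one gives \eqref{boundfromreversemono}). Write
\[
D(r) \;:=\; r^{1-n}\int_{B_{r}}\frac{(1-|u|^{2})^{2}}{2\ep^{2}}\,\dvol \;+\; r^{2-n}\int_{\partial B_{r}}|\nabla_{r}u|^{2}\sqrt{\det(g)}\,dH^{n-1}
\]
for the ``defect'' on the right of \eqref{lipschitzmonotonicity1}, and set $F(\rho):=e^{\chi\rho}\rho^{2-n}\int_{B_{\rho}}e_{\ep}(u)\,\dvol$. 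Since $F$ is nondecreasing (Proposition~\ref{lipschitzmonotonicity}) and $F(1)\le e^{\chi}\int_{B_{1}}e_{\ep}(u)\,\dvol\le e^{\chi}\eta\left| \log\ep \right|$ by \eqref{smallenergyscaled}, integrating \eqref{lipschitzmonotonicity1} from $0$ to $\delta$ and using $(1+C\rho)^{-1}\ge(1+C)^{-1}$ and $e^{\chi\rho}\ge1$ yields the global bound
\[
\int_{0}^{\delta}D(r)\,dr \;\le\; (1+C)\bigl(F(\delta)-F(0^{+})\bigr) \;\le\; K_{1}\,\eta\left| \log\ep \right|,\qquad K_{1}=K_{1}(n,\lambda,\Lambda).
\]

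Next comes the pigeonhole. Consider the $N\sim\tfrac12\left| \log\ep \right|/\left| \log\delta \right|$ scales $\delta^{k}$ with $\delta^{k}>\ep^{1/2}$, and group consecutive ones into the disjoint double-annuli $A_{k}:=(\delta^{k+1},\delta^{k-1})\subset(0,\delta)$, $k$ even. Since $\sum_{k}\int_{A_{k}}D(r)\,dr\le\int_{0}^{\delta}D(r)\,dr\le K_{1}\eta\left| \log\ep \right|$ and there are $\gtrsim N$ of these, some even $k_{0}$ satisfies $\int_{A_{k_{0}}}D(r)\,dr\le C_{2}\eta\left| \log\delta \right|$; set $r_{0}:=\delta^{k_{0}}\in(\ep^{1/2},\delta)$. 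The lower half of $A_{k_{0}}$ is exactly $(\delta r_{0},r_{0})$, so the last bound restricted there is precisely \eqref{boundfrommonotonicity}. For \eqref{potentialbound} I would use the upper half $(r_{0},r_{0}/\delta)$: because $\rho\mapsto\int_{B_{\rho}}(1-|u|^{2})^{2}\ep^{-2}\,\dvol$ is nondecreasing, $D(r)\ge r^{-1}\,r_{0}^{2-n}\int_{B_{r_{0}}}\tfrac{(1-|u|^{2})^{2}}{2\ep^{2}}\,\dvol$ for $r\in(r_{0},r_{0}/\delta)$, and integrating this over an interval of logarithmic length $\left| \log\delta \right|$ converts $\int_{A_{k_{0}}}D\le C_{2}\eta\left| \log\delta \right|$ into $r_{0}^{2-n}\int_{B_{r_{0}}}\tfrac{(1-|u|^{2})^{2}}{2\ep^{2}}\,\dvol\le C_{2}\eta\le C_{2}\eta\left| \log\delta \right|$ (using $\left| \log\delta \right|\ge\log 4$), which is \eqref{potentialbound}.

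It remains to get \eqref{boundfromreversemono}, which is where Proposition~\ref{reversemonotonicity} enters: since $r_{0}<\delta<\rho_{0}$, inequality \eqref{reversemonotonicity1} holds on all of $(0,\rho_{0})$, and integrating it from $\delta r_{0}$ to $r_{0}$ — noting that the monotone quantity there has integrand $e_{\ep}(u)$, bounding the factors $e^{\pm\chi'\rho}$ and $(1-C\rho)^{-1}$ on $(0,\rho_{0})$ by constants depending only on $n,\lambda,\Lambda$, and estimating the resulting right-hand integral by $C\int_{\delta r_{0}}^{r_{0}}D(r)\,dr$ — yields $r_{0}^{2-n}\int_{B_{r_{0}}}e_{\ep}(u)\,\dvol\le C(\delta r_{0})^{2-n}\int_{B_{\delta r_{0}}}e_{\ep}(u)\,\dvol+C\int_{\delta r_{0}}^{r_{0}}D(r)\,dr$, and the last term is $\le CC_{2}\eta\left| \log\delta \right|$ by \eqref{boundfrommonotonicity}. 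Taking $K$ to be the largest constant produced along the way delivers all three inequalities at the single radius $r_{0}$, with $K$ depending only on $n,\lambda,\Lambda$ since everything is inherited from Propositions~\ref{lipschitzmonotonicity} and \ref{reversemonotonicity}. The argument is essentially bookkeeping; the only genuine points of care are using the monotonicity of the potential integral to pass from an annular to a pointwise bound for \eqref{potentialbound}, invoking the \emph{reverse} formula (hence the hypothesis $\delta<\rho_{0}$) for \eqref{boundfromreversemono}, and making sure the pigeonhole has enough scales — i.e.\ that the number of dyadic scales $\sim\left| \log\ep \right|/\left| \log\delta \right|$ is large, which is automatic in the regime of interest where $\ep=\ep_{k}\to0$ with $\delta$ fixed.
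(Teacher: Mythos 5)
Your overall plan — integrate the forward monotonicity \eqref{lipschitzmonotonicity1} to get $\int_{0}^{\delta}D(r)\,dr\lesssim\eta|\log\ep|$, pigeonhole over $\delta$-dyadic double annuli to fix $r_{0}=\delta^{k_{0}}$, read \eqref{boundfrommonotonicity} off the lower annulus, and then feed \eqref{boundfrommonotonicity} into the reverse formula \eqref{reversemonotonicity1} to get \eqref{boundfromreversemono} — is precisely the BBO Lemma III.1 argument the paper is invoking, and your treatments of \eqref{boundfrommonotonicity} and \eqref{boundfromreversemono} are correct.

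However, the step you use to deduce \eqref{potentialbound} is wrong. You claim that for $r\in(r_{0},r_{0}/\delta)$ one has
\[
D(r)\;\ge\; r^{-1}\,r_{0}^{2-n}\int_{B_{r_{0}}}\frac{(1-|u|^{2})^{2}}{2\ep^{2}}\,\dvol.
\]
What monotonicity of $\rho\mapsto\int_{B_{\rho}}(1-|u|^{2})^{2}\ep^{-2}\dvol$ actually gives for $r>r_{0}$ is $D(r)\ge r^{1-n}\int_{B_{r_{0}}}\tfrac{(1-|u|^{2})^{2}}{2\ep^{2}}\dvol$. To upgrade $r^{1-n}$ to $r^{-1}r_{0}^{2-n}$ you would need $r^{2-n}\ge r_{0}^{2-n}$, i.e.\ $r\le r_{0}$ (since $2-n<0$); on the range $r>r_{0}$ the inequality goes the \emph{other} way, so your pointwise bound is false for every $r$ in the interval you integrate over. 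This is also why you end up with the too-good bound $C_{2}\eta$ instead of $C_{2}\eta|\log\delta|$: the spurious factor $r^{-1}$ manufactured an extra $|\log\delta|$ out of nothing.

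The fix is to skip the pointwise upgrade and integrate the correct bound directly:
\begin{equation*}
\int_{r_{0}}^{r_{0}/\delta}D(r)\,dr \;\ge\; \left(\int_{B_{r_{0}}}\frac{(1-|u|^{2})^{2}}{2\ep^{2}}\,\dvol\right)\int_{r_{0}}^{r_{0}/\delta}r^{1-n}\,dr \;=\; \left(\int_{B_{r_{0}}}\frac{(1-|u|^{2})^{2}}{2\ep^{2}}\,\dvol\right)\frac{(1-\delta^{n-2})\,r_{0}^{2-n}}{n-2},
\end{equation*}
and since $n\ge3$ and $\delta<1/4$ the last factor is $\ge c_{n}\,r_{0}^{2-n}$. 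Combined with $\int_{r_{0}}^{r_{0}/\delta}D\le\int_{A_{k_{0}}}D\le C_{2}\eta|\log\delta|$ this yields $r_{0}^{2-n}\int_{B_{r_{0}}}\tfrac{(1-|u|^{2})^{2}}{2\ep^{2}}\dvol\le C\eta|\log\delta|$, which is \eqref{potentialbound} with the correct right-hand side. With this repair, and with the edge case $\delta^{4}\le\ep<\delta^{2}$ handled as you indicate (there $|\log\ep|\le 4|\log\delta|$, so the global defect bound already has the right size and any $r_{0}\in(\ep^{1/2},\delta)$ works for \eqref{boundfrommonotonicity}, with \eqref{potentialbound} obtained by the same integral estimate over $(r_{0},\delta)$), the argument is sound.
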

\begin{proof}[Sketch of proof]
We follow step-by-step the proof of Lemma III.1 in \cite{bbo} to find $r_{0} \in (\ep^{1/2}, \delta)$ such that \eqref{potentialbound} and \eqref{boundfrommonotonicity} hold, with the only change being that \eqref{lipschitzmonotonicity1} should be used in place of the monotonicity formula (II.1) used there. Next, we use Proposition \ref{reversemonotonicity} in place of of Lemma II.2 of \cite{bbo} to derive \eqref{boundfromreversemono} from \eqref{boundfrommonotonicity}. We refer the reader to Lemma III.1 of \cite{bbo} for details. 
\end{proof}

The conclusion of Theorem \ref{boundaryeta} now follows from Propositions \ref{energydecay} and \ref{goodradius} in the same way Theorem 2 follows from Theorem 3 and Lemma III.1 in \cite{bbo}. Nevertheless, we include this argument with the necessary modifications.

Reflecting $u$ evenly across $T$ and letting $\delta < \delta_{0}$ be a small constant to be determined later, with $\delta_{0}$ given by Proposition \ref{energydecay}, then Proposition \ref{goodradius} yields a radius $r_{0} \in (\ep^{1/2}, \delta_{0})$ for which \eqref{potentialbound}, \eqref{boundfrommonotonicity} and \eqref{boundfromreversemono} hold. Next, we consider the following rescaling:
\begin{equation}
\tilde{u}(x) = u(r_{0}x);\ \tilde{\ep} = \ep/r_{0};\ \tilde{g}_{ij}(x) = g_{ij}(r_{0}x).
\end{equation}
Then its easy to see that $\tilde{g}$ again belongs to $\cM^{+}_{\mu, \lambda, \Lambda}$ and that $\tilde{u}$ solves \eqref{halfGL} with $\tilde{\ep}$ in place of $\ep$. Therefore, since $\delta < \delta_{0}$, we may apply Proposition \ref{energydecay} to get \eqref{energydecayestimate} with $\tilde{u}$, $\tilde{\ep}$ and $\tilde{g}$ in place of $u$, $\ep$ and $g$, respectively. Scaling back, we obtain
\begin{align}
r_{0}^{2-n}\int_{B_{\delta r_{0}}}e_{\ep}(u) \dvol_{g} \leq &C \left[ \left( r_{0}^{2-n}\int_{B_{r_{0}}}\frac{(1 - |u|^{2})^{2}}{\ep^{2}} \dvol_{g} \right)^{1/3}\left( r_{0}^{2-n}\int_{B_{r_{0}}}e_{\ep}(u) \dvol_{g} \right) \right.\\
\nonumber &+ \left.\left( r_{0}^{2-n}\int_{B_{r_{0}}}\frac{(1 - |u|^{2})^{2}}{\ep^{2}}\dvol_{g} \right)^{2/3} + \delta^{n}r_{0}^{2-n}\int_{B_{r_{0}}}e_{\ep}(u)\dvol_{g} \right].
\end{align}
Combining the above inequality and \eqref{boundfromreversemono} from Proposition \ref{goodradius} and recalling \eqref{potentialbound}, we get the following inequality.
\begin{align}
r_{0}^{2-n}\int_{B_{r_{0}}}e_{\ep}(u) \dvol \leq &C \left[ \left( \eta \left| \log\delta \right| \right)^{1/3}\left( \delta^{2-n} r_{0}^{2-n}\int_{B_{r_{0}}}e_{\ep}(u) \dvol \right) \right.\\
\nonumber &+ \left. \delta^{2-n}\left( \eta \left| \log\delta \right| \right)^{2/3} + \delta^{2}r_{0}^{2-n}\int_{B_{r_{0}}}e_{\ep}(u)\dvol \right] + K\eta \left|\log\delta  \right|.
\end{align}
Moving all the terms involving $r_{0}^{2-n}\int_{B_{r_{0}}}e_{\ep}(u)\dvol$ to the left-hand side, we get
\begin{align}
&\left( 1 - \frac{C\left( \eta \left| \log\delta \right| \right)^{1/3}}{\delta^{n-2}} - C\delta^{2} \right) r_{0}^{2-n}\int_{B_{r_{0}}}e_{\ep}(u)\dvol \\
\nonumber \leq &C\delta^{2-n} \left( \eta \left| \log\delta \right| \right)^{2/3}+ K\eta \left| \log\delta \right|\\
\nonumber  \leq & C\delta^{2-n}\left( \eta \left| \log\delta \right| \right)^{2/3},
\end{align}
where in going from the second to the third line we absorbed the term $K\eta \left| \log\delta \right|$. To continue, we choose $\delta = \eta^{1/3n}$ if $\ep^{3n/2} < \eta$ (we required that $\delta^{2} > \ep$). Then the above inequalities give
\begin{equation}
\left( 1 - C\eta^{2/3n} \left| \log\eta \right| \right) r_{0}^{2-n}\int_{B^{+}_{r_{0}}}e_{\ep}(u)\dvol  \leq C\eta^{(n+2)/3n}\left| \log\eta \right|^{2/3}.
\end{equation}
Hence, if $\eta_{0}$ is chosen small enough, then we have
\begin{equation}\label{energyboundrscale}
r_{0}^{2-n}\int_{B_{r_{0}}}e_{\ep}(u)\dvol \leq C\eta^{(n + 2)/3n}\left| \log\eta \right|^{2/3},
\end{equation}
provided $\ep^{3n/2} < \eta < \eta_{0}$. However, by the monotonicity formula \eqref{lipschitzmonotonicity1} and the bound \eqref{smallenergyscaled}, inequality \eqref{energyboundrscale} also holds when $\ep^{3n/2} \geq \eta$. Therefore \eqref{energyboundrscale} holds as long as $\eta < \eta_{0}$. 

Next, we invoke Corollary \ref{monotonicitydecentralized} (appropriately scaled) and the remark following it to get that, for all $x \in B_{3r_{0}/4}$ and $\ep$ sufficiently small such that $\ep < \ep^{1/2}/8 < r_{0}/4$, there holds
\begin{equation}
\ep^{2-n}\int_{B_{\ep}(x)}e_{\ep}(u)\dvol_{g} \leq Cr_{0}^{2-n}\int_{B_{r_{0}}}e_{\ep}(u)\dvol_{g} \leq C\eta^{(n+2)/3n}\left| \log\eta \right|^{2/3}.
\end{equation}
In particular, we have
\begin{equation}\label{meanvaluesmall}
\ep^{-n}\int_{B_{\ep}(x)}(1 - |u|^{2})^{2}\dvol_{g} \leq C\eta^{(n + 2)/3n}\left| \log\eta \right|^{2/3}.
\end{equation}
Combining \eqref{meanvaluesmall} and Lemma III.3 of \cite{bbo}, we get \eqref{etaellipticityconclusion}, and hence Theorem \ref{boundaryeta} is proved with $r_{1} = 3r_{0}/4$.

\begin{rmk} Lemma III.3 of \cite{bbo} remains valid in our case because its proof only requires a gradient estimate of the form $|\nabla u| = O(\ep^{-1})$, which we have from Lemma 3.1.
\end{rmk}
\subsection{A corollary of $\eta$-ellipticity}
Combining Theorems \ref{interioreta} and \ref{boundaryeta}, we get the following corollary which will be useful in Section 6.
\begin{coro}\label{etacoro}
For all $\sigma \in (0, 1/4)$, there exists constants $\eta_{1}, \ep_{1}$, depending only on $n, \mu, \lambda, \Lambda$ and $\sigma$, such that if $g$ and $u$ are as in Theorem \ref{boundaryeta}, with $\ep < \ep_{1}$ and $u$ satisfying \eqref{smallenergyscaledbdy} for some $\eta < \eta_{1}$, then we have
\begin{equation}
|u(x)| \geq 1 - \sigma, \text{ for all }x \in B_{3/4}^{+}.
\end{equation}
\end{coro}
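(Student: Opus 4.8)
The plan is to reduce the corollary to a pointwise bound at each $y\in B^{+}_{3/4}$, obtained by rescaling $u$ around a suitable point at a suitable scale so that one of the two $\eta$-ellipticity theorems, \ref{interioreta} or \ref{boundaryeta}, applies to the rescaled solution with a small parameter and, crucially, so that the exceptional ball it produces contains the image of $y$. Two inputs are used. First, extending $u$ across $T$ by Lemma \ref{GLreflection} and applying the monotonicity consequence \eqref{energydensitybound} to the extension gives, for all $x_{0}\in B_{3/4}$ and $r<1/4$ with $B_{r}(x_{0})\subset B_{1}^{+}$,
\begin{equation*}
r^{2-n}\int_{B_{r}(x_{0})}e_{\ep}(u)\dvol_{g}\le C\int_{B_{1}^{+}}e_{\ep}(u)\dvol_{g}\le C\eta\left|\log\ep\right|,\qquad C=C(n,\lambda,\Lambda).
\end{equation*}
Second, the standard fact that translating a ball (or a half-ball with flat face on $T$) to the origin, dilating by a factor $\rho\le 1$, and applying the linear map that normalizes $g$ at the center to the Euclidean metric carries a metric in $\cM_{\mu,\lambda,\Lambda}$ (resp. $\cM^{+}_{\mu,\lambda,\Lambda}$) to one in $\cM_{\mu,\lambda^{2},C_{0}\Lambda}$ (resp. $\cM^{+}_{\mu,\lambda^{2},C_{0}\Lambda}$), $C_{0}$ universal, carries a solution of \eqref{GLequation} (resp. \eqref{halfGL}) with parameter $\ep$ to one with parameter $\ep/\rho$, and multiplies the energy by $\rho^{2-n}$ (up to the bounded linear change of variables); when the center lies on $T$ one takes the normalizing map block-diagonal, which preserves the reflection structure of Definition \ref{boundaryclass} because $g_{in}=\delta_{in}$ there. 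Fix a small $R_{0}=R_{0}(n,\mu,\lambda,\Lambda)<\lambda^{1/2}/4$, set $\delta_{1}:=\tfrac12 R_{0}^{1/2}$, and let $s_{0}>0$ be the smaller of the exponents $s,s'$ supplied by Theorems \ref{interioreta} and \ref{boundaryeta} for the parameters $(n,\mu,\lambda^{2},C_{0}\Lambda)$.

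\textbf{Interior points.} For $y$ with $t:=\dist(y,T)\ge\delta_{1}\ep^{1/2}$, I would rescale around $y$ at scale $\rho:=\tfrac14\lambda^{1/2}t$, so that the rescaled ball lies inside $B_{t/4}(y)\subset B_{1}^{+}$ and only the $C^{1,\mu}$ part of $g$ is seen; the rescaled solution $\hat u$ then lives on $B_{1}$, solves \eqref{GLequation} with parameter $\tilde\ep:=\ep/\rho$, and has metric in $\cM_{\mu,\lambda^{2},C_{0}\Lambda}$. The first input gives $\int_{B_{1}}e_{\tilde\ep}(\hat u)\le C(n,\lambda,\Lambda)\,\eta\left|\log\ep\right|$, and since $\rho\le 1$ forces $\left|\log\tilde\ep\right|\le\left|\log\ep\right|$ while $\rho\ge\tfrac18\lambda^{1/2}R_{0}^{1/2}\ep^{1/2}$ forces $\left|\log\ep\right|\le 4\left|\log\tilde\ep\right|$ once $\ep$ is small, the hypothesis \eqref{smallenergyscaled} of Theorem \ref{interioreta} holds for $\hat u$ with $\eta$ replaced by $C_{1}\eta$, $C_{1}=C_{1}(n,\lambda,\Lambda)$. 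Provided $C_{1}\eta$ lies below the threshold of that theorem and $\tilde\ep$ below its $\ep$-threshold, its conclusion gives $|\hat u|\ge 1-C\eta^{s}$ on a ball about the origin, which is the image of $y$; hence $|u(y)|\ge 1-C\eta^{s}$.

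\textbf{Points near $T$.} For $y$ with $t<\delta_{1}\ep^{1/2}$, let $y'\in T$ be the nearest point (so $|y'|<3/4$) and rescale around $y'$ at the fixed scale $R_{0}$, using the block-diagonal normalization. The rescaled solution $\hat u$ lives on $B_{1}^{+}$, solves \eqref{halfGL} with parameter $\tilde\ep=\ep/R_{0}$, has metric in $\cM^{+}_{\mu,\lambda^{2},C_{0}\Lambda}$, and (since $R_{0}<\lambda^{1/2}/4$) its domain maps into $B_{1}^{+}$, whence $\int_{B_{1}^{+}}e_{\tilde\ep}(\hat u)\le R_{0}^{2-n}\int_{B_{1}^{+}}e_{\ep}(u)\dvol_{g}\le R_{0}^{2-n}\eta\left|\log\ep\right|\le C_{1}\eta\left|\log\tilde\ep\right|$ for $\ep$ small, so that \eqref{smallenergyscaledbdy} holds for $\hat u$ with parameter $C_{1}\eta$. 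Provided $C_{1}\eta$ and $\tilde\ep$ are below the corresponding thresholds, Theorem \ref{boundaryeta} yields $r_{1}\in(\tfrac12\tilde\ep^{1/2},1/4)$ with $|\hat u|\ge 1-C\eta^{s'}$ on $B^{+}_{r_{1}}$. The image of $y$ is $\hat y=(0,\dots,0,t/R_{0})$ because the normalization fixes the $x^{n}$-axis, and
\begin{equation*}
|\hat y|=\frac{t}{R_{0}}<\frac{\delta_{1}\ep^{1/2}}{R_{0}}=\frac{1}{2}\Big(\frac{\ep}{R_{0}}\Big)^{1/2}=\tfrac12\tilde\ep^{1/2}<r_{1},
\end{equation*}
exactly because $\delta_{1}=\tfrac12 R_{0}^{1/2}$; thus $\hat y\in B^{+}_{r_{1}}$ and $|u(y)|=|\hat u(\hat y)|\ge 1-C\eta^{s'}$.

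Combining the two cases, $1-|u(y)|\le C\eta^{s_{0}}$ for every $y\in B^{+}_{3/4}$, where $C$ and all the smallness requirements on $\eta$ and $\ep$ depend only on $n,\mu,\lambda,\Lambda$; one then shrinks $\eta_{1}$ further so that $C\eta_{1}^{s_{0}}\le\sigma$ (this is where the $\sigma$-dependence enters) and $\ep_{1}$ so that the rescaled parameters stay below the $\ep$-thresholds of Theorems \ref{interioreta}--\ref{boundaryeta} and the elementary logarithm comparisons hold. The step I expect to be the main obstacle is the near-boundary case: there the interior theorem is unavailable because the reflected metric is merely Lipschitz, while the exceptional ball produced by the boundary theorem has radius only of order $\tilde\ep^{1/2}$ times the scale — which is exactly why the dichotomy must be drawn at $\dist(y,T)\sim\ep^{1/2}$ and why the interior rescaling must be carried out at a scale proportional to $\dist(y,T)$ (which also keeps the rescaled ball within the region where $g$ is $C^{1,\mu}$), rather than at a single fixed scale.
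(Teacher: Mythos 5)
Your proposal is correct and follows essentially the same route as the paper: reflect, then split $B^{+}_{3/4}$ at distance $\sim\sqrt{\ep}$ from $T$, apply the boundary $\eta$-ellipticity theorem on a fixed-size half-ball centered at the nearest boundary point for the near-boundary layer, and apply the interior $\eta$-ellipticity theorem for the rest, with the monotonicity formula supplying the energy bound at the rescaled balls. The only cosmetic differences are that the paper packages the rescaling into the decentralized Lemmas \ref{etadecentralized} and \ref{etadecentralizedbdy} and uses a fixed interior scale $\rho_{0}\sim\ep^{1/2}$, whereas you rescale directly from Theorems \ref{interioreta} and \ref{boundaryeta} and use an interior scale $\sim\dist(y,T)$; both choices work for the same reason.
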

The proof of Corollary \ref{etacoro} requires the following two preliminary results, which roughly say that we can apply Theorems \ref{interioreta} and \ref{boundaryeta} to balls $B_{r}(x) \subset B_{1}$ not necessarily centered at the origin.

\begin{lemm}\label{etadecentralized}
For all $\sigma \in (0, 1/4)$, there exists constants $\eta_{2}, \ep_{2}$, depending only on $n, \mu, \lambda, \Lambda$ and $\sigma$, such that if $g \in \cM_{\mu, \lambda, \Lambda}$, $x_{0} \in B_{4/5}, \rho < 1/5$ and $u: B_{\rho}(x_{0}) \to \DD$ is a solution to the Ginzburg-Landau equation satisfying
\begin{equation}\label{rescaledeta}
\rho^{2-n}\int_{B_{\rho}(x_{0})} e_{\ep}(u)\dvol_{g} \leq \eta \left| \log\frac{\ep}{\rho} \right|, 
\end{equation}
where $\ep < \rho\ep_{2}$ and $\eta < \eta_{2}$, then there holds
\begin{equation}
|u(x)| \geq 1 - \sigma, \text{ for all } x \in B_{\lambda \rho \sqrt{\ep}/2}(x_{0}).
\end{equation}
\end{lemm}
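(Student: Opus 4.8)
The plan is to reduce the claim to the interior $\eta$-ellipticity theorem, Theorem~\ref{interioreta}, by an affine change of variables of the sort already used in the proof of Corollary~\ref{monotonicitydecentralized}. A naive rescaling $x\mapsto x_0+\rho x$ does not suffice by itself: the rescaled metric $g(x_0+\rho\,\cdot)$ need not equal $\delta_{ij}$ at the origin, so it need not lie in $\cM_{\mu,\lambda,\Lambda}$. This is repaired by precomposing with a linear map that normalizes $g(x_0)$.

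Concretely, write $(g_{ij}(x_0))=(A^{-1})^{t}A^{-1}$ with $A$ invertible; since the eigenvalues of $g(x_0)$ lie in $[\lambda,\lambda^{-1}]$, the singular values of both $A$ and $A^{-1}$ lie in $[\sqrt{\lambda},\lambda^{-1/2}]$. Set
\begin{gather*}
\Psi(y)=x_0+\rho\sqrt{\lambda}\,Ay,\qquad \tilde{u}(y)=u(\Psi(y)),\\
\tilde{g}_{ij}(y)=\bigl(A^{t}\bigl(g_{kl}(\Psi(y))\bigr)A\bigr)_{ij},\qquad \tilde{\ep}=\frac{\ep}{\rho\sqrt{\lambda}}.
\end{gather*}
Since $\|A\|_{\mathrm{op}}\le\lambda^{-1/2}$ one has $\Psi(B_1)\subset B_\rho(x_0)$, so $\tilde{u}$ is defined on $B_1$. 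First I would verify that $\tilde{g}\in\cM_{\mu,\lambda^2,\Lambda'}$ for some $\Lambda'$ depending only on $n,\lambda,\Lambda$: indeed $\tilde{g}(0)=A^{t}g(x_0)A=I$ by the choice of $A$; conjugation by $A$ squares the ellipticity constant; and the chain rule gives condition (h1) for $\tilde{g}$ with an extra factor $\rho\sqrt{\lambda}\le1$ to spare, the Hölder exponent $\mu$ being unchanged because $\Psi$ is affine. Next, the standard Ginzburg--Landau rescaling identity (as in the proof of Corollary~\ref{monotonicitydecentralized}) shows that $\tilde{u}\colon(B_1,\tilde{g})\to\DD$ solves \eqref{GLequation} with $\tilde{\ep}$ in place of $\ep$, and that
\begin{align*}
\int_{B_1}e_{\tilde{\ep}}(\tilde{u})\,\dvol_{\tilde{g}}&=(\rho\sqrt{\lambda})^{2-n}\int_{\Psi(B_1)}e_{\ep}(u)\,\dvol_{g}\\
&\le(\rho\sqrt{\lambda})^{2-n}\int_{B_\rho(x_0)}e_{\ep}(u)\,\dvol_{g}\le\lambda^{(2-n)/2}\,\eta\,\bigl|\log(\ep/\rho)\bigr|.
\end{align*}
Since $\tilde{\ep}=(\ep/\rho)\lambda^{-1/2}$ with $\lambda\le1$, once $\ep/\rho$ is below a threshold depending only on $\lambda$ we have $\bigl|\log(\ep/\rho)\bigr|\le\bigl(1+\tfrac12|\log\lambda|\bigr)|\log\tilde{\ep}|$, so the last quantity is at most $\eta'|\log\tilde{\ep}|$ with $\eta'=\lambda^{(2-n)/2}\bigl(1+\tfrac12|\log\lambda|\bigr)\eta$.

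Now apply Theorem~\ref{interioreta} to $\tilde{u},\tilde{g},\tilde{\ep}$, with the constants $\eta_0,\ep_0,s,C$ attached to the class $\cM_{\mu,\lambda^2,\Lambda'}$, which depend only on $n,\mu,\lambda,\Lambda$. Provided $\eta'<\eta_0$ and $\tilde{\ep}<\ep_0$ — the latter holding as soon as $\ep<\rho\sqrt{\lambda}\,\ep_0$ — one obtains a radius $r_1\in(\sqrt{\tilde{\ep}}/2,1/4)$ with $|\tilde{u}(y)|\ge1-C(\eta')^{s}$ on $B_{r_1}\supset B_{\sqrt{\tilde{\ep}}/2}$. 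Transporting this back through $\Psi$ and using $\Psi(B_r)\supset B_{\rho\lambda r}(x_0)$ (because the smallest singular value of $A$ is at least $\sqrt{\lambda}$), we get $|u(x)|\ge1-C(\eta')^{s}$ for all $x\in B_{\rho\lambda\sqrt{\tilde{\ep}}/2}(x_0)$; and $\rho\lambda\sqrt{\tilde{\ep}}/2=\lambda^{3/4}\sqrt{\rho}\,\sqrt{\ep}/2\ge\lambda\rho\sqrt{\ep}/2$ since $\lambda^{1/4}\sqrt{\rho}\le1$, so the conclusion holds on the asserted ball. It then remains to choose $\eta_2$ small enough that $\eta<\eta_2$ forces both $\eta'<\eta_0$ and $C(\eta')^{s}\le\sigma$, and $\ep_2$ small enough that $\ep<\rho\ep_2$ forces $\tilde{\ep}<\ep_0$ together with the logarithm estimate used above; all of these thresholds depend only on $n,\mu,\lambda,\Lambda$ and $\sigma$.

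There is no genuinely hard step here; the content is entirely in the bookkeeping. The two points requiring the most care are (i) checking that the conjugated-and-rescaled metric $\tilde{g}$ really stays in a fixed class $\cM_{\mu,\lambda^2,\Lambda'}$ with constants controlled only by $n,\lambda,\Lambda$ (in particular the Hölder seminorm of $\partial\tilde{g}$), and (ii) tracking how the $\lambda$- and $\rho$-dependent factors propagate through the energy bound, the relation between $|\log\tilde{\ep}|$ and $|\log(\ep/\rho)|$, and the radius of the ball produced by Theorem~\ref{interioreta}, so that the target ball $B_{\lambda\rho\sqrt{\ep}/2}(x_0)$ is comfortably contained in it.
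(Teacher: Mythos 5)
Your proof is correct and follows essentially the same route the paper indicates (and carries out in a commented-out proof in the source): an affine map normalizing $g(x_0)$ combined with the standard Ginzburg--Landau dilation, followed by an application of Theorem~\ref{interioreta} to the rescaled data. You fold the normalization and the dilation into one map $\Psi$ rather than applying them in two steps, and you are somewhat more explicit about the verification that the rescaled metric lies in a fixed class $\cM_{\mu,\lambda^2,\Lambda'}$ and about the $\lambda$-, $\rho$-dependent bookkeeping, but these are presentational differences rather than a different argument.
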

\begin{lemm}\label{etadecentralizedbdy}
For all $\sigma \in (0, 1/4)$, there exists constants $\eta_{2}', \ep_{2}'$, depending only on $n, \mu, \lambda, \Lambda$ and $\sigma$, such that if $g \in \cM^{+}_{\mu, \lambda, \Lambda}$, $x_{0} \in T_{4/5}, \rho < 1/5$ and $u: B^{+}_{\rho}(x_{0}) \to \DD$ is a solution to the Ginzburg-Landau equation satisfying
\begin{equation}\label{rescaledetabdy}
\rho^{2-n}\int_{B^{+}_{\rho}(x_{0})} e_{\ep}(u)\dvol_{g} \leq \eta \left| \log\frac{\ep}{\rho} \right|, 
\end{equation}
where $\ep < \rho\ep_{2}'$ and $\eta < \eta_{2}'$, then there holds
\begin{equation}
|u(x)| \geq 1 - \sigma, \text{ for all } x \in B^{+}_{\lambda \rho \sqrt{\ep}/2}(x_{0}).
\end{equation}
\end{lemm}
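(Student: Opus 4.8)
The plan is to mirror the argument for the interior version, Lemma \ref{etadecentralized}, reducing to Theorem \ref{boundaryeta} by an affine change of variables followed by a dilation. The only step with no interior analogue — and the one I expect to require some care — is that the affine map must be chosen so as to preserve both the half-space $\{x^{n}>0\}$ and the hyperplane $\{x^{n}=0\}$; this is exactly what the normalization $g_{in}(x)=\delta_{in}$ on $T$, built into the class $\cM^{+}_{\mu,\lambda,\Lambda}$, makes possible.

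First I would fix $x_{0}\in T_{4/5}$ and use that, since $x_{0}\in T$, the matrix $(g_{ij}(x_{0}))$ is block diagonal, with $g_{nn}(x_{0})=1$, $g_{in}(x_{0})=0$ for $i<n$, and an $(n-1)\times(n-1)$ block whose eigenvalues lie in $[\lambda,\lambda^{-1}]$ by (h2). I would therefore write $(g_{ij}(x_{0}))=(A^{-1})^{t}A^{-1}$ with $A=\diag(A',1)$ block diagonal, so that $\|A\|,\|A^{-1}\|\le\lambda^{-1/2}$, and set $\psi(y)=x_{0}+Ay$, $\tilde{u}=u\circ\psi$, $\tilde{g}=\psi^{*}g$. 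Because the last row and column of $A$ equal $e_{n}$ and $x_{0}^{n}=0$, the map $\psi$ carries $B^{+}_{\sqrt{\lambda}\rho}$ into $B^{+}_{\rho}(x_{0})$ and $T_{\sqrt{\lambda}\rho}$ into $T_{\rho}(x_{0})$, commutes with the reflection $\tau$, and sends the $g$-inward normal $\partial_{n}$ of $T$ to the $\tilde{g}$-inward normal $\partial_{n}$ of $T$; hence $\tilde{u}$ solves \eqref{halfGL} on $B^{+}_{\sqrt{\lambda}\rho}$ with respect to $\tilde{g}$. Using $\tilde{g}_{ij}=(A^{t}(g\circ\psi)A)_{ij}$ together with the parity in $x^{n}$ of the components of $g$, one checks that $\tilde{g}$ again satisfies conditions (1), (2) of Definition \ref{boundaryclass} as well as (h1), (h2) with $\lambda$ replaced by $\lambda^{2}$ and $\Lambda$ by a constant $\Lambda'=\Lambda'(n,\mu,\lambda,\Lambda)$; i.e. $\tilde{g}\in\cM^{+}_{\mu,\lambda^{2},\Lambda';\sqrt{\lambda}\rho}$.

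Next I would rescale by $\sqrt{\lambda}\rho$: setting $\hat{u}(z)=\tilde{u}(\sqrt{\lambda}\rho\,z)$, $\hat{\ep}=\ep/(\sqrt{\lambda}\rho)$ and $\hat{g}_{ij}(z)=\tilde{g}_{ij}(\sqrt{\lambda}\rho\,z)$, one gets $\hat{u}:(B^{+}_{1},\hat{g})\to\DD$ solving \eqref{halfGL} with parameter $\hat{\ep}$, with $\hat{g}\in\cM^{+}_{\mu,\lambda^{2},\Lambda''}$, $\Lambda''=\Lambda''(n,\mu,\lambda,\Lambda)$ (the $C^{1,\mu}$ bounds only improve under the dilation since $\sqrt{\lambda}\rho<1$). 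Isometry invariance and the scaling of the energy density give
\begin{equation*}
\int_{B^{+}_{1}} e_{\hat{\ep}}(\hat{u})\dvol_{\hat{g}} = (\sqrt{\lambda}\rho)^{2-n}\!\!\int_{B^{+}_{\sqrt{\lambda}\rho}}\!\! e_{\ep}(\tilde{u})\dvol_{\tilde{g}} \le (\sqrt{\lambda})^{2-n}\rho^{2-n}\!\!\int_{B^{+}_{\rho}(x_{0})}\!\! e_{\ep}(u)\dvol_{g} \le (\sqrt{\lambda})^{2-n}\eta\left|\log\tfrac{\ep}{\rho}\right|,
\end{equation*}
using \eqref{rescaledetabdy}. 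Since $\hat{\ep}=(\ep/\rho)\lambda^{-1/2}$ and $\ep/\rho<\ep_{2}'<1$, once $\ep_{2}'$ is small enough (depending on $\lambda$) one has $|\log\hat{\ep}|\ge\tfrac12|\log(\ep/\rho)|$, so $\int_{B^{+}_{1}} e_{\hat{\ep}}(\hat{u})\dvol_{\hat{g}}\le\eta'|\log\hat{\ep}|$ with $\eta':=2(\sqrt{\lambda})^{2-n}\eta$. Choosing $\eta_{2}'$ small enough that $\eta'<\eta_{0}'$ — the threshold of Theorem \ref{boundaryeta} for the class $\cM^{+}_{\mu,\lambda^{2},\Lambda''}$ (and hence ultimately depending only on $n,\mu,\lambda,\Lambda$) — and $\ep_{2}'\le\sqrt{\lambda}\,\ep_{0}'$ so that $\hat{\ep}<\ep_{0}'$, Theorem \ref{boundaryeta} yields a radius $\hat{r}_{1}\in(\sqrt{\hat{\ep}}/2,1/4)$ with $|\hat{u}(z)|\ge 1-C'\eta'^{\,s'}$ on $B^{+}_{\hat{r}_{1}}$.

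Finally I would unwind the substitutions: given $x\in B^{+}_{\lambda\rho\sqrt{\ep}/2}(x_{0})$, write $x=x_{0}+\sqrt{\lambda}\rho\,Az$; then $z^{n}>0$ and $|z|\le\|A^{-1}\|\,|Az|=\lambda^{-1/2}|x-x_{0}|/(\sqrt{\lambda}\rho)<\sqrt{\ep}/2<\sqrt{\hat{\ep}}/2<\hat{r}_{1}$, hence $|u(x)|=|\hat{u}(z)|\ge 1-C'\eta'^{\,s'}\ge 1-\sigma$ as soon as $\eta_{2}'$ is further shrunk so that $C'(2(\sqrt{\lambda})^{2-n}\eta_{2}')^{s'}\le\sigma$. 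I expect the main (really the only non-routine) point to be the verification that $\psi$, and then the dilation, map $\cM^{+}_{\mu,\lambda,\Lambda}$ into a class of the same type — in particular that the evenness/oddness of the metric components across $T$ survives — which works precisely because $A$ has the special block form $\diag(A',1)$; everything else is the bookkeeping already present in the interior case.
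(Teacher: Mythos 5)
Your proposal is correct and follows the same route the paper indicates: it sketches the proof of Lemma \ref{etadecentralizedbdy} in one line, remarking that it is the interior argument (affine transformation plus dilation, then apply the $\eta$-ellipticity theorem) with the affine map chosen to preserve $T$, "which can be done thanks to (1) and (2) in Definition \ref{boundaryclass}" — and your block-diagonal choice $A=\diag(A',1)$, together with the verification that $\tilde g$ stays in the class $\cM^{+}_{\mu,\lambda^{2},\Lambda'}$, is precisely how that is meant to be carried out.
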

Lemma \ref{etadecentralized} can be proved using suitable affine transformations as in the proof of Corollary \ref{monotonicitydecentralized}, and the proof is almost the same for Lemma \ref{etadecentralizedbdy}, except that we have to choose the affine transformations to preserve $T$, which can be done thanks to (1) and (2) in Definition \ref{boundaryclass}. We proceed to give the proof of Corollary \ref{etacoro}.
\begin{proof}[Proof of Corollary \ref{etacoro}]
We first reflect $u$ across $T$ using Lemma \ref{GLreflection}. Next, notice that for all $x_{0} \in B_{4/5} \cap T$ and for sufficiently small $\ep$, there holds
\begin{equation}
\left( \frac{1}{5} \right)^{2-n}\int_{B^{+}_{1/5}(x_{0})} e_{\ep}(u)\dvol_{g} \leq 2\cdot5^{n-2} \eta\left| \log\frac{\ep}{5} \right|
\end{equation}
Thus, requiring $\eta$ and $\ep$ to be small enough, we may apply Lemma \ref{etadecentralizedbdy} on $B_{1/5}^{+}(x_{0})$ for all $x_{0} \in B_{4/5} \cap T$ to get that $|u(x)| \geq 1 - \sigma$ whenever $x = (x', t) \in B_{4/5}^{+}$ satisfies $t \leq \lambda  \ep^{1/2}/10 \equiv \rho_{0}$. 

On the other hand, for $x_{0} = (x_{0}', t) \in B_{3/4}^{+}$ with $t \geq \rho_{0}$, we have, for sufficiently small $\ep$, 
\begin{align*}
\rho_{0}^{2-n}\int_{B_{\rho_{0}}(x_{0})} e_{\ep}(u)\dvol_{g} &\leq C \left( \frac{1}{4} \right)^{2-n}\int_{B_{1/4}(x_{0})}e_{\ep}(u)\dvol_{g} \\
&\leq C \cdot 4^{n-2} \eta \left| \log\ep \right| \\
&\leq \tilde{C} \cdot 2 \cdot 4^{n-2}\eta \left| \log \rho_{0}^{-1}\ep \right|,
\end{align*}
where the first line follows from Proposition \ref{monotonicitydecentralized}. Since $\ep/\rho_{0} = 10 \ep^{1/2}/\lambda$, we see from the above inequality that for sufficiently small $\eta$ and $\ep$, we may apply Lemma \ref{etadecentralized} to infer that $|u(x_{0})| \geq 1 - \sigma$.

Combining the results of the previous two paragraphs, we get $|u(x)| \geq 1 - \sigma$ whenever $x \in B_{3/4}^{+}$, and the proof of Corollary \ref{etacoro} is complete.
\end{proof}

\section{Convergence of Ginzburg-Landau solutions I: The regular part}

\subsection{Improvement of the $\eta$-ellipticity theorem}
The main result of this section is the following improvements of Theorems \ref{interioreta} and \ref{boundaryeta}, which require slightly better regularity of the metrics. The precise statements are given below.

\begin{prop}[Interior version, cf. \cite{bos}]
\label{interioretareg}
There exists constants $\eta_{3}, \ep_{3}$ and $C$, depending only on $n, \lambda$ and $\Lambda$ such that given $g \in \cM_{1, \lambda, \Lambda}$ and a solution $u: (B_{1}, g)\to \DD$ to \eqref{GLequation}, satisfying 
\begin{equation}
\int_{B_{1}}e_{\ep}(u) \dvol_{g} \leq \eta\left| \log\ep \right|
\end{equation}
with $\eta < \eta_{3}$ and $\ep < \ep_{3}$, we have that $|u| \geq 3/4$ on $B_{3/4}$, and that 
\begin{equation}
\sup\limits_{x \in B_{1/2}} e_{\ep}(u)(x) \leq C\int_{B_{1}}e_{\ep}(u)\dvol_{g}.
\end{equation}
\end{prop}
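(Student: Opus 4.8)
The plan is to upgrade the conclusion of Theorem \ref{interioreta} from the weak bound $|u| \geq 1 - C\eta^{s}$ on a small ball of radius $r_{1}$ to a uniform gradient bound on $B_{1/2}$, following the strategy of \cite{bos}. First I would apply Theorem \ref{interioreta} (after a harmless rescaling so that it applies to all balls $B_{\rho}(x_{0})$ with $B_{2\rho}(x_{0}) \subset B_{1}$, as in Lemma \ref{etadecentralized}) together with the monotonicity formula \eqref{monovarying} to propagate smallness of the scale-invariant energy down to scale $\ep$. Concretely, from the hypothesis $\int_{B_{1}} e_{\ep}(u)\,\dvol \leq \eta|\log\ep|$ and \eqref{energydensitybound}, one gets $\rho^{2-n}\int_{B_{\rho}(x_{0})} e_{\ep}(u)\,\dvol \leq C\eta|\log(\rho^{-1}\ep)|$ for all $x_{0} \in B_{3/4}$ and $\ep^{1/2} \leq \rho \leq 1/4$, so Theorem \ref{interioreta} (suitably scaled) gives $|u| \geq 1 - C\eta^{s} \geq 3/4$ on $B_{3/4}$ once $\eta_{3}$ is small enough.

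With $|u| \geq 3/4$ on $B_{3/4}$ in hand, I would write $u = |u|e^{i\varphi}$ with $\varphi$ single-valued (possible since $B_{3/4}$ is simply connected and $u$ is non-vanishing there), and split $e_{\ep}(u)$ as in \eqref{gradientdecomposition} into the modulus part, involving $\rho \equiv |u|$ and $(1-|u|^{2})$, and the phase part $|u \times \nabla u|^{2} = |u|^{2}|\nabla\varphi|^{2}$. The equation \eqref{GLequation} decouples into a scalar equation for $\rho$ with a good sign on the zeroth-order term (so that $1 - \rho^{2}$ obeys a favorable maximum-principle-type estimate driven by $|\nabla\varphi|^{2}$ and the potential), and the conservation law $d^{\ast}(|u|^{2}\,u\times d u) = 0$, i.e. $\Div(|u|^{2}\nabla\varphi) = 0$, for the phase. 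This is exactly the Bethuel--Brezis--H\'elein splitting adapted in Appendix A; I would invoke those lemmas together with the multiplicative Schauder estimates (Lemma \ref{interiormulti}) to bootstrap. The key quantitative input is that the $C^{1,\mu}$ control on the metric (rather than mere Lipschitz) lets us run interior Schauder estimates on the elliptic system satisfied by $(\rho, \varphi)$ after freezing coefficients.

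The main point — and the step I expect to be the main obstacle — is obtaining the clean bound $\sup_{B_{1/2}} e_{\ep}(u) \leq C\int_{B_{1}} e_{\ep}(u)\,\dvol$ with no loss of powers of $\ep$ or $\eta$. The naive gradient estimate $|\nabla u| = O(\ep^{-1})$ from Lemma \ref{localbasicGLestimate} is far too weak; one must exploit the $\eta$-smallness to show that on $B_{3/4}$ the solution is in the "dilute" regime where $|\nabla u|$ is actually bounded \emph{independently of $\ep$} in terms of the total energy. The mechanism, following \cite{bos}, is: (i) a Bochner/Pohozaev-type inequality for $e_{\ep}(u)$ combined with the $\ep^{-2}$-factor forcing $(1-|u|^{2})/\ep \to 0$ in $L^{\infty}_{\loc}$; (ii) a covering and iteration argument on dyadic sub-balls showing the scale-invariant energy decays below the $\eta$-ellipticity threshold at \emph{every} scale down to $\ep$, hence the energy cannot concentrate; and (iii) an $\ep$-independent elliptic estimate for the system in $(\rho,\varphi)$, using the conservation law $\Div(|u|^{2}\nabla\varphi)=0$ to get $W^{2,p}_{\loc}$ (then $C^{1}_{\loc}$) control on $\varphi$, and the modulus equation to get $C^{1}_{\loc}$ control on $1 - |u|^{2}$, all with constants depending only on $n,\lambda,\Lambda$ and $\int_{B_{1}} e_{\ep}(u)$. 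Once $|\nabla u|_{0;B_{1/2}}$ is bounded linearly by the energy, the pointwise energy bound follows by combining with the sup bound on the potential term, which comes from the mean-value estimate \eqref{meanvaluesmall}-type argument (Lemma III.3 of \cite{bbo}) already established in the proof of Theorem \ref{boundaryeta}. I would organize the write-up so that the modulus and phase estimates are isolated as separate lemmas (paralleling Appendix A), and the final assembly is a short argument.
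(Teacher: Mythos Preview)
Your proposal has the right ingredients and is essentially the same approach as the paper (which defers to \cite{bos} for the interior case and gives the parallel boundary argument in detail), but you have slightly scrambled the roles of the two key mechanisms. In the paper's organization, the modulus--phase decomposition and the associated elliptic estimates on $(\rho,\varphi)$ are used \emph{only} to prove an energy decay estimate of the form
\[
r_{0}^{2-n}\int_{B_{r_{0}}(x_{0})} e_{\ep}(u)\,\dvol \leq C\bigl(r_{0}^{2} + r_{0}^{2-n}\ep^{\beta_{0}}\bigr)\int_{B_{1}} e_{\ep}(u)\,\dvol
\]
(inequality (A.28) of \cite{bos}, restated as Lemma~\ref{boundarybos} in the boundary case). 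One then chooses $r_{0} = \ep^{\beta_{0}/n}$ so that the \emph{absolute} scale-invariant energy drops below a fixed threshold $\gamma_{0}$. The final pointwise bound $\sup e_{\ep}(u) \leq C\int e_{\ep}(u)$ is \emph{not} obtained by further Schauder estimates on $(\rho,\varphi)$ as your item (iii) suggests; it comes instead from a separate Chen--Struwe $\gamma$-regularity lemma (Lemma~\ref{gammareg}), whose proof is the B\^ochner inequality \eqref{GLbochner} for $e_{\ep}(u)$ combined with Schoen's point-picking trick and the mean-value inequality. So the clean modular structure is: (a) $\eta$-ellipticity (Corollary~\ref{etacoro}) gives $|u|\geq 3/4$ on $B_{3/4}$; (b) the \cite{bos}-type decay estimate under $|u|\geq 1-\sigma_{0}$ reduces the absolute energy below $\gamma_{0}$ at an intermediate scale; (c) $\gamma$-regularity via B\^ochner finishes. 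Your write-up will be cleaner if you isolate (b) and (c) as separate lemmas rather than merging them into a single bootstrap on $(\rho,\varphi)$.
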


\begin{prop}[Boundary version]
\label{boundaryetareg}
There exists constants $\eta_{3}', \ep_{3}'$ and $C$, depending only on $n, \lambda$ and $\Lambda$ such that if $g$ is a metric in  $\cM^{+}_{1, \lambda, \Lambda}$ with respect to which $T$ is convex, and if $u: (B^{+}_{1}, g)\to \DD$ is a solution to \eqref{halfGL} satisfying 
\begin{equation}
\int_{B^{+}_{1}}e_{\ep}(u) \dvol_{g} \leq \eta\left| \log\ep \right|
\end{equation}
with $\eta < \eta_{3}'$ and $\ep < \ep_{3}'$, then we have $|u| \geq 3/4$ on $B^{+}_{3/4}$ and there holds
\begin{equation}\label{boundaryetaregconclusion}
\sup\limits_{x \in B^{+}_{1/3}} e_{\ep}(u)(x) \leq C\int_{B^{+}_{1}}e_{\ep}(u)\dvol_{g}.
\end{equation}
\end{prop}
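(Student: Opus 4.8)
The plan is to follow the interior case (Proposition~\ref{interioretareg}, \cite{bos}), the genuinely new point being an argument that carries the estimates up to the flat boundary portion $T$, where the convexity hypothesis is used. First I would shrink $\eta_3',\ep_3'$ below the thresholds of Corollary~\ref{etacoro} applied with $\sigma=1/4$; this already gives $|u|\geq 3/4$ on $B_{3/4}^+$, which is the first assertion. Since $B_{3/4}^+$ is simply connected we may then write $u=\rho e^{i\varphi}$ there, with $\rho=|u|\in[3/4,1]$ and $\varphi$ single-valued, and \eqref{halfGL} decouples into $\Div_g(\rho^2\nabla\varphi)=0$ with $\partial_\nu\varphi=0$ on $T$ and, for $v:=1-\rho^2\in[0,\tfrac7{16}]$, into $-\Delta_g v+2\ep^{-2}\rho^2 v=2|\nabla u|^2$ with $\partial_\nu v=0$ on $T$; note $e_\ep(u)=\tfrac12|\nabla\rho|^2+\tfrac12\rho^2|\nabla\varphi|^2+v^2/(4\ep^2)$.

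For the modulus I would reflect $v$ evenly across $T$ — legitimate because $v$, $|\nabla u|^2$ and $\rho^2$ are even there, and $g$, being $\tau$-invariant (as all metrics in $\cM^+$), extends to a Lipschitz metric on $B_{3/4}$ — and apply the devices adapted from \cite{bbh} in Appendix~A to the reflected equation, feeding in the potential bound \eqref{potentialdensitybound} of Corollary~\ref{monotonicitydecentralized}. This yields the control on $v$ and on $\nabla\rho$ that lets $\rho^2$ play the role of a uniformly positive coefficient whose deviation from $1$ is as small as we wish on slightly smaller half-balls.

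The heart of the argument is the bound $\sup_{B^+_{1/3}}e_\ep(u)\leq C\int_{B^+_1}e_\ep(u)$. A Bochner computation, using $\Delta_g u=-\ep^{-2}(1-|u|^2)u$ and $|u|\leq 1$, shows that $e_\ep(u)$ is a subsolution of $\Delta_g+c_\ep$ on $B_{3/4}^+$ with $c_\ep=C\bigl(1+\ep^{-2}(1-|u|^2)\bigr)\geq 0$: the only dangerous term, $-2\ep^{-2}(1-|u|^2)|\nabla u|^2$, comes with the favourable sign precisely because $u$ is $\CC$-valued. On $T$, using $\partial_\nu u=0$ (hence also $\partial_\nu|u|^2=0$), one computes $\partial_\nu e_\ep(u)=-\sum_{l=1,2}\langle\nabla_{\nabla_T u^l}\nu,\nabla_T u^l\rangle$, which is $\geq 0$ exactly by the convexity \eqref{convexboundary}; equivalently, the even reflection of $e_\ep(u)$ is a subsolution of the reflected, Lipschitz-coefficient operator. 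One can then run a De~Giorgi--Moser mean-value iteration for this (Neumann) subsolution up to $T$ — carried out at the natural scale $\ep$ and propagated to scale one via the monotonicity formula (Proposition~\ref{lipschitzmonotonicity}), so that the $\ep^{-2}$ in $c_\ep$ is rendered harmless — which, together with the modulus estimates of the previous step, gives $\sup_{B^+_{1/3}}e_\ep(u)\leq C\int_{B^+_{3/4}}e_\ep(u)\leq C\int_{B^+_1}e_\ep(u)$, i.e.\ \eqref{boundaryetaregconclusion}.

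The main obstacle is precisely this last step: making the coefficient $c_\ep$ harmless in the iteration forces the careful scale-$\ep$ bookkeeping through the monotonicity formula (as in the interior argument of \cite{bos}), and one must simultaneously ensure that the boundary contribution on $T$ carries the correct sign — which is why, unlike the plain $\eta$-ellipticity of Theorem~\ref{boundaryeta}, the improved estimate genuinely needs the convexity of $T$.
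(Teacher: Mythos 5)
Your outline correctly locates where the convexity of $T$ is used (the sign of the normal derivative $\partial_{\nu}e_{\ep}(u)$ on $T$, obtained from $\partial_\nu u=0$ and $-\langle\nabla_{\nabla u}\partial_n,\nabla u\rangle\geq 0$) and that the route goes through a Bochner estimate together with reflection, which is exactly what the paper does in Lemma~\ref{gammareg}. The first step (shrink $\eta_3',\ep_3'$ so Corollary~\ref{etacoro} gives $|u|\geq 3/4$) is also the paper's step.

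However, two points in the rest of your argument do not hold up. First, a sign confusion: in the Bochner identity for $e_\ep(u)$, the term $-2\ep^{-2}(1-|u|^2)|\nabla u|^2$ does \emph{not} come with a favourable sign; it is negative when $|u|\leq 1$. What is specific to $\CC$-valued $u$ is the appearance of the extra nonnegative term $\ep^{-2}|\nabla|u|^2|^2$ and the $\ep^{-4}(1-|u|^2)^2|u|^2$ term, which allow the negative term to be absorbed into $-C_n e_\ep(u)^2$. The paper's \eqref{GLbochner} is exactly this statement.

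Second, and more seriously, your scheme for ``rendering the $\ep^{-2}$ in $c_\ep$ harmless'' has a genuine gap. Reading the subsolution inequality as $\Delta_g e_\ep(u)+c_\ep e_\ep(u)\geq 0$ with $c_\ep\sim C\ep^{-2}$, a Moser/De~Giorgi mean-value inequality is only useful at scale $\ep$, giving $\sup_{B_\ep^+(x)}e_\ep(u)\leq C\ep^{-n}\int_{B_{2\ep}^+(x)}e_\ep(u)$; combining with monotonicity ($\ep^{2-n}\int_{B_{2\ep}^+}e_\ep(u)\leq C\int_{B_1^+}e_\ep(u)$) leaves a parasitic factor $\ep^{-2}$, not the clean estimate \eqref{boundaryetaregconclusion}. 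You cannot tame $c_\ep$ by bounding $1-|u|^2\lesssim\ep^2|\log\ep|$ via the \cite{bbh} devices either, because Lemma~\ref{bbhmax} needs as input a bound on $|\nabla\varphi|^2$ (the $F$ in the hypothesis) that is precisely what you are trying to prove --- that is the chicken-and-egg which Theorem~A.2 of \cite{bos} is designed to break. The paper therefore inserts an intermediate step, Lemma~\ref{boundarybos}: the \cite{bos} energy-decay inequality (A.28), transferred to the half-ball by reflection, shows that at a scale $r_0=\ep^{\beta_0/n}$ the scaled energy drops below the Chen--Struwe threshold $\gamma_0$. Only then is the Bochner/rescaling argument of Lemma~\ref{gammareg} invoked --- and that argument is not a mean-value iteration at scale $\ep$ but Schoen's rescaling by $e_0^{-1/2}$ (with $e_0$ the maximal weighted density), which is what actually turns $-C_n e_\ep(u)^2$ into $-C e_\ep(u)$ and permits a single half-ball mean-value inequality with $\ep$-independent constants. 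This two-tier structure (energy decay to reach $\gamma_0$, then Schoen rescaling) is the load-bearing part of the proof, and your proposal skips it.
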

We only prove the boundary version here since the proof of the interior version is essentially the same as in \cite{bos}. A key ingredient would be the following result, the interior version of which was first proven by Chen and Struwe (\cite{chenstruwe}).
\begin{lemm}
\label{gammareg}
There exist constants $\gamma_{0}$ and $C$, depending only on $n, \lambda$ and $\Lambda$, such that if $g$ is a metric in $\cM^{+}_{1, \lambda, \Lambda}$, $T$ is convex with respect to $g$ and $u:(B^{+}_{1}, g) \to \DD$ is a solution to \eqref{halfGL} satisfying 
\begin{equation}\label{gammasmall}
\int_{B^{+}_{1}}e_{\ep}(u)\dvol_{g} \leq \gamma_{0},
\end{equation}
then there holds
\begin{equation}\label{gammabound}
\sup\limits_{x \in B_{1/2}^{+}} e_{\ep}(u)(x) \leq C \int_{B^{+}_{1}} e_{\ep}(u)\dvol_{g}.
\end{equation}
\end{lemm}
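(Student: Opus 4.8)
The plan is to follow the Chen--Struwe small-energy regularity argument, adapted to the half-ball with the homogeneous Neumann condition, using the convexity of $T$ to control the boundary terms that arise in the differential inequality for the energy density. First I would establish a Bochner-type (monotonicity-of-density) inequality for $e_\ep(u)$. Writing the equation $-\Delta_g u = \ep^{-2}(1-|u|^2)u$ and differentiating, one gets, with $C$ depending only on $n,\lambda,\Lambda$ (through the curvature bounds implicit in $g\in\cM^+_{1,\lambda,\Lambda}$),
\begin{equation*}
\Delta_g e_\ep(u) \geq -C\,e_\ep(u) - C\,e_\ep(u)^2
\end{equation*}
in $B_1^+$ (the quadratic term coming from $|\nabla^2 u|^2 \gtrsim c\,e_\ep(u)^2$ when $e_\ep(u)$ is large, or more precisely from the standard Kato-type manipulation; the precise form I would use is the one that appears in \cite{chenstruwe} and its Riemannian versions in \cite{bbo}, \cite{stern2}). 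The key new point is the boundary behaviour: on $T$, since $\partial_n u = 0$ and, by condition (1)--(2) of Definition \ref{boundaryclass}, $\partial_n$ is the $g$-unit normal, one computes $\partial_n e_\ep(u) = \langle \nabla_{\partial_n}\nabla u, \nabla u\rangle = -\,\mathrm{II}(\nabla u,\nabla u)$ where $\mathrm{II}$ is the second fundamental form of $T$; the convexity hypothesis \eqref{convexboundary} (i.e. $T$ convex with respect to $g$) gives $\partial_n e_\ep(u) \leq 0$ on $T$, so $e_\ep(u)$ is a subsolution up to the boundary in the Neumann sense.

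Next I would run the standard rescaling/contradiction or direct iteration argument. Define $f(r) = (1-r)^2 \sup_{B_r^+} e_\ep(u)$ for $r\in[0,1]$, pick $r_0$ where the sup-weighted quantity is attained at some $x_0 \in \overline{B_{r_0}^+}$, set $e_0 = e_\ep(u)(x_0)$ and $\rho_0 = \tfrac12(1-r_0)$, and rescale $u$ on the ball (or half-ball, if $x_0$ is near $T$, reflecting first via Lemma \ref{GLreflection} so that the rescaled domain is a full ball) of radius $\rho_0 e_0^{-1/2}$ around $x_0$. On the rescaled picture the energy density at the center is $1$ and bounded by $4$ on the unit ball; if $\gamma_0$ is small, the total rescaled energy is $\leq C\gamma_0$, small. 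One then uses the subsolution property together with a mean-value inequality for subsolutions of $\Delta_g v \geq -Cv - Cv^2$ with Neumann condition on a (convex, hence still subsolution-preserving) piece of boundary — this is where I would invoke the reflection Lemma \ref{GLreflection} to reduce the boundary case to an interior mean-value inequality on $B_1$ for the Lipschitz metric $g\in\cM^+_{\lambda,\Lambda}$, or alternatively a direct Moser iteration respecting the Neumann condition — to conclude $1 = e_\ep(u)(x_0) \leq C \int_{B_1^+} e_\ep(u) \leq C\gamma_0$, a contradiction once $\gamma_0$ is chosen so that $C\gamma_0 < 1$. This forces $\rho_0^2 e_0$ bounded, i.e. $f(r_0)$ bounded by $C\gamma_0$, hence $\sup_{B_{1/2}^+} e_\ep(u) \leq 4 f(r_0) \leq C\int_{B_1^+} e_\ep(u)$, which is \eqref{gammabound}.

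I expect the main obstacle to be two intertwined technical points. First, the metric $g$ is only Lipschitz after the even reflection across $T$ (Remark \ref{singularmetric}), so the mean-value inequality for the reflected $e_\ep(\tilde u)$ must be one valid for subsolutions of a divergence-form inequality with merely Lipschitz (or $C^{1,\mu}$ on each side) coefficients — this is available (De Giorgi--Nash--Moser only needs boundedness and ellipticity of the coefficients, which \eqref{fermicomparable} and (l2) provide), but one must be careful that the reflected $e_\ep(\tilde u)$ is genuinely a distributional subsolution across $T$, which is exactly where the sign $\partial_n e_\ep(u) \leq 0$ from convexity enters, turning the naive jump in the conormal derivative into a favourable (nonpositive) measure. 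Second, deriving the Bochner inequality with the right dependence of constants on $n,\lambda,\Lambda$ requires tracking the curvature and its first derivative, which by Proposition \ref{fermiproperties} (with $k=1$) are controlled by the data defining $\cM^+_{1,\lambda,\Lambda}$; this bookkeeping is routine but must be done to justify the stated dependence of $\gamma_0$ and $C$. The rest — the scaling argument and the extraction of \eqref{gammabound} — is entirely standard once these two points are in place.
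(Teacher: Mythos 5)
Your overall strategy is essentially the paper's: a Bochner inequality for $e_\ep(u)$ combined with the Chen--Struwe rescaling around the point where $(1-|x|)^2 e_\ep(u)$ is maximal, with the boundary Neumann term controlled by the convexity of $T$, and a mean value inequality for subsolutions to close. The only structural difference is that the paper works directly with half-balls (citing the half-ball version of the mean value inequality, e.g.\ \cite{gt} Theorem 8.17), whereas you propose to reflect $e_\ep(\tilde u)$ evenly across $T$ and work on full balls; either is fine once the key sign is correct.

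That sign is where you have a genuine error. You write that $\partial_n e_\ep(u) \le 0$ on $T$ and that the reflected density then has a ``favourable (nonpositive)'' surface measure in its distributional Laplacian; both halves of this are wrong, and the error is not harmless. With $\partial_n = \partial/\partial x^n$ the inward normal to $B_1^+$ (the paper's convention, fixed by Definition \ref{boundaryclass}), one computes on $T$, using $\partial_n u = 0$ and the symmetry of the Hessian,
\begin{equation*}
\partial_n e_\ep(u) \;=\; \langle \nabla_{\partial_n}\nabla u, \nabla u\rangle \;=\; \langle \nabla_{\nabla u}\nabla u, \partial_n\rangle \;=\; -\,\langle \nabla_{\nabla u}\partial_n, \nabla u\rangle \;\ge\; 0,
\end{equation*}
the last step being exactly the convexity condition \eqref{convexboundary} applied to the tangent vector $\nabla u$. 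Consequently, the distributional Laplacian of the even reflection of $e_\ep(u)$ picks up the surface measure $2(\partial_n e_\ep(u))\,\delta_T$, which is \emph{nonnegative}; that is precisely what preserves the subsolution inequality $\Delta \tilde e \ge -C\tilde e - C\tilde e^2$ across $T$ and lets the mean value inequality apply. If $\partial_n e_\ep(u)$ were $\le 0$ as you claimed, the jump would have the wrong sign and the reflected density would \emph{not} be a subsolution across $T$, so the argument would collapse at exactly the step where convexity is supposed to rescue it. Once you flip the sign (and correspondingly call the jump measure nonnegative, hence favourable), the rest of your outline --- the rescaling, the two cases depending on whether $x_0$ is near $T$, and the contradiction giving $\rho_0^2 e_0 \le C\int_{B_1^+} e_\ep(u)$ --- matches the paper and is correct.

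Two small remarks. First, the paper phrases the boundary argument as an integral inequality (testing the Bochner inequality and controlling the boundary term directly) rather than as a distributional subsolution statement for the reflected density; your version is equivalent but you should state clearly that the Bochner inequality holds a.e.\ on each of $B_1^\pm$ (the metric is $C^{1,1}$ there, so $\Ric$ is $L^\infty$) and only the jump at $T$ requires convexity. Second, the dependence of the constants on $n,\lambda,\Lambda$ comes directly from the $C^{1,1}$ bound encoded in $\cM^+_{1,\lambda,\Lambda}$; invoking Proposition \ref{fermiproperties} is not needed here, since that result goes in the opposite direction (from curvature bounds to metric bounds).
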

\begin{proof}
The proof we give below is adapted from that of Theorem 2.2 \cite{schoenharmonic}.
We begin by noting the following B\^ochner-type inequality on $B_{1}^{+}$:
\begin{equation}\label{GLbochner}
\Delta e_{\ep}(u) \geq |\nabla^{2} u|^{2} + \Ric(\nabla u, \nabla u) + \ep^{-2}\left| \nabla |u|^{2} \right|^{2} - C_{n}e_{\ep}(u)^{2},
\end{equation}
where $C_{n}$ is a dimensional constant. This inequality makes sense almost everywhere since $g$ is $C^{1, 1}$ and since $u \in W^{3, 2}$ by elliptic regularity. Moreover, \eqref{GLbochner} can be easily derived from the Ginzburg-Landau equation following the computations in \cite{chenstruwe} or \cite{js}, the only difference being that a Ricci-term appears when one commutes the covariant derivative with the Laplace operator.

Next, we use Lemma \ref{GLreflection} to reflect $u$ evenly across $T$, let $r_{1} = 3/4$ and consider a point $x_{0} \in B^{+}_{r_{1}}$ where the following maximum is attained.
\begin{equation*}
\max\limits_{x \in B^{+}_{r_{1}}} \left( r_{1} - |x| \right)^{2}e_{\ep}(u)(x).
\end{equation*}
Following the proof of Theorem 2.2 of \cite{schoenharmonic}, we rescale by letting $e_{0} = e_{\ep}(u)(x_{0}), \rho_{0} = (r_{1} - |x_{0}|)/2$ and defining
\begin{equation*}
\tilde{\ep} = e_{0}^{1/2}\ep;\  \tilde{g}_{ij}(y) = g_{ij}(x_{0} + e_{0}^{-1/2}y) ;\ \tilde{u}(y) = u(x_{0} + e_{0}^{-1/2}y), \text{ for } y \in B_{e_{0}^{1/2}\rho_{0}}.
\end{equation*}
Then, as in \cite{schoenharmonic} there hold
\begin{equation}\label{rescaledGLbound}
e_{\tilde{\ep}}(\tilde{u})(0) = 1 \text{ and }e_{\tilde{\ep}}(\tilde{u})(y) \leq 4, \text{ for }y \in B_{e_{0}^{1/2}\rho_{0}}.
\end{equation}
To continue, we let $\psi(y) = x_{0} + e^{-1/2}_{0}y$ and define 
\begin{equation*}
B_{r, +}(y) \equiv B_{r}(y) \cap \psi^{-1}(B_{1}^{+}) \text{ and } \tilde{T}_{r}(y) \equiv B_{r}(y)\cap \psi^{-1}(T).
\end{equation*} 
Then we claim that for all $0 \leq \zeta \in W^{1, 2}_{0}(B_{e_{0}^{1/2}\rho_{0}})$, there holds
\begin{equation}\label{improvedetaclaim}
\int_{B_{e_{0}^{1/2}\rho_{0}, +}(0)} \langle \nabla \zeta, \nabla e_{\tilde{\ep}}(\tilde{u}) \rangle \dvol_{\tilde{g}} \leq C_{n, \lambda, \Lambda}\left( 1 + e_{0}^{-1} \right)\int_{B_{e_{0}^{1/2}\rho_{0}, +}(0)} \zeta e_{\tilde{\ep}}(\tilde{u}) \dvol_{\tilde{g}}.
\end{equation}
To see this, we first integrate by parts to get
\begin{equation}\label{bochnerbyparts}
\int_{B_{e_{0}^{1/2}\rho_{0}, +}(0)} \langle \nabla \zeta, \nabla e_{\tilde{\ep}}(\tilde{u}) \rangle \dvol_{\tilde{g}}  = -\int_{\tilde{T}_{e_{0}^{1/2}\rho_{0}}(0)} \zeta \partial_{n} e_{\tilde{\ep}}(\tilde{u}) d\sigma_{\tilde{g}} - \int_{B_{e_{0}^{1/2}\rho_{0}, +}(0)} \zeta \Delta e_{\tilde{\ep}}(\tilde{u})\dvol_{\tilde{g}}.
\end{equation}
For the boundary term on the right, we note that the following holds on $\tilde{T}_{e_{0}^{1/2}\rho_{0}}(0)$.
\begin{equation}
\partial_{n}e_{\tilde{\ep}}(\tilde{u}) = \langle \nabla_{\partial_{n}} \nabla u, \nabla u \rangle = \langle \nabla_{\nabla u}\nabla u, \partial_{n} \rangle = -\langle \nabla_{\nabla u}\partial_{n}, \nabla u \rangle \geq 0,
\end{equation}
where in the last inequality we used the convexity assumption, and the preceding equality follows from the Neumann boundary condition. On the other hand, for the second term on the right-hand side of \eqref{bochnerbyparts}, we use \eqref{GLbochner} (properly scaled), \eqref{rescaledGLbound} and the fact that $g \in \cM^{+}_{1, \lambda, \Lambda}$ to infer that
\begin{equation}
\Delta e_{\tilde{\ep}}(\tilde{u}) \geq - C_{n}\left( 1 + \frac{C_{n, \lambda, \Lambda}}{e_{0}} \right) e_{\tilde{\ep}}(\tilde{u}).
\end{equation}
Plugging the two inequalities above into \eqref{bochnerbyparts}, we get \eqref{improvedetaclaim} as claimed. To conclude, we write $x_{0} = (x_{0}', t_{0})$ and distinguish two cases according to the ratio of $t_{0}$ to $\rho_{0}$. 
\begin{enumerate}
\item[(1)] If $t_{0} \geq \rho_{0}/4$, then $B_{e_{0}^{1/2}\rho_{0}/4, +} = B_{e_{0}^{1/2}\rho_{0}/4}$ and we can finish the proof as in Theorem 2.2 of \cite{schoenharmonic}, with the help of \eqref{improvedetaclaim}. (See also the arguments for Case (2) below.) Note that our inequalities \eqref{energydensitybound} and \eqref{GLbochner} should replace inequalities (2.5) and (2.1) of \cite{schoenharmonic}, respectively.
\vskip 2mm
\item[(2)] If $t_{0} \leq \rho_{0}/4$, we in fact still follow \cite{schoenharmonic}, only that slightly more care is needed. For convenience, we introduce the notation $\hat{y} = \psi^{-1}\left((x_{0}', 0)\right) = (0, - e_{0}^{1/2}t_{0})$. We now show that $e_{0}^{1/2}\rho_{0} \leq 2$. Suppose not, then we have $e_{0}^{-1} \leq \rho_{0}^{2} \leq 1$. Hence by \eqref{improvedetaclaim} and the mean value inequality (see for instance \cite{gt}, Theorem 8.17, but with balls replaced by half-balls) applied to $B_{1, +}(\hat{y}) \subseteq B_{e_{0}^{1/2}\rho_{0}, +}(0)$, we have
\begin{align*}
1 = e_{\tilde{\ep}}(\tilde{u})(0) &\leq \sup\limits_{y \in B_{e_{0}^{1/2}t_{0}, +}(\hat{y})}(\tilde{u})(y) \leq C\int_{B_{1, +}(\hat{y})} e_{\tilde{\ep}}(\tilde{u}) \dvol_{\tilde{g}}\\
& = C \left( e_{0}^{-1/2} \right)^{2-n}\int_{B^{+}_{e_{0}^{-1/2}}(x'_{0})} e_{\ep}(u)\dvol_{g}\\
& \leq C \int_{B_{1}} e_{\ep}(u)\dvol_{g} \leq C\gamma_{0},
\end{align*}
where we used \eqref{energydensitybound} in the third inequality, and the last inequality follows from \eqref{gammasmall}. We then arrive at a contradiction if $\gamma_{0}$ is chosen small enough, and therefore $e_{0}^{1/2}\rho_{0} \leq 2$ as claimed. Now we use the mean value inequality again, but this time on $B_{e_{0}^{1/2}\rho_{0}/2, +}(\hat{y})$ to get
\begin{align*}
1 = e_{\tilde{\ep}}(\tilde{u})(0) &\leq \sup\limits_{y \in B_{e_{0}^{1/2}t_{0}, +}(\hat{y})} e_{\tilde{\ep}}(\tilde{u})(y) \leq C \left( e_{0}^{1/2}\rho_{0} \right)^{-n} \int_{B_{e_{0}^{1/2}\rho_{0}/2, +}(\hat{y})} e_{\tilde{\ep}}(\tilde{u}) \dvol_{\tilde{g}}\\
&= C\left( e_{0}^{1/2}\rho_{0} \right)^{-2} \rho_{0}^{2-n} \int_{B^{+}_{\rho_{0}/2}(x_{0}')} e_{\ep}(u)\dvol_{g}\leq Ce_{0}^{-1}\rho_{0}^{-2} \int_{B_{1}}e_{\ep}(u)\dvol_{g}.
\end{align*}
Thus we conclude that 
\begin{equation}
\left( r_{1} - |x_{0}| \right)^{2}e_{\ep}(u)(x_{0}) = 4\rho_{0}^{2}e_{0} \leq C\int_{B_{1}^{+}}e_{\ep}(u)\dvol_{g},
\end{equation}
which immediately implies \eqref{gammabound} by our choice of $x_{0}$.
\end{enumerate}
\end{proof}

A second ingredient we need for the proof of Theorem \ref{boundaryetareg} is a boundary version of Theorem A.2 of \cite{bos}.
\begin{lemm}\label{boundarybos}
There exists constants $\tilde{\ep}, \sigma_{0}, \alpha_{0}$ and $C$, depending only on $n, \lambda$ and $\Lambda$ such that if $g \in \cM_{1, \lambda, \Lambda}^{+}$, $T$ is convex with respect to $g$, and $u: (B_{1}^{+}, g) \to \DD$ is a solution to \eqref{halfGL} with $\ep < \tilde{\ep}$, satisfying
\begin{equation}
\label{alphaenergy}
\int_{B_{1}^{+}} e_{\ep}(u)\dvol_{g} \leq \ep^{-\alpha_{0}} \text{, and }
\end{equation}
\begin{equation}
\label{sigmaclose}
|u| \geq 1 - \sigma_{0} \text{ on }B_{1}^{+},
\end{equation}
then the gradient estimate \eqref{gammabound} holds.
\end{lemm}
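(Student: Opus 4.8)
The plan is to adapt the proof of Theorem A.2 of \cite{bos} to the boundary, carrying out the argument directly on the half‑ball $B_1^+$ rather than after reflection — since, as in Remark \ref{singularmetric}, the reflected metric is only Lipschitz across $T$ and the Bochner/elliptic machinery needs the one‑sided $C^{1,1}$ regularity available on $B_1^+$ — and exploiting the convexity of $T$ for the boundary terms exactly as in Lemma \ref{gammareg}. Since $|u|\geq 1-\sigma_0\geq \tfrac12$ on $B_1^+$ and $B_1^+$ is simply connected, one may write $u=\rho e^{i\varphi}$ with $\rho=|u|$ and $\varphi$ a smooth single‑valued phase. Then $\rho$ solves $\Delta_g\rho=\rho|\nabla\varphi|^2-\ep^{-2}(1-\rho^2)\rho$ with $\partial_n\rho=0$ on $T$; the $1$‑form $w:=u\times du=\rho^2 d\varphi$ is coclosed ($d^{\ast}w=0$) with vanishing normal part on $T$; and $dw=d(\log\rho^2)\wedge w$, so that $|dw|\leq C|\nabla\rho|\,|w|$. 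Finally $e_\ep(u)=\tfrac12|\nabla\rho|^2+\tfrac12\rho^{-2}|w|^2+\tfrac{(1-\rho^2)^2}{4\ep^2}$, so it suffices to bound each of the three pieces pointwise on $B_{1/2}^+$ by $C\int_{B_1^+}e_\ep(u)\dvol_g$.

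First I would control the modulus–potential part. Writing $\sigma:=1-|u|^2\in[0,\text{small}]$, one has $-\Delta_g\sigma+2\ep^{-2}(1-\sigma)\sigma=2|\nabla u|^2\geq 0$ with $\partial_n\sigma=0$ on $T$. Testing against $\sigma$ times a cutoff, and using $|\nabla u|^2=|\nabla\rho|^2+\rho^2|\nabla\varphi|^2\leq C e_\ep(u)$ together with $1-\sigma\geq\tfrac12$, gives $\int_{B_{3/4}^+}\!\big(|\nabla\rho|^2+\ep^{-2}\sigma^2\big)\dvol_g\leq C\sigma_0\int_{B_1^+}e_\ep(u)\dvol_g$ (here one uses $|\nabla\rho|\leq|\nabla\sigma|$ since $\rho\geq\tfrac12$); thus the ``vortex part'' of the energy is a small fraction of the total. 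Then I would upgrade to a pointwise bound, $\sup_{B_{2/3}^+}\big(\ep^{-2}\sigma^2+|\nabla\rho|^2\big)\leq C\int_{B_1^+}e_\ep(u)\dvol_g$, via a De Giorgi/Moser iteration on $\sigma$ and on the differentiated equation for $\nabla\rho$, which remains valid up to $T$ because of the homogeneous Neumann condition and — for the gradient statement — the convexity of $T$, just as in the proof of Lemma \ref{gammareg}. The hypothesis $\int_{B_1^+}e_\ep(u)\dvol_g\leq\ep^{-\alpha_0}$ enters here: combined with the crude bound $|\nabla u|\leq C\ep^{-1}$ of Lemma \ref{GLbasicbounds} and the monotonicity formula \eqref{lipschitzmonotonicity1}, it furnishes a seed $L^\infty$‑control of $e_\ep(u)$ at scale $\sim\ep$ from which to launch the iteration, with $\alpha_0$ taken small enough that the attendant error terms stay negligible.

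Next I would control the phase part. With $\nabla\rho$ now bounded, $w=u\times du$ satisfies the Hodge‑type system $\Delta_g w=d^{\ast}\big(d(\log\rho^2)\wedge w\big)$ on $B_1^+$ (modulo the Weitzenböck curvature term, harmless since $g$ is $C^{1,1}$ on $B_1^+$) with the boundary conditions inherited from $\partial_n u=0$, namely $d^{\ast}w=0$ and vanishing normal trace of $w$ on $T$. Its right‑hand side is bounded in $L^2$ by $C\,(\sup_{B_{2/3}^+}|\nabla\rho|)\,\|w\|_{L^2(B_{3/4}^+)}$, which by the previous step is $\leq C\int_{B_1^+}e_\ep(u)\dvol_g$. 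Boundary elliptic estimates for this mixed/Neumann‑type problem — using the Green‑matrix bounds of Appendix B and the structure of $\cM^+_{1,\lambda,\Lambda}$, exactly as in the treatment of $\varphi_1$ in Proposition \ref{energydecay} — then yield $\sup_{B_{1/2}^+}|w|^2\leq C\int_{B_1^+}e_\ep(u)\dvol_g$. Combining this with the modulus estimate and the pointwise decomposition of $e_\ep(u)$ above gives \eqref{gammabound}.

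The main obstacle is the pair of boundary estimates near $T$ in the last two paragraphs — the boundary Moser iteration for $\sigma$ and $\nabla\rho$, and the boundary elliptic estimate for the phase form $w$ — since these are precisely the points that go beyond the interior Theorem A.2 of \cite{bos}: one must handle Neumann‑type boundary conditions with a metric that is only one‑sidedly $C^{1,1}$, and, as in Theorem \ref{boundaryeta}, the non‑smoothness that would be created by reflection is exactly why the Green‑matrix estimates of Appendix B (rather than classical ones) must be used, while the convexity of $T$ is what makes the boundary terms in the Bochner‑type integrations have the right sign. An alternative and perhaps cleaner route would bypass the separate estimate of $w$: run the Chen–Struwe blow‑up of Lemma \ref{gammareg} directly, using the a priori bound $\int_{B_{3/4}^+}\big(|\nabla\rho|^2+\ep^{-2}\sigma^2\big)\leq C\sigma_0\int_{B_1^+}e_\ep(u)$ from the second paragraph to force any blow‑up limit to be a finite‑energy harmonic map into $S^1$ on $\RR^n$ or the half‑space with Neumann data, hence constant, contradicting $e(u_\infty)(0)=1$; making that blow‑up argument's energy bounds effective — in particular ruling out the infinite‑energy limit allowed by the mere $\ep^{-\alpha_0}$ bound — is where the $|u|\geq 1-\sigma_0$ hypothesis does its essential work.
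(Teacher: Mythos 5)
Your proposal diverges from the paper's actual proof on two points, one of which is a genuine misconception and the other a missing reduction that creates a real gap.

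On reflection: the paper \emph{does} reflect $u$ across $T$ here, just as in Lemma~\ref{gammareg}. Your worry that the reflected metric is only Lipschitz across $T$ is correct but not an obstruction — the Lipschitz class $\cM_{\lambda,\Lambda}$ is enough for the monotonicity formulae (Proposition~\ref{lipschitzmonotonicity}) and for the Pohozaev/energy-decay computations borrowed from \cite{bos}, which are divergence-form integral identities. The one-sided $C^{1,1}$ regularity on $B_1^+$ is only needed for the Bochner formula, and that is already encapsulated in the (already proven) Lemma~\ref{gammareg}. So starting from the position ``we must not reflect'' leads you away from the actual argument.

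The more serious gap is that you never bring the scale-invariant energy below the threshold $\gamma_0$ of Lemma~\ref{gammareg}, which is what the paper's proof is entirely about. The paper adapts inequality (A.28) of \cite{bos} — an energy \emph{decay} estimate of the form $r_0^{2-n}\int_{B_{r_0}}e_\ep(u)\leq C(r_0^2+r_0^{2-n}\ep^{\beta_0})\int_{B_1}e_\ep(u)$, valid because $|u|\geq 1-\sigma_0$ — and then picks $r_0=\ep^{\beta_0/n}$, which, with $\alpha_0<2\beta_0/n$ and the raw bound $\int_{B_1}e_\ep(u)\leq\ep^{-\alpha_0}$, drives the scale-invariant energy below $\gamma_0$ at scale $r_0$; from there one just invokes Lemma~\ref{gammareg} (or its interior analogue) on $B_{r_0/2}^{+}$ or $B_{r_0/4}$ depending on whether $r_0>4t_0$. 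Your route replaces this with a direct modulus/phase estimate on the unit half-ball, but both the Moser iteration for $\sigma$ and $\nabla\rho$ and the ``cleaner'' Chen--Struwe blow-up cannot close under the mere hypothesis $\int e_\ep\leq\ep^{-\alpha_0}$: the iteration needs the source $|\nabla u|^2$ in the $\sigma$-equation to already be pointwise small (circular), and the Chen--Struwe contradiction in Lemma~\ref{gammareg} (``if $e_0^{1/2}\rho_0>2$ then $1\leq C\int_{B_1}e_\ep\leq C\gamma_0$'') uses precisely the smallness of $\int e_\ep$ that you do not have at the unit scale. Your $L^2$-level observation $\int_{B_{3/4}^+}(|\nabla\rho|^2+\ep^{-2}\sigma^2)\leq C\sigma_0\int e_\ep$ is in the right spirit — it is essentially the input to the BOS decay estimate — but by itself it does not give you the $\gamma_0$-smallness at any scale, nor a pointwise bound. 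In short, the missing idea is the decay-then-localize step; once you have it, there is no need to re-derive the pointwise estimate by hand, since Lemma~\ref{gammareg} already provides it.
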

\begin{proof}[Sketch of proof]
We begin by reflecting $u$ using Lemma \ref{GLreflection}. Then the arguments for inequality (A.28) in \cite{bos} carries over with only minor modifications to show that when $\sigma_{0}$ is small enough, we have, for $x_{0} \in B_{1/2}$ and $r_{0} \in (0, 1/8)$, 
\begin{align}\label{boscopy}
r_{0}^{2-n}\int_{B_{r_{0}}} e_{\ep}(u)\dvol_{g} &\leq C\left( r_{0}^{2} + r_{0}^{2-n}\ep^{\beta_{0}} \right) \int_{B_{1}}e_{\ep}(u)\dvol_{g}\\
\nonumber & \leq C\left( r_{0}^{2} + r_{0}^{2-n}\ep^{\beta_{0}} \right)\ep^{-\alpha_{0}},
\end{align}
where $C$ and $\beta_{0}$ are some constants depending at most on $n, \lambda$ and $\Lambda$. Choosing $r_{0} = \ep^{\beta_{0}/n}$ and requiring that $\alpha_{0} < 2\beta_{0}/n$, we see that for $\ep$ small enough, there holds
\begin{equation}\label{gammaassumption}
r_{0}^{2-n}\int_{B_{r_{0}}(x_{0})}e_{\ep}(u)\dvol_{g} \leq \gamma_{0}/4^{n-2}.
\end{equation}
As in the previous proof, we now let $x_{0} = (x_{0}', t_{0})$ and distinguish two cases.
\begin{enumerate}
\item[(1)] If $r_{0} \leq 4t_{0}$, then obviously $B_{r_{0}/4}(x_{0}) \subseteq B_{1}^{+}$. Moreover, from \eqref{gammaassumption} we see that
\begin{equation}
\left( \frac{r_{0}}{4} \right)^{2-n}\int_{B_{r_{0}/4}(x_{0})} e_{\ep}(u)\dvol_{g} \leq \gamma_{0}.
\end{equation}
An argument similar to the proof of Lemma \ref{etadecentralized} then allows us to apply Lemma \ref{gammareg}, or rather its interior version, on $B_{r_{0}/4}(x_{0})$ to infer that
\begin{equation}
r_{0}^{2}e_{\ep}(u)(x_{0}) \leq Cr_{0}^{2-n}\int_{B_{r_{0}/4}(x_{0})}e_{\ep}(u)\dvol_{g} \leq Cr_{0}^{2}\int_{B_{1}}e_{\ep}(u)\dvol_{g},
\end{equation}
and we are done. Note that we used \eqref{boscopy} in the second inequality.
\vskip 2mm
\item[(2)] If $r_{0} > 4t_{0}$, then we have $B^{+}_{r_{0}/2}(x_{0}') \subseteq B_{r_{0}}(x_{0})$ and \eqref{gammaassumption} yields
\begin{equation}
\left( \frac{r_{0}}{2} \right)^{2-n}\int_{B_{r_{0}/2}^{+}(x_{0}')}e_{\ep}(u)\dvol_{g} \leq \gamma_{0},
\end{equation}
and therefore we may use Lemma \ref{gammareg} on $B_{r_{0}/2}^{+}(x_{0}')$ to get
\begin{align}
r_{0}^{2} e_{\ep}(u)(x_{0}) &\leq r_{0}^{2}\sup\limits_{x \in B_{r_{0}/4}(x_{0}')} e_{\ep}(u)(x) \leq Cr_{0}^{2-n}\int_{B^{+}_{r_{0}/2}(x_{0}')}e_{\ep}(u)\dvol_{g} \\
\nonumber &\leq Cr_{0}^{2}\int_{B_{1}} e_{\ep}(u)\dvol_{g},
\end{align}
and again we're done upon dividing through by $r_{0}^{2}$.
\end{enumerate}
\end{proof}
\begin{proof}[Proof of Proposition \ref{boundaryetareg}]
By Corollary \ref{etacoro}, if we choose $\eta_{3}'$ and $\ep_{3}'$ small enough, then $u$ verifies \eqref{sigmaclose} on $B_{3/4}^{+}$ for some $\sigma_{0} < 1/4$, which verifies the first assertion. Moreover, decreasing $\ep_{3}'$ if necessary, we can also ensure that $\left| \log\ep \right| < \ep^{-\alpha_{0}}$ for all $\ep < \ep_{3}'$. Therefore we may apply Lemma \ref{boundarybos} on $B_{3/4}^{+}$ to conclude that 
\begin{equation}
\sup\limits_{x \in B_{3/8}^{+}} e_{\ep}(u)(x) \leq C\int_{B_{3/4}^{+}} e_{\ep}(u)\dvol_{g},
\end{equation}
which certainly implies the desired estimate, \eqref{boundaryetaregconclusion}. 
\end{proof}
\subsection{Higher-order estimates}
Starting from Propositions \ref{interioretareg} and \ref{boundaryetareg}, we may derive higher-order interior and boundary estimates by an inductive argument adapted from \cite{bbh}. 

\begin{thm}[Interior estimates]
\label{interiorhigher}
Suppose that $g$ is a smooth metric on $B_{1}$ and that $u : (B_{1}, g) \to \DD$ is a solution to \eqref{GLequation} satisfying 
\begin{equation}\label{energyeta}
\int_{B_{1}} e_{\ep}(u)\dvol_{g} < \eta\left| \log\ep \right|,
\end{equation}
for some $\eta < \eta_{3}$ and $\ep < \ep_{3}$, where $\eta_{3}, \ep_{3}$ are determined by Theorem \ref{interioretareg}. Then, letting $X \equiv \ep^{-2}(1 - |u|)$ and writing $u$ in polar coordinates as $u = \rho e^{i\varphi}$, we have
\begin{align}
|\nabla \varphi|_{l, 0; B_{3^{-3l-1}}} &\leq C_{l}\left| \log\ep \right|^{1/2}, \label{inductivephase}\\
|X|_{l, 0; B_{3^{-3l-2}}} &\leq C_{l} \left| \log\ep  \right|, \label{inductivemodulus}\\
|\nabla \rho|_{l, 0; B_{3^{-3l-3}}} & \leq C_{l}\ep^{a_{l}} \left| \log\ep \right|^{b_{l}}, \label{inductivegradmodulus}
\end{align}
for all $l \geq 0$, where $a_{l}$ and $b_{l}$ are constants depending only on $n$, $l$ and $g$.
\end{thm}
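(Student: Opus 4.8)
The plan is to prove Theorem~\ref{interiorhigher} by induction on $l$, following the scheme of \cite{bbh} adapted to non-flat metrics and to the Ginzburg-Landau setting where $|u|$ is merely close to, not equal to, $1$. The base case $l=0$ is essentially Proposition~\ref{interioretareg}: since $|u|\geq 3/4$ on $B_{3/4}$, we may write $u=\rho e^{i\varphi}$ with $\varphi$ single-valued there, and the sup-bound $e_{\ep}(u)\leq C\int_{B_1}e_{\ep}(u)\dvol_g \leq C\eta|\log\ep|$ on $B_{1/2}$ gives \eqref{inductivephase} and \eqref{inductivegradmodulus} at level $l=0$ (for the latter, combine $|\nabla\rho|\leq|\nabla u|$ with the improved decay of $|\nabla|u||$ coming from the $\eta$-ellipticity estimates, i.e. the device of Appendix~A). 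For \eqref{inductivemodulus} at $l=0$ one uses the equation for $\rho$: since $u$ solves \eqref{GLequation}, $\rho$ satisfies $-\Delta_g\rho + \rho|\nabla\varphi|^2 = \ep^{-2}(1-\rho^2)\rho$, so $X=\ep^{-2}(1-\rho^2)$ (up to the bounded factor $\rho\in[3/4,1]$) obeys a second-order equation with right-hand side controlled by $|\nabla\varphi|^2$ and lower-order terms; feeding in \eqref{inductivephase} and elliptic estimates yields $|X|_{0;B_{1/3}}\leq C|\log\ep|$.

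For the inductive step, I would assume \eqref{inductivephase}--\eqref{inductivegradmodulus} hold at level $l-1$ and upgrade to level $l$ in the order phase, then modulus, then gradient of modulus. The phase $\varphi$ satisfies an elliptic equation of the form $\Div_g(\rho^2\nabla\varphi)=0$, equivalently $-\Delta_g\varphi = 2\rho^{-1}\langle\nabla\rho,\nabla\varphi\rangle$; the right-hand side involves $\nabla\rho$ (small, by the level-$(l-1)$ bound \eqref{inductivegradmodulus}) times $\nabla\varphi$ (bounded by $C|\log\ep|^{1/2}$). Applying interior Schauder estimates on the shrinking balls $B_{3^{-3l-1}}\subset B_{3^{-3(l-1)-1}}$, one differentiates the equation $l$ times and absorbs the lower-order contributions using the inductive hypothesis, obtaining \eqref{inductivephase} at level $l$. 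Next, $X$ (equivalently $1-|u|^2$) solves $-\Delta_g X + (\text{coeff})X = F$, where $F$ is a polynomial expression in $\nabla\varphi$, $X$, $\nabla\rho$ and the metric, and the coefficients are controlled once $|u|\geq 3/4$; differentiating this equation $l$ times, the highest-order new term is $|\nabla^l(\nabla\varphi\cdot\nabla\varphi)|$, which by the (already upgraded) level-$l$ bound \eqref{inductivephase} is $O(|\log\ep|)$, while all remaining terms are lower order — this gives \eqref{inductivemodulus} at level $l$. Finally, $\nabla\rho$ is estimated by combining the equation $-\Delta_g\rho = \ep^{-2}(1-\rho^2)\rho - \rho|\nabla\varphi|^2$ with the now-available bound on $X$ and on derivatives of $X$ and $\varphi$; the crucial point preserving the decay factor $\ep^{a_l}$ is that $1-\rho = \ep^2 X/(1+\rho)$, so $\nabla\rho$ inherits a factor $\ep^2$ modulo derivatives of $X$, which carry at most $|\log\ep|$ powers — this is exactly the mechanism in \cite{bbh} and yields \eqref{inductivegradmodulus} with updated exponents $a_l, b_l$.

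The main obstacle I anticipate is bookkeeping the interplay between the three quantities on nested balls so that no circularity arises: the estimate for $\nabla\varphi$ at level $l$ needs $\nabla\rho$ at level $l-1$; the estimate for $X$ at level $l$ needs $\nabla\varphi$ at level $l$; and the estimate for $\nabla\rho$ at level $l$ needs $X$ at level $l$. The choice of radii $3^{-3l-1}, 3^{-3l-2}, 3^{-3l-3}$ is engineered precisely so that each application of interior Schauder or Calderón-Zygmund estimates loses a definite factor of $3$ in the radius and there is always room to shrink; one must check that the constants $C_l$ (depending on $l$, $n$, and $g$ through its derivatives up to order $l$, which are finite since $g$ is smooth here) remain finite at each stage. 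A secondary technical point is justifying that the polar decomposition $u=\rho e^{i\varphi}$ with single-valued smooth $\varphi$ persists on all the balls in play — this follows because $|u|\geq 3/4$ on $B_{3/4}$ and the balls $B_{3^{-3l-k}}$ are simply connected and contained in $B_{3/4}$. I would also remark that, as in \cite{bbh}, Appendix~A of the present paper supplies the precise elliptic lemmas (multiplicative Schauder estimates, and the refined control of $\nabla|u|$) in the form needed here, so the inductive step reduces to repeated invocation of those lemmas together with the structure equations for $\rho$, $\varphi$, and $X$.
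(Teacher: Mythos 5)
Your overall strategy (induction in $l$, exploiting the structure of the coupled $\rho$--$\varphi$--$X$ system, on nested shrinking balls) matches the paper's, but there is a concrete gap in the mechanism you invoke to bound $X$, and this is precisely the point where the argument either works or produces spurious $\ep^{-2}$ factors.

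You write that, once the RHS of the $X$-equation is controlled by $C|\log\ep|$, "elliptic estimates yield $|X|_{0;B_{1/3}}\leq C|\log\ep|$." This does not follow from interior Schauder or Calder\'on--Zygmund estimates: the equation has the degenerate form $-\ep^{2}\Delta_{g}X + cX = F$ with $c\approx 2$ and $\|F\|_{\infty}\leq C|\log\ep|$, and generic interior elliptic estimates rescaled to the $\ep$-scale would give $|X|\leq C\ep^{-2}|\log\ep|$, not $C|\log\ep|$. The paper's device is a BBH-style comparison principle (Lemma~\ref{bbhmax} in Appendix~A): one constructs a supersolution $w = F/2 + Ae^{\sqrt{\lambda}(|x|^{2}-R^{2})/(2\ep R)}$ for the operator $-\ep^{2}\Delta_{g} + 2$ and concludes $X\leq w\leq C|\log\ep|$ on an interior ball. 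The $\ep^{-2}$ factor is killed by the zeroth-order coefficient $2$ together with the maximum principle, not by elliptic regularity. The same issue recurs in your inductive step: after differentiating the $X$-equation $\alpha$ times with $|\alpha|=l$, the correct conclusion $|\partial^{\alpha}X|\leq C|\log\ep|$ again requires applying Lemma~\ref{bbhmax} to $\partial^{\alpha}X$, and this forces the paper's multi-stage bootstrap: first Schauder gives $|X|_{l+1}\leq C\ep^{-2}|\log\ep|$, then the multiplicative Schauder estimate (Lemma~\ref{interiormulti}) improves this to $C\ep^{-1}|\log\ep|$, then a further bootstrap produces a $C^{l+2}$ bound of size $C\ep^{-2}|\log\ep|$ needed to control the commutator $[\partial^{\alpha},\Delta_{g}]X$, and only then can the rewritten equation $-\ep^{2}\Delta\partial^{\alpha}X + 2\partial^{\alpha}X = \ep^{2}[\partial^{\alpha},\Delta]X + \partial^{\alpha}R$ be fed into Lemma~\ref{bbhmax}. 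Your proposal skips this cascade entirely.

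Two smaller points. First, your base case for \eqref{inductivegradmodulus} at $l=0$ relies on "$|\nabla\rho|\leq|\nabla u|$ with the improved decay of $|\nabla|u||$ coming from the $\eta$-ellipticity estimates," but Proposition~\ref{interioretareg} only gives $|\nabla u|^{2}\leq C|\log\ep|$, so this route yields $|\nabla\rho|\leq C|\log\ep|^{1/2}$, not the required $C\ep|\log\ep|$. The paper instead feeds $|\Delta\rho|\leq C|\log\ep|$ and $|1-\rho|\leq C\ep^{2}|\log\ep|$ into the \emph{multiplicative} Schauder estimate (Lemma~\ref{interiormulti}), whose bound $|Du|_{0}\leq C(|u|_{0}+|u|_{0}^{1/2}|\Delta u|_{0}^{1/2})$ is exactly what transfers the $\ep^{2}$ from the sup-bound on $1-\rho$ to a factor $\ep$ in $|\nabla\rho|$. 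Second, the paper's inductive ordering is $\varphi$ (partial, H\"older) $\to$ $\rho$ (H\"older improvement) $\to$ $\varphi$ (full) $\to$ $X$ (multi-stage) $\to$ $\rho$ (full); your three-step order $\varphi\to X\to\rho$ drops the intermediate H\"older upgrade of $\nabla\rho$, which the paper uses to get a $C^{l,1/2}$ bound on $\Delta\varphi$ before invoking Schauder a second time. This is a bookkeeping issue rather than a fatal one, but it does need to be checked that the hypotheses of each Schauder application hold in the right H\"older norm, and as written your proposal has not done so.
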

\begin{rmk}\label{phiSV}
Note that, under the assumptions of Proposition \ref{interiorhigher}, we have $|u| > 3/4$ on $B_{3/4}$, and hence $\varphi$ is single-valued on $B_{3/4}$. Also, without loss of generality we require that $\fint_{B_{1}}\varphi \in [0, 2\pi)$. Then it's clear that $|\varphi|_{0; B_{1/3}} \leq 2\pi + |\nabla \varphi|_{0; B_{1/3}}$.
\end{rmk}

\begin{proof}[Proof of Theorem \ref{interiorhigher}]
The proof will proceed by induction on $l$. When $l = 0$, the estimate \eqref{inductivephase} follows from \eqref{energyeta} and Proposition \ref{interioretareg}. To see \eqref{inductivemodulus}, we note that $\varphi$ and $\rho$ satisfy
\begin{equation}\label{phiPDE}
\Delta \varphi + 2 \rho^{-1} \langle \nabla \rho, \nabla \varphi \rangle = 0.
\end{equation}
\begin{equation}\label{rhoPDE}
-\Delta \rho + \rho |\nabla\varphi|^{2} = \ep^{-2}\left( 1 - \rho^{2} \right)\rho.
\end{equation}
A direct computation using \eqref{rhoPDE} then shows that
\begin{equation}\label{XPDE}
-\ep^{2}\Delta X + \rho(1 + \rho) X =  \rho |\nabla\varphi|^{2}.
\end{equation}
Therefore, from \eqref{inductivephase} and the fact that $1 \geq \rho > 3/4$ on $B_{3/4}$ we infer that
\begin{equation*}
-\ep^{2}\Delta X + X \leq C\left| \log\ep \right|, \text{ on }B_{1/3}
\end{equation*}
Therefore, with the help of Lemma \ref{bbhmax}, we easily get \eqref{inductivemodulus}. Next, we return to the equation \eqref{rhoPDE} and observe that, by \eqref{inductivephase} and \eqref{inductivemodulus}, we have
\begin{equation*}
|\Delta \rho|_{0; B_{3^{-2}}} \leq C\left| \log\ep \right| \text{ and } |1 - \rho|_{0; B_{3^{-2}}} \leq C\ep^{2}\left| \log\ep \right|.
\end{equation*}
Combining this with Lemma \ref{interiormulti} applied to $1 - \rho$, we obtain \eqref{inductivegradmodulus} with $\alpha_{0} = \beta_{0} = 1$.

For the inductive step, we assume the desired estimates hold for $0, 1, \cdots, l$ and prove them for $l + 1$. For later use, we fix radii $3^{-3l-4} < r_{3} < r_{2} < r_{1} < 3^{-3l-3}$. Also, $C$, $a$ and $b$ will denote positive constants that depend only on $n$, $l$ and $g$, and may change from line to line. To begin, we see from \eqref{phiPDE} and the induction hypotheses that
\begin{equation}\label{Deltaphicl}
|\Delta \varphi |_{l; B_{3^{-3l-3}}} \leq C\ep^{a} \left| \log\ep \right|^{b}.
\end{equation}
By \eqref{Deltaphicl} and the interior Schauder estimates, we have (recall Remark \ref{phiSV})
\begin{equation}\label{philplus1}
| \varphi |_{l+1, 1/2; B_{r_{1}}} \leq C\left( |\varphi |_{0; B_{1/3}} + |\Delta \varphi |_{l; B_{3^{-3l-3}}} \right) \leq C\left| \log \ep \right|^{1/2}.
\end{equation}
On the other hand, taking a multi-index $\alpha$ with length $l$ and apply $\partial^{\alpha}$ to \eqref{rhoPDE}, then we get
\begin{equation}\label{rhopartialPDE}
-\Delta \partial^{\alpha}\rho + \partial^{\alpha}\left( \rho |\nabla \varphi|^{2} \right) = \partial^{\alpha}\left( X(1 + \rho)\rho \right) + \left[ \partial^{\alpha}, \Delta \right]\rho,
\end{equation}
where $\left[ \partial^{\alpha}, \Delta \right]$ denotes the commutator $\partial^{\alpha}\Delta - \Delta \partial^{\alpha}$. Moreover, a straightforward computation shows that, pointwise, there holds
\begin{equation}\label{commutatorpointwise}
\left| [\partial^{\alpha}, \Delta] \rho \right| \leq C\sum\limits_{0 \leq j \leq |\alpha|}|\nabla^{j + 1} \rho |,
\end{equation}
where $C$ depends only on $n, |\alpha|$ and the $C^{|\alpha|+1}$-norm of $g$. Note that the right-hand side of the above inequality can be controlled with the help of \eqref{inductivegradmodulus}. Using this and the induction hypotheses in \eqref{rhopartialPDE}, we infer that 
\begin{equation}
|\Delta \partial^{\alpha}\rho|_{0; B_{3^{-3l-3}}} \leq C\left| \log\ep \right|.
\end{equation}
Since $|\partial^{\alpha}(1 - \rho)|_{0; B^{-3l-2}} \leq C\ep^{2}\left| \log\ep \right|$ by \eqref{inductivemodulus}, Lemma \ref{interiormulti} then yields
\begin{align}\label{rholplus1}
\nonumber &|\nabla \partial^{\alpha}\rho|_{0, 1/2; B_{r_{1}}} \leq C \ep^{a}\left| \log\ep \right|^{b}, \text{ for all }|\alpha| = l. \\
&\Longrightarrow |\nabla \rho|_{l, 1/2; B_{r_{1}}} \leq C\ep^{a}\left| \log\ep \right|^{b}.
\end{align}
Now, using \eqref{philplus1} and \eqref{rholplus1} in \eqref{phiPDE} yields
\begin{equation}
|\Delta \varphi |_{l, 1/2; B_{r_{1}}} \leq C\ep^{a}\left| \log \ep \right|^{b},
\end{equation}
from which we get that 
\begin{equation}\label{phiclplus1}
| \varphi |_{l+2, 1/2; B_{r_{2}}} \leq C \left( |\varphi |_{0; B_{1/3}} + |\Delta\varphi |_{l, 1/2; B_{r_{1}}} \right) \leq C \left| \log \ep \right|^{1/2}.
\end{equation}
In particular, this proves \eqref{inductivephase} with $l+1$ in place of $l$, since $r_{2} > 3^{-3l-4}$.

Next we prove \eqref{inductivemodulus} for $l + 1$. To that end, notice that by the equation \eqref{XPDE} and the induction hypotheses, we have
\begin{equation}\label{DeltaXcl}
|\Delta X |_{l; B_{3^{-3l-2}}} \leq C \ep^{-2} \left| \log\ep \right|.
\end{equation}
Hence the interior Schauder estimates yield
\begin{equation}\label{Xclplus1preliminary}
| X |_{l+1; B_{3^{-3l-3}}} \leq C \ep^{-2}\left| \log \ep \right|.
\end{equation}
Taking a multi-index $\alpha$ with $|\alpha| = l$ and applying $\partial^{\alpha}$ to both sides of \eqref{XPDE}, we see that
\begin{align}\label{DeltapartialalphaX}
| \Delta \partial^{\alpha}X |_{0; B_{3^{-3l-3}}} &\leq | \partial^{\alpha}\Delta X |_{0; B_{3^{-3l-3}}} + |[\partial^{\alpha}, \Delta]X|\\
\nonumber &\leq | \partial^{\alpha}\Delta X |_{0; B_{3^{-3l-3}}} + C | X |_{l+1; B_{3^{-3l-3}}}\\
\nonumber&\leq C \ep^{-2} \left| \log \ep \right|,
\end{align}
where the last inequality follows from \eqref{DeltaXcl} and \eqref{Xclplus1preliminary}, and the second inequality follows from \eqref{commutatorpointwise} with $X$ in place of $\rho$. Therefore, Lemma \ref{interiormulti} gives
\begin{align*}
|\nabla \partial^{\alpha} X |_{0; B_{r_{1}}}\leq C\left( |\partial^{\alpha}X |_{0; B_{3^{-3l-3}}} + |\partial^{\alpha}X |_{0; B_{3^{-3l-3}}}^{1/2} | \Delta\partial^{\alpha}X |_{0; B_{3^{-3l-3}}}^{1/2} \right)\leq C \ep^{-1}\left| \log\ep \right|.
\end{align*}
Since $\alpha$ is any multi-index with $|\alpha| = l$, we see that
\begin{equation}\label{Xclplus1almost}
|X |_{l + 1; B_{r_{1}}} \leq C \ep^{-1} \left| \log\ep \right|.
\end{equation}
Next we want to improve \eqref{Xclplus1almost} to get \eqref{inductivemodulus} for $l + 1$. Before that, let us observe that we can use \eqref{Xclplus1almost} to get a $C^{l+2}$-estimate on $X$, which will be useful for handling commutator terms in the subsequence argument. Specifically, looking back at \eqref{XPDE}, we see from \eqref{inductivegradmodulus}, \eqref{phiclplus1} and \eqref{Xclplus1almost} that
\begin{equation}\label{DeltaXclplus1}
|\Delta X |_{l + 1; B_{r_{2}}} \leq C \ep^{-3}\left| \log\ep \right|.
\end{equation}
Therefore, interior Schauder estimates give
\begin{equation}\label{Xclplus2}
|X |_{l + 2; B_{r_{3}}} \leq C \ep^{-3}\left| \log\ep \right|.
\end{equation}
Arguing as in \eqref{DeltapartialalphaX}, for $|\alpha| = l + 1$ we have
\begin{equation}\label{Deltapartialalphaplus1X}
|\Delta \partial^{\alpha}X|_{0; B_{r_{3}}} \leq C \ep^{-3}\left| \log\ep \right|.
\end{equation}
Then, as above, we infer from \eqref{Deltapartialalphaplus1X}, \eqref{Xclplus1almost} and Lemma \ref{interiormulti} that
\begin{equation}\label{Xclplus2almost}
|X|_{l + 2; B_{3^{-3l-4}}} \leq C \ep^{-2}\left| \log\ep \right|.
\end{equation}
We now proceed to improve \eqref{Xclplus1almost} to the desired estimate. To that end, we follow \cite{bbh} and rewrite \eqref{XPDE} as
\begin{equation}\label{XPDEalt}
-\ep^{2}\Delta X + 2X = \rho |\nabla \varphi |^{2} + 3\ep^{2}X^{2} - \ep^{4}X^{3} \equiv R.
\end{equation}
By \eqref{inductivemodulus}, \eqref{inductivegradmodulus}, \eqref{phiclplus1} and \eqref{Xclplus1almost}, we have $| R |_{l+1; B_{r_{2}}} \leq C \left| \log\ep \right|$.
Next, we take a multi-index $\alpha$ of length $l+1$ and apply it to both sides of the above equation, getting
\begin{equation}
-\ep^{2}\Delta \partial^{\alpha}X + 2\partial^{\alpha}X = \ep^{2} [\partial^{\alpha}, \Delta]X + \partial^{\alpha}R.
\end{equation}
By the $C^{l+1}$-bound on $R$ and the estimates \eqref{Xclplus2almost} and \eqref{commutatorpointwise}, we see that the right-hand side of the above equation is bounded pointwise on $B_{3^{-3l-4}}$ by $ C \left| \log\ep  \right|$. Therefore we may use Lemma \ref{bbhmax} to get
\begin{equation}\label{Xclplus1}
|\partial^{\alpha}X|_{0; B_{3^{-3l-5}}} \leq C \left( 1 + \ep^{-1} e^{-\frac{C}{\ep}} \right)\left| \log\ep \right|  \leq C \left| \log\ep \right|.
\end{equation}
Since $\alpha$ is any multi-index with length $l+1$, we conclude that 
\begin{equation}\label{Xlplusonefinal}
|X|_{l + 1; B_{3^{-3l-5}}} \leq C\left| \log\ep \right|;
\end{equation}
in other words, the estimate \eqref{inductivemodulus} holds for $l + 1$.

Finally, we prove \eqref{inductivegradmodulus} for $l + 1$. From \eqref{rhoPDE}, \eqref{phiclplus1} and \eqref{Xlplusonefinal}, we infer that $|\Delta \rho|_{l+1; B_{3^{-3l-5}}} \leq C \left| \log\ep \right|$, and hence 
\begin{equation}
|\rho|_{l + 2; B_{r_{4}}} \leq C\left| \log\ep \right|,
\end{equation}
where $3^{-3l-6} < r_{4} < 3^{-3l-5}$. Letting $\alpha$ be a multi-index of length $l + 1$ and applying $\partial^{\alpha}$ to both sides of \eqref{rhoPDE}, we see with the help of \eqref{commutatorpointwise} that 
\begin{equation}
|\Delta \partial^{\alpha}\rho|_{0; B_{r_{4}}} \leq C\left| \log\ep \right|
\end{equation}
Hence, recalling that $|\partial^{\alpha}\rho|_{0; B_{r_{4}}} \leq |\nabla \rho|_{l; B_{3^{-3l-3}}} \leq C\ep^{a}\left| \log\ep \right|^{b}$ and again using Lemma \ref{interiormulti}, we arrive at
\begin{equation}
|\nabla \partial^{\alpha} \rho |_{0; B_{3^{-3l-6}}} \leq C \ep^{a} \left| \log\ep \right|^{b},
\end{equation}
and the estimate \eqref{inductivegradmodulus} for $l+1$ follows immediately. By induction, we complete the proof of the Theorem.
\end{proof}
\begin{rmk} The structure of the above proof is borrowed from \cite{bbh}. However, in the setting of \cite{bbh} the authors were able to obtain estimates independent of $\ep$, whereas here we have to keep track of the $|\log\ep|$-factors.
\end{rmk}
\begin{thm}[Boundary estimates]\label{boundaryhigher}
Let $g = (g_{ij})$ be a smooth metric on $B_{1}^{+}$ satisfying $g_{nn} \equiv 1$ and $g_{in} \equiv 0$ for all $i = 1, \cdots , n-1$ and suppose that $u:(B_{1}^{+}, g) \to \DD$ is a solution to \eqref{halfGL} such that
\begin{equation}
\int_{B_{1}^{+}} e_{\ep}(u)\dvol_{g} < \eta \left| \log\ep \right|,
\end{equation}
for some $\eta < \eta_{3}'$ and $\ep < \ep_{3}'$, where $\eta_{3}', \ep_{3}'$ are as in Proposition \ref{boundaryetareg}. Then, letting $X$, $\rho$ and $\theta$ be as in Theorem \ref{interiorhigher}, we have
\begin{align}
|\nabla \varphi|_{1, 1/2; B_{3^{-4}}} &\leq C \left| \log\ep \right|^{1/2}, \label{boundaryphase}\\
|X|_{1, 0; B_{3^{-5}}} &\leq C \left| \log\ep  \right|, \label{boundarymodulus}\\
|\nabla \rho|_{1, 1/2; B_{3^{-6}}} & \leq C\ep^{a} \left| \log\ep \right|^{b}, \label{boundarygradmodulus}
\end{align}
where $C, a$ and $b$ are positive constants depending only on $n$ and the metric $g$.
\end{thm}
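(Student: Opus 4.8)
The plan is to rerun the inductive argument in the proof of Theorem \ref{interiorhigher}, carrying it out only at the levels $l=0$ and $l=1$ and replacing the interior Schauder estimates by their homogeneous-Neumann boundary analogues. The key structural observation is that, since $g$ is smooth on $B_1^+$ with $g_{in}\equiv 0$ and $g_{nn}\equiv 1$, the inward conormal of $T$ for $\Delta_g$ is exactly $\partial_n$, so $\partial_\nu u=0$ on $T$ is the natural Neumann condition; writing $u=\rho e^{i\varphi}$ it forces $\partial_n\rho=\partial_n\varphi=\partial_n X=0$ on $T$, where $X=\ep^{-2}(1-\rho)$. For the estimates on $\varphi$ and $\rho$ one uses boundary Schauder and the boundary multiplicative-Schauder estimate (Lemma \ref{boundarymulti}) directly with the smooth metric $g$. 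For the singularly perturbed equation \eqref{XPDE} satisfied by $X$ one applies Lemma \ref{bbhmax} after extending $X$, or a tangential derivative of $X$, evenly across $T$: such an extension is a weak solution on a full ball of the same type of equation with the reflected, now merely Lipschitz, metric in $\cM^+_{\lambda,\Lambda}$, and Lemma \ref{bbhmax}, being a comparison estimate, is insensitive to this loss of smoothness.

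Carrying this out, at level $l=0$: Proposition \ref{boundaryetareg} gives $|u|\geq 3/4$ on $B_{3/4}^+$, so $\varphi$ is single-valued there; normalizing $\fint_{B_{3/4}^+}\varphi\in[0,2\pi)$ and using $\sup_{B_{1/3}^+}e_\ep(u)\leq C\int_{B_1^+}e_\ep(u)\leq C|\log\ep|$ gives $|\varphi|_{0;B_{1/3}^+}+|\nabla\varphi|_{0;B_{1/3}^+}\leq C|\log\ep|^{1/2}$. Since $\rho(1+\rho)\geq 21/16$ and $\rho|\nabla\varphi|^2\leq C|\log\ep|$, reflecting $X$ evenly and applying Lemma \ref{bbhmax} to \eqref{XPDE} yields $|X|_{0;B_{3^{-2}}^+}\leq C|\log\ep|$; then \eqref{rhoPDE} gives $|\Delta_g(1-\rho)|_0\leq C|\log\ep|$ and $|1-\rho|_0\leq C\ep^2|\log\ep|$, so the boundary multiplicative-Schauder estimate produces $|\nabla\rho|_{0;B_{3^{-3}}^+}\leq C\ep|\log\ep|$. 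At level $l=1$: with the three $l=0$ bounds, \eqref{phiPDE} gives $|\Delta_g\varphi|_{0,1/2}\leq C\ep^a|\log\ep|^b$ — small, because $|\nabla\rho|_{0,1/2}$ is small by the multiplicative estimate — whence boundary Schauder gives $|\varphi|_{2,1/2;B_{3^{-4}}^+}\leq C|\log\ep|^{1/2}$, which is \eqref{boundaryphase}. Differentiating \eqref{rhoPDE} tangentially, so that the commutator \eqref{commutatorpointwise} involves only tangential derivatives of the smooth metric, and invoking the boundary multiplicative estimate on $\partial^\alpha(1-\rho)$ together with the smallness of $|1-\rho|_0$, yields \eqref{boundarygradmodulus}. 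Finally, for $X$ one first obtains the crude bounds $|X|_{1,0}\leq C\ep^{-1}|\log\ep|$ and $|X|_{2,0}\leq C\ep^{-2}|\log\ep|$ from the multiplicative-Schauder estimate applied to \eqref{XPDE} (needed to control commutators), then rewrites \eqref{XPDE} as \eqref{XPDEalt}, differentiates, and applies Lemma \ref{bbhmax} to the even reflections of the tangential derivatives and to the normal derivative $\partial_n X$ — which vanishes on $T$, so that its odd reflection solves the corresponding equation on a full ball — thereby absorbing the bad powers of $\ep^{-1}$ and concluding $|X|_{1,0;B_{3^{-5}}^+}\leq C|\log\ep|$, i.e., \eqref{boundarymodulus}.

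The main obstacle, flagged already in Remark \ref{singularmetric}, is precisely that even reflection of the smooth metric across $T$ is generically only Lipschitz: one cannot simply reflect $u$ at the outset and quote Theorem \ref{interiorhigher}, because obtaining the good (bounded) powers of $\ep^{-1}$ in \eqref{boundarymodulus} through Lemma \ref{bbhmax} requires differentiating the Ginzburg-Landau equation, which needs at least $C^{1,1}$ coefficients. Keeping $g$ smooth on $B_1^+$ and reflecting only individual quantities, or their tangential derivatives — tangential differentiation preserves both the Neumann condition and the evenness of the reflected metric components — circumvents this, at the cost that the scheme closes only through the first order, which is exactly what \eqref{boundaryphase}--\eqref{boundarygradmodulus} record. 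A secondary point requiring care is that the boundary multiplicative-Schauder estimate must be used in its "small factor $|1-\rho|_0$ times $\ep$-polynomial higher norm" form, so that the $\ep^a$ decay propagates into \eqref{boundarygradmodulus}; checking that Lemma \ref{boundarymulti} has this structure, exactly as its interior counterpart Lemma \ref{interiormulti} does, is routine but should not be skipped.
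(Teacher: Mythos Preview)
Your proposal is correct and follows essentially the same approach as the paper. The only cosmetic difference is that where you reflect $X$ (or its derivatives) across $T$ and invoke the interior Lemma \ref{bbhmax}, the paper instead applies dedicated half-ball versions, Lemma \ref{bbhmaxboundary} (Neumann) and Lemma \ref{bbhmaxdirichlet} (Dirichlet, for $\partial_n X$), together with Remark \ref{dirichletmulti} for the multiplicative Schauder estimate on $\partial_n\rho$; these are proved directly via the Hopf lemma rather than by reflection, but the content is the same.
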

\begin{proof}
Below we will use $C, a, b$ to denote positive constants depending only on $n$ and $g$. We first note that, by our assumptions and Proposition \ref{boundaryetareg}, there holds $\rho \geq 3/4$ on $B_{3/4}^{+}$ and that
\begin{equation}\label{c1phi}
|\nabla \varphi|_{0; B_{1/3}^{+}} \leq C\left| \log\ep \right|^{1/2}.
\end{equation}
Then, as in the proof of Theorem \ref{interiorhigher}, we know that $X$ satisfies
\begin{equation*}
-\ep^{2}\Delta X + X \leq C\left| \log\ep \right| \text{ on }B_{1/3}^{+}.
\end{equation*}
Moreover, since $\partial_{\nu}X = 0$ on $T$, we may invoke Lemma \ref{bbhmaxboundary} to infer that 
\begin{equation*}
|X|_{0; B_{1/9}^{+}} \leq C\left| \log\ep \right|.
\end{equation*}
By the above two estimates, the equation \eqref{rhoPDE}, and the fact that $\partial_{\nu}\rho = 0$ on $T$, we can use Lemma \ref{boundarymulti} in place of \eqref{interiormulti} and argue as in Theorem \ref{interiorhigher} to get
\begin{equation*}
|\nabla \rho|_{0; B_{1/27}^{+}} \leq C\ep \left| \log\ep \right|.
\end{equation*}

To go from here to the desired estimates, we would follow the induction step in the previous proof with $l = 0$, except that we replace Lemma \ref{interiormulti} by Lemma \ref{boundarymulti} or Remark \ref{dirichletmulti}, and replace Lemma \ref{bbhmax} by Lemma \ref{bbhmaxboundary} or \ref{bbhmaxdirichlet}. These boundary versions would be applicable because by \eqref{halfGL}, $\varphi$, $\rho$, $X$, and $\partial_{i}X$ for $i = 1, \cdots, n-1$ all satisfy the homogeneous Neumann condition on $T$, while $\partial_{n}X$ satisfies the homogeneous Dirichlet condition on $T$ since it equals $\partial_{\nu}X$ by our assumptions on $g$. The details will be omitted to avoid repetition.
\end{proof}
\subsection{Convergence away from singular set}
Up to this point, we have mostly been working with solutions to the Ginzburg-Landau equation on a ball. We now return to the manifold setting. Recall that $M$ is isometrically embedded in a closed manifold $\widetilde{M}$ of the same dimension, which is isometrically embedded in $\RR^{N}$. Also, in this section and in Section 7, we will regard $\mu_{k}$, $\mu$ and any other measure we introduce as measures on $\RR^{N}$ supported on $\overline{M}$. For instance, $\mu(B^{N}_{r}(p))$ would be the same as $\mu(B^{N}_{r}(p) \cap \overline{M})$.

We begin with the following consequence of the monotonicity formula, Proposition \ref{lipschitzmonotonicity}. 
\begin{prop}\label{globalmono} There exist universal positive constants $r_{2}, \chi, C$, such that whenever $u: M \to \CC$ is a solution to \eqref{GLNeumann}, the following hold.
\begin{enumerate}
\item[(1)] For all $p \in M$, the function $\rho \mapsto e^{\chi\rho}\rho^{2-n}\mu_{k}(\tilde{B}_{\rho}(p))$ is increasing whenever $\rho \in (0, \min\{r_{2}, d(p, \partial M)\})$.
\item[(2)] For all $p \in \partial M$, the function $\rho \mapsto e^{\chi\rho}\rho^{2-n}\mu_{k}(\tilde{B}^{+}_{\rho}(p))$ is increasing for $\rho \in (0, r_{2})$.
\end{enumerate}
\end{prop}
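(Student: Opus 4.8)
The plan is to deduce Proposition \ref{globalmono} from the local monotonicity formula, Proposition \ref{lipschitzmonotonicity}, by reading the geodesic or Fermi coordinate representation of $u$ near a point $p$ as a solution to the Ginzburg--Landau equation \eqref{GLequation} (resp.\ \eqref{halfGL}) with respect to a metric in the class $\cM_{\lambda,\Lambda}$ (resp.\ $\cM^{+}_{\lambda,\Lambda}$), then rescaling so that the coordinate ball becomes the unit ball. First I would fix the radius $r_2$: for interior points, take $p\in M$ with $d(p,\partial M)>0$ and work in geodesic normal coordinates on $\tilde B_{r_0}(p)$ with $r_0$ universal (say $r_0$ from Proposition \ref{fermiproperties}, applied to interior geodesic balls, or the analogous interior statement); the metric components in these coordinates satisfy $g_{ij}(0)=\delta_{ij}$ and the Lipschitz bound (l1)--(l2) with universal $\lambda,\Lambda$, so after rescaling $x\mapsto x/\sigma_0$ with $\sigma_0=\min\{r_0,d(p,\partial M)\}$, $u$ becomes a solution of \eqref{GLequation} on $(B_1,g)$ with $g\in\cM_{\lambda,\Lambda}$ (the rescaling multiplies $\ep$ by $1/\sigma_0$, which is harmless since \eqref{GLequation} is scale-covariant in the pair $(u,\ep)$). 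For boundary points $p\in\partial M$, I would use Fermi coordinates on $\tilde B^{+}_{r_0}(p)$ and, crucially, \emph{reflect the metric evenly across $T$} as in Remark \ref{fermirmk}(1), so that the reflected metric lies in $\cM^{+}_{1,\lambda,\Lambda}\subset\cM^{+}_{\lambda,\Lambda}$; by Lemma \ref{GLreflection} the evenly reflected $u$ solves \eqref{wholeGL} on the full ball with respect to this reflected metric.

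Next I would apply Proposition \ref{lipschitzmonotonicity}: inequality \eqref{lipschitzmonotonicity1} gives, for $g\in\cM_{\lambda,\Lambda}$ and $u$ solving \eqref{GLequation} on $(B_1,g)$,
\begin{equation*}
\paop{\rho}\Bigl(e^{\chi\rho}\rho^{2-n}\int_{B_\rho}e_\ep(u)\,\dvol\Bigr)\ \geq\ 0\qquad\text{for }\rho\in(0,1),
\end{equation*}
with $\chi$ universal. Undoing the rescaling, this says that $\rho\mapsto e^{\chi\rho/\sigma_0}\rho^{2-n}\int_{\tilde B_\rho(p)}e_\ep(u)\,\dvol_g$ is increasing on $(0,\sigma_0)$; since $\sigma_0\le r_0\le 1$ we may replace $\chi/\sigma_0$ by $\chi/r_0=:\chi'$ (a larger constant still makes the quantity increasing, because multiplying by the increasing factor $e^{(\chi'-\chi/\sigma_0)\rho}$ preserves monotonicity of a nonnegative increasing function — or, more cleanly, one simply reruns the proof of Proposition \ref{lipschitzmonotonicity} noting the constant $\chi$ there can be taken uniform). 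In the boundary case, the reflected metric and the symmetry of $\tilde u$ mean $\int_{B_\rho}e_\ep(\tilde u)\,\dvol = 2\int_{B^{+}_\rho}e_\ep(u)\,\dvol$, so \eqref{lipschitzmonotonicity1} applied to $\tilde u$ yields monotonicity of $\rho\mapsto e^{\chi\rho}\rho^{2-n}\int_{\tilde B^{+}_\rho(p)}e_\ep(u)\,\dvol_g$ for $\rho\in(0,r_2)$. Finally, dividing the energy by the normalizing factor $|\log\ep_k|$ (which is a positive constant for each fixed $k$) does not affect monotonicity, so the same statement holds for $\mu_k$ in place of $e_{\ep_k}(u_k)\dvol_g$. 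Choosing $r_2$ to be the minimum of the interior and boundary thresholds $r_0$ gives conclusions (1) and (2).

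The one genuine subtlety — and the step I expect to require the most care — is the comparison between intrinsic balls and coordinate balls, and making sure all the constants $r_2,\chi,C$ are truly universal, i.e.\ depend only on $M,\widetilde M$ and the embedding. Proposition \ref{fermiproperties} (and its interior analogue) controls the metric components and their Lipschitz constants in terms of $\sup_M|R|$ and $\sup_{\partial M}|h|$, which are universal by compactness of $\overline M$; this pins down $\lambda,\Lambda$ and hence, via Proposition \ref{lipschitzmonotonicity}, the constant $\chi$. Remark \ref{fermirmk}(2) gives the two-sided inclusion between $\tilde B^{+}_r(p)$ (resp.\ $\tilde B_r(p)$) and the Euclidean balls $B^{N}_r(p)\cap\overline M$, which is what lets one pass freely between the intrinsic formulation used here and the $\RR^N$-ball formulation used in \eqref{GLsigmadefinition} and elsewhere in Section 6; strictly speaking Proposition \ref{globalmono} as stated is about $\tilde B_\rho(p)$ and $\tilde B^{+}_\rho(p)$, so no such conversion is needed for the statement itself, but I would remark on it since the corollaries will need it. A minor point to handle cleanly: when $p\in M$ but $d(p,\partial M)$ is small, the interior statement only gives monotonicity up to scale $d(p,\partial M)$, not $r_2$, which is exactly why conclusion (1) restricts $\rho<\min\{r_2,d(p,\partial M)\}$; there is nothing to prove beyond that range. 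I do not anticipate any difficulty with the monotonicity inequality itself — it is a direct quotation of Proposition \ref{lipschitzmonotonicity} after rescaling and (in the boundary case) reflection.
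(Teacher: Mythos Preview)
Your approach is correct and is exactly the paper's: geodesic normal coordinates at interior points and Fermi coordinates plus even reflection (Lemma \ref{GLreflection}, Remark \ref{fermirmk}(1)) at boundary points, followed by Proposition \ref{lipschitzmonotonicity}. One small slip to fix: in your attempt to make $\chi$ universal you propose replacing $\chi/\sigma_0$ by $\chi' = \chi/r_0$, but since $\sigma_0 \le r_0$ this makes $\chi'$ \emph{smaller}, and multiplying a nonnegative increasing function by the \emph{decreasing} factor $e^{(\chi'-\chi/\sigma_0)\rho}$ need not preserve monotonicity; your parenthetical alternative---that the proof of Proposition \ref{lipschitzmonotonicity} already yields a $\chi$ depending only on $n,\lambda,\Lambda$ and applies verbatim on any sub-ball of $B_1$---is the right way out and is what the paper's one-line proof is tacitly invoking.
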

\begin{proof}
For (1), we simply apply Proposition \ref{lipschitzmonotonicity} on small enough geodesic balls (cf. Remark \ref{fermirmk}(2)). For (2), we take Fermi coordinates at $p \in \partial M$, reflect the solution using Lemma \ref{GLreflection} (cf. Remark \ref{fermirmk}(1)), and then apply Proposition \ref{lipschitzmonotonicity}.
\end{proof}
As a consequence of the above proposition and \eqref{GLenergyupperbound}, we get the following uniform upper bound.
\begin{prop}\label{strongdensityupperbound}
Let $K_{0}$ be as in \eqref{GLenergyupperbound}. There exists universal constants $r_{3}, C > 0$ such that for all $p \in \overline{M}$, $\rho < r_{3}$ and for all $k$, we have $\rho^{2-n}\mu_{k}(B^{N}_{\rho}(p)) \leq CK_{0}$.
\end{prop}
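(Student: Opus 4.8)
The plan is to reduce the claimed bound $\rho^{2-n}\mu_k(B^N_\rho(p)) \le CK_0$ to the global energy bound \eqref{GLenergyupperbound} via the monotonicity statement in Proposition \ref{globalmono}, together with the comparison between Euclidean balls $B^N_\rho(p)$ and geodesic/Fermi balls recorded in Remark \ref{fermirmk}(2). First I would fix the universal threshold: let $r_2$ and $\chi$ be the constants from Proposition \ref{globalmono}, let $\tilde r_0$ and $c(\cdot)$ be as in Remark \ref{fermirmk}(2), and set $r_3$ to be a small universal number (say $r_3 < \tfrac12\min\{r_2, \tilde r_0\}$ with $c(2r_3) \le \tfrac12$). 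I would also recall that by definition of $\mu_k$ in \eqref{energymeasure} and the hypothesis \eqref{GLenergyupperbound}, the total mass satisfies $\mu_k(\overline M) = |\log\ep_k|^{-1} E_{\ep_k}(u_k) \le K_0$.

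Next I would split into the interior and boundary cases according to the location of $p$ relative to $\partial M$. If $\dist(p,\partial M) \ge 2\rho$, then $B^N_\rho(p) \cap \overline M$ is contained in a geodesic ball $\tilde B_{\rho'}(p)$ with $\rho' = (1+c(\rho))\rho \le 2\rho < r_2$ that stays in the interior, so Proposition \ref{globalmono}(1) applies: for $\rho' \le R := \min\{r_2, d(p,\partial M)\}$,
\begin{equation*}
e^{\chi\rho'}(\rho')^{2-n}\mu_k(\tilde B_{\rho'}(p)) \le e^{\chi R}R^{2-n}\mu_k(\tilde B_{R}(p)) \le e^{\chi r_2} r_2^{2-n}\,\mu_k(\overline M) \le e^{\chi r_2} r_2^{2-n} K_0,
\end{equation*}
and since $\rho' \le 2\rho$ and $e^{\chi\rho'} \ge 1$ this gives $\rho^{2-n}\mu_k(B^N_\rho(p)) \le 2^{n-2} e^{\chi r_2} r_2^{2-n} K_0 =: CK_0$. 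If instead $\dist(p,\partial M) < 2\rho$, I would pick a nearest boundary point $q \in \partial M$ with $\dist(p,q) < 2\rho$; then $B^N_\rho(p) \cap \overline M \subset B^N_{3\rho}(q) \cap \overline M \subset \tilde B^+_{(1+c(3\rho))3\rho}(q)$ by Remark \ref{fermirmk}(2), and with $\sigma := (1+c(3\rho))3\rho \le 6\rho < r_2$ Proposition \ref{globalmono}(2) yields
\begin{equation*}
\sigma^{2-n}\mu_k(\tilde B^+_\sigma(q)) \le e^{\chi\sigma}\sigma^{2-n}\mu_k(\tilde B^+_\sigma(q)) \le e^{\chi r_2} r_2^{2-n}\mu_k(\tilde B^+_{r_2}(q)) \le e^{\chi r_2} r_2^{2-n} K_0,
\end{equation*}
and again $\sigma \le 6\rho$ converts this into $\rho^{2-n}\mu_k(B^N_\rho(p)) \le 6^{n-2} e^{\chi r_2} r_2^{2-n} K_0$. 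Taking $C$ to be the larger of the two constants (and absorbing dimensional factors) completes the argument; all constants depend only on $n$, the geometry of $M \subset \widetilde M$, and the embedding $\widetilde M \to \RR^N$, hence are universal.

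The routine but slightly delicate points are bookkeeping: ensuring the enlarged radii $\rho'$, $\sigma$ stay below $r_2$ (and below $d(p,\partial M)$ in the interior case), and correctly invoking monotonicity between the small scale and the fixed scale $r_2$ to dump everything into $\mu_k(\overline M)$. I do not anticipate a genuine obstacle here — Proposition \ref{globalmono} does the real work — the only care needed is in case-splitting near $\partial M$ so that every Euclidean ball is sandwiched between geodesic or Fermi balls of comparable radius via Remark \ref{fermirmk}(2), which is exactly what that remark was set up to provide.
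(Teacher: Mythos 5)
Your global strategy — squeeze $B^N_\rho(p)$ between geodesic/Fermi balls via Remark \ref{fermirmk}(2) and then invoke the monotonicity of Proposition \ref{globalmono} to compare against the total mass $\mu_k(\overline M)\le K_0$ — is the right one, and it is the same one the paper uses. But there is a genuine gap in your interior case. You set $R=\min\{r_2,d(p,\partial M)\}$ and write
\begin{equation*}
e^{\chi R}R^{2-n}\mu_k(\tilde B_R(p))\ \le\ e^{\chi r_2}\,r_2^{2-n}\,\mu_k(\overline M).
\end{equation*}
This step requires $R^{2-n}\le r_2^{2-n}$, i.e.\ $R\ge r_2$ (recall $2-n<0$), which fails whenever $d(p,\partial M)<r_2$. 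And you cannot push the monotonicity of Proposition \ref{globalmono}(1) past $R$, since that proposition is only valid for radii below $\min\{r_2,d(p,\partial M)\}$. So in the regime where $p\in M$ with $2\rho\le d(p,\partial M)<r_2$ — which your case split classifies as ``interior'' — your chain of inequalities does not produce the claimed bound: there is no control on how large $R^{2-n}\mu_k(\tilde B_R(p))$ can be when $R$ is small.

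The missing idea is that once the geodesic ball reaches a scale comparable to $d(p,\partial M)$, one must transfer to a nearby boundary point $q\in\partial M$ and continue with the \emph{boundary} monotonicity formula, Proposition \ref{globalmono}(2). The paper does this with a further case split on the ratio $\rho/d$ (its Cases 3 and 4): if $\rho\gtrsim d$, one directly includes $B^N_\rho(p)$ in a Fermi ball $\tilde B^+_{O(\rho)}(q)$ — the enlargement ratio is bounded precisely because $d_0(p,\partial M)\lesssim\rho$ — and runs the boundary monotonicity up to $r_2$; if $\rho\ll d$, one first uses interior monotonicity to pass from scale $\rho$ to scale $d_0/2<d$, converts back to a Euclidean ball $B^N_{d_0}(p)$, and then falls into the previous case. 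Your two-case split ($d\ge 2\rho$ vs.\ $d<2\rho$) collapses these into one interior case and never performs the hand-off, which is where the argument breaks. (There is also a smaller bookkeeping slip: with Remark \ref{fermirmk}(2) as stated, the inclusion $B^N_\rho(p)\cap\widetilde M\subset\tilde B_{\rho'}(p)$ needs $\rho\le(1-c(\rho'))\rho'$, so $\rho'\approx 2\rho$ rather than $(1+c(\rho))\rho$; similarly in the boundary case $\sigma$ should be roughly $6\rho$, not $(1+c(3\rho))3\rho$. These are easily fixed, unlike the main gap above.)
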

\begin{proof}
Note that by Remark \ref{fermirmk}(2) there exists a universal $s_{1}$ such that for all $s < s_{1}$ we have
\begin{equation}\label{geoeuclidean}
B^{N}_{s/2}(p) \cap \widetilde{M} \subset \tilde{B}_{s}(p) \subset B^{N}_{2s}(p) \cap \widetilde{M}, \text{ if $p \in \widetilde{M}$; }
\end{equation}
\begin{equation}\label{fermieuclidean}
B^{N}_{s/2}(p) \cap \overline{M} \subset \tilde{B}^{+}_{s}(p) \subset B^{N}_{2s}(p) \cap \overline{M},\text{ if if $p \in \partial M$. }
\end{equation}
Note that by \eqref{geoeuclidean} we have 
\begin{equation}\label{distancerel}
d_{0}(p, \partial M)/2 \leq d(p, \partial M) \leq 2 d_{0}(p, \partial M),
\end{equation}
whenever $d < s_{1}/4$. We will take $r_{3} = \min\{s_{1}, r_{2}\}/64$, where $r_{2}$ is as in Proposition \ref{globalmono}. To continue, we distinguish four cases.

\noindent\textbf{Case 1:} If $p \in \partial M$, then since $\rho < r_{3}$, we have
\begin{align*}
\rho^{2-n}\mu_{k}(B^{N}_{\rho}(p)) &\leq 2^{n-2}(2\rho)^{2-n}\mu_{k}(\tilde{B}^{+}_{2\rho}(p)) \leq Cr_{2}^{2-n}\mu_{k}(\tilde{B}^{+}_{r_{2}}(p))\\& \leq Cr_{2}^{2-n}\mu_{k}(\overline{M}) \leq CK_{0}.
\end{align*}
In the second inequality we invoked Proposition \ref{globalmono}(2) since $2\rho < r_{2}$, and in the last inequality we absorbed $r_{2}$ into the universal constant $C$.
\vskip1mm
\noindent\textbf{Case 2:} If $p \in M$ and $d \equiv d_{g}(p, \partial M) \geq 8r_{3}$, then we have
\begin{align*}
\rho^{2-n}\mu_{k}(B^{N}_{\rho}(p)) &\leq \rho^{2-n}\mu_{k}(\tilde{B}_{2\rho}(p)) \leq Cr_{3}^{2-n}\mu_{k}(\tilde{B}_{2r_{3}}(p))\\
&\leq Cr_{3}^{2-n}\mu_{k}(\overline{M}) \leq CK_{0}.
\end{align*}
In the second inequality we used Proposition \ref{globalmono}(1) since $2\rho < 2r_{3} < \min\{d, r_{2}\}$.
\vskip 1mm
\noindent\textbf{Case 3:} If $p \in M$ and $r_{3} > \rho \geq d/8$, then we may use \eqref{distancerel} to infer that $d_{0} \leq 2d < 16r_{3}$ and take $q \in B^{N}_{d_{0}}(p) \cap \partial M$ to estimate
\begin{align*}
\rho^{2-n}\mu_{k}(B_{\rho}^{N}(p)) & \leq \rho^{2-n}\mu_{k}(B^{N}_{\rho + d_{0}}(q)) \leq \rho^{2-n}\mu_{k}(\tilde{B}^{+}_{2\rho + 2d_{0}}(q))\\
&\leq C(\rho + d_{0})^{n-2}\rho^{2-n}r_{2}^{2-n}\mu_{k}(\tilde{B}^{+}_{r_{2}}(q)) \leq Cr_{2}^{2-n}\mu_{k}(\overline{M})\\
&\leq CK_{0}.
\end{align*}
Note that we used Proposition \ref{globalmono}(2) in the third inequality, since $2\rho + 2d_{0} \leq 34r_{3} < r_{2}$. Also, in the fourth inequality we used $d_{0} \leq 2d \leq 16\rho$ to bound $(\rho + d_{0})/\rho$.
\vskip 1mm
\noindent\textbf{Case 4:} If $p \in M$ and $\rho < d/8 < r_{3}$, then since $d < 8r_{3} < s_{1}/4$, we may use \eqref{distancerel} to deduce that $2\rho \leq d/4 \leq d_{0}/2 \leq d$. We now estimate
\begin{align*}
\rho^{2-n}\mu_{k}(B_{\rho}^{N}(p)) & \leq \rho^{2-n}\mu_{k}(\tilde{B}_{2\rho}(p)) \leq Cd_{0}^{2-n}\mu_{k}(\tilde{B}_{d_{0}/2}(p)) \leq Cd_{0}^{2-n}\mu_{k}(B^{N}_{d_{0}}(p)).
\end{align*}
Note that Proposition \ref{globalmono}(1) is used in the second inequality. The last term can then be estimated as in Case 3.
\end{proof}
\begin{rmk}\label{localizeduppersemicont} The above argument can be localized to show that for all $k \in \NN$, $p \in \partial M$, $r < \min\{r_{2}, s_{1}/4\}$ and $q \in B^{N}_{r/4}(p)\cap \overline{M}$ we have 
\begin{equation}
\rho^{2-n}\mu_{k}(B^{N}_{\rho}(q)) \leq Cr^{2-n}\mu_{k}(B^{N}_{r}(p)),
\end{equation}
where we require that $\rho <  \min\{r_{3}, r/200\}$.
\end{rmk}
Going back to Proposition \ref{globalmono} and letting $k$ tend to infinity, one can easily show that the limit measure $\mu$ has the same monotonicity properties as does each $\mu_{k}$. Moreover, since both the geodesic distance (at interior points) and the Fermi distance (at boundary points) eventually agree with the Euclidean distance on smaller and smaller balls, we infer that the density $\Theta(\mu, \cdot)$ defined below exists everywhere on $\overline{M}$ and we could use either geodesic/Fermi balls or Euclidean balls in the definition.
\begin{equation}\label{densityrel}
\Theta(\mu, p) \equiv \lim\limits_{r \to 0}r^{2-n}\mu(B^{N}_{r}(p))
 = \left\{ 
 \begin{array}{cl}
 \lim\limits_{r \to 0}r^{2-n}\mu(\tilde{B}_{r}(p)) & \text{ if }p \in M,\\
 \lim\limits_{r \to 0}r^{2-n}\mu(\tilde{B}^{+}_{r}(p)) & \text{ if }p \in \partial M.
 \end{array}
 \right.
\end{equation}
The singular set $\Sigma$ is then defined as the set of points where $\mu$ has positive density, i.e.
\begin{equation*}
\Sigma \equiv \{ p \in \overline{M} |\ \Theta(\mu, p) > 0 \}.
\end{equation*}
\begin{prop}\label{sigmaclosed}
The set $\Sigma$ is closed.
\end{prop}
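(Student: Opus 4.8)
The plan is to show that the complement of $\Sigma$ in $\overline{M}$ is open, by exhibiting, around each point of positive density-zero, a neighborhood on which the density vanishes identically. So suppose $p_{0} \in \overline{M}$ has $\Theta(\mu, p_{0}) = 0$. By the definition \eqref{densityrel} of the density and the monotonicity of $\mu$ inherited from Proposition \ref{globalmono} (obtained by letting $k \to \infty$), the quantity $e^{\chi\rho}\rho^{2-n}\mu(B^{N}_{\rho}(p_{0}))$ is monotone in $\rho$ for $\rho$ small, and its limit as $\rho \to 0$ is $\Theta(\mu, p_{0}) = 0$; since it is nonnegative and monotone, it must be zero for all small $\rho$, hence $\mu(B^{N}_{\rho}(p_{0})) = 0$ for some $\rho = \rho_{0} < \min\{r_{2}, s_{1}/4\}$.

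Next I would propagate this vanishing to a whole neighborhood. Fix $q \in B^{N}_{\rho_{0}/4}(p_{0}) \cap \overline{M}$ and let $\sigma < \min\{r_{3}, \rho_{0}/200\}$. The key tool is the localized monotonicity-type bound: in the interior case this is \eqref{energydensitybound} together with Proposition \ref{globalmono}(1), and at the boundary it is exactly the inequality recorded in Remark \ref{localizeduppersemicont}, namely
\begin{equation*}
\sigma^{2-n}\mu_{k}(B^{N}_{\sigma}(q)) \leq C\rho_{0}^{2-n}\mu_{k}(B^{N}_{\rho_{0}}(p_{0})),
\end{equation*}
for all $k$. Passing to the limit $k \to \infty$ (using that $\mu_{k} \to \mu$ as Radon measures on $\RR^{N}$, so that $\mu(U) \leq \liminf_{k}\mu_{k}(\overline{U})$ for open $U$ and $\mu_{k}(K) \to$ appropriately on sets of $\mu$-measure zero on the boundary) yields $\sigma^{2-n}\mu(B^{N}_{\sigma}(q)) \leq C\rho_{0}^{2-n}\mu(B^{N}_{\rho_{0}}(p_{0})) = 0$. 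Letting $\sigma \to 0$ gives $\Theta(\mu, q) = 0$, i.e. $q \notin \Sigma$. Hence $B^{N}_{\rho_{0}/4}(p_{0}) \cap \overline{M}$ is disjoint from $\Sigma$, so $\overline{M} \setminus \Sigma$ is open in $\overline{M}$ and $\Sigma$ is closed.

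The one point requiring mild care — and the only place I expect a potential obstacle — is the passage to the limit in the localized density bound: one must ensure that $\mu(B^{N}_{\rho_{0}}(p_{0}))$ is genuinely controlled by the $\mu_{k}$-masses of a slightly larger (closed) ball, and conversely that $\mu(B^{N}_{\sigma}(q))$ is bounded below by the $\mu_{k}$-masses on a slightly smaller ball, so that the inequality survives the limit. This is handled by the standard weak-convergence facts for Radon measures (upper semicontinuity on compact sets, lower semicontinuity on open sets), combined with the freedom to shrink or enlarge the radii by an arbitrarily small amount — the constant $C$ in Remark \ref{localizeduppersemicont} is uniform in $k$, so no loss occurs. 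Everything else is a direct consequence of the monotonicity formula and the definition of $\Sigma$.
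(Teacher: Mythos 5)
Your proof has a genuine gap at the very first step. You argue that since $f(\rho) := e^{\chi\rho}\rho^{2-n}\mu(B^{N}_{\rho}(p_{0}))$ is nonnegative, monotone, and has limit $\Theta(\mu,p_0)=0$ as $\rho\to 0$, it must be zero for all small $\rho$, and hence $\mu(B^N_{\rho_0}(p_0))=0$. This does not follow: the monotonicity coming from Proposition \ref{globalmono} is that $f$ is \emph{increasing}, so its limit at $0$ is its infimum, and a nonnegative increasing function with infimum $0$ need not vanish anywhere. Indeed, $\mu$ generally contains the absolutely continuous piece $\tfrac{|\psi|^2}{2}\dvol$ from \eqref{GLlimitdecomp}, which has $\Theta(\mu,p_0)=\lim_{r\to 0}r^{2-n}\cdot O(r^n)=0$ at every point yet has positive mass on every ball. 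So your claim $\mu(B^N_{\rho_0}(p_0))=0$ fails in precisely the situations the theorem is designed to handle (cf.\ Remark \ref{simplyconnected}(1)), and the downstream comparison via Remark \ref{localizeduppersemicont} then only yields $\Theta(\mu,q)\leq C\rho_0^{2-n}\mu(B^N_{\rho_0}(p_0))$, which is not zero.

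The missing ingredient is the regularity/$\eta$-ellipticity machinery of Section 6.1. In the paper's proof, $\Theta(\mu,p_0)=0$ is used to find a fixed radius $r$ and large $k$ with $r^{2-n}\mu_k(B^N_r(p_0))$ below the threshold $\min\{\eta_3,\eta_3'\}/(2^{n-2}C)$. Then Remark \ref{localizeduppersemicont} propagates this smallness of the scale-invariant \emph{energy} (not of the measure itself) to balls around nearby $q$, putting one in the regime of Propositions \ref{interioretareg} or \ref{boundaryetareg}. Those give the pointwise bound $\sup_{\tilde B_{\rho/4}(q)}e_{\ep_k}(u_k)\leq C\rho^{-2}\eta|\log\ep_k|$, whence $s^{2-n}\mu_k(\tilde B_s(q))\leq Cs^2\rho^{-2}$ — a genuine quadratic decay in $s$ — which survives the limit $k\to\infty$ and gives $\Theta(\mu,q)=0$. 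Monotonicity alone cannot produce this decay; you need the improved $\eta$-ellipticity theorems.
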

\begin{proof}
We will show that $\overline{M}\setminus \Sigma$ is relatively open. To that end, suppose that $\Theta(\mu, p) = 0$ at some $p \in \overline{M}$. If $p \in \partial M$, then we may fix a small enough radius $r < \min\{r_{2}, s_{1}\}$ such that for all sufficiently large $k$ there holds
\begin{equation}
r^{2-n}\mu_{k}(B_{r}^{N}(p)) < \min\{\eta_{3}, \eta_{3}'\}/(2^{n-2}C),
\end{equation}
where $\eta_{3}, \eta_{3}'$ are from Propositions \ref{interioretareg}, \ref{boundaryetareg}, respectively, and $C$ is the constant from Remark \ref{localizeduppersemicont}. By Remark \ref{localizeduppersemicont} and \eqref{geoeuclidean}, for each $q \in B_{r/4}^{N}(p) \cap M$ we may choose $\rho < \min\{r_{3}, r/200, d_{0}(q, \partial M)\}$ and apply Proposition \ref{interioretareg} to the geodesic ball $\tilde{B}_{\rho/2}(q)$ to get
\begin{equation*}
\sup\limits_{\tilde{B}_{\rho/4}(q)}e_{\ep_{k}}(u_{k}) \leq C\rho^{-n}\int_{\tilde{B}_{\rho/2}(q)}e_{\ep_{k}}(u_{k})\dvol \leq C\rho^{-2}\eta_{3}'\left| \log\ep_{k} \right|.
\end{equation*}
Hence for all $s < \rho/2$, we have
\begin{equation*}
s^{2-n}\mu_{k}(\tilde{B}_{s}(q)) \leq Cs^{2}\rho^{-2}.
\end{equation*}
Passing to the limit as $k \to \infty$, we infer that $\Theta(\mu, q) = 0$. Similarly, for each $q \in B_{r/4}^{N}(p) \cap \partial M$ we may apply Proposition \ref{boundaryetareg} to a small enough Fermi ball around $q$ to conclude that $\Theta(\mu, q) = 0$. Therefore $\Theta(\mu, \cdot)$ vanishes in a relative neighborhood of $p$. If $p \in M$, the argument is similar (and in fact simpler), and we also get that $\Theta(\mu, \cdot)$ vanishes near $p$. 
\end{proof}
\begin{rmk}\label{finitemeasure} The previous proof actually shows that if $\Theta(\mu, p) < \min\{\eta_{3}, \eta_{3}'\}/2^{n-2}C \equiv \underline{\eta}$, then $\Theta(\mu, p) = 0$ and $p \notin \Sigma$. Therefore we infer that
\begin{equation}\label{lowerdensityboundpointwise}
\underline{\eta} \leq \Theta(\mu, p)\text{ for all }p \in \Sigma.
\end{equation}
Combining this density lower bound and the assumption \eqref{GLenergyupperbound} with a standard covering argument, we get that $H^{n-2}(\Sigma) < \infty$.
\end{rmk}
In Section 7 we will need a slightly more quantitative version of the lower density bound in the previous remark.
\begin{prop}\label{densitybounds}
Let $\underline{\eta}$ be as in Remark \ref{finitemeasure}. There exists constants $r_{4}$ and $C$ such that
\begin{equation}\label{densityinequalities}
5^{-n}\underline{\eta} \leq r^{2-n}\mu(B_{r}^{N}(p)) \text{ for all $p \in \Sigma$ and $r < r_{4}$.}
\end{equation}
\end{prop}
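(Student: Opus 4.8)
The plan is to argue by contradiction, in the same spirit as the proof of Proposition \ref{sigmaclosed} (and of Remark \ref{finitemeasure}): I will show that if $p\in\Sigma$ and $r^{2-n}\mu(B^N_r(p))<5^{-n}\underline{\eta}$ for some $r<r_{4}$, then $\Theta(\mu,p)<\underline{\eta}$, contradicting the density lower bound \eqref{lowerdensityboundpointwise}. The only inputs are the monotonicity properties of $\mu$ inherited from Proposition \ref{globalmono} — namely $\rho\mapsto e^{\chi\rho}\rho^{2-n}\mu(\tilde B_{\rho}(p))$ is increasing on $(0,\min\{r_{2},d_{g}(p,\partial M)\})$, and for $q\in\partial M$ the function $\rho\mapsto e^{\chi\rho}\rho^{2-n}\mu(\tilde B^{+}_{\rho}(q))$ is increasing on $(0,r_{2})$ — together with the comparisons between geodesic/Fermi balls and Euclidean balls in Remark \ref{fermirmk}(2). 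The constant $r_{4}$ will be taken small enough, depending only on $n$, $r_{2}$, $\chi$ and the function $c(\cdot)$ of Remark \ref{fermirmk}(2) (hence universal), that the factors $e^{\chi r_{4}}$ and $(1+c(r_{4}))^{n-2}$ are as close to $1$ as needed; the key elementary fact is that $4^{n-2}5^{-n}=\tfrac{1}{16}(4/5)^{n}$ and $(8/3)^{n-2}5^{-n}=\tfrac{9}{64}(8/15)^{n}$ are both strictly less than $1$ for every $n\geq3$, with plenty of room to spare.

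When $p$ is not close to $\partial M$ — precisely, when $d:=d_{g}(p,\partial M)\geq r/4$ (this includes the case of interior points far from the boundary) — the argument is a single application of monotonicity. One picks $\rho$ slightly below $\min\{d,\,r/(1+c(r))\}$, so that $\tilde B_{\rho}(p)\subset B^N_{r}(p)$ by Remark \ref{fermirmk}(2) and $\rho<\min\{r_{2},d\}$; interior monotonicity at $p$ then gives
\[
\Theta(\mu,p)\leq e^{\chi\rho}\rho^{2-n}\mu(\tilde B_{\rho}(p))\leq e^{\chi r}(r/\rho)^{n-2}\,r^{2-n}\mu(B^N_{r}(p))<e^{\chi r}\,4^{n-2}\,5^{-n}\underline{\eta}<\underline{\eta}.
\]
The case $p\in\partial M$ is identical, using boundary monotonicity at $p$ and the half-ball version of Remark \ref{fermirmk}(2); there one in fact obtains the better constant $(1+c(r))^{n-2}5^{-n}$.

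The essential case is $p\in M$ with $d<r/4$, where interior monotonicity centered at $p$ reaches only scale $d<r$, so the smallness of $\mu(B^N_{r}(p))$ cannot be pushed down to $p$ in one step. The remedy is to route the estimate through the nearest boundary point $q\in\partial M$ (so $d_{g}(p,q)=d$ and $d_{0}(p,q)\leq d$). First, for a radius $s\approx r-d$ one has $\tilde B^{+}_{s}(q)\subset B^N_{(1+c(s))s}(q)\subset B^N_{r}(p)$ by Remark \ref{fermirmk}(2), so $\mu(\tilde B^{+}_{s}(q))\leq\mu(B^N_{r}(p))$ is small. Next, boundary monotonicity at $q$ propagates this down to a scale $\sigma$ slightly above $2d$ — small enough that, by the triangle inequality together with Remark \ref{fermirmk}(2), $\tilde B_{d}(p)\subset\tilde B^{+}_{\sigma}(q)$ — whence $\mu(\tilde B_{d}(p))\leq\mu(\tilde B^{+}_{\sigma}(q))$ is small. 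Finally, interior monotonicity at $p$ (letting $\rho\uparrow d$) converts this into a bound on $\Theta(\mu,p)$. Chaining these four inequalities, and observing that only scale-ratios bounded by $4/3$ (first step) and by $2(1+c(2d))$ (third step) enter, one reaches
\[
\Theta(\mu,p)\leq e^{2\chi r}\bigl(\tfrac{8}{3}(1+c(2d))\bigr)^{n-2}5^{-n}\underline{\eta}<\underline{\eta}
\]
for $r_{4}$ small, the desired contradiction. A minor split according to whether $2d$ is, or is not, comfortably below $s$ is needed but routine. The one point requiring care — and the main obstacle — is exactly this hand-off between monotonicity at the interior point $p$ and monotonicity at the boundary point $q$: the chain of inclusions must be arranged so that the intermediate Fermi half-balls at $q$ simultaneously contain $\tilde B_{d}(p)$ and are contained in $B^N_{r}(p)$, which is what forces the scales $2d$ (roughly) and $s\approx r$ to be correctly ordered and is responsible for the restriction $d<r/4$.
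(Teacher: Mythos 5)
Your proof is correct and follows the same contradiction-via-monotonicity framework as the paper's: split according to the size of $d(p,\partial M)/r$, and in the near-boundary regime pass through a Fermi ball centered at the nearest boundary point $q$. The substantive difference is in the near-boundary case. The paper writes, after the inclusion $\tilde B^{+}_{r_{i}/4}(q_{i})\subset B^{N}_{r_{i}}(p_{i})$,
\begin{equation*}
4^{2-n}(r_{i}/4)^{2-n}\mu(\tilde B^{+}_{r_{i}/4}(q_{i}))\geq 4^{2-n}e^{-\chi(r_{i}/4)}\underline\eta,
\end{equation*}
which, via boundary monotonicity at $q_{i}$, amounts to asserting $\Theta(\mu,q_{i})\geq\underline\eta$. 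But $q_{i}$ is merely the nearest boundary point to $p_{i}$ and need not lie in $\Sigma$, so this inequality is not justified as written. Your chain of four inclusions (first $\tilde B^{+}_{s}(q)\subset B^{N}_{r}(p)$ with $s\approx r-d$, then boundary monotonicity at $q$ from scale $s$ down to $\sigma$ slightly above $2d$, then $\tilde B_{d}(p)\subset\tilde B^{+}_{\sigma}(q)$, then interior monotonicity at $p$ with $\rho\uparrow d$) is precisely what is needed to make this step rigorous: the intermediate scale $\sigma\approx 2d$ is small enough to reach $p$ through an interior ball yet large enough to be accessible from $s$ by boundary monotonicity at $q$, and the accumulated scale ratio $\bigl(\sigma/d\bigr)\bigl(r/s\bigr)\lesssim 8/3$ is what keeps the constant $(8/3)^{n-2}5^{-n}$ below $1$ for all $n\geq 3$. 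So this is the same route as the paper's, but with the one nontrivial hand-off — from interior monotonicity at $p$ to boundary monotonicity at $q$ — carried out explicitly, which the paper's displayed computation elides.

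One small bookkeeping point: your estimate $s\approx r-d$ should be read as $s=(r-d_{0}(p,q))/(1+c(s))$, using $d_{0}(p,q)\leq d$; this still gives $r/s\leq 4(1+c)/3$ under $d<r/4$, so the numerology survives, but it is worth stating since the Fermi-to-Euclidean correction $(1+c(\cdot))$ enters at exactly this point.
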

\begin{proof}
We argue by contradiction. Negating the statement we obtain sequences $r_{i} \to 0$ and $p_{i} \in \Sigma$ such that 
\begin{equation}\label{lowerboundnegation}
\mu(B_{r_{i}}^{N}(p_{i})) < 5^{-n}\underline{\eta}r_{i}^{n-2}.
\end{equation} 
Next, note that if
\begin{equation*}
\limsup\limits_{i \to \infty}r_{i}^{-1}d(p_{i}, \partial M) > 1/5, 
\end{equation*}
then along a subsequence, which we do not relabel, we may use monotonicity and \eqref{lowerdensityboundpointwise} to estimate
\begin{align*}
5^{-n}\underline{\eta} &\geq r_{i}^{2-n}\mu(B_{r_{i}}^{N}(p_{i})) \geq 5^{2-n}(r_{i}/5)^{2-n}\mu(\tilde{B}_{r_{i}/5}(p_{i}))\\
&\geq 5^{2-n}e^{-\chi(r_{i}/5)}\underline\eta,
\end{align*}
which is a contradiction for sufficiently large $i$. Hence in what follows we may assume that
\begin{equation*}
\limsup\limits_{i \to \infty}r_{i}^{-1}d(p_{i}, \partial M) \leq 1/5.
\end{equation*}
Then eventually, by \eqref{distancerel}, we have $d_{0}(p_{i}, \partial M) \leq 2d(p_{i}, \partial M) < r_{i}/2$, which allows us to choose $q_{i} \in B_{r_{i}/2}^{N}(p_{i}) \cap \partial M$. Note that we then have
\begin{equation}
\tilde{B}^{+}_{r_{i}/4}(q_{i}) \subset B^{N}_{r_{i}/2}(q_{i}) \subset B^{N}_{r_{i}}(p_{i}).
\end{equation}
Consequently we apply Proposition \ref{globalmono}(2) to estimate
\begin{align*}
5^{-n}\underline\eta & \geq r_{i}^{2-n}\mu(B^{N}_{r_{i}}(p_{i})) \geq 4^{2-n}(r_{i}/4)^{2-n}\mu(\tilde{B}^{+}_{r_{i}/4}(q_{i}))\\
&\geq 4^{2-n}e^{-\chi(r_{i}/4)}\underline\eta,
\end{align*}
again a contradiction for $i$ large. 
\end{proof}

The remaining properties of $\Sigma$ will be established in the next section. We now define the harmonic $1$-form $\psi$ in the conclusion of Theorem \ref{GLgeneralconvergence} and prove its asserted properties.
\begin{thm}\label{regpartconvergence}
Let $\rho_{k} = |u_{k}|$, $X_{k} = \ep_{k}^{-2}(1 - \rho_{k})$ and let $\psi_{k} = \left| \log\ep_{k} \right|^{-1/2}u_{k}\times du_{k}$. Then, passing to a subsequence if necessary, the following hold.
\begin{enumerate}
\item[(1)] $\nabla \rho_{k} \to 0$ and $\ep_{k}X_{k}\to 0$ in $C_{\loc}^{m}(M\setminus\Sigma)\cap C_{\loc}^{1}(\overline{M}\setminus \Sigma)$ for all $m$.
\item[(2)] $\psi_{k}$ converges in $C^{m}_{\loc}(M\setminus \Sigma) \cap C^{1}_{\loc}(\overline{M}\setminus\Sigma)$ for all $m$. 
\item[(3)] Denoting the limit $1$-form in part (2) by $\psi$, then 
\begin{equation*}
\lim_{k \to \infty}\mu_{k}\llcorner K = (|\psi|^{2}/2)\dvol\ \llcorner K
\end{equation*} 
for all compact set $K \subset \overline{M}\setminus\Sigma$. In particular, $\int_{M}|\psi|^{2}\dvol \leq 2K_{0} < \infty.$

\item[(4)] $\psi$ is smooth and harmonic over all of $\overline{M}$, and satisfies $\fn \psi = 0$ on $\partial M$.
\end{enumerate}
\end{thm}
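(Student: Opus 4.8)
\textbf{Proof proposal for Theorem \ref{regpartconvergence}.}

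The plan is to use the higher-order estimates from Section 6.2, combined with the monotonicity-based density bounds of Section 6.3, to extract a locally uniformly convergent subsequence away from $\Sigma$, and then to upgrade the limit $1$-form to a global harmonic form by exploiting the finiteness of $H^{n-2}(\Sigma)$ to remove the singular set.

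\emph{Step 1: Local uniform estimates away from $\Sigma$.} Fix a compact set $K \subset \overline{M}\setminus\Sigma$. By definition of $\Sigma$ and the upper semicontinuity built into the monotonicity formula (Proposition \ref{globalmono} together with Remark \ref{localizeduppersemicont}), for every $p\in K$ there is a radius $r_p$, uniform over $K$, and a $k_0$ such that the scale-invariant energy of $u_k$ on $\tilde B_{r_p}(p)$ or $\tilde B^+_{r_p}(p)$ is below $\eta\left|\log\ep_k\right|$ for any prescribed $\eta<\min\{\eta_3,\eta_3'\}$ and all $k\geq k_0$. After pulling back by geodesic/Fermi coordinates (so that the metric lies in the relevant class $\cM_{1,\lambda,\Lambda}$ or $\cM^+_{1,\lambda,\Lambda}$, using Proposition \ref{fermiproperties} and Remark \ref{fermirmk}, with the convexity hypothesis \eqref{convexboundary} supplying the boundary convexity of $T$), Theorems \ref{interiorhigher} and \ref{boundaryhigher} apply, after the usual rescaling to normalize the ball. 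These yield, for each $m$, bounds of the form $|\nabla\rho_k|_{m}\leq C_m\ep_k^{a_m}\left|\log\ep_k\right|^{b_m}\to 0$, $|\ep_k X_k|_m \leq C_m\ep_k\left|\log\ep_k\right|\to 0$, and $|\nabla\varphi_k|_m \leq C_m\left|\log\ep_k\right|^{1/2}$ on slightly smaller balls, uniformly on $K$ (shrinking and re-covering $K$ finitely many times, with a book-keeping of how the radii degrade near $\partial M$ exactly as in the proof of Proposition \ref{sigmaclosed}). Dividing the $\varphi_k$-bound by $\left|\log\ep_k\right|^{1/2}$ gives uniform $C^m$ control on $\psi_k = \left|\log\ep_k\right|^{-1/2}u_k\times du_k$, since on $K$ one has $\rho_k\to 1$ and $u_k\times du_k = \rho_k^2\,d\varphi_k$; this proves (1) and the precompactness needed for (2). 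A diagonal argument over an exhaustion of $\overline M\setminus\Sigma$ by compact sets, passing to successive subsequences, produces a single subsequence along which $\psi_k\to\psi$ in $C^m_{\loc}$ for all $m$.

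\emph{Step 2: Identification of the energy limit on $\overline M\setminus\Sigma$.} On $K$ write $e_{\ep_k}(u_k) = \frac{|\nabla\rho_k|^2}{2} + \frac{\rho_k^2|\nabla\varphi_k|^2}{2} + \frac{(1-\rho_k^2)^2}{4\ep_k^2}$. Dividing by $\left|\log\ep_k\right|$: the first term is $O(\ep_k^{2a}\left|\log\ep_k\right|^{2b-1})\to 0$, the potential term equals $\frac{\ep_k^2 X_k^2(1+\rho_k)^2}{4}/\left|\log\ep_k\right| = O(\ep_k^2\left|\log\ep_k\right|)\to 0$, and the middle term is $\frac{\rho_k^2}{2}|\psi_k|^2 \to \frac{|\psi|^2}{2}$ in $C^0(K)$. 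Hence $\mu_k\llcorner K \to (|\psi|^2/2)\dvol\llcorner K$; taking $K$ to exhaust $\overline M\setminus\Sigma$ and using $E_{\ep_k}(u_k)\leq K_0\left|\log\ep_k\right|$ gives $\int_M|\psi|^2\dvol\leq 2K_0$, which is (3). The boundary condition $\fn\psi=0$ is inherited from the Neumann condition: in Fermi coordinates $\partial_n u_k=0$ on $T$, so $u_k\times\partial_n u_k = 0$ there, i.e. the normal component of $\psi_k$ vanishes on $\partial M\setminus\Sigma$, and this passes to the $C^1$ limit; equivalently $\fn\psi_k=0$ there, hence $\fn\psi=0$ on $\partial M\setminus\Sigma$.

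\emph{Step 3: $\psi$ is closed and co-closed on $\overline M\setminus\Sigma$, then globally harmonic.} On $M\setminus\Sigma$, $\psi_k = \rho_k^2\,d\varphi_k$ (with $\varphi_k$ locally single-valued where $\rho_k>3/4$), so $d\psi_k = d(\rho_k^2)\wedge d\varphi_k \to 0$, giving $d\psi=0$; and $d^\ast(u_k\times du_k)=0$ is the equation \eqref{phiPDE} rewritten invariantly (it is exactly $-d^\ast(u\times du)=0$, a consequence of the Ginzburg-Landau equation), so $d^\ast\psi=0$ in the limit. Thus $\psi$ is a smooth harmonic $1$-form on $\overline M\setminus\Sigma$ with $\fn\psi=0$ on $\partial M\setminus\Sigma$, and $\psi\in L^2$ with, from the $C^1_{\loc}$ estimates extended to a neighborhood of any compact subset, $\nabla\psi$ locally bounded. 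The remaining point is to show $\psi$ extends across $\Sigma$: since $H^{n-2}(\Sigma)<\infty$ by Remark \ref{finitemeasure} and $n-2 = n-2$, the set $\Sigma$ has zero $(n-1)$-dimensional Hausdorff measure and in particular vanishing $2$-capacity (for $n\geq 3$, a set of finite $H^{n-2}$-measure is removable for bounded harmonic functions, and more generally for $W^{1,2}$ harmonic forms). Concretely, one takes logarithmic cutoffs $\chi_\delta$ equal to $0$ on a $\delta$-neighborhood of $\Sigma$ and $1$ outside a $2\delta$-neighborhood, with $\|\nabla\chi_\delta\|_{L^n}\to 0$ as $\delta\to 0$ — possible precisely because $H^{n-2}(\Sigma)<\infty$ — and tests the weak formulations of $d\psi=0$, $d^\ast\psi=0$, $\fn\psi=0$ against $\chi_\delta\zeta$ for arbitrary smooth $\zeta$ on $\overline M$ (with $\ft\zeta$ arbitrary), letting $\delta\to 0$ using $\psi\in L^2$ and Hölder's inequality $\|\zeta\nabla\chi_\delta\|_{L^2}\leq \|\zeta\|_{L^{2n/(n-2)}}\|\nabla\chi_\delta\|_{L^n}$. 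This shows $\psi$ is a weak solution of the Hodge system on all of $\overline M$ with the full (free) boundary condition $\fn\psi=0$, and then elliptic regularity for the Hodge Laplacian with the relevant boundary conditions (as recalled in the differential-forms preliminaries and Appendix C) gives that $\psi$ is smooth on $\overline M$. This is (4).

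\emph{Main obstacle.} The genuinely delicate part is Step 3's removability argument near the boundary: one must run the capacity/cutoff argument for the \emph{boundary-value} problem for the Hodge Laplacian with a merely Lipschitz (reflected) metric near $\partial M$, making sure the weak formulation with $\fn\psi=0$ is preserved in the limit $\delta\to 0$ and that the subsequent regularity theory — for which the relevant statements are collected in Appendix C — indeed applies across portions of $\Sigma$ touching $\partial M$. The interior removability is comparatively standard; it is the interaction of the singular set with the boundary, together with the non-$C^1$ reflected metric, that requires care, and this is where the finiteness $H^{n-2}(\Sigma)<\infty$ from Remark \ref{finitemeasure} is used in an essential way.
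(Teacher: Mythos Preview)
Your Steps 1--2 and the first half of Step 3 (showing $d\psi=0$, $d^{\ast}\psi=0$, $\fn\psi=0$ on $\overline{M}\setminus\Sigma$) are essentially what the paper does. The gap is in the removability argument.

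You claim one can choose cutoffs $\chi_{\delta}$ vanishing on a $\delta$-neighborhood of $\Sigma$ with $\|\nabla\chi_{\delta}\|_{L^{n}}\to 0$, ``possible precisely because $H^{n-2}(\Sigma)<\infty$.'' This is false: a set of finite $H^{n-2}$-measure has vanishing $2$-capacity, not vanishing $n$-capacity. Concretely, for a logarithmic cutoff around an $(n-2)$-plane one computes $\|\nabla\chi_{\delta}\|_{L^{n}}^{n}\sim \delta^{2-n}/(\log(1/\delta))^{n}\to\infty$ for $n\geq 3$, whereas $\|\nabla\chi_{\delta}\|_{L^{2}}^{2}\sim (\log(1/\delta))^{-1}\to 0$. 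So your H\"older split $\|\zeta\nabla\chi_{\delta}\|_{L^{2}}\leq \|\zeta\|_{L^{2n/(n-2)}}\|\nabla\chi_{\delta}\|_{L^{n}}$ gives nothing; the correct pairing is $\|\psi\|_{L^{2}}\|\zeta\|_{L^{\infty}}\|\nabla\chi_{\delta}\|_{L^{2}}$ (taking $\zeta$ smooth), which uses exactly the $2$-capacity of $\Sigma$. Once you make this correction the cutoff argument goes through.

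The paper also organizes the removability differently: rather than testing weak formulations of $d\psi=0$, $d^{\ast}\psi=0$ directly and then invoking boundary elliptic regularity, it takes the global Hodge decomposition $\psi=d\alpha+d^{\ast}\beta+h$ with $h\in\cH_{1,\fn}(M)$ (this is available since $\psi\in L^{2}(M)$ by your Step 2), and uses the $2$-capacity cutoffs to show $\int_{M}\langle\psi,d^{\ast}\xi\rangle=0$ for all $\xi\in W^{1,2}_{2,\fn}$ and $\int_{M}\langle d\alpha,dv\rangle=0$ for all $v\in W^{1,2}$, forcing $d^{\ast}\beta=0$ and $d\alpha=0$, hence $\psi=h$. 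Smoothness of $\psi$ on $\overline{M}$ then comes for free from the regularity of harmonic fields in $\cH_{1,\fn}(M)$. This sidesteps the issue you flag as the ``main obstacle'': no Lipschitz reflected metric enters the removability step at all, since the Hodge decomposition is done on the smooth manifold $(M,g)$. The reflection machinery of Appendix C is used only for the local estimates feeding into Steps 1--2.
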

\begin{proof}

The proof will be based on the estimates obtained in Section 6.2. To begin, take $p \in M \setminus \Sigma$. Then, by the definition of $\Sigma$, we can fix a small enough radius $r = r_{p}$ such that for $k$ sufficiently large, there holds 
\begin{equation*}
r^{2-n}\int_{B_{r}(p)}e_{\ep_{k}}(u_{k}) < \eta_{3}\left| \log\frac{\ep_{k}}{r} \right|.
\end{equation*}
But then, writing $u_{k}$ locally in polar form as $\rho_{k}e^{i\varphi_{k}}$ with the requirement that $\fint_{B_{r}(p)} \varphi_{k}\dvol \in [0, 2\pi)$ and recalling that $X_{k} = \ep_{k}^{-2}(1 - \rho_{k})$, we may apply Theorem \ref{interiorhigher} (suitably scaled) and pass to subsequences to arrange the following:
\begin{enumerate}
\item[(i)] $|\log\ep_{k}|^{-1/2}d\varphi_{k}$ converges in $C^{l}(\tilde{B}_{3^{-3l-4}r}(p))$ for each $l$.
\item[(ii)] $\left|\nabla\rho_{k}\right| $ and $\ep_{k}X_{k}$ converge in $C^{l}(\tilde{B}_{3^{-3l-3}r}(p))$ to zero for all $l$.
\end{enumerate}
 Next we notice that since $u_{k} = \rho_{k}e^{i\varphi_{k}}$, we have 
 \begin{align*}
du_{k} &= e^{i\varphi_{k}}(d\rho_{k} + i\rho_{k}d\varphi_{k});\\
 \left| \log\ep_{k} \right|^{1/2}\psi_{k} &= u_{k} \times du_{k} = \rho_{k}^{2}d\varphi_{k} = d\varphi_{k} - \ep_{k}^{2}X_{k}d\varphi_{k}.
\end{align*}
Hence by (i) and (ii) above, $\psi_{k}$ converges in $C^{l}(\tilde{B}_{3^{-3l-4}r}(p))$ to some limit $\psi$. Moreover, noting that $d^{\ast}(u_{k}\times du_{k}) = 0$ by \eqref{GLNeumann}, and that
\begin{align*}
d\psi_{k} &= 2|\log\ep_{k}|^{-1/2}\rho_{k}d\rho_{k}\wedge d\varphi_{k} \to 0 \text{ in }C^{0}(\tilde{B}_{3^{-3}r(p)}),
\end{align*}
we conclude that $d\psi = 0$ and $d^{\ast}\psi = 0$ pointwise on $\tilde{B}_{3^{-7}r}(p)$. Therefore $\psi$ is harmonic and smooth in the same ball. 

Next, consider $p \in \partial M \setminus \Sigma$. Then we can choose $r$ such that, eventually, 
\begin{equation*}
r^{2-n}\int_{\tilde{B}^{+}_{r}(p)}e_{\ep_{k}}(u_{k}) < \eta_{3}'\left| \log\frac{\ep_{k}}{r} \right|.
\end{equation*}
Again writing $u_{k} = \rho_{k}e^{i\varphi_{k}}$ on $\tilde{B}^{+}_{r}(p)$, but using Theorem \ref{boundaryhigher} in place of Theorem \ref{interiorhigher}, we may argue as above and conclude that $|\nabla \rho_{k}|$ and $\ep_{k}X_{k}$ converge in $C^{1}$ to zero in $\tilde{B}^{+}_{3^{-6}r}(p)$, and that $\psi_{k}$ converges in $C^{1}(\tilde{B}^{+}_{3^{-6}r}(p))$ to a limit $1$-form $\psi$ satisfying that $d\psi = 0$ and $d^{\ast}\psi = 0$. Moreover, by \eqref{GLNeumann}, we deduce that $\fn \psi  = 0$ on $\tilde{B}^{+}_{3^{-6}r}(p) \cap \partial M$. Therefore $\psi$ is harmonic in $\tilde{B}^{+}_{3^{-6}r}(p)$ and smooth up to $\tilde{B}^{+}_{3^{-6}r}(p)\cap \partial M$.

From these local results, we infer after possibly passing to further subsequences that
\begin{enumerate}
\item[(i)'] $\psi_{k}$ converges in $C^{m}_{\loc}(M\setminus \Sigma) \cap C_{\loc}^{1}(\overline{M}\setminus \Sigma)$ for all $m$ to a $1$-form $\psi \in C^{\infty}(\overline{M}\setminus \Sigma)$ which is harmonic in $M\setminus \Sigma$ and satisfies $\fn\psi = 0$ on $\partial M \setminus \Sigma$.
\item[(ii)'] $\nabla \rho_{k}$ and $\ep_{k}X_{k}$ converge in $C^{m}_{\loc}(M\setminus\Sigma) \cap C_{\loc}^{1}(\overline{M}\setminus\Sigma)$ to zero for all $m$.
\end{enumerate}
Note that properties (i)' and (ii)' establish conclusion (1) and (2) of Theorem \ref{regpartconvergence} and almost gives assertion (4), except that $\psi$ is not yet shown to be smooth across $\Sigma$. To prove that, we need assertion (3), which we now turn to. 

Recalling \eqref{gradientdecomposition}, we observe that $e_{\ep_{k}}(u_{k})$ can be decomposed as follows:
\begin{align}\label{densitydecomposition}
e_{\ep_{k}}(u_{k}) &= \frac{|u_{k}\times du_{k}|^{2}}{2} + \frac{\left| \nabla |u_{k}|^{2} \right|^{2} }{8} + \frac{(1 - |u_{k}|^{2}) |\nabla u_{k}|^{2} }{2} + \frac{(1 - |u_{k}|^{2})^{2}}{4\ep_{k}^{2}}\\\nonumber
&= \left| \log\ep_{k} \right|\frac{|\psi_{k}|^{2}}{2} + \frac{\rho_{k}^{2} \left| \nabla \rho_{k} \right|^{2} }{2} + \ep_{k}^{2}X_{k}\frac{|\nabla u_{k}|^{2}}{2} + \ep_{k}^{2}\frac{X_{k}^{2}}{4}.
\end{align}
Dividing through by $|\log\ep_{k}|$ and using (i)' and (ii)' above, we see that $\lim_{k\to \infty}\mu_{k} \llcorner K = (|\psi|^{2}/2)\dvol\ \llcorner K$ in the sense of measures for each compact set $K \subset \overline{M}\setminus \Sigma$.
Consequently,
\begin{equation}\label{psil2finite}
\int_{K}\frac{|\psi|^{2}}{2}\dvol \leq \limsup\limits_{k \to \infty}\mu_{k}(\overline{M}),\text{ for all }K \subset\subset \overline{M}\setminus\Sigma,
\end{equation}
which implies the second assertion of conclusion (3) since $\Sigma$ has zero measure with respect to $\dvol$ by Remark \ref{finitemeasure}.

Finally we return to (4). With the help of (i)' above, we will show that $\psi$ agrees with the harmonic part in its Hodge decomposition. Specifically, by Theorem 7.7.7 of \cite{morreybook}, we can write
\begin{equation}\label{psihodge}
\psi = d\alpha + d^{\ast}\beta + h,
\end{equation}
where $\alpha \in W^{1, 2}(M)$, $\beta \in W^{1, 2}_{2, \fn}(M)$ and $h \in \cH_{1, \fn}$. To proceed, we note the following.
\begin{equation}\label{psidclosed}
\int_{M}\langle \psi, d^{\ast}\xi \rangle\dvol =0 \text{ for all }\xi \in W^{1, 2}_{2, \fn}(M).
\end{equation}
\begin{equation}\label{alphaharmonic}
\int_{M}\langle d\alpha, dv \rangle\dvol = 0\text{ for all }v \in W^{1, 2}(M).
\end{equation}
We assume these for now and give their proofs at the end. Substituting $\xi = \beta$ into \eqref{psidclosed}, we immediately see that $d^{\ast}\beta = 0$. On the other hand, putting $v = \alpha$ in \eqref{alphaharmonic} shows that $d\alpha = 0$. Recalling \eqref{psihodge}, we see that $\psi$ must be equal to $h$, which establishes assertion (4) and finishes the proof of Theorem \ref{regpartconvergence}.
\end{proof}

\begin{proof}[Proof of \eqref{psidclosed}]
Take a sequence $\{W_{k}\}$ of open subsets of $\widetilde{M}$ such that $W_{1} \supset W_{2} \supset \cdots \supset \Sigma = \cap_{k = 1}^{\infty}W_{k}$. Since $\Sigma$ has finite $H^{n-2}$-measure and hence vanishing $2$-capacity (cf. \cite{evansgariepy}, Section 4.7), we may obtain as in Theorem 3, Section 4.7.2 of \cite{evansgariepy} a sequence of cut-off functions $\{\zeta_{k}\}$ such that $\supp \zeta_{k} \subset\subset W_{k}$, $\zeta_{k}\equiv 1$ on $W_{k  + 1}$, and that 
\begin{equation}\label{cap2vanish}
\int_{W_{k}}|\nabla\zeta_{k}|^{2}\dvol < 1/k.
\end{equation}
Recalling that $d\psi = 0$ away from $\Sigma$, we see that for each $k$ and for all smooth $2$-form $\xi$ satisfying $\fn \xi = 0$ on $\partial M$, we may integrate by parts and obtain
\begin{align*}
0 &= \int_{M}\langle d\psi, (1 - \zeta_{k})\xi \rangle \dvol  = \int_{M}\langle \psi, d^{\ast}\left( (1 - \zeta_{k})\xi \right) \rangle\dvol \\
&= \int_{M}\langle \psi, (1 - \zeta_{k})d^{\ast}\xi \rangle\dvol - \int_{M}\langle \psi, \xi \llcorner d\zeta_{k} \rangle\dvol\\
&= I - II.
\end{align*}
By construction of $\{W_{k}\}$ and $\{\zeta_{k}\}$ and the finiteness of $\|\psi\|_{2; M}$, we deduce that (I) tends to $\int_{M}\langle \psi, d^{\ast}\xi \rangle\dvol$ while (II) tends to zero as $k$ tends to infinity. Hence we obtain \eqref{psidclosed} for all smooth $\xi$ with $\fn\xi=  0$. Since $\psi$ has finite $L^{2}$-norm, we may extend the identity to all $\xi$ lying in $W^{1, 2}_{2,\fn}$ by approximation.
\end{proof}
\begin{proof}[Proof of \eqref{alphaharmonic}]
As mentioned in the proof of Theorem \ref{regpartconvergence}, \eqref{psidclosed} implies that $d^{\ast}\beta = 0$ and thus $d\alpha = \psi - h$. Next, letting $v \in C^{1}_{c}(\overline{M}\setminus\Sigma)$ and integrating by parts using the boundary condition on $\psi$ and $h$ and the fact that they are $d^{\ast}$-closed away from $\Sigma$, we get
\begin{equation*}
\int_{M}\langle d\alpha, dv \rangle\dvol = \int_{M\setminus\Sigma} \langle \psi, dv \rangle\dvol - \int_{M}\langle h, dv \rangle\dvol = 0.
\end{equation*}
Now, by a capacity argument similar to the one in the previous proof, we may extend the above to any $v \in C^{1}(\overline{M})$, and then to any $v \in W^{1, 2}(M)$ by approximation. 
\end{proof}
\section{Convergence of Ginzburg-Landau solutions II: The singular part}

The goal of this section is to derive further properties of $\Sigma$ and show that it supports a rectifiable $(n-2)$-varifold which is stationary with free boundary. We remark here that our strategy for proving rectifiability and stationarity is similar to that in \cite{bospara}, Part II (see also \cite{bosring}, Appendix B). 

We begin by noting that, by (2) of Theorem \ref{regpartconvergence}, if we define
\begin{equation}\label{singpart}
\mu_{s} \equiv \mu - \frac{|\psi|^{2}}{2}\dvol\ \llcorner \overline{M},
\end{equation}
then $\mu_{s}$ is a nonnegative Radon measure with $\supp(\mu_{s}) \subset \Sigma$. Moreover, since $\psi$ is smooth on $\overline{M}$, we see that everywhere on $\Sigma$, the density $\Theta(\mu_{s}, p)$ exists and coincides with $\Theta(\mu, p)$. Hence, by Propositions \ref{strongdensityupperbound} and \ref{densitybounds}, decreasing $\underline{\eta}, r_{3}$ and $r_{4}$ if necessary, we have
\begin{equation}\label{singulardensityinequalities}
5^{-n}\underline{\eta}r^{n-2} \leq \mu_{s}(B_{r}^{N}(p)) \leq CK_{0} r^{n-2},
\end{equation}
for all $p \in \Sigma$ and $r < \min\{r_{3}, r_{4}\}$. In particular, we infer that $\mu_{s}$ is absolutely continuous with respect to $H^{n-2}\llcorner \Sigma$, and therefore we may also write
\begin{equation*}\mu_{s} = \Theta(\mu_{s}, \cdot)H^{n-2}\llcorner \Sigma.,
\end{equation*}
where $\Theta(\mu_{s}, p) \in [\underline{\eta}, CE_{0}]$ for all $p$ in $\Sigma$. 

It follows from the density upper and lower bounds and Preiss' theorem (\cite{preiss}) applied to $\mu_{s}$ that $\Sigma\ ( = \supp(\mu_{s}))$ is a countably $(n-2)$-rectifiable set in $\RR^{N}$. Thus, to complete the proof of Theorem \ref{GLgeneralconvergence}, it remains to verify conclusion (3), i.e. to show that the rectifiable varifold $V = (\Sigma, \Theta(\mu_{s}, \cdot))$ is stationary with free boundary.

The starting point is the following identity.
\begin{equation}\label{ukstationary}
\int_{M}e_{\ep_{k}}(u_{k})\Div X - \langle \nabla_{\nabla u_{k}}X, \nabla u_{k} \rangle\dvol = 0,
\end{equation}
where $X$ is any admissible vector field. To prove \eqref{ukstationary}, we simply substitute $\zeta = \langle X, \nabla u_{k} \rangle$ into \eqref{GLvariation} (with $u_{k}$ in place of $u$) and integrate by parts, noting that the boundary term vanishes since $\langle X,\nu \rangle = 0$ on $\partial M$.

\begin{rmk}\label{stationarynotation} Although $X$ vector field, we would sometimes, by abuse of notation, identify it with its dual $1$-form with respect to $g$. For instance, we may write $\langle \nabla_{\nabla u_{k}}X, \nabla u_{k} \rangle$ as $\langle \nabla X, du_{k}\otimes du_{k} \rangle$ etc. Finally, by $du_{k}\otimes du_{k}$ we would mean, in terms of coordinates, the following tensor.
\begin{equation*}
(du_{k}\otimes du_{k})(\partial_{r}, \partial_{s})  = \partial_{r}u_{k}\cdot \partial_{s}u_{k} = \text{Re}\left( \partial_{r}u \ \overline{\partial_{s}u} \right).
\end{equation*}

\end{rmk}
We may pass to the limit in \eqref{ukstationary} as $k$ tends to infinity to obtain useful information on the limit measure. For convenience we will do it locally. Specifically, for $p \in \overline{M}$ we fix a small ball $B_{p} \equiv B^{N}_{r}(p)$ and introduce an orthonormal frame $\{e_{i}\}_{i = 1}^{n}$ for $T\widetilde{M}$, with the additional requirement that $e_{n} = \nu$ in $B_{p} \cap \overline{M}$ if $p$ lies on $\partial M$, where $\nu = \nabla \dist(\cdot, \partial M)$. Then we have the following result.
\begin{lemm}
There exists Radon measures $\{\alpha_{ij}\}$ supported on $\Sigma$ such that for all admissible vector fields $X$ compactly supported in $B_{p}$, we have
\begin{equation}\label{limitstationary}
\int_{M}\frac{|\psi|^{2}}{2}\Div X - \langle \nabla X, \psi \otimes \psi \rangle\dvol + \int_{\Sigma}\Div X d\mu_{s} - \int_{\Sigma}\langle \nabla_{e_{i}}X, e_{j} \rangle d\alpha_{ij} = 0.
\end{equation}
\end{lemm}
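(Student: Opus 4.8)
The idea is to pass to the limit in \eqref{ukstationary}, testing against admissible vector fields $X$ compactly supported in $B_{p}$. The plan is to split $e_{\ep_{k}}(u_{k})$ using the decomposition \eqref{densitydecomposition} and to track each term separately, using the $C^{1}_{\loc}$-convergence on $\overline{M}\setminus\Sigma$ from Theorem \ref{regpartconvergence} to handle the region away from $\Sigma$, and weak compactness of Radon measures to define the $\alpha_{ij}$ from the terms that concentrate on $\Sigma$.

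First I would rewrite the quadratic term $\langle \nabla_{\nabla u_{k}}X, \nabla u_{k}\rangle$ in the frame $\{e_{i}\}$ as $\langle \nabla_{e_{i}}X, e_{j}\rangle (du_{k}\otimes du_{k})(e_{i},e_{j})$ and, dividing \eqref{ukstationary} by $|\log\ep_{k}|$, observe that both $\mu_{k}$ and the measures $|\log\ep_{k}|^{-1}(du_{k}\otimes du_{k})(e_{i},e_{j})\dvol$ have uniformly bounded mass on $\overline{M}$ by \eqref{GLenergyupperbound} and Lemma \ref{GLbasicbounds}. Hence, passing to a further subsequence, the latter converge weakly to signed Radon measures $\beta_{ij}$ on $\overline{M}$; by Theorem \ref{regpartconvergence}(3) and the identity $|\log\ep_k|^{-1}u_k\times du_k\to\psi$ (together with $\nabla\rho_k\to0$, $\ep_k X_k\to0$), the restriction of $\beta_{ij}$ to $\overline{M}\setminus\Sigma$ equals $\psi_{i}\psi_{j}\dvol$, where $\psi_{i}=\psi(e_{i})$. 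Writing $\beta_{ij} = \psi_i\psi_j\dvol\ \llcorner(\overline{M}\setminus\Sigma) + \alpha_{ij}$ with $\alpha_{ij}$ supported on $\Sigma$, and similarly $\mu = (|\psi|^2/2)\dvol + \mu_s$ with $\mu_s$ supported on $\Sigma$, one then takes the limit of $\int_M e_{\ep_k}(u_k)\Div X\,\dvol - \int_M \langle\nabla_{e_i}X,e_j\rangle\,(du_k\otimes du_k)(e_i,e_j)\,\dvol / |\log\ep_k|$. Since $X$ is smooth and compactly supported in $B_p$, $\Div X$ and $\langle\nabla_{e_i}X,e_j\rangle$ are bounded continuous functions, so weak convergence of measures gives precisely \eqref{limitstationary}, after recognizing that the absolutely continuous parts combine into $\int_M \tfrac{|\psi|^2}{2}\Div X - \langle\nabla X,\psi\otimes\psi\rangle\,\dvol$ (using $\sum_{ij}\langle\nabla_{e_i}X,e_j\rangle\psi_i\psi_j = \langle\nabla X,\psi\otimes\psi\rangle$).

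The one genuine subtlety is justifying that the weak limit of $\mu_k$ used throughout agrees with the $\mu$ fixed at the start and that the same subsequence simultaneously makes $\mu_k$, the $\beta_{ij}$, and the $C^1_{\loc}$-convergences of Theorem \ref{regpartconvergence} all work; this is handled by a routine diagonal extraction, passing to a common subsequence once and for all. A second small point is that $\Sigma$ has $\dvol$-measure zero (Remark \ref{finitemeasure}), so the decompositions $\mu = (|\psi|^2/2)\dvol + \mu_s$ and $\beta_{ij} = \psi_i\psi_j\dvol + \alpha_{ij}$ are unambiguous and the absolutely continuous parts may be integrated over all of $M$ rather than $M\setminus\Sigma$. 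I do not expect a serious obstacle here: the delicate analysis (the $\eta$-ellipticity estimates, the higher-order bounds, the structure of $\psi$) has already been carried out, and this lemma is essentially bookkeeping — testing the first-variation identity against admissible fields and sorting the resulting terms by whether they live on $\Sigma$ or off it. The harder work comes afterward, in identifying $\delta_{ij}-A_{ij}$ with orthogonal projection onto $T_y\Sigma$ so that \eqref{limitstationary} collapses to \eqref{GLlimitstationary}; but that is the content of the subsequent argument, not of this statement.
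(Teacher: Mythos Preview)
Your proposal is correct and follows essentially the same approach as the paper: pass to the limit in \eqref{ukstationary} after dividing by $|\log\ep_k|$, use weak compactness to extract limits $\mu_{ij}$ of $|\log\ep_k|^{-1}\nabla_i u_k\cdot\nabla_j u_k\,\dvol$, and use the polar decomposition $du_k\otimes du_k = d\rho_k\otimes d\rho_k + |\log\ep_k|\psi_k\otimes\psi_k + \ep_k^2\rho_k^2(1+\rho_k)X_k\,d\varphi_k\otimes d\varphi_k$ together with Theorem~\ref{regpartconvergence} to identify the absolutely continuous part on $\overline{M}\setminus\Sigma$ as $\psi_i\psi_j\,\dvol$, defining $\alpha_{ij}$ as the remainder supported on $\Sigma$. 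The only minor remark is that the mass bound on the $\beta_{ij}$ follows from \eqref{GLenergyupperbound} alone (via $|\nabla_i u_k\cdot\nabla_j u_k|\le|\nabla u_k|^2\le 2e_{\ep_k}(u_k)$); Lemma~\ref{GLbasicbounds} is not needed there.
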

\begin{proof}
By \eqref{singpart} we see that for all $X$ admissible, 
\begin{equation}\label{easystationarylimit}
\lim\limits_{k \to \infty}\int_{M} e_{\ep_{k}}(u_{k})\Div X\dvol = \int_{M}\frac{|\psi|}{2}\Div X \dvol + \int_{\Sigma}\Div X d\mu_{s}.
\end{equation}
As for the second term on the left of \eqref{ukstationary}, recalling that away from $\Sigma$ we may locally write $u_{k} = \rho_{k}e^{i\varphi_{k}}$ and find that
\begin{align*}
du_{k}\otimes du_{k} &= d\rho_{k}\otimes d\rho_{k} + \rho_{k}^{2}d\varphi_{k}\otimes d\varphi_{k}\\
&= d\rho_{k}\otimes d\rho_{k} + \left| \log\ep_{k} \right| \psi_{k} \otimes \psi_{k} + \rho_{k}^{2}(1 - \rho_{k}^{2})d\varphi_{k}\otimes d\varphi_{k}\\
&= d\rho_{k}\otimes d\rho_{k} + \left| \log\ep_{k} \right| \psi_{k}\otimes \psi_{k} +\ep_{k}^{2} \rho_{k}^{2}(1 + \rho_{k})X_{k}d\varphi_{k}\otimes d\varphi_{k}.
\end{align*}
By Theorem \ref{regpartconvergence}, we see that $|\log\ep_{k}|^{-1}du_{k}\otimes du_{k}$ converges locally uniformly on $\overline{M}\setminus \Sigma$ to $\psi \otimes \psi$. Therefore there exists Radon measures $\{\alpha_{ij}\}$ supported on $\Sigma$ such that the following holds in the sense of measures on $B_{p}$.
\begin{equation}\label{stressenergylimit}
\mu_{ij}\equiv \lim\limits_{k \to\infty}\frac{\nabla_{i}u \cdot \nabla_{j}u}{\left|\log\ep_{k}\right|} \dvol\ \llcorner \overline{M}= \psi_{i}\psi_{j} \dvol\ \llcorner \overline{M} + \alpha_{ij}.
\end{equation}
It now follows from \eqref{stressenergylimit} that for all admissible $X$ compactly supported in $B_{p}$,
\begin{equation}\label{hardstationarylimit}
\lim\limits_{k \to \infty}\int_{M} \langle \nabla X, du_{k}\otimes du_{k} \rangle\dvol = \int_{M}\langle X, \psi\otimes \psi \rangle\dvol + \int_{\Sigma}\langle \nabla_{e_{i}}X, e_{j} \rangle d\alpha_{ij}.
\end{equation}
Combining \eqref{easystationarylimit} and \eqref{hardstationarylimit}, we are done.
\end{proof}
We next show that the first integral in \eqref{limitstationary} actually vanishes for all admissible $X$.
\begin{lemm}\label{regpartstationary}
For all admissible vector field $X$, we have
\begin{equation}\label{psistationary}
\int_{M}\frac{|\psi|^{2}}{2}\Div X - \langle \nabla X, \psi \otimes \psi \rangle \dvol = 0.
\end{equation}
\end{lemm}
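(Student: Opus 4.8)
The statement \eqref{psistationary} says precisely that the smooth $1$-form $\psi$, viewed as defining the absolutely continuous measure $(|\psi|^2/2)\dvol$ together with the stress-energy tensor $\psi\otimes\psi$, is itself stationary with respect to admissible deformations. Since $\psi$ is smooth and harmonic on all of $\overline{M}$ with $\fn\psi=0$ on $\partial M$ (Theorem \ref{regpartconvergence}(4)), I would prove this directly, without passing through the $u_k$'s, by a standard inner-variation computation for the Dirichlet energy of $\psi$. The plan is to fix an admissible vector field $X$, let $\{\Phi_t\}$ be the flow it generates on $\widetilde M$ (which preserves $\overline M$ and $\partial M$ since $X$ is tangent to $\partial M$), and differentiate $t\mapsto \tfrac12\int_M |\Phi_t^*\psi|^2\dvol$ at $t=0$. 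The first-variation identity for this functional is exactly
\begin{equation*}
\frac{d}{dt}\Big|_{t=0}\frac12\int_M |\Phi_t^*\psi|^2\dvol = \int_M \frac{|\psi|^2}{2}\Div X - \langle \nabla X, \psi\otimes\psi\rangle\dvol,
\end{equation*}
so it suffices to show the left-hand side vanishes.

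To see that the left-hand side vanishes I would argue as follows. Because $d\psi=0$, each pulled-back form $\Phi_t^*\psi$ is also closed, and $\Phi_t^*\psi - \psi = d\sigma_t$ for a smooth family of functions $\sigma_t$ (using that $\Phi_t$ is isotopic to the identity through maps preserving $\overline M$, so $\Phi_t^*$ acts trivially on de Rham cohomology — or more concretely $\Phi_t^*\psi-\psi = d\!\int_0^t \Phi_s^*(\iota_X\psi)\,ds$ by Cartan's formula and $d\psi=0$). Hence $\tfrac{d}{dt}\Phi_t^*\psi\big|_{t=0} = d(\iota_X\psi)$ is exact, and differentiating the energy at $t=0$ gives $\int_M \langle \psi, d(\iota_X\psi)\rangle\dvol$. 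Now integrate by parts: since $\psi$ is smooth on $\overline M$ and $\fn\psi=0$ on $\partial M$, the boundary term in $(\psi, d(\iota_X\psi)) = (d^*\psi, \iota_X\psi) + \text{bdry}$ vanishes, and $d^*\psi=0$ because $\psi$ is harmonic. Therefore the derivative of the energy is zero, which is \eqref{psistationary}. Alternatively, and perhaps more cleanly for the writeup, I would skip the flow and verify the algebraic identity
\begin{equation*}
\frac{|\psi|^2}{2}\Div X - \langle\nabla X,\psi\otimes\psi\rangle = \langle \psi, \mathcal{L}_X\psi\rangle - \Div\big(\tfrac{|\psi|^2}{2}X\big) + \text{(divergence terms)},
\end{equation*}
expressing the integrand as a sum of an exact term $\langle\psi, d(\iota_X\psi)\rangle$ (after using $d\psi=0$ to rewrite $\mathcal L_X\psi = d\iota_X\psi$) plus a pure divergence $-\Div\big(\tfrac{|\psi|^2}{2}X\big)$ whose integral vanishes by the divergence theorem since $X$ is tangent to $\partial M$; then the first piece integrates to zero exactly as above using $d^*\psi=0$ and $\fn\psi=0$.

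The computations here are routine; the only points requiring care are (i) checking that the boundary terms in all integrations by parts genuinely vanish, for which the two facts $\langle X,\nu\rangle=0$ on $\partial M$ (tangency, killing $\int_{\partial M}\tfrac{|\psi|^2}{2}\langle X,\nu\rangle$) and $\fn\psi=0$ on $\partial M$ (killing the boundary contribution in $(\psi,d\iota_X\psi)=(d^*\psi,\iota_X\psi)+\int_{\partial M}\langle \iota_\nu\psi, \iota_X\psi\rangle$, since $\iota_\nu\psi$ is the normal trace) are exactly what is needed; and (ii) confirming the pointwise inner-variation identity for the Dirichlet form, which is a classical computation (the same one underlying the fact that harmonic maps are stationary). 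I do not anticipate a genuine obstacle — this lemma is really a consistency check showing the ``regular part'' of the energy contributes nothing to the first variation — but I would be careful to state explicitly which properties of $\psi$ from Theorem \ref{regpartconvergence} are being invoked, namely smoothness up to $\partial M$, $d\psi = d^*\psi = 0$, and $\fn\psi = 0$.
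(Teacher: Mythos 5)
Your argument is essentially the same as the paper's: both pass through the flow $\Phi_t$ of $X$, identify the derivative of $t \mapsto \tfrac12\int_M |\Phi_t^*\psi|^2\dvol$ at $t=0$ with $\int_M\langle\psi,\mathcal{L}_X\psi\rangle\dvol = \int_M\langle\psi, d(\iota_X\psi)\rangle\dvol$ via Cartan's formula and $d\psi=0$, and then kill this integral by integrating by parts using $d^*\psi=0$ and $\fn\psi=0$. The paper dresses this up slightly differently — it first invokes the homotopy formula to write $\varphi_t^*\psi = \psi + d\alpha_t$ and deduces $\int|\varphi_t^*\psi|^2\geq\int|\psi|^2$ (with equality at $t=0$), so the derivative vanishes; then it computes the derivative pointwise as $\langle X,\nabla(|\psi|^2/2)\rangle + \langle\nabla X,\psi\otimes\psi\rangle$ and integrates by parts once more using $\langle X,\nu\rangle=0$ to land on \eqref{psistationary}. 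Your route compresses these into a single integration by parts, which is fine.

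One small but real slip: the first-variation identity you state has the wrong sign. A direct computation (or your own alternative pointwise decomposition, corrected) gives
\begin{equation*}
\frac{d}{dt}\Big|_{t=0}\frac12\int_M|\Phi_t^*\psi|^2\dvol \;=\; \int_M\langle\psi,\mathcal{L}_X\psi\rangle\dvol \;=\; -\int_M\Big(\frac{|\psi|^2}{2}\Div X - \langle\nabla X,\psi\otimes\psi\rangle\Big)\dvol,
\end{equation*}
i.e.\ the pointwise identity is $\tfrac{|\psi|^2}{2}\Div X - \langle\nabla X,\psi\otimes\psi\rangle = \Div\big(\tfrac{|\psi|^2}{2}X\big) - \langle\psi,\mathcal{L}_X\psi\rangle$, not with the signs you wrote. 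Since your argument ultimately shows both sides vanish, the error is harmless for the conclusion, but the identity as stated is incorrect and would not survive scrutiny in a context where the derivative were not identically zero. When you write this up, either fix the sign or sidestep the issue by doing exactly what the paper does: deduce $\frac{d}{dt}\big|_{t=0}\int|\Phi_t^*\psi|^2 = 0$ first, then translate this into \eqref{psistationary} by the (correctly signed) pointwise computation plus the divergence theorem with $\langle X,\nu\rangle = 0$.
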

\begin{proof}
Take an admissible vector field $X$ and let $\{\varphi_{t}: \widetilde{M} \to \widetilde{M}\}$ be the flow generated by $X$ for $t$ in a neighborhood of $0$. Note that $\varphi_{t}$ preserves $\partial M$ and $\overline{M}$ since $\langle X, \nu \rangle = 0$ on $\partial M$. 

Next, we apply the homotopy formula for differential forms (\cite{gms}, p. 135), to see that each pullback $\varphi_{t}^{\ast}\psi$ is cohomologous to $\psi$, i.e. for each $t$ there exists a $C^{1}$-function $\alpha_{t}$ such that $\varphi_{t}^{\ast}\psi = \psi + d\alpha_{t}$. Then, we compute
\begin{align*}
\int_{M}|\varphi_{t}^{\ast}\psi|^{2} &= \int_{M} |\psi|^{2} + |d\alpha_{t}|^{2}\dvol + 2\int_{M}\langle \psi, d\alpha_{t} \rangle\dvol \geq \int_{M} |\psi|^{2}\dvol.
\end{align*}
Note that the integral of $\langle \psi, d\alpha_{t} \rangle$ vanishes because we can integrate by parts and use the fact that $\fn\psi = 0$ on $\partial M$ and $d^{\ast}\psi = 0$ in $M$. Since the above inequality holds for all $t$, we get that 
\begin{align}\label{liederivative}
0 &= \frac{1}{2}\frac{d}{dt}\int_{M}|\varphi_{t}^{\ast}|\dvol = \int_{M}\langle \psi, \cL_{X}\psi \rangle\dvol \\\nonumber
&= \int_{M}\langle \psi, d(\psi(X)) \rangle\dvol \text{ ($\cL_{X} = d\iota_{X} + \iota_{X}d$ by Cartan's formula). }
\end{align}
A closer look at the integrand in the second line yields
\begin{align*}
\langle \psi, d(\psi(X)) \rangle &= \nabla_{\psi^{\#}} \langle \psi^{\#}, X \rangle\\
&= \langle \nabla_{\psi^{\#}}\psi^{\#}, X \rangle + \langle \psi^{\#}, \nabla_{\psi^{\#}}X \rangle\\
&= \langle \nabla_{X}\psi^{\#}, \psi^{\#} \rangle + \langle X, \psi\otimes \psi \rangle\\
&= \langle X, \nabla(|\psi^{2}|/2) \rangle + \langle X, \psi\otimes \psi  \rangle,
\end{align*}
where for the third equality we used the fact that $d\psi = 0$. Putting this into \eqref{liederivative} and integrating the first term by parts (no boundary term appears since $\langle  X, \nu\rangle = 0$ on $\partial M$), we get the desired identity, \eqref{psistationary}.
\end{proof}
Combining the previous two Lemmas, we get that for all $p \in \Sigma$ and admissible $X$ compactly supported in $B_{p}$, there holds
\begin{equation}\label{cleanstationary}
\int_{\Sigma} \Div Xd\mu_{s} - \int_{\Sigma}\langle \nabla_{e_{i}}X, e_{j} \rangle d\alpha_{ij} = 0.
\end{equation}
Moreover, as in the case for $\mu_{s}$, it's quite straightforward from the definition of $\alpha_{ij}$ and Proposition \ref{densitybounds} that $\alpha_{ij}$ coincides with $\mu_{ij}\ \llcorner \Sigma$ and that it is absolutely continuous with respect to $H^{n-2}\llcorner \Sigma$. Therefore, since $\Theta(\mu_{s},\cdot)$ is always positive on $\Sigma$, we may write 
\begin{equation*}
\alpha_{ij} = A_{ij}(\cdot)\Theta(\mu_{s}, \cdot)H^{n-2}\llcorner\Sigma\ (\ = A_{ij}(\cdot)\mu_{s}\ ).
\end{equation*}
Consequently, \eqref{cleanstationary} becomes 
\begin{equation}\label{almoststationary}
\int_{\Sigma} \langle \nabla_{e_{i}(p)}X(p), e_{j}(p) \rangle (\delta_{ij} - A_{ij}(p))\Theta(\mu_{s}, p)dH^{n-2}(p) = 0.
\end{equation}
Note that, by definition, the matrix $(A_{ij}(p))$ is symmetric for $H^{n-2}$-almost every $p$. The proof of Theorem \ref{GLgeneralconvergence}(3) would be complete if we can show that $I - A(p)$ projects onto $T_{p}\Sigma$ orthogonally for $H^{n-2}$-a.e. $p \in \Sigma$. To see that, we need the two results below.
\begin{lemm}\label{Aproperties}
For $H^{n-2}$-almost every $p \in \Sigma$, the trace of $I - A(p)$ is at least $n-2$, and all its eigenvalues lie in $[-1, 1]$.
\end{lemm}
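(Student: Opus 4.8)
The plan is to extract the two assertions from the structure of the measures $\alpha_{ij}$ together with the fact that $\mu_{s}$ has positive, finite $(n-2)$-density $H^{n-2}$-a.e.\ on the rectifiable set $\Sigma$. First I would recall that $\alpha_{ij} = A_{ij}(\cdot)\mu_{s}$ arises as the weak limit of $|\log\ep_{k}|^{-1}\nabla_{i}u_{k}\cdot\nabla_{j}u_{k}\,\dvol$ restricted to $\Sigma$; in particular, for $H^{n-2}$-a.e.\ $p\in\Sigma$ the matrix $A(p)$ is the limiting density of the (nonnegative-definite) ``stress'' matrices $\bigl(|\log\ep_{k}|^{-1}\nabla_{i}u_{k}\cdot\nabla_{j}u_{k}\bigr)$ with respect to $\mu_{s}$. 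Since each of these approximating matrices is symmetric and positive semidefinite, and since their trace equals $|\log\ep_{k}|^{-1}|\nabla u_{k}|^{2}$, which is bounded above by $2|\log\ep_{k}|^{-1}e_{\ep_{k}}(u_{k})$, the standard lower-semicontinuity / upper-semicontinuity argument for Radon--Nikodym derivatives of weakly converging measures (cf.\ \cite{bospara}, Part II, Section 5, or \cite{bosring}, Appendix B) shows that for $H^{n-2}$-a.e.\ $p\in\Sigma$ the matrix $A(p)$ is symmetric and positive semidefinite, and that
\begin{equation*}
\operatorname{tr}A(p) = \sum_{i}A_{ii}(p) \le \Theta\!\left(\tfrac12|\psi|^{2}\dvol,p\right)^{-1}\cdot 0 + 2 = 2,
\end{equation*}
more precisely $\sum_i A_{ii}(p)\le 2\Theta(\mu,p)/\Theta(\mu_s,p)=2$ at a.e.\ point of $\Sigma$, because the density of $\tfrac12|\psi|^2\dvol$ with respect to $\mu_s$ vanishes $H^{n-2}$-a.e.\ ($|\psi|^2\dvol$ being absolutely continuous with respect to $H^n$ and $\Sigma$ having $\sigma$-finite $H^{n-2}$-measure, hence $H^n(\Sigma)=0$). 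So the trace of $A(p)$ is at most $2$, and every eigenvalue is nonnegative. Combined with $\operatorname{tr}(I-A(p))=n-\operatorname{tr}A(p)\ge n-2$, this gives the first assertion and the lower bound on the eigenvalues of $I-A(p)$.

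For the upper bound, i.e.\ that every eigenvalue of $I-A(p)$ is at most $1$, it suffices to observe that $A(p)$ is positive semidefinite, which we have just established: if $A(p)\succeq 0$ then $I-A(p)\preceq I$, so all eigenvalues of $I-A(p)$ are $\le 1$; and since $\operatorname{tr}A(p)\le 2\le n$ and $A(p)\succeq 0$, combined with the (symmetric) structure, each eigenvalue of $I-A(p)$ is also $\ge -1$ once we note that no single eigenvalue of $A(p)$ can exceed $\operatorname{tr}A(p)\le 2$. (The bound $\ge -1$ will in fact be strengthened below; here we only need membership in $[-1,1]$.)

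The main obstacle I anticipate is making the passage ``$\alpha_{ij}$ is the limit of positive semidefinite matrix-valued measures, hence its density with respect to $\mu_s$ is a.e.\ positive semidefinite'' fully rigorous: one must choose a countable dense set of constant vectors $\xi$, apply the scalar statement that the weak limit of the nonnegative measures $|\log\ep_k|^{-1}|\xi\cdot\nabla u_k|^2\dvol$ has nonnegative density $\xi^T A(p)\xi$ with respect to $\mu_s$ at $\mu_s$-a.e.\ $p$ (using the Besicovitch differentiation theorem and the density lower bound \eqref{singulardensityinequalities}), and then intersect the resulting full-measure sets and pass to all $\xi$ by continuity. I would also need the elementary fact that for $\mu_s$-a.e.\ $p$ the density of $\tfrac12|\psi|^2\dvol$ with respect to $\mu_s$ is zero, which follows since $H^n(\Sigma)=0$ while $\tfrac12|\psi|^2\dvol\ll H^n$. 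Once these measure-theoretic points are in place, the linear-algebra conclusions (trace bound, eigenvalue bounds) are immediate, and Lemma \ref{Aproperties} follows.
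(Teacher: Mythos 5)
Your proof is correct and follows essentially the same route as the paper's: both pass from the positive semidefinite stress tensors $|\log\ep_k|^{-1}\nabla_i u_k\cdot\nabla_j u_k$, bounded above by $2e_{\ep_k}(u_k)$, to the measure inequalities $0 \le \xi^i\xi^j\alpha_{ij}$ and $\sum_i\alpha_{ii}\le 2\mu_s$ (using that $|\psi|^2\dvol$ has zero mass on $\Sigma$), and then invoke the differentiation theorem for Radon measures. The only cosmetic difference is that the paper reads the eigenvalue bound off the direct estimate $\xi^i\xi^j\alpha_{ij}\le 2\mu_s|\xi|^2$, whereas you derive it from the trace bound together with positive semidefiniteness, which is an equivalent piece of linear algebra.
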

\begin{proof}
We notice by definition that, in the sense of measures, 
\begin{align*}
\sum\limits_{i = 1}^{n}\alpha_{ii} &= \sum\limits_{i = 1}^{n}\mu_{ii} - |\psi|^{2}\dvol\ \llcorner \overline{M}\\
&= \lim\limits_{k \to \infty}\frac{|\nabla u_{k}|^{2}}{|\log\ep_{k}|} \dvol\ \llcorner \overline{M} - |\psi|^{2}\dvol\ \llcorner \overline{M} \leq 2\mu_{s}.
\end{align*}
Thus by the differentiation theorem for Radon measures (\cite{leonGMT}, Theorem 4.7), we have $\tr A(p) \leq 2$ and hence $\tr(I - A(p)) \geq n-2$ for $H^{n-2}$-a.e. $p \in \Sigma$. For the second assertion, we take an arbitrary $\xi \in \RR^{n}$ and observe that, as measures,
\begin{align*}
0 \leq \xi^{i}\xi^{j}\alpha_{ij} = \lim\limits_{k \to \infty} \frac{\langle \nabla u_{k}, \xi \rangle^{2}}{|\log\ep_{k}|}\dvol \ \llcorner\overline{M} - \left( \psi(\xi) \right)^{2}\dvol\ \llcorner \overline{M} \leq 2\mu_{s} |\xi|^{2},
\end{align*}
and the result follows again from the differentiation theorem.
\end{proof}
\begin{prop}\label{blownupstationary} 
For $H^{n-2}$-almost every $p_{0} \in \Sigma \cap \overline{M}$ and for all ambient vector field $X \in C^{1}_{0}(B^{N}_{1}; \RR^{N})$, there holds
\begin{equation}\label{interiorblowup}
\sum\limits_{1 \leq i,j \leq n}(\delta_{ij} - A_{ij}(p_{0}))\int_{T_{p_{0}}\Sigma \cap B^{N}_{1}} \langle \nabla_{e_{i}(p_{0})}X(y), e_{j}(p_{0}) \rangle dH^{n-2}(y) = 0,
\end{equation}
where $T_{p_{0}}\Sigma$ denotes the approximate tangent plane of $\Sigma$ at $p_{0}$.
\end{prop}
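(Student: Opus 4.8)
The idea is to derive \eqref{interiorblowup} by rescaling the localized identity \eqref{almoststationary} about a generic point of $\Sigma$ and passing to the limit. Call $p_{0}\in\Sigma$ \emph{good} if: (i) $\Sigma$ has an approximate tangent plane $T_{p_{0}}\Sigma$ at $p_{0}$ and $\Theta(\mu_{s},p_{0})$ is the approximate $(n-2)$-density of $\mu_{s}$ there; (ii) $p_{0}$ is a Lebesgue point of every $A_{ij}$ with respect to $\mu_{s}$; and (iii) if $p_{0}\in\partial M$, the rescalings of $\overline{M}$ about $p_{0}$ converge to the closed half-space tangent to $\overline{M}$ at $p_{0}$. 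Condition (i) holds $H^{n-2}$-a.e.\ on $\Sigma$ because $\Sigma$ is countably $(n-2)$-rectifiable (Preiss' theorem, via \eqref{singulardensityinequalities}); since $A_{ij}\in L^{\infty}(\mu_{s})$ and $\mu_{s}$, $H^{n-2}\llcorner\Sigma$ are mutually absolutely continuous, (ii) holds $H^{n-2}$-a.e.\ on $\Sigma$; and (iii) holds everywhere on $\partial M$ by smoothness. For a good $p_{0}$, writing $\mu_{s,r}$, $\alpha_{ij,r}$ for $r^{2-n}$ times the push-forwards of $\mu_{s}$, $\alpha_{ij}$ under $x\mapsto(x-p_{0})/r$, the blow-up theory for rectifiable measures together with the Lebesgue point property gives, as $r\to0$,
\begin{equation*}
\mu_{s,r}\rightharpoonup \Theta(\mu_{s},p_{0})\,H^{n-2}\llcorner T_{p_{0}}\Sigma,\qquad \alpha_{ij,r}\rightharpoonup A_{ij}(p_{0})\,\Theta(\mu_{s},p_{0})\,H^{n-2}\llcorner T_{p_{0}}\Sigma,
\end{equation*}
the density bounds \eqref{singulardensityinequalities} furnishing the uniform mass bounds needed for these limits and, later, for discarding lower-order errors. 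Moreover, when $p_{0}\in\partial M$ the linear plane $T_{p_{0}}\Sigma$, being contained as a subset of $\RR^{N}$ in the tangent half-space of $\overline{M}$, must lie in its bounding hyperplane $T_{p_{0}}\partial M$; in particular $T_{p_{0}}\Sigma\perp e_{n}(p_{0})$.

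At an interior good point $p_{0}\in\Sigma\cap M$ the argument is routine. Fix $X\in C^{1}_{0}(B^{N}_{1};\RR^{N})$; for $r$ small the tangential projection onto $T\widetilde{M}$ of $y\mapsto rX((y-p_{0})/r)$ is a vector field on $\widetilde{M}$ supported in the ball of \eqref{almoststationary}, which we may take disjoint from $\partial M$, so that any such field is admissible. Substituting it there, and using that the frame $\{e_{i}\}$ is smooth (so $e_{i}(p)\to e_{i}(p_{0})$ uniformly on $B^{N}_{r}(p_{0})$), that the covariant derivative on $\widetilde{M}$ differs from the Euclidean directional derivative of $X((\cdot-p_{0})/r)$ by $O(r)$, and that the projection correction is $O(r)$ since the rescaled field itself is $O(r)$ in sup-norm, one lets $r\to0$ and applies the two weak-$*$ convergences above to obtain, after dividing by $\Theta(\mu_{s},p_{0})>0$, precisely \eqref{interiorblowup}.

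At a boundary good point $p_{0}\in\Sigma\cap\partial M$ the rescaled fields need not be admissible, so I would argue in two stages. First, to control the directions transverse to $T_{p_{0}}\partial M$, I use the \emph{admissible} fields $X_{r}:=\dist(\cdot,\partial M)\,\phi((\cdot-p_{0})/r)\,e_{i}$ with $\phi\in C^{1}_{0}(B^{N}_{1})$, $i=1,\dots,n$, which are admissible because they vanish on $\partial M$. Plugging these into \eqref{almoststationary}, a direct computation --- using that $e_{k}\dist(\cdot,\partial M)=\langle e_{k},e_{n}\rangle=\delta_{kn}$ and that $\dist(\cdot,\partial M)\equiv 0$ on $T_{p_{0}}\Sigma$ --- shows that in the limit only the contribution $(\delta_{ni}-A_{ni}(p_{0}))\,\Theta(\mu_{s},p_{0})\int_{T_{p_{0}}\Sigma\cap B^{N}_{1}}\phi\,dH^{n-2}$ survives, hence it must vanish for all $\phi$, giving $A_{ni}(p_{0})=\delta_{ni}$, i.e.\ $A_{nn}(p_{0})=1$ and $A_{in}(p_{0})=A_{ni}(p_{0})=0$ for $i\le n-1$. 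Thus the $n$-th row and column of $\delta_{ij}-A_{ij}(p_{0})$ vanish, so for a general $X\in C^{1}_{0}(B^{N}_{1};\RR^{N})$ every term of \eqref{interiorblowup} with $i=n$ or $j=n$ drops out and the claim reduces to its $(n-1)$-dimensional version, which involves only the part of $X$ tangent to $T_{p_{0}}\partial M$. Second, for such a tangential field $X$, the corrected rescaling $X_{r}:=rX((\cdot-p_{0})/r)-\langle rX((\cdot-p_{0})/r),\nu\rangle\nu$ is admissible and agrees with $rX((\cdot-p_{0})/r)$ up to $O(r^{2})$; substituting it into \eqref{almoststationary} and passing to the limit as in the interior case yields the reduced identity. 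Combining the two stages gives \eqref{interiorblowup} for all $X$.

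The main obstacle is the boundary case, and within it the bookkeeping of the error terms: one must verify that the discrepancy between covariant and Euclidean derivatives, the variation of the frame and of $\nu=\nabla\dist(\cdot,\partial M)$ across $B^{N}_{r}(p_{0})$, the failure of $\dist(\cdot,\partial M)$ to be exactly linear, and the $O(r^{2})$ admissibility corrections all produce terms which, integrated against $\mu_{s,r}$ and $\alpha_{ij,r}$, vanish as $r\to0$ --- this is where the monotonicity formula of Section 4 and the density bounds \eqref{singulardensityinequalities} enter, keeping the rescaled measures of bounded mass on $B^{N}_{1}$. One also needs the inclusion $T_{p_{0}}\Sigma\subset T_{p_{0}}\partial M$ at a.e.\ boundary point, used throughout; it follows from (iii) together with the elementary fact that a linear subspace lying in a closed half-space through the origin must lie in its bounding hyperplane. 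The interior case and the tangent-measure statements of the first step are standard.
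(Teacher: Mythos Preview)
Your proposal is correct and follows the same overall blow-up strategy as the paper, but your handling of the boundary case is organized differently and is in fact somewhat cleaner.

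In the paper, for $p_{0}\in\partial M$ one writes $X_{\rho}^{\tan}=(X_{\rho}^{\tan}-\varphi_{\rho}\nu)+\varphi_{\rho}\nu$ with a carefully constructed $\varphi_{\rho}$ built from the nearest-point projection onto $\partial M$ and a cutoff in $\dist(\cdot,\partial M)$, and then computes $\rho^{3-n}F(\varphi_{\rho}\nu)$ explicitly (a fairly long calculation). Equating this limit with the blow-up of the left-hand side yields an intermediate identity, into which one then substitutes $X(y)=y^{n}\xi(y')\partial_{y^{k}}$ to conclude $A_{nk}(p_{0})=0$ for $k\le n-1$; this in turn makes the $\varphi_{\rho}\nu$ contribution vanish and gives \eqref{interiorblowup}.

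You instead reverse the order: you first plug the \emph{admissible} fields $X_{r}=\dist(\cdot,\partial M)\,\phi((\cdot-p_{0})/r)\,e_{i}$ directly into \eqref{almoststationary} and use $T_{p_{0}}\Sigma\subset T_{p_{0}}\partial M$ to show the only surviving term is $(\delta_{ni}-A_{ni}(p_{0}))\int_{T_{p_{0}}\Sigma}\phi$, obtaining $A_{ni}(p_{0})=\delta_{ni}$ for all $i$ (so also $A_{nn}(p_{0})=1$, which the paper defers to the next step). This kills the $n$-th row and column of $\delta_{ij}-A_{ij}(p_{0})$ and reduces \eqref{interiorblowup} to tangential $X$, where a simple $O(r^{2})$ admissibility correction suffices. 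Your route avoids the paper's explicit computation of $F(\varphi_{\rho}\nu)$ and yields the extra information $A_{nn}(p_{0})=1$ immediately; the paper's route is more uniform in that it establishes \eqref{interiorblowup} for general $X$ before specializing. One small point: in your second stage the corrected field $rX((\cdot-p_{0})/r)-\langle rX((\cdot-p_{0})/r),\nu\rangle\nu$ is still not tangent to $\widetilde{M}$, so you must also apply the tangential projection onto $T\widetilde{M}$ as you did in the interior case; you allude to this (``as in the interior case'') but it should be made explicit.
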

\begin{proof}
For convenience, for any vector field $X \in C^{1}_{0}(\RR^{N}; \RR^{N})$, we let $F(X)$ denote the left-hand side of \eqref{almoststationary}, which still make sense even for this larger class of vector fields. For later use, we note that $F(X)$ depends only on $X|_{\overline{M}}$. We will show that the conclusion holds whenever $p_{0}$ is chosen so that $T_{p_{0}}\Sigma$ exists at $p_{0}$, and that $p_{0}$ is a Lebesgue point for each $\left(\delta_{ij} - A_{ij}(\cdot)\right) \Theta(\mu_{s}, \cdot)$ with respect to $H^{n-2}\llcorner \Sigma$. Note that both conditions hold for $H^{n-2}$-almost every $p_{0} \in \Sigma$.

In the case where $p_{0} \in M$, we take any ambient vector field $X \in C^{1}_{0}(B^{N}_{1}; \RR^{N})$ and define $X_{\rho}(p) = X\left( \rho^{-1}(p - p_{0}) \right)$. Writing $X_{\rho}(p) = X_{\rho}^{\tan}(p) + X_{\rho}^{\text{nor}}(p)$, where $X^{\tan}_{\rho}$ and $X^{\text{nor}}_{\rho}$ denote the projections of $X_{\rho}(p)$ onto $T_{p}M$ and $T_{p}^{\perp}M$, respectively, we observe that
\begin{align}\label{rescalestationary}
\int_{B^{N}_{1}} &\langle \nabla_{e_{i}(p_{0} + \rho y)}X(y),  e_{j}(p_{0} + \rho y) \rangle \left(\delta_{ij} - A_{ij}(p_{0} + \rho y)\right)\Theta(\mu_{s}, p_{0} + \rho y) dH^{n-2}(y)\\\nonumber
&= \rho^{3-n}F(X_{\rho}) = \rho^{3-n}(F(X_{\rho}^{\tan}) + F(X_{\rho}^{\text{nor}}))= \rho^{3-n}(0 + F(X_{\rho}^{\text{nor}}))\\\nonumber
& =  -\rho^{3-n}\int_{B^{N}_{\rho}(p_{0})\cap \Sigma}\langle X_{\rho}, H_{ij} \rangle(\delta_{ij} - A_{ij})\Theta(\mu_{s}, \cdot)dH^{n-2},
\end{align}
where $H$ denotes the 2nd fundamental form of $\widetilde{M}$ in $\RR^{N}$. Notice that the last line tends to zero as $\rho \to 0$. 
On the other hand, since $\{e_{i}\}$ and $X$ are $C^{1}$, by our choice of $p_{0}$ we infer that the first line of \eqref{rescalestationary} tends to the following expression as $\rho$ tends to zero:
\begin{equation}\label{scalelimitstationary}
\Theta(\mu_{s}, p_{0}) \left( \delta_{ij} - A_{ij}(p_{0}) \right)  \int_{B_{1}^{N}\cap T_{p_{0}}\Sigma} \langle \nabla_{e_{i}(p_{0})}X(y), e_{j}(p_{0}) \rangle dH^{n-2}(y).
\end{equation}
Thus, sending $\rho$ to zero in \eqref{rescalestationary}, we get
\begin{equation}
\Theta(\mu_{s}, p_{0}) \left( \delta_{ij} - A_{ij}(p_{0}) \right)  \int_{B_{1}^{N}\cap T_{p_{0}}\Sigma} \langle \nabla_{e_{i}(p_{0})}X(y), e_{j}(p_{0}) \rangle dH^{n-2}(y)  = 0.
\end{equation}
This establishes \eqref{interiorblowup} since the density is always positive on $\Sigma$.

For the case $p_{0} \in \Sigma \cap \partial M$, the proof is almost identical, except that now we further decompose $X_{\rho}^{\tan}$ as $\left( X_{\rho}^{\tan} - \varphi_{\rho}\nu \right) + \varphi_{\rho}\nu$. The choice of $\{\varphi_{\rho}\}$ will be made later. For now we only require that they are supported in $B^{N}_{\rho}(p_{0})$ and satisfy $\varphi_{\rho} = \langle X_{\rho}, \nu \rangle$ on $B^{N}_{\rho}(p_{0}) \cap \partial M$. Then, in place of \eqref{rescalestationary}, we get
\begin{align}\label{bdyrescalestationary}
&\int_{B^{N}_{1}} \langle \nabla_{e_{i}(p_{0} + \rho y)}X(y),  e_{j}(p_{0} + \rho y) \rangle \left(\delta_{ij} - A_{ij}(p_{0} + \rho y)\right)\Theta(\mu_{s}, p_{0} + \rho y) dH^{n-2}(y)\\ \nonumber&= -\rho^{3-n}\int_{B^{N}_{\rho}(p_{0})\cap \Sigma}\langle X_{\rho}, H_{ij} \rangle(\delta_{ij} - A_{ij})\Theta(\mu_{s}, \cdot)dH^{n-2} + \rho^{3-n} F(\varphi_{\rho}\nu).
\end{align}
As in the previous case, when $\rho \to 0$, the first line tends to \eqref{scalelimitstationary}, whereas the first term on the second line tends to zero. For the remaining term, we choose
\begin{equation*}
\varphi_{\rho}(y) = \left\langle X_{\rho}\left(\pi(y)\right), \nu\left(\pi(y)\right) \right\rangle \xi\left(\rho^{-1}\dist(y, \partial M)\right)
\end{equation*}
for $y \in \overline{M} \cap B_{\rho}^{N}(p_{0})$, and extend it arbitrarily to have compact support on $B_{\rho}^{N}(p_{0})$. The cutoff function $\xi$ is compactly supported in $[0, 1)$ with $\xi(t) \equiv 1$ for $t < 1/2$. Also, $\pi$ denotes the nearest-point projection onto $\partial M$. Then, when we compute $\rho^{3-n}F(\varphi_{\rho}\nu)$ and let $\rho \to 0$, we are left only with the terms where either $X_{\rho}(\pi(\cdot))$ or $\xi(\dist(\cdot, \partial M)/\rho)$ is differentiated, while all the remaining terms vanish in the limit for scaling reasons. More precisely, noting that $\nabla_{e_{i}}\dist(\cdot, \partial M) = \langle e_{i}, \nu \rangle = \delta_{in}$ and that $\nabla_{\nu}\pi(\cdot) = 0$, we compute
\begin{align}\label{longcomputation}
&\rho^{3-n}F(\varphi_{\rho}\nu)\\
\nonumber=& \rho^{3-n}\sum\limits_{i \leq n-1}\int_{B_{\rho}^{N}(p_{0}) \cap \Sigma}( \nabla_{e_{i}}\varphi_{\rho})(- A_{in})\Theta(\mu_{s}, \cdot) dH^{n-2}\\
\nonumber&+ \rho^{3-n}\int_{B_{\rho}^{N}(p_{0})\cap \Sigma}( \nabla_{\nu}\varphi_{\rho} )(1 - A_{nn})\Theta(\mu_{s}, \cdot)dH^{n-2} + O(\rho)\\
\nonumber=&\sum\limits_{i \leq n-1}\int_{B^{N}_{1} \cap \rho^{-1}(\Sigma - p_{0})}  \left\langle \nabla_{(\nabla_{e_{i}}\pi)(p_{0} + \rho y)} X\left(\frac{\pi(p_{0} + \rho y) - p_{0}}{\rho}\right), \nu(\pi(p_{0} + \rho y))  \right\rangle\\
\nonumber&\times \xi\left(\rho^{-1}\dist(p_{0} + \rho y, \partial M)\right)\left( - A_{in}(p_{0} + \rho y)\right) \Theta(\mu_{s}, p_{0} + \rho y) dH^{n-2}(y)\\
\nonumber&+  \int_{B_{1}^{N} \cap \rho^{-1}(\Sigma - p_{0})} \left\langle X\left(\frac{\pi(p_{0} + \rho y) - p_{0}}{\rho}\right), \nu(\pi(p_{0} + \rho y)) \right\rangle \\
\nonumber&\times \xi'\left(\rho^{-1}\dist(p_{0} + \rho y, \partial M)\right)\left(1 - A_{nn}(p_{0} + \rho y)\right)\Theta(\mu_{s}, p_{0} + \rho y) dH^{n-2}(y)\\
\nonumber&+ O(\rho).
\end{align}
At this point, we notice that, by the definition of the approximate tangent plane (cf. \cite{leonGMT}, Section 11) and the fact that $\Sigma \subset \overline{M}$, we infer that $T_{p_{0}}\Sigma$, which is a whole $(n-2)$-plane in $\RR^{N}$, cannot be transverse to $T_{p_{0}}\partial M$ in $T_{p_{0}}\widetilde{M}$ and hence must be contained in it. Therefore, to continue, we may assume without loss of generality that $e_{i}(p_{0}) = \paop{y^{i}}, e_{n}(p_{0}) = \nu(p_{0}) = \paop{y^{n}}$, and that 
\begin{equation*}
T_{p_{0}}\Sigma = \RR^{n-2}\times\{0\};\ T_{p_{0}}\partial M = \RR^{n-1}\times\{0\};\ T_{p_{0}} \widetilde{M} = \RR^{n}\times \{0\}.
\end{equation*}
Moreover, we have
\begin{align*}
&\rho^{-1}(\Sigma - p_{0}) \cap B_{1}^{N} \to T_{p_{0}}\Sigma \cap B_{1}^{N} = (\RR^{n-2}\times \{0\}) \cap B_{1}^{N},\\
&\rho^{-1}(\partial M - p_{0}) \cap B_{1}^{N} \to T_{p_{0}}\partial M \cap B_{1}^{N} = (\RR^{n-1}\times \{0\}) \cap B_{1}^{N},
\end{align*}
where both convergences are in the Hausdorff distance. (The first convergence holds by the lower bound in \eqref{singulardensityinequalities}, whereas the second one follows because $\partial M$ is smooth.) Consequently, the coordinates $y^{n}, y^{n+1}, \cdots, y^{N}$ and the distance
\begin{equation*}
\rho^{-1}\dist(p_{0} + \rho y, \partial M)\ ( = \dist(y, \rho^{-1}(\partial M - p_{0})))
\end{equation*}
all converge to zero uniformly on $B_{1}^{N} \cap \rho^{-1}(\Sigma - p_{0})$ as $\rho \to 0$. Hence, we have that
\begin{equation*}
\xi(\rho^{-1}\dist(p_{0} + \rho y, \partial M)) = 1 \text{ and } \xi'(\rho^{-1}\dist(p_{0} + \rho y, \partial M)) = 0
\end{equation*}
for $y \in \rho^{-1}(\Sigma - p_{0})$ and $\rho$ small enough. 

With the help of these observations, we may return to \eqref{longcomputation} and let $\rho \to 0$ to get
\begin{equation}\label{Flimit}
\lim\limits_{\rho \to 0}\rho^{3-n} F(\varphi_{\rho}\nu) = \sum\limits_{i \leq n-1}\Theta(\mu_{s}, p_{0})( - A_{in}(p_{0}))\int_{T_{p_{0}}\Sigma \cap B^{N}_{1}} \langle \nabla_{e_{i}(p_{0})}X(y), \nu(p_{0}) \rangle dH^{n-2}(y),
\end{equation}
Recalling \eqref{bdyrescalestationary} and the remarks following it, we conclude that the right-hand side above is equal to \eqref{scalelimitstationary}. Making some cancellations, we arrive at
\begin{align}\label{firstchoiceX}
\sum\limits_{i, j \leq n-1} &(\delta_{ij} - A_{ij}(p_{0})) \int_{B^{N}_{1} \cap T_{p_{0}}\Sigma} \langle \nabla_{e_{i}(p_{0})}X(y), e_{j}(p_{0}) \rangle dH^{n-2}(y)\\\nonumber
=& \sum\limits_{j \leq n-1} A_{nj}(p_{0}) \int_{B^{N}_{1} \cap T_{p_{0}}\Sigma} \langle \nabla_{\nu(p_{0})}X(y), e_{j}(p_{0}) \rangle dH^{n-2}(y)\\\nonumber
&- (1 - A_{nn}(p_{0}))\int_{B_{1}^{N} \cap T_{p_{0}}\Sigma} \langle \nabla_{\nu(p_{0})}X(y), \nu(p_{0}) \rangle dH^{n-2}(y).
\end{align}
Note that this relation holds as long as $X$ has compact support when restricted to $B_{1}^{N} \cap T_{p_{0}}\Sigma$. To continue, we choose $X(y) = y^{n}\xi(y^{1}, \cdots, y^{n-1})\paop{y^{k}}$, where $1 \leq k \leq n-1$ and $\xi \in C^{1}_{0}(B_{1}^{n-1})$. Then we have
\begin{align*}
0 = A_{nk}(p_{0}) \int_{B_{1}^{N} \cap T_{p_{0}}\Sigma} \xi dH^{n-2} \text{ for all }\xi \in C^{1}_{0}(B^{n-1}_{1}),
\end{align*}
which forces $A_{nk}(p_{0})$ and, by symmetry, $A_{kn}(p_{0})$ to vanish for $1 \leq k \leq n-1$. 
Going back to \eqref{Flimit}, we infer that $\lim_{\rho \to 0}\rho^{3-n}F(\varphi_{\rho}\nu) = 0$. Recalling \eqref{bdyrescalestationary}, we get \eqref{interiorblowup} again.
\end{proof}

\begin{proof}[Conclusion of the proof of Theorem \ref{GLgeneralconvergence}(3)] 
Assuming that $e_{i}(p_{0}) = \paop{y^{i}}$ for $1 \leq i \leq n$ and that
\begin{equation*}
T_{p_{0}}\Sigma = \RR^{n-2} \times \{0\};\ T_{p_{0}}\widetilde{M} = \RR^{n} \times \{0\},
\end{equation*}
we choose any $j = n-1, n$ and $k = 1, \cdots, n$ and $\xi\in C^{1}_{0}(B^{n-2}_{1})$, and substitute $X(y) = y^{j}\xi(y^{1}, \cdots, y^{n-2})\paop{y^{k}}$ into \eqref{interiorblowup} to arrive at
\begin{equation*}
(\delta_{jk} - A_{jk}(p_{0}))\int_{T_{p_{0}}\Sigma \cap B_{1}^{N}} \xi dH^{n-2} = 0,
\end{equation*}
from which we get
\begin{equation}\label{nullity}
\delta_{jk} - A_{jk}(p_{0}) = 0, \text{ for }j = n-1, n \text{ and }k = 1, \cdots, n.
\end{equation}
In particular, $I - A(p_{0})$ has at least two zero eigenvalues. Recalling Lemma \ref{Aproperties}, we conclude that $I - A(p_{0})$ is the orthogonal projection onto an $(n-2)$-plane, which by \eqref{nullity} must be $\RR^{n-2}\times \{0\} = T_{p_{0}}\Sigma$. Consequently, \eqref{almoststationary} becomes the desired stationarity condition, \eqref{GLlimitstationary}.
\end{proof}
\section{An example}

In this section, we take $M$ to be the solid ellipsoid $\{x^{2} + y^{2} + (z/l)^{2} \leq 1\} \subset \RR^{3}$ with the Euclidean metric $\delta$, where $l$ is a constant to be determined. Note that $\partial M$ is certainly convex in the sense defined in the introduction. Moreover, $\cH_{1, \fn}(M) = \{0\}$, i.e. $M$ admits not non-zero harmonic $1$-forms with normal component vanishing on the boundary. We will construct a sequence of solutions to \eqref{GLNeumann} verifying all the assumptions of Theorem \ref{GLgeneralconvergence}, such that the limit varifold $V$ is non-zero and supported on $\partial M$. 

To begin, we slightly modify the potential term in the Ginzburg-Landau energy $E_{\ep}$ so that it's defined on $W^{1, 2}(M; \CC)$. For example we can let
\begin{equation}\label{nwepotential}
W(t) = \left\{
\begin{array}{cl}
\frac{(t^{2} - 1)^{2}}{4}, &\ t \leq 1,\\
(t - 1)^{2}, &\ t > 1.
\end{array}
\right.,
\end{equation}
and define
\begin{equation*}
\tilde{E}_{\ep}(u) = \int_{M}\frac{|\nabla u|^{2}}{2} + \frac{W(|u|)}{\ep^{2}}\dvol,\ u \in W^{1, 2}(M; \CC).
\end{equation*}
It's not hard to see that if $u \in W^{1, 2}(M; \CC)$ is a critical point of $\tilde{E}_{\ep}$, then in fact $|u| \leq 1$, $\tilde{E}_{\ep}(u) = E_{\ep}(u)$, and $u$ is a classical solution to \eqref{GLNeumann}. (Indeed, if $\{|u| > 1\} \neq \emptyset$, one derives a contradiction by applying the strong maximum principle and the Hopf lemma to $|u|^{2}$.) Therefore we will construct our solutions as critical points of $\tilde{E}_{\ep}$. Moreover, we restrict our attention to functions that are rotationally symmetric with respect to the $z$-axis. Specifically, in terms of cylindrical coordinates $(r, \theta, z)$ on $\RR^{3}$, we define 
\begin{equation*}
\cA = \{u \in W^{1, 2}(M; \CC)|\ \partial_{\theta}u = 0\}.
\end{equation*}
An important observation is that any critical point of $\tilde{E}_{\ep}|_{\cA}$ is also a critical point for $\tilde{E}_{\ep}$, and hence a solution to \eqref{GLNeumann}. This is because if $u \in \cA$, then for any $\zeta \in C^{1}(\overline{M}; \CC)$ we may use cylindrical coordinates to find that
\begin{align*}
\delta \tilde{E}_{\ep}(u)(\zeta)  = \delta\tilde{E}_{\ep}(u)(\overline{\zeta}),
\end{align*}
where $\overline{\zeta}(r, z) = \fint_{0}^{2\pi}\zeta(r, \theta, z)d\theta$. Next, as in Section 2 of \cite{stern1}, we apply standard min-max methods to $\tilde{E}_{\ep}|_{\cA}$ to produce a sequence $\{u_{k}\}$ of solutions to \eqref{GLNeumann} with $\ep = \ep_{k} \to 0$ whose energies $c_{k} \equiv \tilde{E}_{\ep_{k}}(u_{k}) = E_{\ep_{k}}(u_{k})$ are characterized by the following min-max value.
\begin{equation}\label{minmaxvalues}
0 < \alpha \leq c_{k} = \inf_{h \in \Gamma}\sup_{y \in \overline{B^{2}}}E_{\ep_{k}}(h(y)),
\end{equation}
where $B^{2}$ is the unit disk in $\RR^{2}$, $\Gamma = \{h \in C^{0}(\overline{B^{2}}; \cA)|\ h(y) = \text{const. } y \text{ for all }y \in S^{1}\}$ and $\alpha$ is a positive constant independent of $k$. We next estimate $c_{k}$ relative to $|\log\ep_{k}|$.
\begin{claim*}
\begin{equation}\label{minmaxenergybounds}
0 < \liminf\limits_{k \to \infty}\left| \log\ep_{k}\right|^{-1}c_{k} \leq \limsup\limits_{k \to \infty}\left| \log\ep_{k} \right|^{-1}c_{k} \leq K < \infty,
\end{equation}
where $K$ is independent of $l > 0$.
\end{claim*}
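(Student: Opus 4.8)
The plan is to obtain the two bounds in \eqref{minmaxenergybounds} separately, the upper bound by exhibiting an explicit competitor sweepout and the lower bound by a contradiction argument exploiting the topological non-triviality of the family $\Gamma$.

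\textbf{Upper bound.} First I would construct an admissible family $h \in \Gamma$ whose maximal energy is $O(\left| \log\ep_{k} \right|)$, with the implied constant independent of $l$. The natural choice, following Section 2 of \cite{stern1}, is to build $h$ out of maps that look like a degree-one vortex configuration on the cross-sectional disk, filled in by rotation about the $z$-axis. Concretely, for $y \in \overline{B^{2}}$ one takes a map $u_{y}$ that agrees with $y$ (as a constant) near $\partial M$ and whose ``vortex'' of size $\sim\ep_{k}$ is placed along a fixed circle $\{r = r_{0}, z = 0\}$ in the interior, or is pushed out to the boundary. The energy of such a model is the standard vortex-ring energy, which is $\pi \cdot (\text{length of the ring}) \cdot \left| \log\ep_{k} \right| + O(1)$; crucially the length of the ring and all geometric constants can be bounded above independently of $l$ because the relevant circle sits in the $\{z=0\}$-plane where the ellipsoid has the fixed unit disk as cross-section. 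This gives $\sup_{y}E_{\ep_{k}}(h(y)) \leq K\left| \log\ep_{k} \right|$, hence $c_{k} \leq K\left| \log\ep_{k} \right|$, and therefore $\limsup_{k}\left| \log\ep_{k} \right|^{-1}c_{k} \leq K$. The main care here is verifying that the sweepout is continuous into $\cA$ and restricts to $h(y) = \text{const.}\,y$ on $S^{1}$; this is routine but needs to be checked.

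\textbf{Lower bound.} For $\liminf_{k}\left| \log\ep_{k} \right|^{-1}c_{k} > 0$, I would argue by contradiction: suppose along a subsequence $\left| \log\ep_{k} \right|^{-1}c_{k} \to 0$. Then $E_{\ep_{k}}(u_{k}) = o(\left| \log\ep_{k} \right|)$, so Theorem \ref{GLgeneralconvergence} applies (the energy upper bound \eqref{GLenergyupperbound} holds with $K_{0} \to 0$), and one concludes $\mu \equiv \lim\mu_{k} = 0$: both the harmonic part vanishes since $\int_{M}|\psi|^{2} \leq 2K_{0} \to 0$ (and in fact $\cH_{1,\fn}(M) = \{0\}$), and the density lower bound \eqref{lowerdensityboundpointwise} forces $\Sigma = \emptyset$ once $K_{0} < \underline{\eta}$. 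So $E_{\ep_{k}}(u_{k}) \to 0$. But a solution with energy tending to zero must be close to a constant of modulus $1$: by the $\eta$-ellipticity theorems (Theorems \ref{interioreta}, \ref{boundaryeta}) and the higher-order estimates (Theorems \ref{interiorhigher}, \ref{boundaryhigher}), $|u_{k}| \to 1$ uniformly and $\nabla u_{k} \to 0$, so $u_{k}$ converges uniformly to a constant $a$ with $|a| = 1$. This would contradict $c_{k} \geq \alpha > 0$ together with the min-max characterization: the constant family cannot be a valid competitor, and more to the point, one shows directly that the min-max value is bounded below by a fixed positive constant because any $h \in \Gamma$, having nonconstant boundary values $h(y) = \text{const.}\,y$ on $S^{1}$, must ``sweep out'' a nontrivial amount of energy --- this is precisely the content of the inequality $c_{k} \geq \alpha$ already asserted in \eqref{minmaxvalues}, so the contradiction is with $E_{\ep_{k}}(u_{k}) = c_{k} \to 0$ versus $c_{k} \geq \alpha$. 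Hence $\liminf_{k}\left| \log\ep_{k} \right|^{-1}c_{k}$ cannot be zero --- wait, this only gives $c_{k} \geq \alpha$, not the logarithmic lower bound; so in fact the genuine lower bound $\liminf\left| \log\ep_{k} \right|^{-1}c_{k} > 0$ requires more.

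\textbf{The genuine logarithmic lower bound.} The real obstacle is showing $c_{k} \geq c\left| \log\ep_{k} \right|$ for some $c > 0$, not merely $c_{k} \geq \alpha$. Here I would again argue by contradiction as above to get a limit varifold $V = 0$, harmonic part $\psi = 0$, and then derive a quantitative lower bound on the energy of \emph{any} map in the min-max family that is incompatible with $c_{k} = o(\left| \log\ep_{k} \right|)$. The key point is that the family $\Gamma$ is topologically nontrivial in a way that detects $\pi_{1}$: tracking the degree of $u_{k}/|u_{k}|$ on a suitable linking circle (e.g. a small circle in the $(r,z)$ half-plane linking the axis, or equivalently a circle $\{r = r_{0}\}$ in a cross-section), one shows that somewhere in the sweepout this degree must be nonzero, which by the standard lower bound for the Ginzburg-Landau energy of a degree-$d$ configuration (as in \cite{bbh}) forces energy at least $\pi |d| \left| \log\ep_{k} \right| - C$. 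This is the analogue of the lower bound argument in \cite{stern1}, Section 2, and I expect it to be the main technical step: one must carefully set up the topological invariant on the family of rotationally symmetric maps, show it is nonzero for every $h \in \Gamma$ (using that $h|_{S^{1}}$ has winding number one), and then invoke the sharp logarithmic energy lower bound, being careful that the constant $c > 0$ so produced does not degenerate — it will not, since the relevant linking circle and its length are controlled independently of $l$ (again because the obstruction lives near the fixed $\{z = 0\}$ cross-section). Combining with the upper bound completes the proof of \eqref{minmaxenergybounds}.
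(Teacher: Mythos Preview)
Your upper bound sketch is in the right spirit but less precise than the paper's; the paper composes Stern's family $\{v_{y,\ep}\}$ with the projection $f(r,\theta,z)=(r,z)$ and applies the co-area formula, using that the fibres of $f$ are horizontal circles of length $\leq 2\pi$. This gives the $l$-independent bound cleanly, with only a harmless $\log l$ term appearing. Your vortex-ring construction would also work but needs more details to verify membership in $\Gamma$.

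The genuine gap is in the lower bound. Your initial contradiction argument is actually the right one, and your retreat from it is a mistake. What you are missing is the step that turns the small-energy hypothesis into $E_{\ep_k}(u_k)\to 0$, which is \emph{not} achieved by saying ``$\nabla u_k\to 0$'' (the pointwise estimates from Propositions~\ref{interioretareg}, \ref{boundaryetareg} only give $e_{\ep_k}(u_k)\leq C c_k$, and $c_k$ could tend to infinity). The key observation is: once the estimates of Section~6 hold globally, one has $d(u_k\times du_k)=2\rho_k\,d\rho_k\wedge d\varphi_k\to 0$ uniformly (since $|\nabla\rho_k|\leq C\ep_k|\log\ep_k|$ and $|\nabla\varphi_k|\leq C|\log\ep_k|^{1/2}$), while $d^*(u_k\times du_k)=0$ from \eqref{GLNeumann} and $\fn(u_k\times du_k)=0$ on $\partial M$. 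Because $\cH_{1,\fn}(M)=\{0\}$, the coercivity inequality (the $\fn$-analogue of \eqref{hodgecoercive}) forces $\|u_k\times du_k\|_{2;M}\to 0$. Feeding this and the uniform bounds on $\nabla\rho_k$, $\ep_k X_k$ into the decomposition \eqref{densitydecomposition} then gives $E_{\ep_k}(u_k)\to 0$ (the cross term $\ep_k^2 X_k|\nabla u_k|^2$ is absorbed since $\ep_k^2 X_k\to 0$ uniformly and $|\nabla u_k|^2\leq 2e_{\ep_k}(u_k)$). This contradicts $c_k\geq\alpha>0$ from \eqref{minmaxvalues}, finishing the proof. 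Your proposed topological degree argument is unnecessary and substantially harder; the hypothesis $\cH_{1,\fn}(M)=\{0\}$, which you noted but did not exploit, is exactly what closes the loop.
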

\begin{proof}
For the upper bound in \eqref{minmaxenergybounds}, we follow the arguments in Section 4 of \cite{stern1}. Specifically, we take the family $\{v_{y, \ep}\}_{y \in \overline{B^{2}},\ep > 0}$ constructed there and define 
\begin{equation}
h_{k}(y) = v_{y, \ep_{k}}\circ f : M \to \RR^{2},
\end{equation}
where $f: M \to \RR^{2}$ is defined by $f(r, \theta, z) = (r, z)$. Since $f(M)$ is a bounded subset of $\RR^{2}$, we see as indicated in \cite{stern1} that $h_{k} \in \Gamma$. To estimate $\tilde{E}_{\ep_{k}}(h_{k}(y))$ (which coincides with $E_{\ep_{k}}(h_{k}(y))$ because $v_{y, \ep_{k}}$ maps into the unit disk), we observe that $[f]_{0, 1; M} \leq 1$ and that 
\begin{equation}\label{jacobianlevel}
|Jf(p)| = 1 \text{, for all }p \in M;\ 
H^{1}(f^{-1}(x)) \leq 2\pi, \text{ for all }x \in \RR^{2}.
\end{equation}
The Jacobian lower bound follows from direction computation (recall that $|Jf| = \sqrt{\det(Df Df^{t})}$), while the measure upper bound holds because the level sets of $f$ are horizontal circles in $M$. Letting $w = -y/(1 - |y|)$, we apply the co-area formula as in \cite{stern1} to arrive at
\begin{align}\label{sternminmaxestimate}
E_{\ep_{k}}(h_{k}(y)) \leq & C\int_{f(M)\setminus B^{2}_{\ep_{k}}(w)} |x - w|^{-2} H^{1}(f^{-1}(x)) dH^{2}(x)+ C\ep_{k}^{-2}\int_{B^{2}_{\ep_{k}}(w)}H^{1}(f^{-1}(x))dH^{2}(x)\\\nonumber
\leq & 2\pi C \left( \int_{f(M)\setminus B^{2}_{\ep_{k}}(w)}|x - w|^{-2}dH^{2}(x) + 1 \right),
\end{align}
where $C$ is independent of $l$ and $y$. To estimate the first integral in parentheses, note that since $f(M) \subset B^{2}_{l}$, when $|w| \geq 2l$ we have $|x - w| \geq l$ for all $x \in f(M)$, and hence, for some $C$ independent of $l$ and $y$, we have
\begin{equation*}
\int_{f(M)\setminus B^{2}_{\ep_{k}}(w)}|x - w|^{-2} dH^{2}(x) \leq Cl^{-2}H^{2}(f(M)) \leq C.
\end{equation*}
On the other hand, if $|w| \leq 2l$, then $f(M) \subset B^{2}_{3l}(w)$ and hence
\begin{align*}
\int_{f(M)\setminus B^{2}_{\ep_{k}}(w)}|x - w|^{-2} dH^{2}(x) &\leq  \int_{B^{2}_{3l}(w) \setminus B^{2}_{\ep_{k}}(w)}|x - w|^{-2} dH^{2}(x) = 2\pi\left( \log(3l) + \left| \log\ep_{k} \right| \right).
\end{align*}
Putting the previous two estimates back into \eqref{sternminmaxestimate} and recalling the definition of $c_{k}$, we obtain that, for some $C$ independent of $l$ and $y$, there holds
\begin{equation*}
\left| \log\ep_{k} \right|^{-1}c_{k} \leq \left| \log\ep_{k} \right|^{-1}\sup\limits_{y \in \overline{B^{2}}}E_{\ep_{k}}(h_{k}(y)) \leq C\frac{1 + \log l + \left| \log\ep_{k} \right|}{\left| \log\ep_{k} \right|},
\end{equation*}
from which the upper bound in \eqref{minmaxenergybounds} follows immediately.

For the lower bound, we argue by contradiction and suppose that $c_{\ep} = o(\left| \log\ep \right|)$. Then the estimates in Section 6 of the present paper would be applicable everywhere on $\overline{M}$. In particular, we see from the proof of Theorem \ref{regpartconvergence} that $\nabla \rho_{k},\ \ep_{k}X_{k}$ and $d(u_{k}\times du_{k}) = 2\rho_{k}d\rho_{k}\wedge \varphi_{k}$ all converge uniformly to zero on $\overline{M}$. Recalling that $d^{\ast}(u_{k} \times du_{k}) = 0$ by \eqref{GLNeumann}, we have
\begin{equation*}
\|u_{k} \times du_{k}\|_{2; B^{3}}^{2} \leq C\cD_{\delta}(u_{k}\times du_{k}, u_{k}\times du_{k}) \to 0 \text{ as }k \to \infty,
\end{equation*}
where the inequality holds because $\cH_{1, \fn}(M) = \{0\}$ (cf. \eqref{hodgecoercive}). In view of \eqref{densitydecomposition}, the above discussion implies that $e_{\ep_{k}}(u_{k}) \to 0$ in $L^{1}(B^{3})$ and hence $E_{\ep_{k}}(u_{k}) \to 0$ as $k \to \infty$, contradicting \eqref{minmaxvalues}.
\end{proof}

Having obtained the bounds \eqref{minmaxenergybounds}, we may apply Theorem \ref{GLgeneralconvergence} to $\{u_{k}\}$. Note that in this case the $1$-form $\psi$ in the conclusion vanishes because $\cH_{1, \fn}(M) = \{0\}$, and hence $\mu = \|V\|$. Consequently, \eqref{minmaxenergybounds} yields
\begin{equation}\label{varifoldmass}
0 <  \|V\|(\overline{M}) \leq K. 
\end{equation}
In particular, $V \neq 0$. To conclude, we prove the following. 
\begin{claim*}
Let $S = \partial M \cap \{z = 0\}$. Then for $l$ sufficiently large, we have $V = \theta_{0}v(S)$, where $\theta_{0}$ is a positive constant, and $v(S)$ denotes the varifold associated to $S$.
\end{claim*}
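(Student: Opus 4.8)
The plan is to show that the min-max energy concentrates on the equatorial circle $S$ by combining a lower bound for the energy coming from the ``topological'' reason behind the min-max (the family $h_k$ must pass through a map with nontrivial degree near the $z$-axis, forcing concentration along a vortex circle) with an upper bound obtained by constructing a competitor whose energy, after normalization by $|\log\ep_k|$, converges to the length of $S$ (up to the constant $\pi$ coming from the vortex profile) provided $l$ is large. The key point of the geometry is that $S$ is the \emph{shortest} closed curve in $\overline M$ that is non-contractible in the relevant sense once $l$ is large, so that a sweepout by such curves realizes the min-max value and the varifold produced by Theorem \ref{GLgeneralconvergence} must be supported on $S$ with multiplicity.

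First I would set up the upper bound. Using the rotational symmetry, I would build an explicit element $h_k \in \Gamma$ (refining the family used in the Claim's proof) whose slices $h_k(y)$ are, for $y$ near the ``center'', of the form $e^{i\varphi}$ times a standard $2$-dimensional vortex profile $\rho(\ep_k^{-1}\dist(\cdot, C_w))$ concentrated on a horizontal circle $C_w \subset M$ whose radius and height are controlled by $w = -y/(1-|y|)$; away from the center the slices are essentially $S^1$-valued with no topology. A co-area computation as in the Claim shows $E_{\ep_k}(h_k(y)) \le \pi\,\length(C_w)\,|\log\ep_k| + O(|\log\ep_k|\cdot o(1))$, and the worst slice is the one where the concentration circle degenerates to the largest horizontal circle available, namely $S$, of radius $1$. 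When $l$ is large this forces $\limsup |\log\ep_k|^{-1} c_k \le \pi\,\length(S) = 2\pi^2$ (the point being that for large $l$ the vertical circles through the poles are far longer, so the sweepout can avoid them). This gives $\|V\|(\overline M) \le \pi\,\length(S)$.

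Next I would establish the matching lower bound via a slicing/lower-density argument. By Theorem \ref{GLgeneralconvergence}(2), $V = (\Sigma, \Theta)$ is a rectifiable $(n-2)=1$-varifold in $\RR^3$ stationary with free boundary, with density $\Theta \ge \underline\eta > 0$ on $\Sigma$, and by the rotational symmetry of the solutions (which passes to the limit) $\mu$ is invariant under rotation about the $z$-axis; hence $\Sigma$ is a union of horizontal circles and arcs of the meridian, with $\Theta$ constant on each circle. Stationarity with free boundary then severely constrains $\Sigma$: an interior stationary $1$-varifold in $\RR^3$ is a union of line segments, and a rotationally invariant one made of horizontal circles cannot be stationary in the interior (the first variation in the radial direction is nonzero unless the radius is $0$), so no full horizontal circle can sit in the interior $M$; likewise any meridian arc in the interior would have to be a straight segment through the $z$-axis, but such a configuration is excluded by the rotational symmetry together with the density lower bound (it would force infinite density at the axis) — so in fact $\Sigma \subset \partial M$. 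Among rotationally invariant subsets of $\partial M$, the only ones on which a positive-density varifold can be stationary with free boundary (i.e.\ stationary under deformations tangent to $\partial M$, since $\Sigma \subset \partial M$ by Remark \ref{simplyconnected}(3)) are the equator $S$ (a geodesic of $\partial M$, balancing by symmetry) — the two ``polar'' points are not $1$-dimensional, and any latitude circle other than the equator fails to be stationary along $\partial M$. Therefore $\Sigma = S$ and $V = \theta_0 v(S)$ with $\theta_0 = \Theta|_S > 0$; combined with the upper bound, $\theta_0 \le \pi$, and $V \ne 0$ from \eqref{varifoldmass} gives $\theta_0 > 0$.

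\textbf{Main obstacle.} The delicate step is the lower bound / the exclusion of interior singular pieces and non-equatorial boundary circles. Showing $\|V\|(\overline M) \ge \pi\,\length(S)$ genuinely requires more than the energy lower bound $\liminf |\log\ep_k|^{-1} c_k > 0$ from the Claim; one needs to locate \emph{where} the energy is, e.g.\ by a careful analysis of the rotationally symmetric reduced problem on the half-disk $f(M) \subset \RR^2$, identifying a vortex of the $2$-d problem whose limit position is at the boundary point $(1,0)$ (the image of $S$), and then using the lower semicontinuity of the (reduced) energy together with the $\eta$-ellipticity estimates of Section 5 to transfer this to a density lower bound for $\mu_s$ along $S$. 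Equivalently, one must rule out the min-max energy ``escaping'' to a meridian circle through the poles; this is exactly where the hypothesis that $l$ is large is used, and making the dependence on $l$ quantitative (to ensure the equator strictly beats every competing symmetric curve) is the crux. I would handle it by comparing the min-max value $c_k$ with the $2$-d renormalized energy of the reduced functional on $f(M)$ and invoking the $\Gamma$-convergence/concentration theory for symmetric Ginzburg--Landau in two dimensions, whose renormalized energy for large $l$ is minimized by a single vortex at $(1,0)$.
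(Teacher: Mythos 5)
Your identification of the structure---rule out interior energy concentration, restrict to the boundary, then pin down the equator---is on the right track, and your stationarity argument for excluding horizontal circles of positive radius in the interior is essentially the paper's. But there is a genuine gap in how you dispose of the $z$-axis $L = \{(0,0,z) : |z| \le l\}$. You claim that a straight segment on the axis is ``excluded by the rotational symmetry together with the density lower bound (it would force infinite density at the axis).'' This is false: a segment of $L$ carried with constant multiplicity $\theta_1 \ge \underline\eta$ is a perfectly good rotationally invariant, stationary, rectifiable $1$-varifold with \emph{finite} density $\theta_1$ at every point of $L$. Nothing in symmetry, stationarity, or the density bounds rules it out, so your conclusion $\Sigma \subset \partial M$ does not follow, and the subsequent reduction to the equator is unsupported. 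This is precisely the reason the statement requires $l$ to be sufficiently large---a hypothesis your argument never actually uses.

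The paper closes this gap differently and more cheaply than your proposed $\Gamma$-convergence / renormalized-energy analysis. It first proves only $\supp V \subset L \cup S$ (via exactly the first-variation computations you sketch: test against $\zeta\,\partial_r$ in the interior, and against a tangential field pointing toward the equator on the boundary), and then invokes the constancy theorem to write $V = \theta_0\, v(S) + \theta_1\, v(L)$. The $l$-dependence then kills $\theta_1$: the mass bound $\|V\|(\overline M) \le K$ from \eqref{minmaxenergybounds} is \emph{uniform in $l$}, while $\length(L) = 2l$, so $\theta_1 \le K/(2l)$. Choosing $l$ large enough makes $\theta_1$ fall below the $\eta$-ellipticity threshold, and Proposition~\ref{interioretareg} applied near the origin then upgrades this smallness to $\Theta(\mu,0) = \theta_1 = 0$. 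No renormalized energy or two-dimensional reduction is needed; the whole argument runs off the tools already established (monotonicity, $\eta$-ellipticity, and the constancy theorem). You should replace your axis-exclusion step with this quantitative argument and drop the $\Gamma$-convergence machinery, which is both unnecessary and not developed anywhere in the paper.
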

\begin{proof}
Letting $L = \{(0, 0, z)|\ |z| \leq l\}$, we first show that
\begin{equation*}
\supp V ( = \Sigma) \subset L \cup S.
\end{equation*}
Indeed, suppose this is not the case, then for all $p \in \supp V \setminus (S \cup L)$, since $\Sigma$ inherits the rotational symmetry of the sequence $\{u_{k}\}$, it must contain the circle $C$ obtained by rotating $p$ around $L$. This in turn forces $T_{p}\Sigma = T_{p}C = \Span \{\partial_{\theta}|_{p}\}$ for all $p \in \supp V \setminus (S \cup L)$. Based on this observation we will derive a contradiction with the stationarity of $V$. We treat the cases $p \in M$ and $p \in \partial M$ separately.

If $p \in M$, take ball $B^{3}_{2r}(p)$ contained in $M$ and let $\zeta$ be a cut-off function with $\supp\zeta \subset B^{3}_{2r}(p)$ and $\zeta \equiv 1$ on $B^{3}_{r}(p)$. Then, following the proof of Lemma B.2 of \cite{coldingdelellis} and recalling $T_{q}\Sigma = \Span\{\partial_{\theta}|_{q}\}$ for all $q \in B^{3}_{2r}(p)$, we compute
\begin{align*}
\delta V (\zeta \partial_{r}) &= \int_{B^{3}_{2r}(p)} \langle D_{\partial_{\theta}}\left(\zeta \partial_{r}\right), \partial_{\theta}\rangle d\|V\|=\int_{B^{3}_{2r}(p)} \zeta D^{2}r(\partial_{\theta}, \partial_{\theta}) + \partial_{\theta}\zeta \langle \partial_{r}, \partial_{\theta}\rangle d\|V\| \\
& \geq \int_{B^{3}_{r}(p)} D^{2}r(\partial_{\theta}, \partial_{\theta}) d\|V\| > 0,
\end{align*}
since the integrand is everywhere positive, and $\|V\|(B^{3}_{r}(p)) > 0$ because $p \in \supp V$. However this is a contradiction to the stationarity of $V$. 

On the other hand, if $p \in \partial M$, then we take some ball $B^{3}_{2r}(p)$ that does not intersect $L \cup S$. Without loss of generality, we may assume that $B^{3}_{2r}(p)$ is contained in $\{z > 0\}$. Next we let $\zeta$ be as in the previous case and define a vector field $X$ on $B^{3}_{2r}(p)$ by letting $X(q) = \pi_{q}(\partial_{z}|_{q})$, where $\pi_{q}$ denotes projection onto the tangent plane to the level set of $x^{2} + y^{2} + (z/l)^{2}$ passing through $q$. Then $\zeta X$ is admissible in the sense defined in the Introduction, and we compute
\begin{align*}
\delta V(\zeta X) &= \int_{B^{3}_{2r}(p)} \zeta \langle D_{\partial_{\theta}}X, \partial_{\theta} \rangle + \partial_{\theta}\zeta \langle X, \partial_{\theta} \rangle d\|V\|= - \int_{B^{3}_{2r}(p)} \zeta \langle X, D_{\partial_{\theta}}\partial_{\theta} \rangle d\|V\| < 0,
\end{align*}
where in the second equality we used the fact that $X \perp \partial_{\theta}$ and the final inequality follows because the integrand is strictly positive and $\|V\|(B^{3}_{r}(p)) > 0$. Since $\zeta X$ is admissible, we again have a contradiction to the fact that $V$ is stationary with free boundary.

Since $\supp V \subset L \cup S$, the constancy theorem (see \cite{leonGMT}, Section 41) applied to $V\llcorner L$ (which is stationary in $M$ in the usual sense) and $V \llcorner S$ (which is stationary when viewed as varifold in $\partial M$) then implies that there exist constants $\theta_{0}, \theta_{1} \geq 0$ such that
\begin{equation*}
V = \theta_{0} v(S) + \theta_{1} v(L).
\end{equation*}
Since $V \neq 0$, it remains to show that $\theta_{1} = 0$ when $l$ is large enough. To that end, observe that by the upper bound in \eqref{varifoldmass} we can estimate
\begin{align*}
\theta_{1} = \frac{1}{2l} \|V\|(L) \leq \frac{1}{2l}\|V\|(\overline{M}) \leq \frac{K}{2l}.
\end{align*}
We take $l$ so large that $K/2l < \eta_{3}/2$, where $\eta_{3}$ is the constant in Proposition \ref{interioretareg} with $n = 3, \lambda  = 1$ and $\Lambda = 1$, then we see that
\begin{equation*}
\mu(B^{3}_{1/2}) ( = \|V\|(B^{3}_{1/2})) = \theta_{1} < \eta_{3}/2.
\end{equation*}
Hence for $k$ large enough we have $\int_{B^{3}_{1/4}} e_{\ep_{k}}(u_{k}) dx < \eta_{3} \left| \log(4\ep_{k}) \right|$.
Proposition \ref{interioretareg} then implies that $\sup_{B^{3}_{1/8}}e_{\ep_{k}}(u_{k}) \leq C\int_{B^{3}_{1/4}}e_{\ep_{k}}(u_{k})dx$.
Consequently, for all $r < 1/8$, we use the above estimate and \eqref{varifoldmass} to get that for sufficiently large $k$, 
\begin{align*}
r^{-1}\mu_{k}(B^{3}_{r}) \leq \frac{Cr^{2}}{\left| \log\ep_{k} \right|}\int_{B^{3}_{1/2}}e_{\ep_{k}}(u_{k})dx\ \Longrightarrow \  r^{-1}\mu(B^{3}_{r}) \leq Cr^{2}K.
\end{align*}
Therefore $\theta_{1} = \Theta(\mu, 0) = 0$ and $V = \theta_{0}v(S)$ with $\theta_{0} > 0$, as claimed.
\end{proof}

\appendix
\section{Auxiliary estimates for solutions}

We first recall a result essentially due to Bethuel, Brezis and Helein and establish some simple extensions of it. 
\begin{lemm}[cf. \cite{bbh}, Lemma 2]
\label{bbhmax}
Let $g \in \cM_{\lambda, \Lambda}$ and suppose $v$ satisfies
\begin{equation*}
\left\{
\begin{array}{cl}
-\ep^{2}\Delta_{g}v + 2v \leq F& \text{ in }B_{R}\\
v \leq A& \text{ on }\partial B_{R},
\end{array}
\right.
\end{equation*}
where $F$ and $A$ are positive constants. Then for all $x \in B_{R}$ we have
\begin{equation}\label{bbhestimate}
v(x) \leq \frac{F}{2} + A e^{\sqrt{\lambda}\frac{|x|^{2} - R^{2}}{2\ep R}}, 
\end{equation}
provided $\ep$ is small enough depending on $n, \lambda, \Lambda$ and $R$.
\end{lemm}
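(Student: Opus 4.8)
The statement is a comparison-principle estimate, so the plan is to construct an explicit supersolution and invoke the maximum principle.

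\medskip

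\textbf{Approach.} I would build a barrier of the form $w(x) = \frac{F}{2} + A\,\varphi(|x|)$ where $\varphi$ is chosen so that $w$ is a supersolution of the operator $-\ep^2\Delta_g + 2$ on $B_R$ and dominates $v$ on $\partial B_R$; the conclusion then follows from the weak maximum principle (valid for the Lipschitz metric $g\in\cM_{\lambda,\Lambda}$ since the operator is uniformly elliptic with bounded measurable coefficients). The constant $F/2$ already satisfies $-\ep^2\Delta_g(F/2) + 2(F/2) = F$, so it suffices to check that $A\varphi(|x|)$ is a supersolution of the homogeneous equation $-\ep^2\Delta_g\zeta + 2\zeta \geq 0$, and that $A\varphi \geq A$ on $\partial B_R$, i.e. $\varphi(R)\geq 1$. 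The natural guess, matching the desired bound, is $\varphi(|x|) = \exp\!\big(\sqrt{\lambda}\,\tfrac{|x|^2 - R^2}{2\ep R}\big)$, which already has $\varphi(R)=1$.

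\medskip

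\textbf{Key steps.} First I would compute $\Delta_g$ applied to $\varphi(|x|)$ using $\Delta_g = \tfrac{1}{\sqrt{\det g}}\partial_i(\sqrt{\det g}\, g^{ij}\partial_j)$. Writing $\varphi = e^{a(|x|^2-R^2)}$ with $a = \sqrt{\lambda}/(2\ep R)$, one has $\partial_j\varphi = 2ax_j\varphi$ and a direct calculation gives $\Delta_g\varphi = \big(4a^2 g^{ij}x_ix_j + 2a\,\Delta_g(|x|^2/... )\big)\varphi$; more precisely $\Delta_g\varphi = \big(4a^2 \langle x, x\rangle_{g^{-1}} + 2a\, L\big)\varphi$ where $L = \tfrac{1}{\sqrt{\det g}}\partial_i(\sqrt{\det g}\,g^{ij}x_j)$ is bounded on $B_R$ by a constant $C_1 = C_1(n,\lambda,\Lambda,R)$ (this uses (l1), (l2): $g^{ij}$ is bounded and Lipschitz). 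Since $g^{ij}$ is positive definite with lowest eigenvalue $\geq \lambda$ is false — actually (l2) gives $g_{ij}\geq\lambda$, hence $g^{ij}\leq\lambda^{-1}$ and $g^{ij}\geq\lambda$ in the quadratic-form sense — let me use $\langle x,x\rangle_{g^{-1}} = g^{ij}x_ix_j \geq \lambda|x|^2$. Then
\begin{equation*}
-\ep^2\Delta_g\varphi + 2\varphi \geq \big(-\ep^2(4a^2\lambda|x|^2 + 2a C_1) + 2\big)\varphi.
\end{equation*}
With $a = \sqrt{\lambda}/(2\ep R)$ we get $\ep^2\cdot 4a^2\lambda|x|^2 = \lambda^2|x|^2/R^2 \leq \lambda^2 \leq 1$ on $B_R$, and $\ep^2\cdot 2aC_1 = \ep\sqrt{\lambda}C_1/R \to 0$ as $\ep\to 0$; so for $\ep$ small enough depending on $n,\lambda,\Lambda,R$ the bracket is $\geq 2 - 1 - \tfrac12 = \tfrac12 > 0$. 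Hence $w = F/2 + A\varphi$ satisfies $-\ep^2\Delta_g w + 2w \geq F \geq -\ep^2\Delta_g v + 2v$ in $B_R$ and $w \geq F/2 + A \geq A \geq v$ on $\partial B_R$. Second, I would apply the weak maximum principle to $v - w$: since $-\ep^2\Delta_g(v-w) + 2(v-w) \leq 0$ in $B_R$ and $v - w \leq 0$ on $\partial B_R$, and the zeroth-order coefficient $2$ is nonnegative, we conclude $v \leq w$ throughout $B_R$, which is exactly \eqref{bbhestimate}.

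\medskip

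\textbf{Main obstacle.} There is no serious obstacle; the only mild technical point is making sure the barrier computation is carried out correctly for a merely Lipschitz metric, so that $\Delta_g\varphi$ must be interpreted in the weak (distributional) sense and the maximum principle applied in the $W^{1,2}$ setting rather than classically — but this is standard for uniformly elliptic operators with bounded measurable coefficients. One should also double-check the precise smallness threshold on $\ep$ and that the error term $2a C_1\ep^2$ is handled with room to spare; tracking the exact constant is routine. In the write-up I would also remark that the same argument, with $\lambda$ replaced appropriately, gives the analogous estimates used in the boundary versions (Lemmas \ref{bbhmaxboundary} and \ref{bbhmaxdirichlet}).
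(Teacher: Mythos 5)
Your approach is exactly the paper's: construct the barrier $w = F/2 + A\,e^{\sqrt{\lambda}(|x|^2-R^2)/(2\ep R)}$, verify $-\ep^2\Delta_g w + 2w \geq F$ for small $\ep$, check $w\geq A$ on $\partial B_R$, and apply the maximum principle. One small slip in your computation: to bound $-\ep^2\Delta_g\varphi$ from below you need an \emph{upper} bound on $g^{ij}x_ix_j$, namely $g^{ij}x_ix_j \leq \lambda^{-1}|x|^2$, not the lower bound $\geq\lambda|x|^2$ you invoked; with the correct bound the leading term becomes $\ep^2\cdot 4a^2\lambda^{-1}|x|^2 = |x|^2/R^2 \leq 1$, and the rest of the argument is unchanged, so the conclusion stands.
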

\begin{proof}
Letting $w(x) = F/2 + Ae^{\sqrt{\lambda}\frac{|x|^{2} - R^{2}}{2\ep R}}$, a straightforward computation shows that
\begin{equation}
-\ep^{2}\Delta_{g} w + 2w \geq F,
\end{equation}
provided $\ep$ is sufficiently small. On the other hand, clearly we have $w  = F/2 + A \geq A$ on $\partial B_{R}$. Thus $w$ is a supersolution and the standard maximum principle yields the result.
\end{proof}

We would also need a boundary version of the above result.
\begin{lemm}
\label{bbhmaxboundary}
Let $g \in \cM^{+}_{\lambda, \Lambda}$ and suppose $v$ satisfies 
\begin{equation*}
\left\{
\begin{array}{cl}
-\ep^{2}\Delta_{g}v + 2v \leq F& \text{ in }B^{+}_{R}\\
v \leq A& \text{ on }\partial B_{R} \cap \{x^{n} > 0\}\\
\pa{v}{x^{n}} = 0 &\text{ on }T_{R}.
\end{array}
\right.
\end{equation*}
Then \eqref{bbhestimate} holds on $B_{R}^{+}$ as long as $\ep$ is sufficiently small depending on $n, \lambda, \Lambda$ and $R$.
\end{lemm}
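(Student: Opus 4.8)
The plan is to reduce the boundary statement to the interior statement (Lemma \ref{bbhmax}) by the same reflection device that underlies the whole paper. First I would extend $v$ to $B_R$ by even reflection across $T_R$, setting $\tilde v(x',x^n) = v(x',|x^n|)$. Since $g \in \cM^+_{\lambda,\Lambda}$, the reflected metric $\tilde g$ (reflecting $g_{nn}$ and $g_{ij}$ for $i,j\le n-1$ evenly and $g_{in}$ for $i\le n-1$ oddly, which is consistent with conditions (1),(2) of Definition \ref{boundaryclass}) lies in $\cM_{\lambda,\Lambda}$ on all of $B_R$. The homogeneous Neumann condition $\partial_n v = 0$ on $T_R$ is precisely what guarantees that $\tilde v$ is a weak (sub)solution of the reflected equation across $T_R$: for any test function one splits the integral over $B_R$ into the two half-balls and uses the symmetry of $\tilde v$ and $\tilde g$, exactly as in the proof of Lemma \ref{GLreflection}. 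Concretely, $\tilde v$ satisfies $-\ep^2\Delta_{\tilde g}\tilde v + 2\tilde v \le F$ weakly in $B_R$ and $\tilde v \le A$ on $\partial B_R$.

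Next I would simply invoke Lemma \ref{bbhmax} applied to $\tilde v$ with the metric $\tilde g \in \cM_{\lambda,\Lambda}$, which gives
\begin{equation*}
\tilde v(x) \le \frac{F}{2} + A\, e^{\sqrt{\lambda}\frac{|x|^2 - R^2}{2\ep R}}, \qquad x \in B_R,
\end{equation*}
provided $\ep$ is small depending only on $n,\lambda,\Lambda,R$. Restricting to $x \in B_R^+$ and noting $\tilde v = v$ there yields exactly \eqref{bbhestimate} on $B_R^+$.

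One point deserving a line of care is that Lemma \ref{bbhmax} as stated presumes enough regularity to run the maximum principle, whereas $\tilde v$ is only a weak subsolution and $\tilde g$ is only Lipschitz. This is not a genuine obstacle: the comparison argument in the proof of Lemma \ref{bbhmax} works for weak sub/supersolutions with the explicit barrier $w(x) = F/2 + A e^{\sqrt{\lambda}(|x|^2-R^2)/(2\ep R)}$, since the computation showing $-\ep^2\Delta_{\tilde g} w + 2w \ge F$ only uses the bounds on $\tilde g$ and its first derivatives (the Lipschitz bound suffices, the inequality holding a.e.), and the weak maximum principle for the operator $-\ep^2\Delta_{\tilde g} + 2$ applies to $\tilde v - w$. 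Alternatively one observes that in the application of this lemma in the proof of Theorem \ref{boundaryhigher}, $v$ (namely $X$ or a derivative of $X$) is already smooth up to $T_R$ with $g$ smooth there, so the reflected $\tilde v$ and $\tilde g$ are smooth across $T_R$ and no regularity issue arises at all. I would mention this briefly and then conclude.

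\begin{proof}
Extend $v$ to $B_R$ by even reflection across $T_R$, i.e. set $\tilde v(x',x^n) = v(x', |x^n|)$, and let $\tilde g$ be the metric on $B_R$ obtained by reflecting the components of $g$ as prescribed in Definition \ref{boundaryclass} (evenly for $g_{nn}$ and for $g_{ij}$ with $i,j\le n-1$, oddly for $g_{in}$ with $i\le n-1$). Then $\tilde g \in \cM_{\lambda,\Lambda}$. Using the symmetry of $\tilde v$ and $\tilde g$ and the condition $\partial_n v = 0$ on $T_R$, the same computation as in Lemma \ref{GLreflection} shows that $\tilde v$ is a weak subsolution of
\begin{equation*}
-\ep^2\Delta_{\tilde g}\tilde v + 2\tilde v \le F \quad\text{in } B_R,
\end{equation*}
and clearly $\tilde v \le A$ on $\partial B_R$. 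Applying Lemma \ref{bbhmax} to $\tilde v$ with the metric $\tilde g$ (the comparison with the explicit barrier $w(x) = F/2 + Ae^{\sqrt\lambda(|x|^2-R^2)/(2\ep R)}$ being valid for weak sub/supersolutions and Lipschitz metrics, the inequality $-\ep^2\Delta_{\tilde g}w + 2w \ge F$ holding a.e.\ for $\ep$ small depending on $n,\lambda,\Lambda,R$), we obtain
\begin{equation*}
\tilde v(x) \le \frac{F}{2} + A\, e^{\sqrt{\lambda}\frac{|x|^2 - R^2}{2\ep R}}, \qquad x \in B_R.
\end{equation*}
Restricting to $x \in B_R^+$, where $\tilde v = v$, gives \eqref{bbhestimate} on $B_R^+$.
\end{proof}
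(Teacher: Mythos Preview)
Your proof is correct but takes a different route from the paper. The paper does not reflect: it works directly on $B_R^+$ with the same barrier $w(x) = F/2 + A\,e^{\sqrt{\lambda}(|x|^2-R^2)/(2\ep R)}$, observes that $w$ is radial and hence satisfies $\partial_n w = 0$ on $T_R$, so $v-w$ is a subsolution of $-\ep^2\Delta_g + 2$ on $B_R^+$ with $v-w \le 0$ on $\partial B_R \cap \{x^n>0\}$ and $\partial_n(v-w)=0$ on $T_R$; the Hopf lemma then rules out an interior or $T_R$ maximum. Your reflection argument is equally valid and in some sense more systematic given the paper's setup (and it sidesteps the Hopf lemma in favor of the already-proven interior statement), while the paper's direct approach is a touch shorter since it only needs the one-line observation that the barrier itself has zero normal derivative on $T_R$.
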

\begin{proof}
Again letting $w(x)$ be the right-hand side of \eqref{bbhestimate}, then as in the previous lemma we have $-\ep^{2}\Delta_{g}( v - w ) + 2 ( v - w ) \leq 0$ on $B_{R}^{+}$ for small enough $\ep$. Also, $v - w \leq 0$ in $\partial B_{R} \cap \{x^{n} > 0\}$. Moreover, it is easy to see that 
\begin{equation*}
\pa{(v - w)}{x^{n}} = 0 \text{ on }T_{R}.
\end{equation*}
Thus, the Hopf Lemma implies that either $v - w$ is constant, or attains its maximum on $\partial B_{R} \cap \{x^{n} > 0\}$. In either case, we get the desired estimate.
\end{proof}
We can also replace the Neumann boundary condition in Lemma \ref{bbhmaxboundary} by the Dirichlet boundary condition. The proof is essentially the same and will be omitted. (One need only check that $v \leq w$ on $T_{R}$ under the assumption below.)
\begin{lemm}\label{bbhmaxdirichlet}
Under the same assumptions as in Lemma \ref{bbhmaxboundary}, but replacing the condition $\pa{v}{x^{n}} = 0$ on $T_{R}$ by the condition $v = 0 \text{ on }T_{R}$. Then \eqref{bbhestimate} holds for all $x \in B_{R}^{+}$ provided $\ep$ is small enough depending on $n, \lambda, \Lambda$ and $R$.
\end{lemm}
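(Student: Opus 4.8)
The plan is to run the same barrier argument as in Lemmas \ref{bbhmax} and \ref{bbhmaxboundary}, taking as comparison function the right‑hand side of \eqref{bbhestimate},
\[
w(x) = \frac{F}{2} + A\, e^{\sqrt{\lambda}\frac{|x|^{2} - R^{2}}{2\ep R}}.
\]
First I would recall from the proof of Lemma \ref{bbhmaxboundary} that, for $\ep$ small enough depending only on $n,\lambda,\Lambda$ and $R$, one has $-\ep^{2}\Delta_{g}w + 2w \geq F$ on $B_{R}^{+}$; combined with the hypothesis $-\ep^{2}\Delta_{g}v + 2v \leq F$ this gives
\[
-\ep^{2}\Delta_{g}(v - w) + 2(v - w) \leq 0 \quad\text{on } B_{R}^{+}.
\]

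Next I would check the boundary inequality $v \leq w$ on all of $\partial B_{R}^{+} = \big(\partial B_{R}\cap\{x^{n}\geq 0\}\big)\cup T_{R}$. On $\partial B_{R}\cap\{x^{n}>0\}$ we have $|x| = R$, so $w = F/2 + A \geq A \geq v$, exactly as before. The one point that is genuinely new here — and the only thing to verify compared with Lemma \ref{bbhmaxboundary} — is the bound on $T_{R}$: since $F>0$, on $T_{R}$ we have $w \geq F/2 > 0 = v$ by the Dirichlet condition. Hence $v \leq w$ on the whole topological boundary of $B_{R}^{+}$.

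Finally, since the zeroth‑order coefficient of the operator $-\ep^{2}\Delta_{g} + 2$ is positive, the classical maximum principle applies to $v - w$ on $B_{R}^{+}$, yielding $v \leq w$ throughout $B_{R}^{+}$, which is precisely \eqref{bbhestimate}. I do not expect any real obstacle: unlike the Neumann case of Lemma \ref{bbhmaxboundary}, here no Hopf lemma is needed, because $v - w$ is controlled on the \emph{entire} boundary $\partial B_{R}^{+}$, so the ordinary maximum principle suffices. The only mild care is the standard observation that a metric $g \in \cM_{\lambda,\Lambda}^{+}$ is merely Lipschitz, so the differential inequality is understood in the weak (distributional) sense and the comparison principle is invoked in that form; this is already the setting of Lemmas \ref{bbhmax}–\ref{bbhmaxboundary}.
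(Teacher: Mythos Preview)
Your proposal is correct and follows exactly the approach indicated in the paper, which merely remarks that the proof is the same as Lemma~\ref{bbhmaxboundary} once one checks $v \leq w$ on $T_R$. Your observation that the Hopf lemma is unnecessary here (since $v-w \leq 0$ on the full boundary $\partial B_R^+$) is a nice clarification beyond what the paper states.
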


Next we recall the multiplicative version of the interior Schauder estimates.
\begin{lemm}[cf. \cite{bbh}, Lemma A.1]
\label{interiormulti}
Suppose that $u$ satisfies $\Delta_{g}u = f$ on $B_{1}$, then we have
\begin{align}
|D u|_{0; B_{s}} &\leq C\left( |u|_{0; B_{1}} + |u|_{0; B_{1}}^{1/2} |f|_{0; B_{1}}^{1/2} \right) \label{multigradsup} \\
[D u]_{1/2; B_{s}} & \leq C\left( |u|_{0; B_{1}} + |u|_{0; B_{1}}^{1/4} |f|_{0; B_{1}}^{3/4} \right), \label{multigradholder}
\end{align}
where $C$ depends only on $n, s$ and the $C^{0, 1/2}$-norm of the metric $g$.
\end{lemm}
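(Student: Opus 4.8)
\textbf{Proof proposal for Lemma \ref{interiormulti}.}

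The plan is to reduce the multiplicative estimates to the standard (non-multiplicative) interior Schauder estimates via a scaling argument. First I would recall that from the usual interior elliptic estimates for the operator $\Delta_g$ with $C^{0,1/2}$ coefficients, applied on concentric balls, one has for any $1/2 \le s < s' \le 1$
\begin{equation*}
|Du|_{0;B_s} + [Du]_{1/2;B_s} \le C\left( |u|_{0;B_{s'}} + |f|_{0,1/2;B_{s'}} \right),
\end{equation*}
with $C$ depending on $n$, $s$, $s'$ and the $C^{0,1/2}$-norm of $g$; since we are only given $|f|_{0;B_1}$ rather than a H\"older norm of $f$, I would instead use the estimate in the form $|Du|_{0;B_s} \le C(|u|_{0;B_1} + |f|_{0;B_1})$, which follows from interior $W^{2,p}$ (or $C^{1,\alpha}$) estimates together with the Sobolev embedding, and similarly $[Du]_{1/2;B_s} \le C(|u|_{0;B_1} + |f|_{0;B_1})$.

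The key step is the scaling trick from \cite{bbh}: fix $x_0 \in B_s$ and a radius $R \le (1-s)/2$ to be chosen, and set $v(y) = u(x_0 + Ry)$, $\tilde g(y) = g(x_0 + Ry)$ for $y \in B_1$. Then $\Delta_{\tilde g} v = R^2 f(x_0 + Ry) =: \tilde f(y)$, and $\tilde g$ still has $C^{0,1/2}$-norm bounded in terms of that of $g$ (in fact the semi-norm only improves under this rescaling since $R\le 1$). Applying the non-multiplicative estimate to $v$ on $B_{1/2} \subset B_1$ gives
\begin{equation*}
R|Du(x_0)| = |Dv(0)| \le C\left( |v|_{0;B_1} + |\tilde f|_{0;B_1}\right) \le C\left( |u|_{0;B_1} + R^2|f|_{0;B_1}\right),
\end{equation*}
so that $|Du(x_0)| \le C(R^{-1}|u|_{0;B_1} + R|f|_{0;B_1})$. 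Optimizing in $R$, i.e. taking $R = \min\{(1-s)/2,\ (|u|_{0;B_1}/|f|_{0;B_1})^{1/2}\}$, yields $|Du(x_0)| \le C(|u|_{0;B_1} + |u|_{0;B_1}^{1/2}|f|_{0;B_1}^{1/2})$, which is \eqref{multigradsup} once we take the supremum over $x_0 \in B_s$. For \eqref{multigradholder} one argues in the same way with the H\"older semi-norm: from $R^{3/2}[Du]_{1/2;B_{R/2}(x_0)} = [Dv]_{1/2;B_{1/2}} \le C(|u|_{0;B_1} + R^2|f|_{0;B_1})$ one gets a local H\"older bound, and combining the local bounds on overlapping balls of radius $R$ with the $C^0$-bound on $Du$ just obtained (to control the H\"older quotient for points at distance $\ge R$) gives $[Du]_{1/2;B_s} \le C(R^{-3/2}|u|_{0;B_1} + R^{1/2}|f|_{0;B_1})$; optimizing $R$ as before produces the exponents $1/4$ and $3/4$ in \eqref{multigradholder}.

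I do not expect a genuine obstacle here — this is the standard Bethuel-Brezis-H\'elein scaling argument and the only points requiring a little care are: (i) checking that the rescaled metrics $\tilde g$ lie in a fixed class so that the constant in the underlying Schauder estimate is uniform (this is immediate from Definition \ref{interiorclass} since rescaling by $R \le 1$ does not increase the Lipschitz/H\"older semi-norm), and (ii) patching the local H\"older estimates into a global one on $B_s$, where one must separately treat pairs of points at distance less than $R$ (covered by a single rescaled ball) and at distance at least $R$ (handled by the $C^0$ bound on $Du$). Both are routine, so the lemma follows.
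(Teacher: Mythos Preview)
Your proposal is correct and follows essentially the same scaling approach as the paper (and as \cite{bbh}): rescale to a ball of radius $R$, apply the non-multiplicative estimate, and optimize in $R$. For \eqref{multigradholder} the paper organizes the argument slightly differently --- rather than fixing one global $R$ and splitting into near/far pairs, it sets $\lambda_0 = |u|_{0;B_1}^{1/2}/|f|_{0;B_1}^{1/2}$ and runs a three-case analysis according to how $d_x = \dist(x,\partial B_1)$ and $d_y$ compare to $\lambda_0$, obtaining the weighted pointwise bound $|\partial_k u(x) - \partial_k u(y)|/|x-y|^{1/2} \le C(d_{x,y}^{-3/2}|u|_{0;B_1} + |u|_{0;B_1}^{1/4}|f|_{0;B_1}^{3/4})$ on all of $B_1$ --- but this is just a different bookkeeping of the same idea, and your near/far dichotomy is equally valid.
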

\begin{proof}
The first estimate can be proved as in Lemma A.1 of \cite{bbh}, so we will focus on the second estimate. Take two points $x, y \in B_{1}$ and let $d_{x} = \dist(x, \partial B_{1})$, $d_{y} = \dist(y, \partial B_{1})$ and $d_{x, y} = \min\{ d_{x}, d_{y} \}$. We will show that for all $k$,
\begin{equation}\label{multigradholdercomponent}
\frac{|\partial_{k} u(x) - \partial_{k} u(y)|}{|x - y|^{1/2}} \leq C\left( d_{x, y}^{-3/2}|u|_{0; B_{1}} + |u|_{0; B_{1}}^{1/4} |f|_{0; B_{1}}^{3/4} \right).
\end{equation}
Letting $\lambda_{0} = \frac{|u|_{0; B_{1}}^{1/2}}{|f|_{0; B_{1}}^{1/2}}$, we distinguish three cases depending on the relative sizes of $d_{x}, d_{y}$ and $\lambda_{0}$. Without loss of generality, we assume that $d_{x} \leq d_{y}$, so that $d_{x, y} = d_{x}$. Also, two basic estimates we will frequently use in the subsequent argument are the following.
\begin{equation}\label{supscaled}
|\partial_{k} u|_{0; B_{\lambda/2}(z)} \leq C\left( \lambda^{-1}|u|_{0; B_{1}} + \lambda |f|_{0; B_{1}} \right) \text{ and }
\end{equation}
\begin{equation}\label{holderscaled}
[\partial_{k} u]_{1/2; B_{\lambda/2}(z)} \leq C\left( \lambda^{-3/2}|u|_{0; B_{1}} + \lambda^{1/2} |f|_{0; B_{1}} \right),
\end{equation}
where $z \in B_{1}$ and $\lambda \leq d_{z}$. Both estimates follow from a simple scaling argument by considering $y \mapsto z + \lambda y$ for $y \in B_{1}$ (note that the H\"older constant of $g$ improves when scaled this way). For \eqref{supscaled}, we refer the reader to the proof of Lemma A.1 in \cite{bbh} for the details, while \eqref{holderscaled} is proven similarly. Now we proceed with the proof of \eqref{multigradholdercomponent}.

\vskip 2mm
\noindent\textbf{Case 1:} If $\lambda_{0} \leq d_{x} \leq d_{y}$, then we apply \eqref{holderscaled} with $z = x$ and $\lambda = \lambda_{0}$ to get
\begin{equation}
[\partial_{k} u]_{1/2; B_{\lambda_{0}/2}(x)} \leq C |u|_{0; B_{1}}^{1/4} |f|_{0; B_{1}}^{3/4}.
\end{equation}
If $|x - y| \leq \lambda_{0}/2$, then of course we are done. On the other hand, if $|x - y| > \lambda_{0}/2$, then
\begin{equation}\label{holderfar}
\frac{|\partial_{k} u(x) - \partial_{k} u(y)|}{|x - y|^{1/2}} \leq C\lambda_{0}^{-1/2}\left( |\partial_{k} u(x)| + |\partial_{k} u(y)|\right). 
\end{equation}
By \eqref{supscaled} with $z = x, y$ and $\lambda = \lambda_{0}$, we see that both $|\partial_{k} u(x)|$ and $|\partial_{k} u(y)|$ are bounded by $C|u|_{0; B_{1}}^{1/2} |f|_{0; B_{1}}^{1/2}$. Combining this with the above inequality and recalling the definition of $\lambda_{0}$, we arrive at
\begin{equation}
\frac{|\partial_{k} u(x) - \partial_{k} u(y)|}{|x - y|^{1/2}} \leq C|u|_{0; B_{1}}^{1/4} |f|_{0; B_{1}}^{3/4}.
\end{equation}

\vskip 2mm
\noindent\textbf{Case 2:} If $d_{x} < \lambda_{0} \leq d_{y}$, then \eqref{holderscaled} with $z = y$ and $\lambda = \lambda_{0}$ yields 
\begin{equation}
[\partial_{k} u]_{1/2; B_{\lambda_{0}/2}(y)} \leq C |u|_{0; B_{1}}^{1/4} |f|_{0; B_{1}}^{3/4}.
\end{equation}
As in the previous case, if $|x - y| \leq \lambda_{0}/2$ then we are done, and if $|x - y| > \lambda_{0}/2$, then we have \eqref{holderfar}. Taking $z = y$ and $\lambda = \lambda_{0}$ in \eqref{supscaled}, we see that $|\partial_{k} u(y)| \leq C|u|_{0; B_{1}}^{1/2}  |f|_{0; B_{1}}^{1/2}$. On the other hand, choosing $z = x$ and $\lambda = d_{x}$ in \eqref{supscaled}, we get
\begin{equation}
|\partial_{k} u(x)| \leq C\left( d_{x}^{-1}|u|_{0; B_{1}} + d_{x}|f|_{0; B_{1}} \right) \leq C\left( d_{x}^{-1}|u|_{0; B_{1}} + |u|_{0; B_{1}}^{1/2}  |f|_{0; B_{1}}^{1/2} \right).
\end{equation}
Putting the above estimates for $|\partial_{k} u(x)|$ and $|\partial_{k} u(y)|$ into \eqref{holderfar}, we arrive at
\begin{align*}
\frac{|\partial_{k} u(x) - \partial_{k} u(y)|}{|x - y|^{1/2}} &\leq C\left(  \lambda_{0}^{-1/2} d_{x}^{-1}|u|_{0; B_{1}} + \lambda_{0}^{-1/2}|u|_{0; B_{1}}^{1/2}  |f|_{0; B_{1}}^{1/2}  \right)\\
&\leq C\left( d_{x}^{-3/2}|u|_{0; B_{1}} + |u|_{0; B_{1}}^{1/4} |f|_{0; B_{1}}^{3/4} \right),
\end{align*}
where in the second inequality we used the definition of $\lambda_{0}$ and the fact that $\lambda_{0}^{-1} \leq d_{x}^{-1}$ in the present case.

\vskip 2mm
\noindent\textbf{Case 3:} If $d_{x} \leq d_{y} < \lambda_{0}$, we use \eqref{holderscaled} with $z = y$ and $\lambda = d_{y}$ to get
\begin{equation}
[\partial_{k} u]_{1/2; B_{d_{y}/2}(y)} \leq C\left( d_{y}^{-3/2}|u|_{0; B_{1}} + |u|_{0; B_{1}}^{1/4} |f|_{0; B_{1}}^{3/4} \right).
\end{equation}
If $|x - y| \leq d_{y}/2$, we are done. On the other hand if $|x - y| > d_{y}/2$, there holds
\begin{equation}\label{holderfar2}
\frac{|\partial_{k} u(x) - \partial_{k} u(y)|}{|x - y|^{1/2}} \leq Cd_{y}^{-1/2}\left( |\partial_{k} u(x)| + |\partial_{k} u(y)| \right).
\end{equation}
Estimating $|\partial_{k} u(x)|$ using \eqref{supscaled} with $z = x$ and $\lambda = d_{x}$, we get
\begin{equation*}
|\partial_{k} u(x)| \leq C\left( d_{x}^{-1}|u|_{0; B_{1}} + |u|_{0; B_{1}}^{1/2} |f|_{0; B_{1}}^{1/2} \right),
\end{equation*}
and thus, since $d_{y} \geq d_{x}$, the first term on the right-hand side of \eqref{holderfar2} satisfies
\begin{align*}
d_{y}^{-1/2}|\partial_{k} u(x)| \leq  C\left( d_{x}^{-3/2}|u|_{0; B_{1}} + d_{x}^{-1/2}|u|_{0; B_{1}}^{1/2}|f|_{0; B_{1}}^{1/2} \right).
\end{align*}
Now notice that since $d_{x} < \lambda_{0}$, we in fact have $|f|_{0; B_{1}}^{1/2} < d_{x}^{-1}|u|_{0; B_{1}}^{1/2}$. Plugging this into the previous inequality, we arrive at
\begin{equation}
d_{y}^{-1/2} |\partial_{k} u(x)| < C d_{x}^{-3/2}|u|_{0; B_{1}}.
\end{equation}
The term $d_{y}^{-1/2}|\partial_{k} u(y)|$ on the right-hand side of \eqref{holderfar2} can be estimated similarly, and we conclude that
\begin{equation}
\frac{|\partial_{k} u(x) - \partial_{k} u(y)|}{|x - y|^{1/2}} \leq Cd_{x}^{-3/2} |u|_{0; B_{1}}.
\end{equation}
\end{proof}

A simple reflection argument yields the following boundary version of \eqref{interiormulti}.
\begin{lemm}
\label{boundarymulti}
Let $g$ be a $C^{0, 1/2}$-metric on $B_{1}$ satisfying conditions (1) and (2) in Definition \ref{boundaryclass}, and suppose $u$ satisfies $\Delta_{g} u = f$ on $B_{1}^{+}$ along with the Neumann boundary condition $\pa{u}{x^{n}} = 0$ on $T$, then the estimates \eqref{multigradsup} and \eqref{multigradholder} hold with $B_{s}^{+}$ and $B_{1}^{+}$ in place of $B_{s}$ and $B_{1}$, respectively. 
\end{lemm}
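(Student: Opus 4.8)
\textbf{Proof proposal for Lemma \ref{boundarymulti}.}

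The plan is to reduce the boundary estimates to the interior estimates of Lemma \ref{interiormulti} by even reflection across $T$. First I would extend $u$ to all of $B_1$ by setting $\tilde u(x', x^n) = u(x', -x^n)$ for $x^n < 0$, and similarly extend $f$ evenly. Since $g$ satisfies conditions (1) and (2) in Definition \ref{boundaryclass}, its components also extend (evenly for $g_{nn}$ and $g_{ij}$ with $i,j\le n-1$, oddly for $g_{in}$ with $i\le n-1$), and the reflection $\tau(x',x^n)=(x',-x^n)$ satisfies $\tau^\ast g = g$; hence the Laplace--Beltrami operator $\Delta_g$ commutes with the pullback by $\tau$. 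The key point is that the Neumann condition $\partial u/\partial x^n = 0$ on $T$, together with the fact that the inward unit normal to $T$ with respect to $g$ is exactly $\partial_n$ (Remark following Definition \ref{boundaryclass}), guarantees that $\tilde u \in W^{1,2}_{\loc}(B_1)$ and that $\Delta_g \tilde u = \tilde f$ weakly on all of $B_1$, including across $T$ — one checks this by testing against $\zeta \in C^1_c(B_1)$ and splitting the integral into the two half-balls exactly as in the proof of Lemma \ref{GLreflection}. The regularity of the extended metric is only $C^{0,1/2}$, but that is precisely the regularity hypothesis under which Lemma \ref{interiormulti} was proved, so nothing is lost.

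Next I would apply Lemma \ref{interiormulti} to $\tilde u$ on $B_1$, obtaining
\begin{equation*}
|D\tilde u|_{0; B_s} \leq C\left( |\tilde u|_{0;B_1} + |\tilde u|_{0;B_1}^{1/2}|\tilde f|_{0;B_1}^{1/2} \right), \qquad [D\tilde u]_{1/2;B_s} \leq C\left( |\tilde u|_{0;B_1} + |\tilde u|_{0;B_1}^{1/4}|\tilde f|_{0;B_1}^{3/4} \right).
\end{equation*}
Since $|\tilde u|_{0;B_1} = |u|_{0;B_1^+}$ and $|\tilde f|_{0;B_1} = |f|_{0;B_1^+}$ by the evenness of the extensions, and since $B_s^+ \subset B_s$ and $D\tilde u$ restricted to $B_s^+$ agrees with $Du$, restricting the above inequalities to $B_s^+$ yields exactly \eqref{multigradsup} and \eqref{multigradholder} with $B_s^+$ and $B_1^+$ in place of $B_s$ and $B_1$. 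For the H\"older seminorm one should note that for two points $x,y \in B_s^+$ the difference quotient $|D\tilde u(x)-D\tilde u(y)|/|x-y|^{1/2}$ is literally one of the quotients controlled by $[D\tilde u]_{1/2;B_s}$, so no separate argument is needed.

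The only point requiring a little care — and the main (minor) obstacle — is verifying that the reflected function genuinely solves the equation across $T$ in the weak sense, i.e. that no distributional boundary term supported on $T$ appears. This is where the hypotheses on $g$ are essential: because $g_{in} = \delta_{in}$ on $T$ (so that $\partial_n$ is $g$-normal to $T$) and the metric components have the stated parities, the conormal derivative appearing in the integration by parts is a multiple of $\partial u/\partial x^n$, which vanishes by assumption; the contributions from the two half-balls then cancel and one gets the global weak equation. I would carry this out by the same short computation used in Lemma \ref{GLreflection}, and then simply invoke Lemma \ref{interiormulti}. Everything else is a bookkeeping of which norms are preserved under even reflection, which is routine.
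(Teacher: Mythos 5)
Your proposal is correct and matches the paper's proof exactly: even reflection of $u$ and $f$ across $T$, verification via the computation in Lemma \ref{GLreflection} that the reflected function solves $\Delta_g\tilde u=\tilde f$ on all of $B_1$, and then an application of Lemma \ref{interiormulti}. The paper states this in one sentence; you have merely spelled out the details (parity of the metric components, preservation of sup norms under even reflection, the $C^{0,1/2}$ regularity of the reflected metric), all of which are accurate.
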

\begin{proof}[Sketch of proof]
Extending $u$ and $f$ to $B_{1}$ by even reflection in $x^{n}$ and following the proof of Lemma \ref{GLreflection}, we see that the extended $u$ solves $\Delta_{g}u = f$ on $B_{1}$, and we can conclude using Lemma \ref{interiormulti}.
\end{proof}
\begin{rmk}\label{dirichletmulti}
Lemma \ref{boundarymulti} holds true with the boundary condition replaced by $u = 0$ on $T$. To prove this version, one would extend $u$ and $f$ as odd functions in $x^{n}$. It's not hard to check that $u$ solves $\Delta_{g}u = f$ on $B_{1}$, and we are again reduced to the interior case.
\end{rmk}

\section{Estimates for the Green matrix of a class of elliptic systems}


The purpose of this appendix is to derive estimates on the fundamental solution for certain elliptic systems in divergence form. We shall see that under mild assumptions on the coefficients, the fundamental solution has the same growth near the diagonal as in the constant-coefficient case.

Given positive integers $n$ and $N$, we are interested in elliptic operators of the following type: 
\begin{equation}\label{divergenceoperator}
\left( Lu \right)_{\alpha} \equiv \partial_{i}\left( A^{ij}_{\alpha \beta}\partial_{j}u_{\beta} + B^{i}_{\alpha \beta}u_{\beta} \right) + C^{i}_{\alpha \beta}\partial_{i}u_{\beta} + D_{\alpha \beta}u_{\beta},
\end{equation}
where $u = (u_{1}, \cdots, u_{N}): B_{1}\subset \RR^{n} \to \RR^{N}$ and the repeated indices are meant to be summed. The Latin indices are understood to run from $1$ to $n$, while the Greek indices go from $1$ to $N$. 

The assumptions on the coefficients of $L$ are as follows. For some fixed constants $\Lambdabar, \lambdabar, c_{0} > 0$, and $1 \geq \bar{\mu} > 0$, we assume that
\begin{enumerate}
\item[(H1)] $A^{ij}_{\alpha\beta} \in C^{0, \bar{\mu}}(B_{1})$ and $B_{\alpha\beta}^{i}, C_{\alpha, \beta}^{i}, D_{\alpha,\beta} \in L^{\infty}(B_{1})$ for all $i, j, \alpha, \beta$.
\item[(H2)] $[A^{ij}_{\alpha\beta}]_{0, \bar{\mu}; B_{1}} + \|B_{\alpha\beta}^{i}\|_{\infty; B_{1}} + \|C_{\alpha\beta}^{i}\|_{\infty; B_{1}} + \|D_{\alpha\beta}\|_{\infty; B_{1}} \leq \Lambdabar$, for all $i, j, \alpha, \beta$.
\item[(H3)] $\lambdabar |\xi|^{2} \leq A^{ij}_{\alpha\beta}(x)\xi_{i\alpha} \xi_{j\beta} \leq \lambdabar^{-1} |\xi|^{2}$, for all $x \in B_{1}$ and $\xi = (\xi_{i\alpha})$ in $\RR^{nN}$.
\item[(H4)] For all $u \in W^{1, 2}_{0}(B_{1}; \RR^{N})$, there holds
\begin{equation}\label{coercive}
\int_{B_{1}}\left( A^{ij}_{\alpha\beta}\partial_{j}u_{\beta} + B^{i}_{\alpha\beta}u_{\beta}\right)\partial_{i}u_{\alpha} - u_{\alpha}\left( C^{i}_{\alpha\beta}\partial_{i}u_{\beta} + D_{\alpha\beta} \right) \geq c_{0} \| u\|^{2}_{1, 2; B_{1}}.
\end{equation}
\end{enumerate}

Under these assumptions, following the construction outlined in Section 2 of \cite{fuchs} (see also \cite{lsw}), we find for each $\RR^{N}$-valued finite Radon measure $\nu$ a unique element $u_{\nu}$ of $\cap_{r < n/(n-1)} W_{0}^{1, r}(B_{1})$ satisfying
\begin{equation}
\left( Lu_{\nu} \right)_{\alpha} = \nu_{\alpha} \text{ for }\alpha = 1, \cdots, N. 
\end{equation}
For each $y \in B_{1}$ and $\gamma \in \{1, \cdots ,N\}$, we define $G_{\gamma}(\cdot, y) = (G_{\gamma\beta}(\cdot, y))_{1 \leq \beta \leq N}$ to be the $u_{\nu}$ corresponding to the following choice of $\nu$:
\begin{equation*}
\nu = e_{\gamma}\delta_{y},
\end{equation*}
where $e_{\gamma}$ denotes the $\gamma$-standard basis vector on $\RR^{N}$, and $\delta_{y}$ is the Dirac measure supported at $y$. In what follows, the matrix-valued function $G = (G_{\gamma\beta})_{1 \leq \gamma, \beta \leq N}: B_{1} \times B_{1} \to \RR^{N^{2}}$ will be referred to as the Green matrix of $L$. For later purposes, we introduce the adjoint of $L$, denoted $L'$ and defined by 
\begin{equation*}
( L^{'}v )_{\alpha} = \partial_{i}\left( A^{ij}_{\alpha \beta}\partial_{j}v_{\beta} - C^{i}_{\alpha \beta}v_{\beta} \right) - B^{i}_{\alpha \beta}\partial_{i}v_{\beta} + D_{\alpha \beta}v_{\beta}.
\end{equation*}
We immediately see from the definition that $L'$ satisfies (H1) to (H4) since $L$ does, and therefore we can define the Green matrix of $L'$, denoted $G'$. 

The basic properties of the Green matrix is summarized below. 
\begin{prop}[cf. \cite{fuchs}, Section 3]
\label{greensbasic}
\begin{enumerate}
\item[(1)] For all $1 \leq p < n/(n-1)$, we have $\|G(\cdot, y)\|_{1, p; B_{1}} \leq C$, where $C = C(n, N, c_{0}, \lambdabar, \Lambdabar, \bar{\mu}, p)$.

\item[(2)] Let $y \in B_{1/2}$ and $0 < r \leq 1/4$. Then for all $1 \leq q < \infty$, we have 
\begin{equation*}
\|G(\cdot, y)\|_{1, q; B_{1}\setminus B_{r}(y)} \leq C = C(n, N, c_{0}, \lambdabar, \Lambdabar, \bar{\mu}, q, r).
\end{equation*}

\item[(3)] For all $x, y \in B_{1}$ with $x \neq y$ and for all $1 \leq \alpha, \beta \leq N$, we have $G_{\alpha\beta}(x, y) = G'_{\beta\alpha}(y, x)$.

\item[(4)] Let $\Delta$ denote the diagonal $\{(x, x)| x \in B_{1}\}$. Then $G \in C^{0, \alpha}_{\text{loc}}(B_{1} \times B_{1} \setminus \Delta; \RR^{N^{2}})$ for all $0 < \alpha < 1$.

\item[(5)] Let $\cM(B_{1}; \RR^{N})$ denote the space of $\RR^{N}$-valued finite Radon measure on $B_{1}$. Then for each $\nu \in \cM_{B_{1}; \RR^{N}}$, we have that $G$ is $L^{1}$ on $B_{1} \times B_{1}$ with respect to both $\nu \times H^{n}$ and $H^{n} \times \nu$. Moreover, $u_{\nu}$ can be represented as follows.
\begin{equation*}
u_{\nu, \alpha}(x) = \int_{B_{1}} G_{\beta\alpha}(x, y)d\nu_{\beta}(y) = \int_{B_{1}} G'_{\alpha\beta}(y, x)d\nu_{\beta}(y),\text{ for $H^{n}$-a.e. $x \in B_{1}$.}
\end{equation*}
\end{enumerate}
\end{prop}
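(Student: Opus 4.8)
The statement to prove is Proposition~\ref{greensbasic}, collecting the basic properties of the Green matrix $G$ of the elliptic system $L$ under hypotheses (H1)--(H4). The plan is to follow the classical construction of the Green matrix for divergence-form systems, essentially as in Fuchs and in Littman--Stampacchia--Weinberger, adapting the $W^{1,p}$ regularity and existence theory (which is available under (H1)--(H4) via standard references such as \cite{morreybook} and \cite{fuchs}) to the present coefficient assumptions. The key point is that the H\"older continuity of the leading coefficients $A^{ij}_{\alpha\beta}$ and the coercivity (H4) together yield, on one hand, interior and boundary $W^{1,p}$-estimates for all $1 < p < \infty$, and on the other hand, interior De Giorgi--Nash--Moser type H\"older estimates for solutions of the homogeneous system; these two ingredients are what drive all five assertions.

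First I would establish (1) by an approximation argument: replace the Dirac mass $\delta_y$ by a bounded approximating density $\rho_\varepsilon \rightharpoonup \delta_y$, solve $(Lu^\varepsilon)_\alpha = e_\gamma \rho_\varepsilon$ in $W^{1,2}_0(B_1)$ using the Lax--Milgram theorem and (H4), and derive the uniform bound $\|u^\varepsilon\|_{1,p;B_1} \le C$ for $1 \le p < n/(n-1)$ by the standard duality trick: test the equation against the $W^{1,p'}_0$-solution (with $p' > n$) of the adjoint problem with smooth right-hand side and invoke Sobolev embedding $W^{1,p'}_0 \hookrightarrow C^{0,\alpha}$, so that pairing with $\rho_\varepsilon$ stays bounded uniformly in $\varepsilon$. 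Passing to a weak limit gives $G(\cdot,y)$ and the estimate; uniqueness in $\cap_{r<n/(n-1)} W^{1,r}_0$ follows from the energy estimate again. For (2), away from the pole the right-hand side vanishes, so $G(\cdot,y)$ solves the homogeneous system on $B_1 \setminus B_r(y)$ with zero boundary data on $\partial B_1$; bootstrapping the $W^{1,p}$-regularity theory (and the boundary version of it) starting from the $W^{1,p_0}$-bound of (1) upgrades the integrability to any $q < \infty$ on $B_1 \setminus B_r(y)$, with constants depending on $r$. Assertion (3), the symmetry $G_{\alpha\beta}(x,y) = G'_{\beta\alpha}(y,x)$, is the standard Green's identity computation: apply the bilinear form to $G(\cdot,y)$ and $G'(\cdot,x)$, integrate by parts, and use that each is a fundamental solution with a single Dirac mass; the parts (1)--(2) ensure the manipulations are justified (finite energy away from the diagonal, controlled blow-up at the diagonal).

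Part (4), local H\"older continuity of $G$ off the diagonal, again follows from (2) plus the De Giorgi--Nash--Moser H\"older estimate for solutions of the homogeneous system with H\"older-continuous leading coefficients: away from $\Delta$, $G(\cdot, y)$ and, by (3), $G(x, \cdot)$ are each solutions of such a system locally, and a now-standard argument combining the two (treating $x$ and $y$ variables separately and patching) yields joint H\"older continuity. Finally (5) is a consequence of (1)--(3) and Fubini: write $u_\nu$ as a limit of solutions with smooth data, represent those via $G$, and pass to the limit using the $L^1$-integrability of $G$ with respect to $\nu \times H^n$ and $H^n \times \nu$, which in turn comes from (1) (giving $\|G(\cdot,y)\|_{1;B_1} \le C$ uniformly in $y$, hence $\int |G(x,y)|\,dH^n(x)\,d|\nu|(y) < \infty$) together with (3) applied to $G'$.

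The main obstacle I anticipate is making the approximation and uniform-estimate step (1) fully rigorous under the weaker assumption that only the principal part is H\"older continuous while $B^i_{\alpha\beta}, C^i_{\alpha\beta}, D_{\alpha\beta}$ are merely $L^\infty$: one must verify that the Calder\'on--Zygmund $W^{1,p}$-theory for such systems still holds for the full range $1 < p < \infty$ with constants independent of the approximation parameter, which requires either citing a sufficiently general reference (\cite{morreybook}, Chapter 6, \cite{fuchs}) or sketching the perturbation argument that absorbs the lower-order terms using the coercivity (H4) and interpolation. Once that $W^{1,p}$-machinery is in place, everything else is a fairly mechanical adaptation of the classical Green-matrix construction, so I would keep the exposition of (3)--(5) brief and refer to \cite{fuchs} for the routine details, spending the bulk of the proof on the uniform estimates underlying (1) and (2).
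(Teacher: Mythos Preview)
Your proposal is correct and matches the paper's treatment: the paper does not give a proof of this proposition at all, but simply cites \cite{fuchs} and remarks that the construction there (done for operators without lower-order terms) goes through verbatim once one has coercivity (H4) and the full range of $W^{1,p}$-estimates under (H1)--(H3), the latter coming from \cite{morreybook}, Theorem~6.4.8. Your sketch is precisely an outline of the Fuchs construction, and your identification of the ``main obstacle'' --- verifying that the $W^{1,p}$-theory persists with merely bounded lower-order coefficients --- is exactly the point the paper flags in its remark.

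One small terminological caution: in part (4) you invoke a ``De Giorgi--Nash--Moser H\"older estimate'' for the homogeneous system. For genuine systems (as opposed to scalar equations) De Giorgi--Nash--Moser is not available without additional structure; what actually gives the H\"older regularity here is the $W^{1,q}$-estimate of part (2) for all $q < \infty$ combined with Morrey's embedding (or equivalently a Campanato/Schauder argument using the H\"older continuity of the leading coefficients). This is not a gap in your argument --- you already have (2) in hand and correctly note the role of the coefficient regularity --- but the label is misleading.
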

\begin{rmk} In \cite{fuchs} the construction of $G$ and the proof of its properties were only given when $L$ has no lower order terms. However, the argument goes through as long as $L$ is coercive in the sense of (H4) and satisfies the $W^{1, p}$-estimates listed in Section 1 of \cite{fuchs}, which hold under the assumptions (H1) to (H3) by \cite{morreybook}, Theorem 6.4.8.
\end{rmk}
We are now ready to state the main result of this appendix.
\begin{thm}[see also \cite{fuchs}, Theorem 7]
\label{greensgrowth}
There exists constants $C_{1}, C_{2} > 0$ and $ 1/8 \geq R_{1} > 0$, depending only on $n, N, c_{0}, \lambdabar, \Lambdabar$ and $\bar{\mu}$, such that
\begin{equation}
\label{zerothboundcentral}
|G(x, y)| \leq C_{1}|x - y|^{2-n} + C_{2}R^{1 + \bar{\mu} - n}, 
\end{equation}
whenever $y \in B_{1}$, $R \leq \min\{R_{1}, \dist(y, \partial B_{1})/4  \}$ and $x \in B_{2R}(y)\setminus \{y\}$.
\end{thm}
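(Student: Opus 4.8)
The plan is to reduce the bound to a near-diagonal comparison with the constant-coefficient case, following the strategy of \cite{fuchs}. Fix $y \in B_1$ and $R \leq \min\{R_1, \dist(y,\partial B_1)/4\}$, so that $B_{2R}(y) \subset\subset B_1$; after a translation we may take all balls below to be centred at $y$. Let $L_0 w := \partial_i\big(A^{ij}_{\alpha\beta}(y)\,\partial_j w_\beta\big)$ be the homogeneous principal part of $L$ with the leading coefficients frozen at $y$, and let $\Gamma_0 = (\Gamma_{0}^{\alpha\beta})$ be its matrix-valued fundamental solution on $\RR^n$. By ellipticity (H3) and the scaling invariance of $L_0$, $\Gamma_0$ is smooth away from the origin and satisfies $|\Gamma_0(z)| \leq C|z|^{2-n}$ and $|D\Gamma_0(z)| \leq C|z|^{1-n}$ with $C = C(n,N,\lambdabar)$. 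Fix a column index $\gamma$ and a cut-off $\zeta$ with $\zeta \equiv 1$ on $B_{3R/2}(y)$, $\supp\zeta \subset B_{2R}(y)$, and $|\nabla\zeta| \leq CR^{-1}$, $|\nabla^2\zeta| \leq CR^{-2}$, and write $G_{\gamma}(\cdot,y) = \zeta\,\Gamma_0(\cdot - y)e_{\gamma} + v$ on $B_{2R}(y)$. The term $\zeta\,\Gamma_0(\cdot-y)e_{\gamma}$ accounts for the $C_1|x-y|^{2-n}$ contribution to \eqref{zerothboundcentral}, so it remains to show $\sup_{B_{R}(y)}|v| \leq CR^{1+\bar{\mu}-n}$; the range $B_{2R}(y)\setminus B_R(y)$ is then covered by rerunning the argument with $R$ replaced by a comparable radius, or directly by interior local boundedness, since there one is a fixed distance away from the pole.

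Since $L$ and $L_0$ have the same principal part at $y$, the Dirac masses cancel and $v$ solves a genuinely non-singular system
\[
(Lv)_{\alpha} = \partial_i \widetilde{F}^{i}_{\alpha} + \widetilde{f}_{\alpha} \quad \text{in } B_{2R}(y), \qquad v = G_{\gamma}(\cdot,y) \ \text{ on } \partial B_{2R}(y),
\]
where, on $B_{3R/2}(y)$,
\[
\widetilde{F}^{i}_{\alpha} = -\big(A^{ij}_{\alpha\beta}(\cdot) - A^{ij}_{\alpha\beta}(y)\big)\,\partial_j\Gamma_{0}^{\beta\gamma}(\cdot - y) - B^{i}_{\alpha\beta}\,\Gamma_{0}^{\beta\gamma}(\cdot - y), \qquad
\widetilde{f}_{\alpha} = -C^{i}_{\alpha\beta}\,\partial_i\Gamma_{0}^{\beta\gamma}(\cdot - y) - D_{\alpha\beta}\,\Gamma_{0}^{\beta\gamma}(\cdot - y),
\]
together with commutator terms involving $\nabla\zeta$ and $\nabla^2\zeta$, supported on the annulus $\{\tfrac{3}{2}R \leq |x - y| \leq 2R\}$, where they are bounded by $CR^{-n}$. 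The decisive point is that, by (H1)--(H2), the leading piece of $\widetilde{F}$ is bounded by $\Lambdabar\,|x-y|^{\bar{\mu}}\,|D\Gamma_0(x-y)| \leq C|x-y|^{1+\bar{\mu}-n}$, one power of $|x-y|$ less singular than $D\Gamma_0$ itself, whereas the bounded lower-order coefficients see only $\Gamma_0$ or $D\Gamma_0$ and so contribute the milder $O(|x-y|^{2-n})$ and $O(|x-y|^{1-n})$.

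To estimate $v$ I would split $v = v_1 + v_2$, with $v_1 \in W^{1,2}_{0}(B_{2R}(y))$ solving $(Lv_1)_{\alpha} = \partial_i\widetilde{F}^{i}_{\alpha} + \widetilde{f}_{\alpha}$ and $v_2 := v - v_1$ solving the homogeneous system with $v_2 = G_{\gamma}(\cdot,y)$ on $\partial B_{2R}(y)$. For $v_1$, after rescaling $B_{2R}(y)$ to the unit ball --- a rescaling under which the H\"older seminorm of the leading coefficients and the sup-norms of the lower-order coefficients only decrease, so that (H1)--(H4) persist with controlled constants --- the rescaled inhomogeneity lies in Morrey spaces with exponents strictly below the critical ones, precisely because of the $|x-y|^{\bar{\mu}}$ gain in $\widetilde{F}$ (the lower-order term $\widetilde{f} \sim |x-y|^{1-n}$, which is not even in $L^2_{\loc}$ near the pole, is handled by an $L^p$--Morrey estimate with $p$ just below $n/(n-1)$, or by a Hardy-type bound on the corresponding linear functional on $W^{1,2}_0$). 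Campanato regularity for the rescaled system then bounds the supremum of the rescaled $v_1$ on the half-ball, and undoing the scaling gives $\sup_{B_{R}(y)}|v_1| \leq CR^{2+\bar{\mu}-n} \leq CR^{1+\bar{\mu}-n}$. For $v_2$, solutions of the homogeneous system enjoy the scale-invariant interior bound $\sup_{B_{\rho}(z)}|w| \leq C\rho^{-n}\|w\|_{L^1(B_{2\rho}(z))}$, valid under (H1)--(H3) --- itself obtained by freezing the leading coefficients and iterating Campanato-type decay estimates; combined with the weak-$L^{n/(n-2)}$ bound for $G_{\gamma}(\cdot,y)$, a consequence of Proposition \ref{greensbasic}(1) and the Sobolev embedding, which yields $\|G_{\gamma}(\cdot,y)\|_{L^1(B_{2R}(y))} \leq CR^2$, together with $\|\zeta\Gamma_0(\cdot-y)\|_{L^1(B_{2R}(y))} \leq CR^2$ and the (negligible) bound $\|v_1\|_{L^1(B_{2R}(y))} \leq CR^{2+\bar{\mu}}$ just obtained, this gives $\sup_{B_{R}(y)}|v_2| \leq CR^{2-n} \leq CR^{1+\bar{\mu}-n}$. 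Adding the contributions of $v_1$, $v_2$ and $\zeta\Gamma_0(\cdot-y)$ yields \eqref{zerothboundcentral} on $B_R(y)$.

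The step I expect to be the main obstacle is the estimate for $v_1$: the inhomogeneity sits exactly at the borderline of the natural function spaces --- $\widetilde{f}$ is not even $L^2$ near the pole --- so a naive $L^p \to W^{1,q} \to L^\infty$ Sobolev chain does not close, and one must instead use Morrey--Campanato regularity for elliptic systems, checking carefully that the $\bar{\mu}$-gain in $\widetilde{F}$ really pushes every relevant Morrey exponent past the critical threshold and that the rescaling preserves coercivity (H4) with a uniform constant. A secondary point is that the scale-invariant interior boundedness estimate for homogeneous solutions of $L$, needed for $v_2$ and for the outer-annulus range, must be established under only (H1)--(H3) --- H\"older leading coefficients, merely bounded lower-order coefficients, and no maximum principle --- which is classical Campanato/Schauder theory for systems but has to be invoked with some care.
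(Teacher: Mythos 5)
Your decomposition $G_{\gamma}(\cdot,y) = \zeta\,\Gamma_{0}(\cdot-y)e_{\gamma} + v_{1} + v_{2}$ is a reasonable starting point, and the estimate of $v_{2}$ and of the outer annulus is fine, but the estimate of $v_{1}$ has a genuine gap that the $\bar\mu$-gain cannot close in a single step. The scale-invariant local boundedness estimate for divergence-form systems (or equivalently the Morrey--Campanato sup bound) requires the inhomogeneity $\partial_{i}\tilde{F}^{i} + \tilde{f}$ to satisfy $\tilde{F}\in L^{q}(B_{1})$ for some $q > n$ and $\tilde{f}\in L^{q/2}(B_{1})$. After rescaling, $|\tilde{F}(z)|\lesssim R^{2+\bar\mu - n}|z|^{1+\bar\mu - n}$, which lies in $L^{q}$ near the pole only for $q < n/(n-1-\bar\mu)$; since $\bar\mu \leq 1$ this threshold never exceeds $n$ (it equals $n$ exactly when $\bar\mu = n - 2$, hence only in the borderline case $n=3$, $\bar\mu=1$, and is strictly below $n$ otherwise). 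The same counting shows $\tilde F\notin L^{2}$ near $0$ whenever $n\geq 4$. So the inhomogeneity does not land in the spaces Campanato regularity needs, and the assertion that "the $\bar\mu$-gain really pushes every relevant Morrey exponent past the critical threshold" is false. Without that, $\sup_{B_{R}(y)}|v_{1}| \leq CR^{2+\bar\mu - n}$ does not follow.

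The paper sidesteps this by iterating rather than trying to close in one shot. It introduces a perturbation operator $T_{r}$ which is a \emph{contraction on a scaled $W^{1,p}$ space} with $p < n/(n-1)$ (a space the Green matrix actually lies in, by Proposition~\ref{greensbasic}(1)), and writes $G^{(1)}_{\gamma} = E_{\gamma} + \sum_{l\geq 1}T_{2r}^{l}E_{\gamma} + \sum_{l\geq 0}T_{2r}^{l}w$. The crucial Lemma~\ref{estimatesonE} then proves \emph{pointwise} bounds $|T_{2r}^{l}E_{\gamma}(x)| \leq C(Cr^{\bar\mu})^{l}|x|^{2-n}$ by estimating each iterate as a convolution against the explicit constant-coefficient kernel $E$ and its gradient, using \eqref{poleconvolution}; each pass gains a factor $r^{\bar\mu}$ in the \emph{constant}, not a full order of regularity, and the series is summed geometrically. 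The residual term $w$, being a homogeneous solution, is controlled in $W^{1,q}$ for large $q$ on $B_{2r}$ and hence in $L^{\infty}$ with the $r^{1+\bar\mu - n}$ bound (Lemma~\ref{estimatesonw} and \eqref{wseriessup}) --- this plays the role of your $v_{2}$ and works for the same reason your $v_{2}$ estimate works. The paper also passes through the intermediate operator $L^{(1)}$ (dropping the $B^{i}_{\alpha\beta}$ terms and shifting $D$ by a large constant), and then perturbs $G^{(1)}\to G$ by a second contraction, which keeps the contraction radius depending only on the stated parameters. You should replace the $v_{1}$ step by an iteration of this type: a single Campanato step cannot absorb a distributional right-hand side whose potential part $F$ is only in $L^{p}$ for $p<n/(n-1)$.
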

\begin{coro}\label{growthnearzero}
Let $C_{1}$ and $C_{2}$ be as in Theorem \ref{greensgrowth}. There exists a radius $R_{2} = R_{2}(n, N, c_{0}, \lambdabar, \Lambdabar, \bar{\mu}) < 1/8$, such that 
\begin{equation}\label{zerothbound}
|G(x, y)| \leq C_{1}|x - y|^{2-n} + C_{2} R_{2}^{1 + \bar{\mu} - n},
\end{equation}
whenever $x , y \in B_{R_{2}}$ and $x \neq y$. 
\end{coro}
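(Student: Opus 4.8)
\textbf{Proof proposal for Corollary \ref{growthnearzero}.}

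The plan is to deduce the estimate at all pairs $x,y \in B_{R_2}$ from the localized estimate \eqref{zerothboundcentral} by distinguishing whether $x$ is close to $y$ relative to the scale $R_1$, and handling the far range via the already-established Hölder regularity of $G$ away from the diagonal (Proposition \ref{greensbasic}(4)) together with the $W^{1,p}$ bound in Proposition \ref{greensbasic}(1). First I would fix $R_2 \le \min\{R_1, 1/16\}$, to be shrunk further below, so that for $y \in B_{R_2}$ one has $\dist(y, \partial B_1) \ge 1 - R_2 > 1/2$, hence $\min\{R_1, \dist(y,\partial B_1)/4\} = R_1$. Thus Theorem \ref{greensgrowth} applies with $R = R_1$ and gives, for every $y \in B_{R_2}$ and every $x$ with $0 < |x-y| < 2R_1$,
\begin{equation*}
|G(x,y)| \le C_1 |x-y|^{2-n} + C_2 R_1^{1 + \bar\mu - n}.
\end{equation*}
Since $x, y \in B_{R_2}$ forces $|x - y| < 2R_2 \le 2R_1$, this already covers \emph{all} admissible pairs, and because $R_2 \le R_1 < 1$ and $1 + \bar\mu - n < 0$ we have $R_1^{1+\bar\mu-n} \le R_2^{1+\bar\mu-n}$, so the stated inequality \eqref{zerothbound} follows verbatim. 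In other words, once $R_2 \le R_1$ and $R_2$ is small enough that the distance-to-boundary condition is automatic, the corollary is an immediate specialization of Theorem \ref{greensgrowth}; no separate far-range argument is actually needed.

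To be careful about the one remaining point: I would verify that the quantity $\min\{R_1, \dist(y,\partial B_1)/4\}$ in the hypothesis of Theorem \ref{greensgrowth} equals $R_1$ for all $y \in B_{R_2}$. Indeed $\dist(y, \partial B_1) \ge 1 - |y| > 1 - R_2 \ge 1 - 1/16 > 1/4 \cdot 4 \cdot R_1$ provided $R_1 \le 1/8$, which is exactly the range allowed for $R_1$; hence $\dist(y,\partial B_1)/4 > R_1$ and the minimum is $R_1$ as claimed. Then $B_{2R_2}(y) \subset B_{2R_1}(y)$, so every $x \in B_{R_2} \setminus \{y\}$ lies in $B_{2R_1}(y)\setminus\{y\}$ and \eqref{zerothboundcentral} applies with this choice of $R$.

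I do not anticipate a genuine obstacle here: the corollary is a cosmetic reformulation of Theorem \ref{greensgrowth}, trading the $y$-dependent radius $\min\{R_1,\dist(y,\partial B_1)/4\}$ for a fixed radius $R_2$ by restricting $y$ to a smaller ball where that minimum is constant. The only thing to get right is bookkeeping of the constants — keeping $C_1, C_2$ literally the same as in Theorem \ref{greensgrowth} and exploiting the monotonicity of $t \mapsto t^{1+\bar\mu-n}$ to replace $R_1$ by $R_2$ in the second term — and the elementary estimate $\dist(y,\partial B_1) > 1/2$ on $B_{R_2}$. Accordingly I would set $R_2 := \min\{R_1, 1/16\}$ (any value $\le R_1$ with $1 - R_2 > 4R_1/ \ldots$ bounded below works equally well) and present the proof as the two-line specialization above.
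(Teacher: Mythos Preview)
Your argument is correct and is exactly the immediate specialization the paper has in mind; the paper states the corollary without proof, and the deduction is precisely the one you give: for $y\in B_{R_2}$ with $R_2\le R_1\le 1/8$ one has $\dist(y,\partial B_1)/4>R_1$, so Theorem \ref{greensgrowth} applies with $R=R_1$, and then $R_1^{1+\bar\mu-n}\le R_2^{1+\bar\mu-n}$ since $1+\bar\mu-n<0$. The only cosmetic slip is the expression ``$1/4\cdot 4\cdot R_1$'', which should read $4R_1$ (you need $\dist(y,\partial B_1)>4R_1$); the estimate $1-R_2\ge 15/16>1/2\ge 4R_1$ gives this.
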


For the proof of Theorem \ref{greensgrowth}, we need to introduce two other elliptic operators. For $u: B_{1} \to \RR^{N}$ and $x_{0} \in B_{1/2}$, we define
\begin{align*}
\left( L^{(0)}u \right)_{\alpha} &= \partial_{i}\left( A^{ij}_{\alpha\beta}(x_{0})\partial_{j}u_{\beta} \right),\\
\left( L^{(1)}u \right)_{\alpha} &= \partial_{i}\left( A^{ij}_{\alpha\beta}\partial_{j}u_{\beta} \right) + C^{i}_{\alpha\beta}\partial_{i}u_{\beta} + \hat{D}_{\alpha\beta}u_{\beta},
\end{align*}
where $\hat{D}_{\alpha\beta} = D_{\alpha\beta} - K\delta_{\alpha\beta}$ and $K$ is a large enough constant depending only on $n, N, c_{0}, \lambdabar, \Lambdabar$ and $\bar{\mu}$ that makes $L^{(1)}$ satisfy (H4). The Green matrix of $L^{(1)}$ is denoted $G^{(1)}$ and it has all the properties listed in Proposition \ref{greensbasic}.
On the other hand, it is well-known that the constant-coefficient operator $L^{(0)}$ has a fundamental solution $E: \RR^{n} \to \RR^{N^{2}}$,whose properties we summarize in the two Propositions below. Both results can be found in Chapter 6 of \cite{morreybook}.
\begin{prop}\label{constantcoefficient}
\begin{enumerate}
\item[(1)] $\left( L^{(0)}E \right)_{\alpha} = e_{\alpha}\delta_{0}$, where $e_{\alpha}$ is the $\alpha$-th coordinate vector in $\RR^{N}$ and $\delta_{0}$ is the Dirac measure supported at $0 \in \RR^{n}$.

\item[(2)] $E$ is an even function and is positively homogeneous of degree $2-n$ on $\RR^{n}\setminus \{0\}$.

\item[(3)] $E$ is smooth away from the origin and its derivatives satisfy
\begin{equation}\label{constantcoefficientgrowth}
|\partial^{\nu}E(x)| \leq C |x|^{2-n-\nu},\ C = C(n, N, \lambdabar, \Lambdabar, |\nu|)
\end{equation}
where $\nu = (\nu_{1}, \cdots, \nu_{n})$ is a multi-index, $\partial^{\nu} = \partial_{1}^{\nu_{1}}\cdots \partial_{n}^{\nu_{n}}$ and $|\nu| = \nu_{1} + \cdots + \nu_{n}$.
\end{enumerate}
\end{prop}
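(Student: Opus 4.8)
This is the classical construction of the fundamental solution of a homogeneous, strongly elliptic constant-coefficient system, so the plan is to produce $E$ by the Fourier transform and read off (1)--(3) from the symbol; I will also indicate the real-variable alternative.

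Write $a^{ij}_{\alpha\beta} = A^{ij}_{\alpha\beta}(x_{0})$ for the (constant) leading coefficients and set $\mathcal{A}(\zeta) = \big(a^{ij}_{\alpha\beta}\zeta_{i}\zeta_{j}\big)_{1\le\alpha,\beta\le N}$ for $\zeta \in \RR^{n}$. Inserting $\xi_{i\alpha} = \zeta_{i}\eta_{\alpha}$ into (H3) gives $\lambdabar|\zeta|^{2}|\eta|^{2} \le \mathcal{A}(\zeta)\eta\cdot\eta \le \lambdabar^{-1}|\zeta|^{2}|\eta|^{2}$ for all $\eta \in \RR^{N}$, so the symmetric part of $\mathcal{A}(\zeta)$ is $\ge \lambdabar|\zeta|^{2}I$; together with the boundedness of the $a^{ij}_{\alpha\beta}$ this makes $\mathcal{A}(\zeta)$ invertible for $\zeta\ne 0$ with $|\mathcal{A}(\zeta)^{-1}|\le C|\zeta|^{-2}$ (if $\mathcal{A}(\zeta)\eta = 0$ then $\lambdabar|\zeta|^{2}|\eta|^{2}\le \mathcal{A}(\zeta)\eta\cdot\eta = 0$). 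Thus $\zeta\mapsto\mathcal{A}(\zeta)^{-1}$ is real-analytic and positively homogeneous of degree $-2$ on $\RR^{n}\setminus\{0\}$, and differentiating $\mathcal{A}\,\mathcal{A}^{-1}=I$ yields $|\partial_{\zeta}^{\nu}\mathcal{A}(\zeta)^{-1}| \le C(n,N,\lambdabar,\Lambdabar,|\nu|)\,|\zeta|^{-2-|\nu|}$. Since $n\ge 3$, $|\zeta|^{-2}$ is locally integrable near the origin and grows polynomially, so $\mathcal{A}(\zeta)^{-1}$ is a tempered distribution; define $E := -\mathcal{F}^{-1}\big[\mathcal{A}(\zeta)^{-1}\big]$ entrywise.

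Property (1) follows at once: as $\widehat{\partial_{i}\partial_{j}u}(\zeta) = -\zeta_{i}\zeta_{j}\hat u(\zeta)$, we get $\widehat{L^{(0)}E}(\zeta) = -\mathcal{A}(\zeta)\hat E(\zeta) = \mathcal{A}(\zeta)\mathcal{A}(\zeta)^{-1} = I$, which is the transform of $\delta_{0}$ times the $N\times N$ identity, i.e. $(L^{(0)}E)_{\alpha} = e_{\alpha}\delta_{0}$. For (2): $\mathcal{A}(-\zeta) = \mathcal{A}(\zeta)$ makes $\hat E$ even, hence $E$ even; and since the Fourier transform of a distribution homogeneous of degree $a$ is homogeneous of degree $-n-a$, the degree $-2$ of $\hat E$ forces $E$ to be homogeneous of degree $2-n$. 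For (3): the degree $-2$ is not an exceptional value when $n\ge 3$ (the exceptional degrees for homogeneous distributions in $\RR^{n}$ are $-n,-n-1,\dots$), and the Fourier transform of a function on $\RR^{n}\setminus\{0\}$ that is smooth on $S^{n-1}$ and homogeneous of such a non-exceptional degree is a distribution which is smooth on $\RR^{n}\setminus\{0\}$ (see e.g. H\"ormander, Section 7.1); applying this to $\mathcal{A}(\zeta)^{-1}$ gives smoothness of $E$ away from the origin, and then $|\partial^{\nu}E(x)| \le C|x|^{2-n-|\nu|}$ is simply homogeneity of $\partial^{\nu}E$ (degree $2-n-|\nu|$) plus its boundedness on $S^{n-1}$, the constant inherited from the estimates on $\mathcal{A}(\zeta)^{-1}$ above.

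Being a classical statement, this should present no real obstacle; the only points needing (routine) care are verifying the invertibility and the symbol bounds for $\mathcal{A}(\zeta)^{-1}$ from the quadratic-form hypothesis (H3), and quoting the homogeneous-distribution facts in the matrix-valued setting. One can also avoid the Fourier transform entirely by writing $E$ through the Herglotz--G\aa rding (Fritz John) formula as an explicit integral of $\mathcal{A}(\omega)^{-1}$ over $\omega\in S^{n-1}$ against an $n$-dependent kernel --- a power of $|x\cdot\omega|$ for $n$ odd, with a logarithmic factor for $n$ even --- from which homogeneity, evenness, smoothness away from the origin and the derivative bounds are immediate, at the price of the parity case distinction.
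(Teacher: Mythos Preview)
Your argument is correct and is one of the two standard constructions. Note, however, that the paper does not actually prove this proposition: it simply states that the result ``can be found in Chapter 6 of \cite{morreybook}.'' Morrey's treatment in that chapter proceeds via the Fritz John plane-wave formula (exactly the real-variable alternative you mention at the end), writing $E(x)$ as an integral over $S^{n-1}$ of $\mathcal{A}(\omega)^{-1}$ against a kernel in $x\cdot\omega$, rather than through the Fourier transform of $\mathcal{A}(\zeta)^{-1}$. Your Fourier route is arguably cleaner conceptually --- homogeneity, evenness and the derivative bounds drop out of symbol homogeneity and the standard facts on homogeneous tempered distributions --- at the cost of quoting the mildly nontrivial fact that the inverse Fourier transform of a smooth function on $S^{n-1}$ extended homogeneously of non-exceptional degree is smooth away from the origin. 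The plane-wave approach avoids that citation but introduces the parity split you note. Either way there is nothing to add: the only substantive check is the invertibility of $\mathcal{A}(\zeta)$ from (H3), which you handle correctly.
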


\begin{prop}\label{greensrepresentation} Fixing an exponent $1 < p < \infty$, we have
\begin{enumerate}
\item[(1)] Given $(f_{\alpha}) \in L^{p}(B_{1}; \RR^{N})$, define $u_{\alpha}(x) = \int_{B_{1}}E_{\beta\alpha}(x - y)f_{\beta}(y)dy$ for $x \in B_{1}$. Then $u \in W^{2, p}(B_{1}; \RR^{N})$ and is a weak solution to $L^{(0)}u = f$. Moreover,
\begin{equation}\label{w2ppotential}
\|u\|_{2, p; B_{1}} \leq C\|f\|_{p; B_{1}},\ C = C(n, N, \lambdabar, \Lambdabar, p).
\end{equation}

\item[(2)] Given $(F^{i}_{\alpha}) \in L^{p}(B_{1}; \RR^{nN})$, define $v_{\alpha}(x) = \int_{B_{1}}\partial_{i}E_{\beta\alpha}(x - y)F^{i}_{\beta}(y)dy$ for $x \in B_{1}$. Then $u \in W^{1, p}(B_{1}; \RR^{N})$ and is a weak solution to $L^{(0)}u = \partial_{i}F^{i}$. Moreover
\begin{equation}\label{w1ppotential}
\|u\|_{1, p; B_{1}} \leq C\|F\|_{p; B_{1}},\ C = C(n, N, \lambdabar, \Lambdabar, p).
\end{equation}
\end{enumerate}
\end{prop}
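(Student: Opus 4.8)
The statement is the classical $L^p$ theory for the Newtonian-type potential of a constant-coefficient elliptic system, so the plan is to realize both potentials as convolutions with the fundamental solution $E$ and to split the estimates into an elementary part, handled by Young's inequality, and a singular-integral part, handled by Calderón--Zygmund theory. Throughout I would extend $f$ (resp.\ $F = (F^i)$) by zero outside $B_1$, view $u = E * f$ and $v = (\partial_i E) * F^i$ as convolutions on all of $\RR^n$, and restrict to $B_1$ only at the end; since the densities are supported in $B_1$, only the values of $E$ and its derivatives on $B_2$ ever enter. It is worth noting at the outset that, although $L^{(0)}$ depends on the frozen point $x_0 \in B_{1/2}$, the matrices $(A^{ij}_{\alpha\beta}(x_0))$ range over the compact family of coefficients satisfying (H3) with bounds $\lambdabar,\Lambdabar$, so every constant produced below is uniform in $x_0$, depending only on $n,N,\lambdabar,\Lambdabar$ (and $p$).

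\textbf{The elementary bounds.} By Proposition \ref{constantcoefficient}(2)--(3) one has $|E(x)| \le C|x|^{2-n}$ and $|\partial_i E(x)| \le C|x|^{1-n}$, and both kernels are locally integrable on $\RR^n$ because the exponents $2-n$ and $1-n$ exceed $-n$. Hence Young's convolution inequality immediately gives
\[
\|u\|_{p;B_1} \le \|E\|_{L^1(B_2)}\,\|f\|_{p;B_1},\qquad
\|\nabla u\|_{p;B_1} \le \|\nabla E\|_{L^1(B_2)}\,\|f\|_{p;B_1},
\]
and likewise $\|v\|_{p;B_1} \le \|\nabla E\|_{L^1(B_2)}\,\|F\|_{p;B_1}$, with constants of the asserted form. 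This already settles all the lower-order terms in \eqref{w2ppotential} and the full left-hand side of \eqref{w1ppotential} except for $\|\nabla v\|_{p;B_1}$.

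\textbf{The main point: the second-derivative / singular-integral estimate.} What remains is $\|\nabla^2 u\|_{p;B_1} \le C\|f\|_{p;B_1}$ and $\|\nabla v\|_{p;B_1} \le C\|F\|_{p;B_1}$, and this is the step I expect to carry the real weight. Formally $\partial_k\partial_j u = (\partial_k\partial_j E)*f$, but $\partial_k\partial_j E$ is positively homogeneous of degree $-n$ and so fails to be locally integrable; the correct statement is
\[
\partial_k\partial_j E = \mathrm{p.v.}\,K_{kj} + c_{kj}\,\delta_0,
\]
where $K_{kj}$ is smooth away from the origin, homogeneous of degree $-n$, and has vanishing mean over every sphere centred at $0$ (the cancellation coming from the fact that $E$ solves the homogeneous equation $L^{(0)}E = 0$ on $\RR^n\setminus\{0\}$). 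Such a $K_{kj}$ is a Calderón--Zygmund kernel: the smoothness estimates of Proposition \ref{constantcoefficient}(3) give the Hörmander regularity condition, and the cancellation gives $L^2$-boundedness --- equivalently, one may observe that $\widehat{\partial_k\partial_j E}(\xi)$ is, up to sign, $\xi_k\xi_j$ times the inverse of the matrix $\bigl(A^{lm}_{\alpha\beta}(x_0)\xi_l\xi_m\bigr)$, hence homogeneous of degree $0$ and smooth on $S^{n-1}$, so the Mikhlin--Hörmander multiplier theorem applies directly. The Calderón--Zygmund theorem then yields boundedness on $L^p$ for every $1 < p < \infty$ with norm depending only on $n,N,\lambdabar,\Lambdabar,p$, giving $\|\nabla^2 u\|_{p;B_1}\le C\|f\|_{p;B_1}$. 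The estimate for $\nabla v$ is the same argument applied to $\partial_j\partial_i E$ convolved with $F^i$, and this proves \eqref{w2ppotential} and \eqref{w1ppotential}.

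\textbf{Identifying the equations.} Finally I would check that $u$ and $v$ are weak solutions of $L^{(0)}u = f$ and $L^{(0)}v = \partial_i F^i$. For $f \in C^\infty_c$ this is immediate from $L^{(0)}E = e_\alpha\delta_0$ (Proposition \ref{constantcoefficient}(1)) and the fact that convolution commutes with the constant-coefficient operator $L^{(0)}$; for general $f \in L^p(B_1)$ one picks $f_m \to f$ in $L^p$ with $f_m$ smooth, sets $u_m = E * f_m$, uses the $W^{2,p}$-bound just established to conclude $u_m \to u$ in $W^{2,p}(B_1)$, and passes to the limit in the identity $\int A^{ij}_{\alpha\beta}(x_0)\,\partial_j u_{m,\beta}\,\partial_i\zeta_\alpha = -\int f_{m,\alpha}\zeta_\alpha$ for $\zeta \in C^1_c(B_1)$. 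The argument for $v$ is identical, approximating $F$ and using the $W^{1,p}$-estimate. This approximation device also sidesteps the only genuine technical wrinkle --- justifying differentiation under the integral sign across the diagonal --- since that manipulation is only ever performed on smooth data and then transferred by the limiting argument, so no delicate pointwise analysis near $\Delta$ is required.
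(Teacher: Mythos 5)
The paper itself does not prove Proposition \ref{greensrepresentation}; it cites Chapter 6 of Morrey's book for both Propositions \ref{constantcoefficient} and \ref{greensrepresentation}. Your argument is a correct, self-contained proof and follows exactly the standard route that the cited reference uses: extend the density by zero, handle the lower-order derivatives with the local integrability of $E$ and $\nabla E$ via Young's inequality, and handle the top-order derivative by recognizing $\partial_k\partial_j E$ as a principal-value Calder\'on--Zygmund kernel plus a multiple of $\delta_0$. The Fourier-side observation that the symbol of $\partial_k\partial_j(E*\cdot)$ is $\xi_k\xi_j\bigl(A^{lm}_{\alpha\beta}(x_0)\xi_l\xi_m\bigr)^{-1}$ (hence homogeneous of degree zero, smooth on the sphere, and satisfying Mikhlin--H\"ormander bounds uniformly over all coefficient matrices obeying (H3) with the fixed $\lambdabar,\Lambdabar$) is a clean way to get $L^p$-boundedness for $1<p<\infty$ with uniform constants; the real-variable verification of the kernel's smoothness and cancellation via Proposition \ref{constantcoefficient}(3) and the homogeneity of $E$ works equally well. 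Your closing approximation argument correctly identifies the weak equation and also justifies the formal differentiation under the integral sign, so there is no gap. In short, this matches the paper's intended source and adds nothing essentially different.
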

\begin{rmk} The homogeneity of $E$ implies that both statements in Proposition \ref{greensrepresentation} remain true with $B_{1}$ replaced by any $B_{r}(x_{0})$. However the norms $\|f\|_{p; B_{1}}$ and $\|F\|_{p; B_{1}}$ should be replaced by $r^{2 - n/p}\|f\|_{p; B_{r}(x_{0})}$ and $r^{1 - n/p}\|F\|_{p; B_{r}(x_{0})}$, respectively. Also, $\|u\|_{k, p; B_{1}}$ in \eqref{w2ppotential} and \eqref{w1ppotential} should be replaced by the scaled version
\begin{equation}
\|u\|^{\ast}_{k, p; B_{r}(x_{0})} \equiv \sum\limits_{i = 0}^{k}r^{k-\frac{n}{p}} \| D^{k}u \|_{p; B_{r}(x_{0})}.
\end{equation}
\end{rmk}

The proof of Theorem \ref{greensgrowth} consists of two steps. We first prove Theorem \ref{greensgrowth} with $G^{(1)}$ in place of $G$, based on estimates for the fundamental solution of $L^{(0)}$. Then we extend the result to $G$. We begin with the first step, where we in fact get bounds for the first-order derivatives of $G^{(1)}$ as well.

\begin{thm}\label{growthstepone}There exists $R_{0} = R_{0}(n, N, c_{0}, \lambdabar, \Lambdabar, \bar{\mu}) < 1/8$ such that Theorem \ref{greensgrowth} holds with $G^{(1)}$ in place of $G$ and $R_{0}$ in place of $R_{1}$. In addition, we have
\begin{equation}\label{firstderivative}
\left| \pa{G^{(1)}}{x}(x, y) \right| \leq C_{1} |x - y|^{1-n} + |x - y|^{-1}R^{1 + \bar{\mu} - n},
\end{equation}
whenever $y \in B_{1}$, $R \leq \min\{ R_{0}, \dist(y ,\partial B_{1})/4\}$ and $x \in B_{R}(y)\setminus \{y\}$.
\end{thm}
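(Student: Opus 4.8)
The plan is to compare $G^{(1)}$, the Green matrix of the operator $L^{(1)}$ with $C^{0,\bar\mu}$ leading coefficients and bounded lower-order terms, against the fundamental solution $E$ of the frozen constant-coefficient operator $L^{(0)}$ obtained by evaluating the leading coefficients at the pole $y$. Fix $y \in B_1$ and $R \le \min\{R_0, \dist(y,\partial B_1)/4\}$, and write $L^{(0)}$ for the operator with coefficients $A^{ij}_{\alpha\beta}(y)$. Set $H = G^{(1)}(\cdot, y) - E(\cdot - y)$ on $B_{2R}(y)$. Then $H$ solves, in the weak sense on $B_{2R}(y)$, an equation of the form $L^{(0)} H = \partial_i\big( (A^{ij}_{\alpha\beta}(y) - A^{ij}_{\alpha\beta}(x)) \partial_j G^{(1)}_{\alpha\beta}(\cdot,y) \big) + (\text{lower-order terms applied to } G^{(1)})$, since both $G^{(1)}(\cdot,y)$ and $E(\cdot - y)$ have the same Dirac source $e_\gamma \delta_y$ and these cancel. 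The first forcing term has the Hölder smallness $|A^{ij}(y) - A^{ij}(x)| \le \Lambdabar |x - y|^{\bar\mu} \le \Lambdabar (2R)^{\bar\mu}$ on $B_{2R}(y)$, which is the source of the gain.

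First I would record the baseline bounds: by Proposition \ref{constantcoefficient}(3), $|E(x-y)| \le C|x-y|^{2-n}$ and $|\partial E(x-y)| \le C|x-y|^{1-n}$, so it suffices to prove the stated bounds for $H$ itself (the $R^{1+\bar\mu-n}$ and $|x-y|^{-1}R^{1+\bar\mu-n}$ terms), and the $C_1|x-y|^{2-n}$, $C_1|x-y|^{1-n}$ pieces come from $E$. Then I would run a Campanato/Morrey-type iteration on dyadic annuli around $y$: starting from the $W^{1,p}$ a-priori bounds for $G^{(1)}$ away from the pole (Proposition \ref{greensbasic}(2)) and the interior estimates for $L^{(0)}$ via the potential representation (Proposition \ref{greensrepresentation}, in its scaled form from the Remark following it), one shows that on each ball $B_\rho(y)$ with $\rho \le 2R$ the scaled $L^p$ norm of $\nabla G^{(1)}(\cdot, y)$ decays like a power of $\rho$ up to the frozen-coefficient fundamental solution. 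Concretely, one uses the decomposition $G^{(1)}(\cdot,y) = E(\cdot - y) + H$ and estimates $H$ on $B_{\rho/2}(y)$ by comparing with its $L^{(0)}$-harmonic replacement, picking up at each step a factor from the Hölder oscillation $(2R)^{\bar\mu}$ of the leading coefficients and an $L^\infty$ factor $\Lambdabar$ from the lower-order terms; the lower-order contributions are subcritical and absorb into the constant once $R_0$ is chosen small. Summing the geometric-type series (whose ratio is controlled because $\bar\mu > 0$) yields $\|\nabla G^{(1)}(\cdot,y)\|_{L^p(B_\rho(y))}^* \le C\rho^{2-n-1} + CR^{1+\bar\mu-n}\cdot(\text{scaling factors})$, and then Sobolev/Morrey embedding on annuli (or a direct Morrey-lemma argument as in \cite{fuchs}, Theorem 7) converts these integral bounds into the pointwise estimates \eqref{zerothboundcentral} for $G^{(1)}$ and \eqref{firstderivative} for $\partial_x G^{(1)}$. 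The choice of $R_0$ is made at the end so that the coefficient-oscillation term $C(2R_0)^{\bar\mu}$ and the subcritical lower-order contributions are, say, $\le 1/2$, guaranteeing convergence of the iteration with constants depending only on $n, N, c_0, \lambdabar, \Lambdabar, \bar\mu$.

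The main obstacle I anticipate is handling the lower-order terms $C^i_{\alpha\beta}\partial_i u_\beta + \hat D_{\alpha\beta} u_\beta$ cleanly inside the iteration: these are only in $L^\infty$, not Hölder, so they do not directly fit a Campanato scheme, and one must treat them as a perturbation of lower differential order, exploiting that on a ball of radius $\rho$ they contribute a factor $\sim \rho$ (from the extra power of length-scale) relative to the principal part. This requires care to keep the estimates scale-invariant and to verify that the accumulated error from infinitely many scales still converges; the standard device is to absorb $C^i\partial_i u$ into the leading term via Young's inequality on each annulus and to handle $\hat D u$ by the zeroth-order smallness of $\rho^2$. The subsequent step — passing from $G^{(1)}$ to the full Green matrix $G$ of $L$ — is comparatively routine: $L$ differs from $L^{(1)}$ only by the zeroth-order shift $K\delta_{\alpha\beta}u_\beta$ and the $B^i_{\alpha\beta}u_\beta$ term in divergence form, both with $L^\infty$ coefficients, so $G - G^{(1)}$ solves an equation with right-hand side $\partial_i(B^i G) + K G$ which, by the bounds just proved for $G^{(1)}$ and the $W^{1,p}$ theory, is bounded by a less singular (indeed, by a power strictly better than $2-n$ near the diagonal) quantity; I would set this up as a second, finite perturbation argument to conclude \eqref{zerothboundcentral} for $G$ with possibly enlarged $C_1, C_2$ and smaller $R_1 \le R_0$.
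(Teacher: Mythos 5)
Your overall strategy---perturb $G^{(1)}$ off the frozen-coefficient fundamental solution $E$, exploit the $C^{0,\bar\mu}$ smallness of $A(x)-A(y)$ on $B_{2R}(y)$, and treat the lower-order terms as subcritical in the scaled norms---is exactly the idea of the paper, which follows \cite{fuchs}. The paper organizes this not as a Campanato iteration but as a Neumann series: writing $G^{(1)}_{\gamma} = E_{\gamma} + T_{2r}G^{(1)}_{\gamma} + w$ with $L^{(0)}w = 0$ in $B_{2r}$, inverting $(I-T_{2r})$ by geometric series in $W^{1,p}$ ($p=n/(n-\bar\mu)<n/(n-1)$), and then estimating $\sum_{l\ge1} T^l_{2r}E_{\gamma}$ pointwise by induction via the potential estimate \eqref{poleconvolution}, and $\sum_{l\ge0}T^l_{2r}w$ in $W^{1,q}$ with $q=n/(1-\bar\mu)>n$ so that Sobolev gives a $C^0$ bound. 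Your dyadic-annulus iteration is a plausible repackaging of the same fixed-point argument, so the difference in method is one of bookkeeping rather than substance.

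However, there is a genuine gap in your reduction. You claim it suffices to show the correction $H=G^{(1)}(\cdot,y)-E(\cdot-y)$ is bounded by $C_2 R^{1+\bar\mu-n}$, with the singular term $C_1|x-y|^{2-n}$ coming entirely from $E$. This is false: the frozen-coefficient comparison only improves the oscillation by a fixed factor $\sim R^{\bar\mu}$, not by a power of $|x-y|$, so the correction still blows up at the pole. Concretely, the $T$-iterates satisfy $|T^l_{2r}E_{\gamma}(x)|\le C(Cr^{\bar\mu})^l|x-y|^{2-n}$ (Lemma \ref{estimatesonE}), so $|H(x)|\le Cr^{\bar\mu}|x-y|^{2-n}+Cr^{1+\bar\mu-n}$; the first term is not uniformly bounded on $B_{2R}(y)\setminus\{y\}$ and must be absorbed into $C_1$. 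If you aim your Campanato scheme at proving a uniform bound $|H|\le C_2R^{1+\bar\mu-n}$, the iteration will stall on the residual $|x-y|^{2-n}$ tail. A related issue is the passage from scaled-$L^p$ control of $\nabla G^{(1)}$ to the pointwise bound: near the pole, $G^{(1)}$ is only in $W^{1,p}$ for $p<n/(n-1)$, so Morrey/Sobolev embedding alone does not reach $L^\infty$; you need either the pointwise convolution induction on the singular piece (as the paper does for $\sum T^l E_{\gamma}$), or a scaling argument on annuli combined with local elliptic estimates for $L^{(1)}G^{(1)}=0$ away from $y$ (which is exactly how the paper derives \eqref{firstderivative}). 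Finally, for the step from $G^{(1)}$ to $G$, note that $L^{(1)}(G-G^{(1)})=-\partial_i(B^iG)-KG$ involves $G$ itself, not $G^{(1)}$, so ``bounds just proved for $G^{(1)}$'' do not directly control the right-hand side; you need a second contraction in $W^{1,p}$, which is what the paper's proof of Theorem \ref{greensgrowth} sets up with the operator in \eqref{finalperturbation}.
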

The proof of Theorem \ref{growthstepone} we are about to give is a modification of the one given in \cite{fuchs}. The basic idea is to obtain the desired estimates on $G^{(1)}$ by writing it as $E$ plus some perturbation terms which grow no faster than $E$ near the diagonal. To that end, we define the perturbation operator, denoted $T_{r}$, by letting
\begin{align*}
\left(T_{r}u \right)_{\gamma}(x) =& \int_{B_{r}(x_{0})} \partial_{i}E_{\alpha\gamma}(x - y)\left( A^{ij}_{\alpha\beta}(x_{0}) - A^{ij}_{\alpha\beta}(y) \right)\partial_{j}u_{\beta}(y)dy\\
& - \int_{B_{r}(x_{0})} E_{\alpha\gamma}(x - y) \left( C^{i}_{\alpha\beta}(y)\partial_{i}u_{\beta}(y) + \hat{D}_{\alpha\beta}(y) u_{\beta}(y)\right)dy\\
\equiv & \left(T^{I}_{r}u\right)_{\gamma}(x) - \left(T^{II}_{r}u\right)_{\gamma}(x),
\end{align*}
for $B_{r}(x_{0}) \subset B_{1}$. From Proposition \ref{greensrepresentation}, we infer that $v \equiv T_{r}u$ is a weak solution to $L^{(0)}v = \left( L^{(0)} - L^{(1)} \right)u$ on $B_{r}(x_{0})$, and that $T_{r}$ defines a bounded operator from $W^{1, p}(B_{r}(x_{0}); \RR^{N})$ to itself with respect to the norm $\|\cdot\|^{\ast}_{1, p; B_{r}(x_{0})}$. Furthermore, we shall see below that $T_{r}$ is a contraction mapping provided $r$ is chosen small enough.

\begin{prop}[cf. \cite{fuchs}, (1.10) and the remark preceding it]
\label{perturbationsmall}
There exists $r_{0} = r_{0}(n, N, \lambdabar, \Lambdabar, \bar{\mu})$ such that whenever $r \leq r_{0}$, we have
\begin{equation}
\|T_{r}\|^{\ast}_{p} < \frac{1}{2},
\end{equation}
where $\| T_{r} \|^{\ast}_{p}$ denotes the operator norm of $T_{r}:W^{1, p}(B_{1}; \RR^{N}) \to W^{1, p}(B_{1}; \RR^{N})$ with respect to $\|\cdot\|_{1, p; B_{r}(x_{0})}^{\ast}$.
\end{prop}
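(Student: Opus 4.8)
\textbf{Proof proposal for Proposition \ref{perturbationsmall}.}

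The plan is to estimate the operator norm of $T_{r}$ directly from the estimates in Proposition \ref{greensrepresentation} (in its scaled form, per the Remark following it), separating the two pieces $T_{r}^{I}$ and $T_{r}^{II}$. For $T_{r}^{I}u = v^{I}$, observe that $v^{I}$ is a weak solution of $L^{(0)}v^{I} = \partial_{i}\big[(A^{ij}_{\alpha\beta}(x_{0}) - A^{ij}_{\alpha\beta}(\cdot))\partial_{j}u_{\beta}\big]$ on $B_{r}(x_{0})$, so by the scaled version of \eqref{w1ppotential} we get
\begin{equation*}
\|v^{I}\|^{\ast}_{1, p; B_{r}(x_{0})} \leq C\,\big\| (A^{ij}_{\alpha\beta}(x_{0}) - A^{ij}_{\alpha\beta}(\cdot))\partial_{j}u_{\beta} \big\|_{p; B_{r}(x_{0})} \cdot r^{1 - n/p}.
\end{equation*}
Here the H\"older assumption (H2) gives $|A^{ij}_{\alpha\beta}(x_{0}) - A^{ij}_{\alpha\beta}(y)| \leq \Lambdabar\, |x_{0} - y|^{\bar\mu} \leq \Lambdabar\, r^{\bar\mu}$ for $y \in B_{r}(x_{0})$; combined with $\|\partial_{j}u_{\beta}\|_{p; B_{r}(x_{0})} \leq r^{n/p - 1}\|u\|^{\ast}_{1, p; B_{r}(x_{0})}$ by definition of the scaled norm, this yields $\|T_{r}^{I}u\|^{\ast}_{1, p; B_{r}(x_{0})} \leq C\Lambdabar\, r^{\bar\mu}\, \|u\|^{\ast}_{1, p; B_{r}(x_{0})}$, with $C = C(n, N, \lambdabar, \Lambdabar, p)$. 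Thus the gain is the factor $r^{\bar\mu}$, which can be made small.

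For $T_{r}^{II}u = v^{II}$, note that $v^{II}$ solves $L^{(0)}v^{II} = -\big(C^{i}_{\alpha\beta}\partial_{i}u_{\beta} + \hat D_{\alpha\beta}u_{\beta}\big)$ on $B_{r}(x_{0})$, so by the scaled version of \eqref{w2ppotential} (and discarding the extra second-derivative control, keeping only the $\|\cdot\|^{\ast}_{1,p}$-part),
\begin{equation*}
\|v^{II}\|^{\ast}_{1, p; B_{r}(x_{0})} \leq C r^{2 - n/p}\, \big\| C^{i}_{\alpha\beta}\partial_{i}u_{\beta} + \hat D_{\alpha\beta}u_{\beta} \big\|_{p; B_{r}(x_{0})}.
\end{equation*}
Bounding $\|C^{i}_{\alpha\beta}\partial_{i}u_{\beta}\|_{p; B_{r}(x_{0})} \leq \Lambdabar\,\|\nabla u\|_{p; B_{r}(x_{0})} \leq \Lambdabar\, r^{n/p - 1}\|u\|^{\ast}_{1,p;B_r(x_0)}$ and $\|\hat D_{\alpha\beta}u_{\beta}\|_{p; B_{r}(x_{0})} \leq C\,\|u\|_{p; B_{r}(x_{0})} \leq C\, r^{n/p}\,\|u\|^{\ast}_{1,p;B_r(x_0)}$ (where the $L^\infty$-bound on $\hat D$ is controlled by $\Lambdabar$ and $K$, hence by the listed parameters), we obtain $\|T_{r}^{II}u\|^{\ast}_{1, p; B_{r}(x_{0})} \leq C(r + r^{2})\,\|u\|^{\ast}_{1, p; B_{r}(x_{0})}$. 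Combining the two pieces gives
\begin{equation*}
\|T_{r}u\|^{\ast}_{1, p; B_{r}(x_{0})} \leq C\big(r^{\bar\mu} + r + r^{2}\big)\,\|u\|^{\ast}_{1, p; B_{r}(x_{0})} \leq C' r^{\min\{\bar\mu, 1\}}\,\|u\|^{\ast}_{1, p; B_{r}(x_{0})}
\end{equation*}
for $r \leq 1$, with $C'$ depending only on $n, N, \lambdabar, \Lambdabar, \bar\mu, p$ (and implicitly $K$, which is determined by those). Choosing $r_{0} = r_{0}(n, N, \lambdabar, \Lambdabar, \bar\mu)$ so small that $C' r_{0}^{\min\{\bar\mu,1\}} < 1/2$ completes the proof; note the bound is uniform in $x_{0} \in B_{1/2}$ since $E$ is translation-invariant.

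The only mildly delicate point is bookkeeping the scaling exponents correctly so that no spurious positive or negative power of $r$ survives in the would-be contraction constant — i.e., checking that the $r$-weights in the scaled norms $\|\cdot\|^{\ast}$ exactly cancel the $r$-weights coming from \eqref{w1ppotential}, \eqref{w2ppotential} and the H\"older modulus, leaving behind precisely the good factor $r^{\min\{\bar\mu,1\}}$. There is no genuine analytic obstacle: once the estimates of Proposition \ref{greensrepresentation} and assumption (H2) are in hand, the argument is a direct chain of inequalities, and the dependence of $r_{0}$ is transparently on the asserted parameters (with $p$ fixed, as in the statement). Strictly speaking the statement suppresses the $p$-dependence of $r_0$; since $p$ is a fixed exponent throughout, this is harmless, but if desired one may note that the argument is uniform for $p$ in any compact subinterval of $(1,\infty)$.
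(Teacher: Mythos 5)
Your proof is correct and follows essentially the same route as the paper's: estimate $T_{r}^{I}$ and $T_{r}^{II}$ separately using the scaled potential estimates from Proposition \ref{greensrepresentation}, getting $r^{\bar\mu}$ from the H\"older modulus of $A$ for the first piece and a positive power of $r$ from the scaling weights for the second. The only difference is in the $T_{r}^{II}$ bound: the paper inserts an extra Sobolev--H\"older step to pass from $\|T_{r}^{II}u\|^{\ast}_{2,p}$ to $\|T_{r}^{II}u\|^{\ast}_{1,p}$ and gains an extra factor of $r$, landing on $Cr^{2}$, whereas you simply drop the second-order part of the scaled $W^{2,p}$ norm and obtain $C(r + r^{2})$. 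Since $\bar\mu \leq 1$, both are dominated by $r^{\bar\mu}$ for $r \leq 1$, so the conclusion and the dependence of $r_{0}$ on the listed parameters are the same in either version.
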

\begin{proof}[Proof of Proposition \ref{perturbationsmall}]
Suppose $r \leq r_{0}$, with $r_{0}$ to be determined later. We treat $T^{I}_{r}$ and $T^{II}_{r}$ separately. The former was already handled in \cite{fuchs}, but we include the simple argument for completeness. Specifically, by assumption (H2) and Proposition \ref{greensrepresentation}, for $u \in W^{1, p}(B_{1}; \RR^{N})$ we have
\begin{align*}
\left\| T^{I}_{r}u \right\|^{\ast}_{1, p; B_{r}(x_{0})} &\leq Cr^{1-p/n}\left\| \left( A(x_{0}) - A \right) D u\right\|_{p; B_{r}(x_{0})}
\leq C\Lambdabar r^{\bar{\mu}}\| u \|^{\ast}_{1, p; B_{r}(x_{0})}.
\end{align*}
For $T^{II}_{r}u$, we again apply Proposition \ref{greensrepresentation} to get
\begin{align*}
\left\| T^{II}_{r}u \right\|^{\ast}_{2, p; B_{r}(x_{0})} & \leq Cr^{2 - p/n}\| C^{i}_{\alpha\beta}\partial_{i} u_{\beta}  + \hat{D}_{\alpha\beta} u_{\beta} \|_{p; B_{r}(x_{0})} \leq  C\Lambdabar r \| u \|^{\ast}_{1, p; B_{r}(x_{0})}.
\end{align*}
By the Sobolev inequality and the H\"older inequality, we then get
\begin{align*}
\|T_{r}^{II}u\|^{\ast}_{1, p; B_{r}(x_{0})} &\leq Cr\|T_{r}^{II}u\|^{\ast}_{1, p^{\ast}; B_{r}(x_{0})} \leq Cr\|T_{r}^{II}u\|^{\ast}_{2, p; B_{r}(x_{0})}.
\end{align*}
Therefore we arrive at $\|T^{II}_{r}u\|^{\ast}_{1, p; B_{r}(x_{0})} \leq C\Lambdabar r^{2}\|u\|^{\ast}_{1, p; B_{r}(x_{0})}$. Combining the estimates for $T^{I}_{r}u$ and $T^{II}_{r}u$, we conclude that
\begin{equation}
\|T_{r}u\|^{\ast}_{1, p; B_{r}(x_{0})} \leq C\Lambdabar r^{\bar{\mu}}\|u\|^{\ast}_{1, p; B_{r}(x_{0})},
\end{equation}
for all $u \in W^{1, p}(B_{r}(x_{0}); \RR^{N})$. Choosing $r_{0}$ small enough completes the proof. 
\end{proof}

We proceed to the proof Theorem \ref{growthstepone}, which will be given in two parts: First we establish the estimate \eqref{zerothboundcentral} for $G^{(1)}$, then we prove \eqref{firstderivative}.
\begin{proof}[Proof of Theorem \ref{growthstepone}, Part 1: \eqref{zerothboundcentral} holds for $G^{(1)}$]
Throughout the proof, we will denote by $C$ any constant that depends only on $n, N, c_{0}, \lambdabar, \Lambdabar$ and $\bar{\mu}$, and specify other types of dependence when necessary.

As in \cite{fuchs}, we fix $p = n/(n - \bar{\mu})$ and $q = n/(1 - \bar{\mu})$. Note that $p < n/(n-1)$ whereas $q > n$. Next, let $y \in B_{1}$ and $r \leq \min\{r_{0}/2, \dist(y, \partial B_{1})/4\}$, with $r_{0}$ given by Proposition \ref{perturbationsmall}. For simplicity, we assume that $y = 0$. Furthermore, for a fixed index $\gamma$ we write $G_{\gamma}^{(1)}(\cdot)$ for $\left( G^{(1)}_{\gamma\alpha}(\cdot, 0) \right)_{1 \leq \alpha \leq N}$. Similarly, we write $E_{\gamma}(\cdot)$ for $\left( E_{\gamma\alpha}(\cdot) \right)_{1 \leq \alpha \leq N}$. 

We then observe that both $G_{\gamma}^{(1)} - T_{2r}G_{\gamma}^{(1)}$ and $E_{\gamma}$ are solutions on $B_{2r}$ to $\left( L^{(0)}u \right) = e_{\gamma}\delta_{0}$, and hence we may write
\begin{equation}\label{g1decomposition}
G_{\gamma}^{(1)} = E_{\gamma} + T_{2r}G_{\gamma}^{(1)} + w,
\end{equation}
with $w$ satisfying $L^{(0)}w = 0$ in $B_{2r}$.
Since $r \leq r_{0}/2$, we have by Proposition \ref{perturbationsmall} that $\|T_{2r}\|_{p} < 1/2$ and thus $(I - T_{2r})$ has a bounded inverse on $W^{1, p}(B_{2r}; \RR^{N})$ given by $\sum_{l = 0}^{\infty} T_{2r}^{l}$. Applying this to \eqref{g1decomposition}, we see that
\begin{equation}\label{gseries}
G_{\gamma}^{(1)} = E_{\gamma} + \sum\limits_{l = 1}^{\infty}T_{2r}^{l} E_{\gamma} + \sum\limits_{l = 0}^{\infty} T_{2r}^{l}w,
\end{equation}
as elements of $W^{1, p}(B_{2r}; \RR^{N})$. Among the terms on the right-hand side, the first term already has the desired growth by \eqref{constantcoefficientgrowth}, and thus the proof boils down to showing that the remaining two terms grows no faster than $|x|^{2-n}$ near the origin. 

\begin{lemm}[cf. \cite{fuchs}, Lemma 4.1] 
\label{estimatesonw}
We have $w \in W^{1, q}(B_{2r}; \RR^{N})$ and $\|w\|^{\ast}_{1, q; B_{2r}} \leq Cr^{1 - n/p}$.
\end{lemm}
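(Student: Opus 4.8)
The plan is to obtain $w$ from the decomposition \eqref{g1decomposition}, namely $w = G_\gamma^{(1)} - E_\gamma - T_{2r}G_\gamma^{(1)}$, and to estimate each of the three summands in the scaled norm $\|\cdot\|^*_{1,q;B_{2r}}$. The point is that $w$ is $L^{(0)}$-harmonic on $B_{2r}$, so once we control its $W^{1,p}$-norm on $B_{2r}$ (which is a weak norm, $p<n/(n-1)$), interior $L^p$-estimates for constant-coefficient systems (Proposition \ref{greensrepresentation}, or rather the associated local regularity, applied on concentric balls) will upgrade this to a $W^{1,q}$-bound on a smaller ball; a covering/scaling argument then promotes the bound on $B_{r}$ or $B_{3r/2}$ back to a uniform statement on $B_{2r}$ with the scaled norm. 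The scaling is the reason the exponents enter as $r^{1-n/p}$ on the right-hand side: this is exactly $\|\cdot\|^*$ applied to the unit-norm profile after rescaling $B_{2r}$ to $B_1$.

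First I would record the $W^{1,p}$-bounds on $B_{2r}$ for the relevant quantities. By Proposition \ref{greensbasic}(1) (applied after rescaling $B_{2r}$ to the unit ball, using that $L^{(1)}$ satisfies (H1)--(H4) with constants depending only on the listed parameters) we have $\|G^{(1)}_\gamma\|^*_{1,p;B_{2r}} \le C r^{1-n/p}$. By \eqref{constantcoefficientgrowth} and a direct integration, $\|E_\gamma\|^*_{1,p;B_{2r}} \le C r^{1-n/p}$ as well. By Proposition \ref{perturbationsmall}, $\|T_{2r}\|^*_p < 1/2$, so $\|T_{2r}G^{(1)}_\gamma\|^*_{1,p;B_{2r}} \le \tfrac12 \|G^{(1)}_\gamma\|^*_{1,p;B_{2r}} \le C r^{1-n/p}$. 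Adding these three gives $\|w\|^*_{1,p;B_{2r}} \le C r^{1-n/p}$.

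Next I would upgrade from $p$ to $q$. Since $L^{(0)}w = 0$ in $B_{2r}$, $w$ is a weak solution of a constant-coefficient elliptic system with smooth (constant) coefficients, hence is smooth in $B_{2r}$, and interior $L^p$-to-$L^q$ estimates (scale-invariant form, as in the Remark following Proposition \ref{greensrepresentation}) give, on any ball $B_{\sigma r}(x_0) \subset B_{2r}$,
\begin{equation*}
\|w\|^*_{1,q;B_{\sigma r}(x_0)} \le C \|w\|^*_{1,p;B_{2r}} \le C r^{1-n/p},
\end{equation*}
with $C$ depending only on $n,N,\lambdabar,\Lambdabar$ and the ratio of radii. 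Covering $B_{2r}$ (or the slightly smaller $B_{(2-\epsilon)r}$, which is what is actually needed downstream, or re-running the argument with $3r$ in place of $2r$ to get genuine $B_{2r}$ control) by finitely many such balls and summing yields $\|w\|^*_{1,q;B_{2r}} \le C r^{1-n/p}$, which is the assertion.

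I do not expect a genuine obstacle here — this lemma is essentially a bookkeeping exercise in how the scaled norms $\|\cdot\|^*$ transform and in applying the already-established $W^{1,p}$-bounds for the two Green matrices together with elementary interior regularity for $L^{(0)}$. The one point requiring a little care is the radius matching: to get the estimate on all of $B_{2r}$ rather than on a strictly smaller ball one must either start the whole decomposition \eqref{g1decomposition} on a slightly larger ball (say $B_{3r}$, legitimate as long as $3r \le \min\{r_0,\dist(y,\partial B_1)/4\}$, which just means halving the threshold $R_0$) or accept the smaller ball and track that this is all that is used in the subsequent steps of the proof of Theorem \ref{growthstepone}. I would adopt the former, adjusting $R_0$ accordingly, so that the statement reads cleanly as written.
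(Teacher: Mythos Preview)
Your first step --- bounding $\|w\|^{\ast}_{1,p;B_{2r}}$ via the triangle inequality on $G^{(1)}_\gamma$, $E_\gamma$, $T_{2r}G^{(1)}_\gamma$ --- matches the paper exactly, as does the interior-regularity upgrade $\|w\|^{\ast}_{1,q;B_r}\le C\|w\|^{\ast}_{1,p;B_{2r}}$. The gap is in reaching all of $B_{2r}$. Neither of your proposed fixes closes it. A covering by interior balls $B_{\sigma r}(x_0)\subset B_{2r}$ cannot reach $\partial B_{2r}$. Enlarging the decomposition to $B_{3r}$ replaces $w$ by a new $\tilde w$ (defined via $T_{3r}$) with $L^{(0)}\tilde w=0$ on $B_{3r}$; interior estimates then give $\|\tilde w\|^{\ast}_{1,q;B_{2r}}$, but the downstream series is now $\sum_{l\ge 0}T_{3r}^l\tilde w$, and $T_{3r}$ is a contraction on $W^{1,q}(B_{3r})$, not on $W^{1,q}(B_{2r})$. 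So to sum the series you would need $\|\tilde w\|^{\ast}_{1,q;B_{3r}}$, and the radius mismatch has merely shifted up one scale.

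The paper instead bounds $\|w\|^{\ast}_{1,q;B_{2r}\setminus B_r}$ directly, applying the triangle inequality to $w=G^{(1)}_\gamma-E_\gamma-T_{2r}G^{(1)}_\gamma$ on the annulus. There the singularity at the origin is absent: $\|G^{(1)}_\gamma\|^{\ast}_{1,q;B_{2r}\setminus B_r}$ follows from interior $W^{1,p}\to W^{1,q}$ estimates for $L^{(1)}$ (which annihilates $G^{(1)}_\gamma$ away from $0$), and $\|E_\gamma\|^{\ast}_{1,q;B_{2r}\setminus B_r}$ from the explicit growth \eqref{constantcoefficientgrowth}. The substantive work is the term $T_{2r}G^{(1)}_\gamma$: one splits the defining integral into the regions $B_{r/2}$ and $B_{2r}\setminus B_{r/2}$; on the first piece the kernel is bounded away from its pole (since $x\in B_{2r}\setminus B_r$ while $y\in B_{r/2}$) and the contribution is controlled by $\|G^{(1)}_\gamma\|_{1,p;B_{r/2}}$, while the second piece is handled via the $W^{1,q}$-theory for $L^{(0)}$ together with the annulus bound on $G^{(1)}_\gamma$. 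This direct annulus estimate is the ingredient your proposal is missing.
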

\begin{proof}[Proof of Lemma \ref{estimatesonw}] We will estimate $\|w\|^{\ast}_{1, q; B_{r}}$ and $\|w\|^{\ast}_{1, q; B_{2r}\setminus B_{r}}$ separately.
Since $L^{(0)}w = 0$ on $B_{2r}$, interior $W^{1, p}$-estimates give
\begin{equation}\label{wsmallball}
\|w\|^{\ast}_{1, q; B_{r}} \leq C\| w \|^{\ast}_{1, p; B_{2r}}.
\end{equation}
To estimate $\|w\|^{\ast}_{1, p; B_{2r}}$, we write 
\begin{equation*}
\|w\|^{\ast}_{1, p; B_{2r}} \leq \|G_{\gamma}^{(1)}\|^{\ast}_{1, p; B_{2r}} + \|T_{2r}G_{\gamma}^{(1)}\|^{\ast}_{1, p; B_{2r}} + \|E_{\gamma}\|^{\ast}_{1, p; B_{2r}}.
\end{equation*} 
The terms $\|E_{\gamma}\|^{\ast}_{1, p; B_{2r}}$ and $\|G^{(1)}_{\gamma}\|^{\ast}_{1, p; B_{2r}}$ are handled exactly as in \cite{fuchs}. In short, the former can be bounded with the help of \eqref{constantcoefficientgrowth}, while the latter is estimated using the Sobolev inequality and Proposition \ref{greensbasic}(1). The result is that
\begin{equation}\label{smallballscaled}
\| E_{\gamma} \|^{\ast}_{1, p; B_{2r}} + \| G^{(1)}_{\gamma} \|^{\ast}_{1, p; B_{2r}} \leq Cr^{1 - n/p}.
\end{equation}
It follows from \eqref{smallballscaled} and Proposition \ref{perturbationsmall} that $\|T_{2r}G^{(1)}_{\gamma}\|^{\ast}_{1, p; B_{2r}} \leq Cr^{1 - n/p}$ as well.  Putting these back into \eqref{wsmallball}, we obtain
\begin{equation}\label{w1qsmallball}
\|w\|^{\ast}_{1, q; B_{r}} \leq Cr^{1-n/p}.
\end{equation}
It remains to show that $\|w\|_{1, q; B_{2r} \setminus B_{r}} \leq Cr^{1-n/p}$. Again, we start with
\begin{equation}\label{w1qdecomposition}
\|w\|^{\ast}_{1, q; B_{2r}\setminus B_{r}} \leq \|G_{\gamma}^{(1)}\|^{\ast}_{1, q; B_{2r}\setminus B_{r}} + \|T_{2r}G_{\gamma}^{(1)}\|^{\ast}_{1, q; B_{2r}\setminus B_{r}} + \|E_{\gamma}\|^{\ast}_{1, q; B_{2r}\setminus B_{r}}.
\end{equation}
The last two terms are estimated as in \cite{fuchs}, Lemma 4.1, and the conclusion is that
\begin{equation}\label{annulusscaled}
\|G^{(1)}\|^{\ast}_{1, q; B_{2r} \setminus B_{r}} + \|E\|^{\ast}_{1, q; B_{2r}\setminus B_{r}} \leq Cr^{1-n/p}.
\end{equation}
As for the term $\|T_{2r}G_{\gamma}^{(1)}\|^{\ast}_{1, q; B_{2r}\setminus B_{r}}$, by its definition we can write
\begin{equation*}
\|T_{2r}G_{\gamma}^{(1)}\|_{1, q; B_{2r}\setminus B_{r}}^{\ast} \leq \|T^{I}_{2r}G_{\gamma}^{(1)}\|_{1, q; B_{2r}\setminus B_{r}}^{\ast} + \|T^{II}_{2r}G_{\gamma}^{(1)}\|_{1, q; B_{2r}\setminus B_{r}}^{\ast}.
\end{equation*}
Following the arguments on \cite{fuchs}, p.520, we infer that
\begin{equation}\label{TGI1qannulus}
\|T^{I}_{2r}G^{(1)}_{\gamma} \|^{\ast}_{1, q; B_{2r} \setminus B_{r}} \leq Cr^{1 - n/p}.
\end{equation}
For the second term, we break the definition of $T^{II}_{2r}u$ into two integrals, one over $B_{r/2}$ and the other over $B_{2r}\setminus B_{r/2}$. That is, we write
\begin{align*}
\left(T^{II}_{2r}G_{\gamma}^{(1)}\right)_{\nu} (x) = & \int_{B_{r/2}} E_{\alpha\nu}(x - y)\left( C^{i}_{\alpha\beta}(y) \partial_{i}G^{(1)}_{\gamma\beta}(y) + \hat{D}_{\alpha\beta}(y)G^{(1)}_{\gamma\beta}(y) \right)dy\\ & + \int_{B_{2r} \setminus B_{r/2}} E_{\alpha\nu}(x - y)\left( C^{i}_{\alpha\beta}(y) \partial_{i}G^{(1)}_{\gamma\beta}(y) + \hat{D}_{\alpha\beta}(y)G^{(1)}_{\gamma\beta}(y) \right)dy\\
\equiv &\ \psi_{1}(x) + \psi_{2}(x).
\end{align*}
To estimate $\psi_{1}$, note that when $x \in B_{2r}\setminus B_{r}$ and $y \in B_{r/2}$, $E(x - y)$ is smooth in both variables and we have by \eqref{constantcoefficientgrowth} that $|E(x - y)| \leq Cr^{2-n}$ and $|D_{x} E (x - y)| \leq Cr^{1-n}$. Thus \begin{equation*}
\left| D\psi_{1} (x)\right| \leq Cr^{1-n} \Lambdabar \|G^{(1)}\|_{1, 1; B_{r/2}} \leq Cr^{1 - n/p}\| G^{(1)}\|_{1, p; B_{r/2}} \leq Cr^{1 - n/p},
\end{equation*}
where in the last inequality we again used Proposition \ref{greensbasic}(1). Integrating the above pointwise estimate, we obtain
\begin{equation*}
\left\| D \psi_{1} \right\|_{q;B_{2r}\setminus B_{r}} \leq Cr^{1-n/p}r^{n/q} = Cr^{2-n} \leq Cr^{1 - n/p}.
\end{equation*}
A similar computation yields $\| \psi_{1} \|_{q; B_{2r}\setminus B_{r}} \leq Cr^{2 - n/p}$, and we get 
\begin{equation*}
\| \psi_{1} \|^{\ast}_{1, q; B_{2r} \setminus B_{r}} \leq Cr^{1 - n/q}r^{1-n/p} \leq Cr^{1-n/p},
\end{equation*}
where the last inequality follows from the fact that $q > n$. As for $\psi_{2}$, observe that by Proposition \ref{greensrepresentation}$(1)$ and the remark after it, we have
\begin{equation*}
\| \psi_{2} \|^{\ast}_{1, q; B_{2r}} \leq C\| G^{(1)}_{\gamma} \|^{\ast}_{1, q; B_{2r}\setminus B_{r}} \leq C\|G_{\gamma}^{(1)}\|^{\ast}_{1, p; B_{3r}\setminus B_{r/2}} \leq Cr^{1 - n/p},
\end{equation*}
where in the second inequality we used interior elliptic estimates and the fact that $L^{(1)}G_{\gamma}^{(1)} = 0$ away from the origin, while in the last inequality we used a version of \eqref{annulusscaled}. Combining the two estimates above, we get
\begin{equation}\label{TGII1qannulus}
\| T^{II}_{2r}G^{(1)}_{\gamma} \|^{\ast}_{1, q; B_{2r}\setminus B_{r}} \leq Cr^{1-n/p}.
\end{equation}
Substituting \eqref{annulusscaled}, \eqref{TGI1qannulus} and \eqref{TGII1qannulus} back into \eqref{w1qdecomposition}, we get 
\begin{equation}\label{w1qannulus}
\|w\|^{\ast}_{1, q; B_{2r} \setminus B_{r}} \leq Cr^{1-n/p}.
\end{equation}
The proof of Lemma \ref{estimatesonw} is now complete in view of \eqref{w1qsmallball} and \eqref{w1qannulus}.
\end{proof}
It follows from Lemma \ref{estimatesonw} and Proposition \ref{perturbationsmall} that $\sum\limits_{l = 0}^{\infty}T^{l}_{2r}w \in W^{1, q}(B_{2r}; \RR^{N})$ and 
\begin{equation}\label{wseries1q}
\left\| \sum\limits_{l = 0}^{\infty}T^{l}_{2r}w \right\|^{\ast}_{1, q; B_{2r}} \leq \sum\limits 2^{-l} \|w\|^{\ast}_{1, q; B_{2r}} \leq Cr^{1-n/p}.
\end{equation}
Since $q > n$, by the Sobolev embedding we have
\begin{equation}\label{wseriessup}
\left| \sum\limits_{l = 0}^{\infty}T^{l}_{2r}w \right|_{0; B_{2r}} \leq C\left\| \sum\limits_{l = 0}^{\infty}T^{l}_{2r}w \right\|^{\ast}_{1, q; B_{2r}} \leq Cr^{1 - n/p} = Cr^{1 + \bar{\mu} - n}.
\end{equation}
Note that this accounts for the second term on the right-hand side of \eqref{zerothboundcentral}.

Next we turn to estimating the term $\sum\limits_{l = 1}^{\infty}T_{2r}^{l} E_{\gamma}$. Borrowing the notation in \cite{fuchs}, we write $u_{l}$ for $T_{2r}^{l}E_{\gamma}$, for $l = 0, 1, 2, \cdots$.
\begin{lemm}[cf. \cite{fuchs}, Lemma 4.3]
\label{estimatesonE}
The following estimates hold for all $l = 0, 1, 2, \cdots$.
\begin{enumerate}
\item[(i)] $|u_{l}(x)| \leq C(Cr^{\bar{\mu}})^{l} |x|^{2-n}$, for all $x \in B_{2r} \setminus \{0\}$.
\item[(ii)] $|D u_{l}(x)| \leq C(Cr^{\bar{\mu}})^{l} |x|^{1-n}$, for all $x \in B_{2r} \setminus \{0\}$.
\item[(iii)] $ |D u_{l}(x) - D u_{l}(y)| \leq C(Cr^{\bar{\mu}})^{l}|x - y|^{\bar{\mu}} \max(|x|^{1 - n - \bar{\mu}}, |y|^{1 - n - \bar{\mu}})$, for all $x, y \in B_{2r}\setminus \{0\}$.
\end{enumerate}
\end{lemm}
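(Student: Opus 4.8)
The plan is to prove Lemma \ref{estimatesonE} by induction on $l$, using the definition $u_{l} = T_{2r}u_{l-1}$ and the representation of $T_{2r}$ as an integral against the fundamental solution $E$ and its first derivatives. The base case $l = 0$ is $u_{0} = E_{\gamma}$, and the estimates (i)--(iii) then follow directly from the growth bounds \eqref{constantcoefficientgrowth} for $E$ and its derivatives (the H\"older estimate (iii) being the mean value inequality applied to the smooth function $E$ on the segment joining $x$ and $y$, or, when that segment passes near the origin, a direct comparison of the two endpoint bounds). For the inductive step, assuming (i)--(iii) for $u_{l-1}$, I would split $T_{2r}u_{l-1}$ into the two pieces $T^{I}_{2r}u_{l-1}$ and $T^{II}_{2r}u_{l-1}$ coming from the principal part and the lower-order part, respectively.

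For $T^{I}_{2r}u_{l-1}(x) = \int_{B_{2r}(x_{0})} \partial_{i}E_{\alpha\gamma}(x-y)\bigl(A^{ij}_{\alpha\beta}(x_{0}) - A^{ij}_{\alpha\beta}(y)\bigr)\partial_{j}u_{l-1,\beta}(y)\,dy$, the key point is that the H\"older continuity of $A$ (assumption (H2)) contributes a factor $|x_{0} - y|^{\bar{\mu}} \le Cr^{\bar{\mu}}$, while $\partial E$ contributes $|x-y|^{1-n}$ and the inductive bound (ii) gives $|\partial u_{l-1}| \le C(Cr^{\bar{\mu}})^{l-1}|y|^{1-n}$. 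One then estimates the convolution-type integral $\int |x-y|^{1-n}|y|^{1-n}\,dy$ over a ball of radius $2r$ centered near $x$; a standard splitting of the domain into the regions $\{|y| < |x|/2\}$, $\{|x-y| < |x|/2\}$, and the remainder produces the bound $C|x|^{2-n}$, giving (i) for $u_{l}$ with the extra factor $Cr^{\bar{\mu}}$. The derivative estimate (ii) requires differentiating under the integral; here one must be careful because $\partial_{x}\partial E$ is only $(-n)$-homogeneous and hence not locally integrable, so one either writes the difference quotient and uses the cancellation $\int \partial_{x}\partial E(x-y)\,(\text{const})\,dy$ over symmetric domains, or subtracts off $\partial_{j}u_{l-1}(x)$ to exploit its H\"older modulus (iii); this is the Calder\'on--Zygmund-type argument and it is where the $\bar{\mu}$-H\"older bound (iii) on $u_{l-1}$ is genuinely needed. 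The H\"older estimate (iii) for $u_{l}$ is then obtained in the same spirit, comparing the integrals defining $Du_{l}(x)$ and $Du_{l}(y)$ and using the Lipschitz/H\"older bounds on $\partial E$ away from the singularity together with the inductive hypotheses.

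The piece $T^{II}_{2r}u_{l-1}(x) = -\int_{B_{2r}(x_{0})} E_{\alpha\gamma}(x-y)\bigl(C^{i}_{\alpha\beta}(y)\partial_{i}u_{l-1,\beta}(y) + \hat{D}_{\alpha\beta}(y)u_{l-1,\beta}(y)\bigr)\,dy$ is gentler: $E$ is only $(2-n)$-homogeneous (hence more integrable), and the $L^{\infty}$-bounds on $C$ and $\hat D$ together with (i)--(ii) for $u_{l-1}$ show that this term is actually bounded, in fact $O(r)$ times $(Cr^{\bar{\mu}})^{l-1}$ in sup norm, and similarly for one derivative. So $T^{II}$ contributes a lower-order perturbation that is easily absorbed into the claimed bounds (for $r$ small its size is dominated by any of $|x|^{2-n}$, $|x|^{1-n}$, etc. on $B_{2r}$), at the cost of possibly enlarging the constant $C$ in $Cr^{\bar{\mu}}$. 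Combining the $T^{I}$ and $T^{II}$ estimates completes the induction.

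I expect the main obstacle to be the derivative estimates (ii) and (iii) for $T^{I}_{2r}u_{l-1}$, precisely because the second derivative of $E$ fails to be locally integrable and so one cannot naively differentiate under the integral sign; the whole point of carrying the H\"older bound (iii) through the induction is to make a Calder\'on--Zygmund / singular-integral argument go through, subtracting the value $\partial_{j}u_{l-1}(x)$ at the base point and using its modulus of continuity to tame the singularity. Once Lemma \ref{estimatesonE} is established, summing the geometric series $\sum_{l\ge 1}u_{l}$ (convergent since $Cr^{\bar\mu} < 1/2$ for $r \le r_{0}$ small, after possibly shrinking $r_{0}$) yields $\bigl|\sum_{l\ge1}T_{2r}^{l}E_{\gamma}(x)\bigr| \le C|x|^{2-n}$, which together with \eqref{wseriessup} and \eqref{gseries} produces \eqref{zerothboundcentral} for $G^{(1)}$, and the analogous summation of the derivative bounds gives \eqref{firstderivative}; this finishes Part~1 of the proof of Theorem \ref{growthstepone}, and one then chooses $R_{0} = \min\{r_{0}/2, 1/8\}$.
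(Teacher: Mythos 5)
Your plan matches the structure of the paper's proof: induction on $l$, splitting $u_{l}$ into the principal-part contribution $u_{l,1} = T^{I}_{2r}u_{l-1}$ and the lower-order contribution $u_{l,2} = T^{II}_{2r}u_{l-1}$, exploiting the H\"older modulus of $A$ and the pole-convolution estimate \eqref{poleconvolution}, and treating the derivative estimate for $T^{I}$ by a Calder\'on--Zygmund / subtract-off-the-base-point argument (which is exactly what the paper defers to \cite{fuchs}, Lemma 4.3). So the overall route is the same.

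There is, however, a mischaracterization of the $T^{II}$ piece that would turn into a genuine gap if you tried to write this out in full. You claim that $T^{II}_{2r}u_{l-1}$ is ``bounded, in fact $O(r)$ times $(Cr^{\bar\mu})^{l-1}$ in sup norm, and similarly for one derivative,'' and therefore easily absorbed. That is not right: plugging the inductive bound $|F(y)|\leq C(Cr^{\bar\mu})^{l-1}|y|^{1-n}$ into the defining integral and using \eqref{poleconvolution} with $(\sigma,\tau)=(2,1)$ gives $|u_{l,2}(x)|\lesssim (Cr^{\bar\mu})^{l-1}|x|^{3-n}$, which blows up at the origin; what saves the day is the extra factor $|x|\leq 2r\leq 2r^{\bar\mu}$, not a uniform sup bound. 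More importantly, the H\"older seminorm (iii) for $Du_{l,2}$ is \emph{not} automatic: $D^{2}E$ is $(-n)$-homogeneous, so one faces the same non-integrable kernel as in $T^{I}$, and the ``constant-in-$y$'' cancellation trick is unavailable here because the H\"older-continuous factor is now absent. The paper in fact devotes the bulk of the proof precisely to (iii) for $u_{l,2}$, decomposing $Du_{l,2}(x)-Du_{l,2}(z)$ into four regional integrals (two near $\xi=(x+z)/2$, one near the origin where the mean value theorem applies, and the far region where $|D^{2}E(x'-y)|\lesssim|\xi-y|^{-n}$) and estimating each against the target modulus $|x-z|^{\bar\mu}\max(|x|^{1-n-\bar\mu},|z|^{1-n-\bar\mu})$. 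Without that decomposition the induction does not close, because (iii) for $u_{l-1}$ is an input to your Calder\'on--Zygmund estimate for $u_{l,1}$'s derivative. So you should drop the ``gentler/bounded'' heuristic for $T^{II}$ and instead carry out (or at least acknowledge the need for) a separate regional decomposition to get (iii) for $u_{l,2}$.
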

\begin{proof}[Proof of Lemma \ref{estimatesonE}] The proof proceeds by induction on $l$. To simplify notations, we will often drop the Greek and Latin indices when writing convolution integrals.

For $l = 0$, $u_{l}$ is just $E_{\gamma}$, and we have the desired estimates from \eqref{constantcoefficientgrowth}. Now, assume that the estimates hold for $l-1$. Then by definition we can write $u_{l}$ as 
\begin{align*}
\left(u_{l}\right)_{\nu}(x) =& \int_{B_{2r}} \partial_{i}E_{\alpha\nu}(x - y)\left( A^{ij}_{\alpha\beta}(0) - A^{ij}_{\alpha\beta}(y) \right)\partial_{j}\left(u_{l-1} \right)_{\beta}(y)dy\\
& - \int_{B_{2r}} E_{\alpha\nu}(x - y) \left( C^{i}_{\alpha\beta}(y)\partial_{i}\left( u_{l-1} \right)_{\beta}(y) + \hat{D}_{\alpha\beta}(y)\left( u_{l-1}\right)_{\beta}(y) \right)dy\\
\equiv & \left(u_{l, 1}\right)_{\nu}(x) - \left( u_{l, 2} \right)_{\nu}(x).
\end{align*}
The potential estimates in \cite{fuchs} carries over to our setting to show that $u_{l, 1}$ satisfies (ii) and (iii) under the induction hypothesis. To prove (i) for $u_{l, 1}$, observe that since
\begin{equation*}
u_{l, 1}(x) = \int_{B_{2r}} D E(x - y) \left( A(0) - A(y) \right)D u_{l-1}(y) dy,
\end{equation*}
taking absolute values and using \eqref{constantcoefficientgrowth} along with estimate (ii) for $u_{l-1}$, we obtain
\begin{align}\label{ulpreliminary}
| u_{l, 1}(x) | &\leq C \int_{B_{2r}} |x - y|^{1-n} [A]_{\bar{\mu}, B_{1}}r^{\bar{\mu}}|D u_{l-1}(y)| dy\\
\nonumber&\leq C(Cr^{\bar{\mu}})^{l-1}\Lambdabar r^{\bar{\mu}} \int_{B_{2r}} |x - y|^{1-n} |y|^{1-n}dy.
\end{align}
The evaluate the last integral, we need the following fact (cf. \cite{aubin}, Proposition 4.12): \textit{For $0 < \tau, \sigma < n$, there exists a constant $C = C(n, \sigma, \tau)$ such that for $x \in B_{1}$, we have
\begin{equation}\label{poleconvolution}
\int_{B_{1}} |x - y|^{\sigma - n}|y|^{\tau - n}dy \leq 
\left\{
\begin{array}{cc}
C|x|^{\sigma + \tau - n}&,\ \sigma + \tau < n.\\
C(1 + \log |x|)&,\ \sigma + \tau = n.\\
C &,\ \sigma + \tau > n.
\end{array}
\right.
\end{equation}}
It follows from \eqref{poleconvolution} and \eqref{ulpreliminary} that
\begin{equation}
|u_{l, 1}(x)| \leq C\Lambdabar r^{\bar{\mu}}(Cr^{\bar{\mu}})^{l-1} C |x|^{2-n} \leq C (Cr^{\bar{\mu}})^{l} |x|^{2-n}.
\end{equation}
Therefore, (i) holds for $u_{l, 1}$. Next, we note that a similar reasoning applied to the following integral and the defining integral of $u_{l, 2}$ yields (i) and (ii) for $u_{l, 2}$.
\begin{equation}
D u_{l, 2}(x) = \int_{B_{2r}} D E(x - y)\left( C(y)D u_{l-1}(y) + \hat{D}(y)u_{l-1}(y) \right)dy.
\end{equation}

Finally, to get (iii) for $u_{l, 2}$, it suffices to consider $x$ and $z$ satisfying 
\begin{equation}\label{smalldistance}
\rho \equiv |x - z| \leq \frac{1}{8}\min\{|x|, |z|\},
\end{equation}
for if the reverse inequality holds, then (iii) follows easily from (ii). Moreover, we will assume that $|x| \leq |z|$ and write $\xi = (x + z)/2$. Note that \eqref{smalldistance} then implies
\begin{equation}\label{xicomparable}
\frac{15}{16}|x| \leq |\xi| \leq \frac{17}{16}|x|.
\end{equation}
We now break $D u_{l, 2}(x) - D u_{l, 2}(z)$ into four integrals and estimate them separately. Specifically, 
\begin{align*}
D u_{l, 2}(x)  - D u_{l, 2}(z) =& \int_{B_{\rho}(\xi)} D E(x - y) F(y) dy - \int_{B_{\rho}(\xi)} D E(z - y) F(y)dy\\
 &+ \int_{B_{|\xi|/2}} \left( D E(x - y) - D E(z - y) \right)F(y)dy\\
 & + \int_{B_{2r}\setminus (B_{\rho}(\xi) \cup B_{|\xi|/2})}  \left( D E(x - y) - D E(z - y) \right)F(y)dy\\
 \equiv&\ I - II + III + IV,
\end{align*}
where we let $F(y) = C(y)D u_{l-1}(y) + D(y)u_{l-1}(y)$ to save space. Note that by the induction hypotheses, $|F(y)| \leq C(Cr^{\bar{\mu}})^{l-1}\Lambdabar (1 + |y|)|y|^{1-n} \leq C (Cr^{\bar{\mu}})^{l-1}|y|^{1-n}$.

Integrals (I) and (II) are estimated in identical ways, so we only consider (I) here. Taking absolute values, we see that
\begin{align}\label{estimateforI}
\left| (I) \right| &\leq C(Cr^{\bar{\mu}})^{l-1} \int_{B_{\rho}(\xi)}|x - y|^{1-n} |y|^{1-n}dy \\
\nonumber&\leq C(Cr^{\bar{\mu}})^{l-1}\int_{B_{3\rho}(x)} |y - x|^{1-n} |x|^{1-n}dy\\
\nonumber&\leq C(Cr^{\bar{\mu}})^{l-1}|x|^{1-n} \rho \leq C(Cr^{\bar{\mu}})^{l-1} r^{\bar{\mu}}|x|^{1-n-\bar{\mu}}\rho^{\bar{\mu}}\\
\nonumber&\leq C(Cr^{\bar{\mu}})^{l} |x|^{1-n - \bar{\mu}}\rho^{\bar{\mu}}.
\end{align}
In the second inequality above, we used \eqref{xicomparable} to infer that 
\begin{equation*}
|y| \geq |\xi| - \rho \geq |x| - 2\rho \geq \frac{3}{4}|x|,\text{ whenever $y \in B_{\rho}(\xi)$.}
\end{equation*}
Also, in the second to last inequality, we wrote $|x|^{1 - n} = |x|^{1 - n - \bar{\mu}}|x|^{\bar{\mu}}$ and estimated the second factor by $r^{\bar{\mu}}$.

Next we estimate (III). By the mean-value theorem, there exists $x' \in B_{\rho}(x)$ such that 
\begin{equation}\label{meanvalue}
\left| D E(x - y) - D E(z - y) \right|  \leq \rho \left| D^{2}E(x' - y) \right| \leq C\rho |x' - y|^{-n}.
\end{equation}
Now observe that for all $y \in B_{|\xi|/2}$ and $x' \in B_{\rho}(x)$, from \eqref{xicomparable} we have 
\begin{equation}\label{distanceforIII}
|x' - y| \geq |x| - \rho - |\xi|/2  \geq \frac{1}{4}|x|.
\end{equation}
Therefore we obtain $\left| D E(x - y) - D E(z - y) \right| \leq C\rho |x|^{-n}$, and hence
\begin{align*}
|\text{(III)}| & \leq C\rho \int_{B_{|\xi|/2}} |x|^{-n}|F(y)|dy \leq C\rho \int_{B_{|\xi|/2}} |x|^{-n} C(Cr^{\bar{\mu}})^{l-1} |y|^{1-n}dy\\
&\leq  C(Cr^{\bar{\mu}})^{l-1}\rho |x|^{-n}|\xi| \leq C(Cr^{\bar{\mu}})^{l-1}\rho^{\bar{\mu}} r^{\bar{\mu}}|x|^{-n-\bar{\mu}}|x|\\
&\leq C(Cr^{\bar{\mu}})^{l}\rho^{\bar{\mu}}|x|^{1 - n - \bar{\mu}},
\end{align*}
where in the second to last inequality we used \eqref{xicomparable} and that $|x|^{-n} \leq |x|^{-n-\bar{\mu}}r^{\bar{\mu}}$.

Finally we estimate (IV). Again we use the mean-value theorem to find $x' \in B_{\rho/2}(\xi)$ such that \eqref{meanvalue} holds. To estimate the right-hand side in \eqref{meanvalue} in a useful way, we notice that for $y \in B_{2r} \setminus (B_{\rho}(\xi) \cup B_{|\xi|/2})$ and $x' \in B_{\rho/2}(\xi)$, we have $|x' - \xi| \leq \rho/2 \leq |y - \xi|/2$ and hence 
\begin{equation*}
|y - x'| \geq |y - \xi| - |x' - \xi| \geq \frac{1}{2}|y - \xi|.
\end{equation*}
Therefore $|x' - y|^{-n} \leq 2^{n}|y - \xi|^{-n}$, and (IV) can be estimated as follows.
\begin{align*}
|\text{(IV)}|&\leq C\rho \int_{B_{2r} \setminus (B_{\rho}(\xi) \cup B_{|\xi|/2})} |\xi - y|^{-n} |F(y)|dy\\
&\leq C\rho \int_{B_{2r} \setminus (B_{\rho}(\xi) \cup B_{|\xi|/2})}|\xi - y|^{-n}C (Cr^{\bar{\mu}})^{l-1}|y|^{1-n}dy\\
&\leq C\rho (Cr^{\bar{\mu}})^{l-1} |\xi|^{1-n} \int_{B_{2r} \setminus (B_{\rho}(\xi) \cup B_{|\xi|/2})}r^{1 - \bar{\mu}}  |\xi - y|^{-n - 1 + \bar{\mu}}dy\\
&\leq C\rho (Cr^{\bar{\mu}})^{l-1} |x|^{1-n} r^{1 - \bar{\mu}} \rho^{\bar{\mu}-1}\\
&\leq C\rho^{\bar{\mu}} (Cr^{\bar{\mu}})^{l-1} |x|^{1 - n - \bar{\mu}} r \leq C\rho^{\bar{\mu}}(Cr^{\bar{\mu}})^{l}|x|^{1 - n - \bar{\mu}}.
\end{align*}
In the third inequality above, we used the fact that $|y| \geq |\xi|/2$ on the domain of integration, and that
\begin{equation}
|\xi - y|^{-n} = |\xi - y|^{-n-1 + \bar{\mu}}|\xi - y|^{1 - \bar{\mu}} \leq r^{1 - \bar{\mu}}|\xi - y|^{-n-1 + \bar{\mu}}.
\end{equation}
Having estimated (I), (II), (III) and (IV), we conclude that (iii) holds for $u_{l, 2}$, thus finishing the induction step. The proof of Lemma \ref{estimatesonE} is now complete.
\end{proof}
We now conclude the proof of \eqref{zerothboundcentral} for $G^{(1)}$ as follows. Requiring that $r \leq R_{0} \equiv \min\{r_{0}/2, 1/4, \left( 2C \right)^{-1/\bar{\mu}} \}$, with $C$ as in the previous lemma, then by the estimate (i) in Lemma \ref{estimatesonE}, we have
\begin{equation}
\sum\limits_{l = 1}^{\infty}T_{2r}^{l}E_{\gamma}(x) \leq C\sum\limits_{l = 1}^{\infty}2^{-l}|x|^{2-n} \leq C|x|^{2-n}.
\end{equation}
Combining this with \eqref{wseriessup} and recalling \eqref{gseries}, we arrive at
\begin{equation}
|G^{(1)}_{\gamma}(x)| \leq |E_{\gamma}(x)| + C|x|^{2-n} + Cr^{1 + \bar{\mu} - n} \leq C|x|^{2-n} + Cr^{1 + \bar{\mu} - n}.
\end{equation}
The proof of \eqref{zerothboundcentral} for $G^{(1)}$ is now complete.
\end{proof}
\begin{proof}[Proof of the first-order derivative estimate \eqref{firstderivative}]
Having proven \eqref{zerothboundcentral}, we will show that \eqref{firstderivative} follows from standard $C^{1, \bar{\mu}}$-estimates for elliptic systems and a scaling argument. As in the previous proof, we assume without loss of generality that $y  = 0$, and we write $G_{\gamma}^{(1)}(\cdot)$ for $\left( G_{\gamma\alpha}(\cdot, 0) \right)_{1 \leq \alpha \leq N}$.

Fix $r \leq R_{0}$ and choose a point $z \in B_{r}$. We let $\rho = |z|/2$ and rescale $G^{(1)}_{\gamma}$ by letting $\tilde{G}(x) \equiv G^{(1)}_{\gamma}(z + \rho x)$. Since $G^{(1)}_{\gamma}$ satisfies $L^{(1)}G^{(1)}_{\gamma} = 0$ on $B_{\rho}(z)$, we infer that $\tilde{L}\tilde{G} = 0$ on $B_{1}$, where the operator $\tilde{L}$ has the form
\begin{equation}\label{scaledoperator}
\left( \tilde{L}u \right)_{\alpha} = \partial_{i}\left( \tilde{A}^{ij}_{\alpha\beta}\partial{j}u_{\beta} \right) + \tilde{C}^{i}_{\alpha\beta}\partial_{i}u_{\beta} + \tilde{D}_{\alpha\beta}u_{\beta},
\end{equation}
with coefficients given by
\begin{equation}
\left\{
\begin{array}{cl}
\tilde{A}^{ij}_{\alpha\beta}(x) &= A^{ij}_{\alpha\beta}(z + \rho x)\\
\tilde{C}^{i}_{\alpha\beta}(x) &= \rho C^{i}_{\alpha\beta}(z + \rho x)\\
\tilde{D}_{\alpha\beta}(x) &= \rho^{2}\hat{D}_{\alpha\beta}(z + \rho x).
\end{array}
\right.
\end{equation}
We see immediately from the definition that $\tilde{L}$ also satisfies (H1) to (H3) with the same parameters $\lambdabar, \Lambdabar$ and $\bar{\mu}$. Since $\tilde{L}\tilde{G} = 0$ on $B_{1}$, Schauder theory implies that $\tilde{G} \in C^{1, \bar{\mu}}_{\text{loc}}(B_{1}; \RR^{N})$. Furthermore, the following estimate holds. 
\begin{equation}\label{scaledschauder}
| D \tilde{G} |_{0; B_{1/2}} \leq C\|\tilde{G}\|_{2; B_{1}}.
\end{equation}
Scaling back and recalling the definition of $\rho$, we obtain
\begin{equation}\label{unscaledschauder}
|z| |D G^{(1)}_{\gamma}|_{0; B_{|z|/4}(z)} \leq C |z|^{-n/2}\|G^{(1)}_{\gamma}\|_{2; B_{|z|/2}(z)}.
\end{equation}
Since $|z| < r$, it follows that $B_{|z|/2}(z) \subset B_{2r}\setminus \{0\}$. Thus we may use the bound \eqref{zerothboundcentral} for $G^{(1)}_{\gamma}$ to estimate the right-hand side of \eqref{unscaledschauder} by 
\begin{equation*}
C|z|^{-n/2}\left( C|z|^{2-n} + r^{1 + \bar{\mu} - n} \right)|z|^{n/2} = C\left( |z|^{2-n} + r^{1 + \bar{\mu} - n} \right).
\end{equation*}
Plugging this back into \eqref{unscaledschauder} and dividing both sides by $|z|$, we arrive at
\begin{equation}
|D G^{(1)}_{\gamma}|_{0; B_{|z|/4}(z)} \leq C\left( |z|^{1-n} + |z|^{-1}r^{1 + \bar{\mu} - n} \right).
\end{equation}
In particular, \eqref{firstderivative} holds, and we are done.
\end{proof}

Before proceeding with the proof of Theorem \ref{greensgrowth}, let us present some further properties of $G^{(1)}$ that we will need.
\begin{prop}\label{greensdifferentiable}
\begin{enumerate}
\item[(1)] $G^{(1)}: B_{1} \times B_{1} \to \RR^{N^{2}}$ is locally $C^{1}$ on $B_{1} \times B_{1} \setminus \Delta$. Moreover, for $x \neq y$, we have
\begin{equation}\label{c1symmetry}
\partial_{i}^{(1)}G^{(1)}_{\alpha\beta}(x, y) = \partial_{i}^{(2)}G'^{(1)}_{\beta\alpha}(y, x),
\end{equation}
where $\partial_{i}^{(1)}$ (resp., $\partial_{i}^{(2)}$) denotes partial differentiation with respect to $i$-th variable in the first slot (resp., second slot), and $G'^{(1)}$ denotes the fundamental solution for $L'^{(1)}$, the adjoint of $L^{(1)}$.
\item[(2)] For $p \in [1, \infty)$ and $F  = (F_{\alpha}^{i}) \in L^{p}(B_{1}; \RR^{nN})$, let $u \in W^{1, p}_{0}(B_{1}; \RR^{N})$ be the unique solution to $L^{(1)}u = \partial_{i}F^{i}$. Then, for $H^{n}$-a.e. $y \in B_{1}$, we have
\begin{equation}\label{G1divrepresentation}
u_{\gamma}(y) = \int_{B_{1}} \partial_{i}^{(2)}G^{(1)}_{\alpha\gamma}(y, x)F_{\alpha}^{i}(x)dx.
\end{equation}
\item[(3)] For $p \in [1, \infty)$ and $f = (f_{\alpha}) \in L^{p_{\ast}}(B_{1}; \RR^{N})$, let $u \in W^{1, p}_{0}(B_{1}; \RR^{N})$ be the unique solution to $L^{(1)}u = f$. Then, for $H^{n}$-a.e. $x \in B_{1}$, we have
\begin{equation}\label{G1nondivrepresentation}
u_{\gamma}(x) = \int_{B_{1}}G^{(1)}_{\alpha\gamma}(x, y)f_{\alpha}(y)dy.
\end{equation}
\end{enumerate}
\end{prop}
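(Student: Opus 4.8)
Since $L^{(1)}$ was arranged to satisfy (H1)--(H4), its Green matrix $G^{(1)}$ has all the properties in Proposition \ref{greensbasic}. Given $f\in L^{p_{\ast}}(B_{1};\RR^{N})$ with $p_{\ast}\geq 1$, the measure $\nu=f\,H^{n}$ is a finite $\RR^{N}$-valued Radon measure, and by Proposition \ref{greensbasic}(5) the associated $u_{\nu}\in\bigcap_{r<n/(n-1)}W^{1,r}_{0}(B_{1};\RR^{N})$ satisfies $u_{\nu,\gamma}(x)=\int_{B_{1}}G^{(1)}_{\alpha\gamma}(x,y)f_{\alpha}(y)\,dy$ for $H^{n}$-a.e.\ $x$. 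The hypothesis $p_{\ast}=np/(n+p)\geq1$ forces $p\geq n/(n-1)$, so the given $W^{1,p}_{0}$ solution $u$ lies in $\bigcap_{r<n/(n-1)}W^{1,r}_{0}$ as well; uniqueness of $u_{\nu}$ then gives $u=u_{\nu}$, which is \eqref{G1nondivrepresentation}.

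\textbf{Part (1): $C^{1}$-regularity and the symmetry of derivatives.} For fixed $y$, the function $x\mapsto G^{(1)}_{\gamma}(x,y)$ solves the homogeneous system $L^{(1)}G^{(1)}_{\gamma}(\cdot,y)=0$ on $B_{1}\setminus\{y\}$, so interior $C^{1,\bar\mu}$-estimates for elliptic systems with $C^{0,\bar\mu}$ principal part and bounded lower-order coefficients give $G^{(1)}(\cdot,y)\in C^{1,\bar\mu}_{\mathrm{loc}}(B_{1}\setminus\{y\})$; combined with the joint H\"older continuity of $G^{(1)}$ off the diagonal (the analogue of Proposition \ref{greensbasic}(4)) and the fact that this Schauder estimate depends continuously on the boundary data, one gets that $\partial_{x}G^{(1)}$ is jointly continuous on $B_{1}\times B_{1}\setminus\Delta$. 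The same applies, via the zeroth-order symmetry $G^{(1)}_{\alpha\beta}(x,y)=G'^{(1)}_{\beta\alpha}(y,x)$ of Proposition \ref{greensbasic}(3) (legitimate since the adjoint $L'^{(1)}$ also satisfies (H1)--(H4)), to $\partial_{y}G^{(1)}$, so $G^{(1)}$ is $C^{1}$ on $B_{1}\times B_{1}\setminus\Delta$. Finally, for fixed $y$ both sides of $G^{(1)}_{\alpha\beta}(\cdot,y)=G'^{(1)}_{\beta\alpha}(y,\cdot)$ are $C^{1}$, and differentiating in $x^{i}$ yields \eqref{c1symmetry}.

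\textbf{Part (2): the divergence representation.} Fix $\gamma$ and $y\in B_{1}$ and put $v=G'^{(1)}_{\gamma}(\cdot,y)$, which solves $L'^{(1)}v=e_{\gamma}\delta_{y}$ and vanishes on $\partial B_{1}$. First take $F\in C^{\infty}_{c}(B_{1};\RR^{nN})$, so the corresponding solution $u\in W^{1,p}_{0}$ of $L^{(1)}u=\partial_{i}F^{i}$ is continuous near $y$. Testing the weak form of $L^{(1)}u=\partial_{i}F^{i}$ against $v$ away from a ball $B_{\rho}(y)$, and testing $L'^{(1)}v=0$ on $B_{1}\setminus B_{\rho}(y)$ against $u$, the interior contributions cancel and one is left with a boundary integral over $\partial B_{\rho}(y)$ which tends to zero as $\rho\to0$ by the estimates $|v(x)|\lesssim|x-y|^{2-n}$ and $|\nabla v(x)|\lesssim|x-y|^{1-n}$ from Theorem \ref{greensgrowth} and \eqref{firstderivative} (applied to $G'^{(1)}$) together with the continuity of $u$. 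This produces $u_{\gamma}(y)=\int_{B_{1}}\partial^{(1)}_{i}v_{\alpha}(x)\,F^{i}_{\alpha}(x)\,dx$, and $\partial^{(1)}_{i}G'^{(1)}_{\gamma\alpha}(x,y)=\partial^{(2)}_{i}G^{(1)}_{\alpha\gamma}(y,x)$ by \eqref{c1symmetry}, giving \eqref{G1divrepresentation} for smooth $F$. For general $F\in L^{p}$ one approximates $F$ in $L^{p}$ by $F_{m}\in C^{\infty}_{c}$, so that $u_{m}\to u$ in $W^{1,p}_{0}$ and hence $u_{m,\gamma}\to u_{\gamma}$ $H^{n}$-a.e.\ along a subsequence, while the right-hand side passes to the limit because $x\mapsto\partial^{(2)}_{i}G^{(1)}_{\alpha\gamma}(y,x)$ is the gradient of $G'^{(1)}_{\gamma}(\cdot,y)\in\bigcap_{r<n/(n-1)}W^{1,r}_{0}$ and is bounded away from $y$.

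\textbf{Main obstacle.} I expect the real work to be in Part (2): verifying the vanishing of the $\partial B_{\rho}(y)$-boundary terms in the two-sided integration by parts with the singular form $v$, and --- in the range of $p$ where $\nabla_{x}G^{(1)}(y,\cdot)$ only just fails to be in $L^{p'}$ near the diagonal --- justifying the limit $m\to\infty$ in the representation integral. Both are controlled by the precise near-diagonal growth of $G^{(1)}$ and $\nabla_{x}G^{(1)}$ from Theorem \ref{greensgrowth} and Theorem \ref{growthstepone}; the remaining ingredients are the abstract Green-matrix properties in Proposition \ref{greensbasic} and interior elliptic regularity.
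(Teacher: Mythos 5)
Your approach to Part (3) is a clean shortcut: since $p_{\ast}\geq 1$ forces $p\geq n/(n-1)$, the given $W^{1,p}_{0}$-solution lies in $\bigcap_{r<n/(n-1)}W^{1,r}_{0}$, so it coincides with $u_{\nu}$ for $\nu=f\,H^{n}$ and Proposition \ref{greensbasic}(5) applies directly. This is shorter than the paper's argument, which reproves the representation by testing against the mollified Green matrix $G^{(1)\rho}$ (using Proposition \ref{mollifiedgreens}) and then invoking the Lebesgue differentiation theorem. Your Part (1) is essentially the paper's argument (uniform local $C^{1,\bar\mu}$-bounds from Proposition \ref{greensbasic}(2) plus the joint $C^{0,\alpha}$-continuity of Proposition \ref{greensbasic}(4), then transfer to the second slot via the zeroth-order symmetry), just stated more loosely.

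There is, however, a genuine problem in your Part (2). In the two-sided integration by parts over $B_{1}\setminus B_{\rho}(y)$, the surviving boundary terms over $\partial B_{\rho}(y)$ are (schematically)
\begin{equation*}
\int_{\partial B_{\rho}(y)}\bigl(A^{ij}\partial_{j}u_{\beta}\,v_{\alpha}-u_{\beta}\,A^{ij}\partial_{j}v_{\alpha}+\text{lower order}\bigr)\nu_{i}\,dH^{n-1},
\end{equation*}
and it is \emph{not} true that this tends to zero. The piece $u\,A\nabla v\cdot\nu$ has integrand of size $|\nabla v|\sim\rho^{1-n}$ over a surface of measure $\sim\rho^{n-1}$; this term stays $O(1)$ and in fact converges to $\pm\,u_{\gamma}(y)$ as $\rho\to 0$ — it is exactly how the Dirac mass in $L'^{(1)}v=e_{\gamma}\delta_{y}$ is recovered. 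So one boundary term supplies $u_{\gamma}(y)$ and only the remaining ones vanish; your claim that the boundary contribution tends to zero would make the identity $\int\partial_{i}v\,F^{i}=0$, which is false. Separately, the density step ($F_{m}\to F$ in $L^{p}$) requires care when $p\leq n$, since $\partial^{(2)}_{i}G^{(1)}(y,\cdot)\notin L^{p'}$ near the diagonal; one must pass to an a.e.\ subsequence using the uniform $L^{1}$-bound $\sup_{x}\|\partial^{(2)}G^{(1)}(\cdot,x)\|_{1;B_{1}}<\infty$ and Fubini. The paper sidesteps both issues by testing against the mollified Green matrix $G^{(1)\rho}_{\gamma}(\cdot,y)$, which is an honest $W^{1,p'}_{0}$ test function, using \eqref{mollificationidentity} to write $\partial_{i}v^{\rho}_{y,\alpha}(x)=\fint_{B_{\rho}(y)}\partial^{(2)}_{i}G^{(1)}_{\alpha\gamma}(z,x)\,dz$, interchanging integrals by Fubini, and concluding for a.e.\ $y$ by Lebesgue differentiation — no smooth approximation of $F$, no singular boundary terms.
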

\begin{proof}[Proof of Proposition \ref{greensdifferentiable}(1)]
For $(1)$, we start by noting that \eqref{c1symmetry} is an immediate consequence of Proposition \ref{greensbasic}$(3)$. Next, to see that $G^{1} \in C^{1}_{\text{loc}}(B_{1} \times B_{1}\setminus \Delta; \RR^{N^{2}})$, we will argue that for each $\rho > 0$, $G^{(1)}$ is $C^{1}$ on the set $A_{\rho} \equiv \{ (x, y) \in B_{1} \times B_{1}|\ |x - y| > \rho \}.$ To that end, let us fix $(x_{0}, y_{0}) \in A_{\rho}$, and look at the family of functions 
\begin{equation*}
\left\{ G^{(1)}(\cdot, y) \right\}_{y \in B_{\rho/2}(y_{0})}.
\end{equation*} 
Below, for convenience, we denote the restriction of $G^{(1)}(\cdot, y)$ to $B_{\rho/2}(x_{0})$ by $u_{y}$. 
\begin{claim*}
The family $\{u_{y}\}_{y \in B_{\rho/2}(y_{0})}$ is bounded in the $ |\cdot|_{1, \bar{\mu}; B_{\rho/3}(x_{0})}$-norm.
\end{claim*}
\begin{proof}[Proof of the claim]
Notice that since $|x_{0} - y_{0}| > \rho$, for all $y \in B_{\rho/2}(y_{0})$, the functions $u_{y}$ actually solve $L^{(1)}u_{y} = 0$ on $B_{\rho/2}(x_{0})$. By the interior $C^{1, \bar{\mu}}$-estimates for divergence-form elliptic systems, we infer that
\begin{equation}\label{uniformc1holder}
|u_{y}|_{1, \bar{\mu}; B_{\rho/3}(x_{0})} \leq C\|u_{y}\|_{1, 2; B_{\rho/2}(x_{0})}.
\end{equation}
Since $|x_{0} - y_{0}| > \rho$, we see from Proposition \ref{greensbasic} (2) with $q = 2$ that
\begin{align*}
\|u_{y}\|_{1, 2; B_{\rho/2}(x_{0})} &\leq \|u_{y}\|_{1, 2; B_{1} \setminus B_{\rho/2}(y_{0})} \leq C_{\rho}.
\end{align*}
 The claim is proved upon combining the above inequalities.
\end{proof}
From the claim we see that for each sequence $y_{i} \to y_{0}$, a subsequence of $u_{y_{i}}$ converges in $C^{1}(B_{\rho/3}(x); \RR^{N^{2}})$ to a limit in $C^{1, \bar{\mu}}(B_{\rho/3}(x); \RR^{N^{2}})$. By Proposition \ref{greensbasic}(4), this limit must be $u_{y_{0}}$, and hence we conclude that $u_{y}$ converges to $u_{y_{0}}$ in $C^{1}$-topology as $y \to y_{0}$. Next we let $(x_{i}, y_{i}) \to (x_{0}, y_{0})$ as $i \to \infty$. Then for $i$ sufficiently large, we have
\begin{align*}
&|\partial^{(1)}_{i} G^{(1)}(x_{i}, y_{i}) - \partial^{(1)}_{i} G^{(1)}(x_{0}, y_{0})|\\
& \leq \ |\partial^{(1)}_{i} G^{(1)}(x_{i}, y_{i}) - \partial^{(1)}_{i} G^{(1)}(x_{i}, y_{0})|+ |\partial^{(1)}_{i} G^{(1)}(x_{i}, y_{0}) - \partial^{(1)}_{i} G^{(1)}(x_{0}, y_{0})|\\
&\leq \ |u_{y_{i}} - u_{y_{0}}|_{1, 0; B_{\rho/3}(x_{0})} + |x_{i} - x_{0}|^{\bar{\mu}}[u_{y_{0}}]_{1, \bar{\mu}; B_{\rho/3}(x_{0})} \longrightarrow \ 0 \text{ as }i \to \infty.
\end{align*}
Thus we've shown that $\partial^{(1)}_{i}G^{(1)}$ is continuous on $A_{\rho}$. Applying the same argument to $G'^{(1)}$ and using the relation \eqref{c1symmetry}, we see that the same conclusion holds for $\partial_{i}^{(2)}G^{(1)}$.
In view of this and Proposition \ref{greensbasic}(4), we conclude that $G^{(1)}$ is $C^{1}$ on $A_{\rho}$ for any $\rho > 0$. The proof of Proposition \ref{greensdifferentiable}(1) is now complete.
\end{proof}

Next we turn to assertions (2) and (3) of Proposition \ref{greensdifferentiable}. For this purpose we need to introduce the mollified fundamental solutions, denoted by $G^{(1)\rho}(\cdot, y) = (G^{(1)\rho}_{\alpha\beta}(\cdot, y))$ and defined for $\rho > 0$ by letting $G^{(1)\rho}_{\alpha} = (G^{(1)\rho}_{\alpha\beta})_{1 \leq \beta \leq N}$ be the unique solution to 
\begin{equation}
L^{(1)}u = \frac{e_{\alpha}}{|B_{\rho}|}\chi_{B_{\rho}(y)}dH^{n}.
\end{equation}
The mollified version of the adjoint fundamental solution $G'^{(1)}$ is defined similarly.
In what follows, we will frequently use the following property of $G^{(1)\rho}$.
\begin{prop}[\cite{fuchs}, Corollary to Theorem 4]\label{mollifiedgreens}
For $x, y \in B_{1}$ and $0 < \rho < \dist(y, \partial B_{1})$, the following holds.
\begin{equation}\label{mollificationidentity}
G^{(1)\rho}_{\alpha\beta}(x, y) = \fint_{B_{\rho}(y)}G'^{(1)}_{\beta\alpha}(z, x)dz = \fint_{B_{\rho}(y)} G^{(1)}_{\alpha\beta}(x, z) dz,
\end{equation}
where the $\fint_{B_{\rho}(y)}$ denotes the average over $B_{\rho}(y)$.
\end{prop}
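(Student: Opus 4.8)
\textbf{Proof proposal for Proposition \ref{mollifiedgreens}.} The plan is to first read off the two asserted identities in the $H^n$-almost-everywhere sense from the representation formula for solutions with measure-valued data, and then to upgrade the result to a genuine pointwise identity using continuity of both sides. Fix indices $\alpha,\beta$, a point $y\in B_1$, and $0<\rho<\dist(y,\partial B_1)$, and consider the $\RR^N$-valued Radon measure $\nu$ on $B_1$ with components $d\nu_\gamma=|B_\rho|^{-1}\delta_{\gamma\alpha}\chi_{B_\rho(y)}\,dH^n$. Since $L^{(1)}$ satisfies (H1)--(H4), the whole of Proposition \ref{greensbasic} applies to it; in particular the associated $u_\nu\in\cap_{r<n/(n-1)}W^{1,r}_0(B_1)$ exists and is unique, and by the very definition of the mollified fundamental solution its $\beta$-component is $G^{(1)\rho}_{\alpha\beta}(\cdot,y)$. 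Applying the representation formula of Proposition \ref{greensbasic}(5) for $L^{(1)}$ to $u_\nu$, together with the symmetry relation in Proposition \ref{greensbasic}(3) (for $L^{(1)}$), I would obtain that for $H^n$-a.e.\ $x\in B_1$ there holds
\begin{equation}\label{aemolliplan}
G^{(1)\rho}_{\alpha\beta}(x,y)=\fint_{B_\rho(y)}G^{(1)}_{\alpha\beta}(x,z)\,dz=\fint_{B_\rho(y)}G'^{(1)}_{\beta\alpha}(z,x)\,dz .
\end{equation}
This is already the content of \eqref{mollificationidentity} outside a null set.

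The remaining task is to show that both sides of \eqref{aemolliplan} are continuous in $x$ on $B_1$; two continuous functions agreeing $H^n$-a.e.\ must then coincide everywhere, finishing the proof. For the left-hand side I would note that $G^{(1)\rho}_{\alpha\beta}(\cdot,y)$ solves $L^{(1)}u=|B_\rho|^{-1}e_\alpha\chi_{B_\rho(y)}$, whose right-hand side lies in $L^q(B_1)$ for every $q$; the interior $W^{1,q}$-estimates for $L^{(1)}$ (available under (H1)--(H3)) plus Morrey's embedding then furnish a representative in $C^0_{\loc}(B_1)$, which is the one to work with. For the right-hand side, fixing $x_0\in B_1$ and a small $\delta$, I would split
\begin{equation*}
\fint_{B_\rho(y)}G^{(1)}_{\alpha\beta}(x,z)\,dz=\fint_{B_\rho(y)\cap B_{2\delta}(x_0)}G^{(1)}_{\alpha\beta}(x,z)\,dz+\fint_{B_\rho(y)\setminus B_{2\delta}(x_0)}G^{(1)}_{\alpha\beta}(x,z)\,dz ,
\end{equation*}
observe that the second term is continuous in $x$ near $x_0$ by the joint continuity of $G^{(1)}$ off the diagonal (Proposition \ref{greensbasic}(4)), and control the first term uniformly for $x\in B_\delta(x_0)$ by the bound $|G^{(1)}(x,z)|\le C|x-z|^{2-n}+C'$, which follows from Theorem \ref{growthstepone} applied to $G'^{(1)}$ with pole $x$ (via $G^{(1)}_{\alpha\beta}(x,z)=G'^{(1)}_{\beta\alpha}(z,x)$); this yields a bound of the form $C|B_\rho|^{-1}(\delta^2+\delta^n)\to 0$. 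A standard $\varepsilon/3$ argument then gives continuity at $x_0$, and $x_0$ was arbitrary.

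The step I expect to be the main obstacle is precisely this upgrade from the almost-everywhere identity \eqref{aemolliplan} to the pointwise one: it forces one to control the diagonal singularity of $G^{(1)}$, and this is exactly where the near-optimal growth estimate of Theorem \ref{growthstepone} (equivalently, Theorem \ref{greensgrowth} for $G^{(1)}$) is indispensable. Everything else is bookkeeping with indices and routine elliptic estimates. As an alternative to writing this out, one may simply appeal to the Corollary to Theorem~4 of \cite{fuchs}: the argument there, given for operators without lower-order terms, carries over verbatim to $L^{(1)}$ because $L^{(1)}$ satisfies (H1)--(H4) and the $W^{1,p}$-theory used in \cite{fuchs} holds in our setting by \cite{morreybook}, Theorem 6.4.8.
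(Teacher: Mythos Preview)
Your proposal is correct. The paper itself does not give a proof: it simply states the result as a citation to \cite{fuchs} and remarks that ``the proofs given in \cite{fuchs} were based solely on standard $W^{1,p}$-estimates, and therefore apply to our setting.'' Your closing paragraph is exactly this, so you have recovered the paper's treatment.

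Your main argument goes further and supplies an explicit proof. It is logically sound and not circular: Theorem~\ref{growthstepone} is established before Proposition~\ref{mollifiedgreens} in the paper's development, so invoking the growth bound $|G^{(1)}(x,z)|\le C|x-z|^{2-n}+C'$ to verify continuity of the averaged integral is legitimate. The only point worth flagging is that the paper's remark stresses that \emph{only} $W^{1,p}$-estimates are needed, whereas your continuity argument for the right-hand side brings in the sharper pointwise bound from Theorem~\ref{growthstepone}. This is heavier than necessary---one can instead show directly, via Fubini and the uniform $W^{1,p}$-bound of Proposition~\ref{greensbasic}(1), that $x\mapsto\fint_{B_\rho(y)}G^{(1)}_{\alpha\beta}(x,z)\,dz$ lies in $W^{1,p}_0(B_1)$ and weakly solves $L^{(1)}u=\nu$, then conclude by uniqueness---but your route is perfectly valid and arguably more transparent about why the pointwise identity holds.
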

\begin{rmk}
The proofs given in \cite{fuchs} were based solely on standard $W^{1, p}$-estimates, and therefore apply to our setting.
\end{rmk}
\begin{proof}[Proof of Proposition \ref{greensdifferentiable} (2)]
To simplify notations, we fix an index $\gamma$ and denote $G^{(1)\rho}_{\gamma}(\cdot, y)$ (resp., $G^{(1)}_{\gamma}(\cdot, y)$) by $u^{\rho}_{y}$ (resp., $u_{y}$), and $G'^{(1)\rho}_{\gamma}(\cdot, y)$ (resp., $G'^{1}_{\gamma}(\cdot, y)$) by $v^{\rho}_{y}$ (resp., $v_{y}$). Let $u$ be as in the hypothesis of (2), and use $v^{\rho}_{y}$ as a test function in the equation $L^{(1)}u = \partial_{i}F^{i}$. Then we get
\begin{align*}
\int_{B_{1}}A^{ij}_{\alpha\beta}\partial_{j}u_{\beta} \partial_{i}v^{\rho}_{y, \alpha} - v^{\rho}_{y, \alpha}(C^{i}_{\alpha\beta}\partial_{i}u_{\beta} + \hat{D}_{\alpha\beta}u_{\beta}) = \int_{B_{1}} F^{i}_{\alpha}\partial_{i}v^{\rho}_{y, \alpha}.
\end{align*}
Note that the left-hand side can understood as testing the system $L'^{(1)}v^{\rho}_{y} = e_{\gamma}|B_{\rho}|^{-1}\chi_{B_{\rho}(y)}dH^{n}$ against $u$. Thus we get
\begin{equation*}
\int_{B_{1}}A^{ij}_{\alpha\beta}\partial_{j}u_{\beta} \partial_{i}v^{\rho}_{y, \alpha} - v^{\rho}_{y, \alpha}(C^{i}_{\alpha\beta}\partial_{i}u_{\beta} + \hat{D}_{\alpha\beta}u_{\beta}) = \fint_{B_{\rho}(y)} u_{\gamma}.
\end{equation*}
Putting the two above equations together, we obtain
\begin{equation}\label{averagerepresentation}
\fint_{B_{\rho}(y)} u_{\gamma}(z)dz = \int_{B_{1}}F^{i}_{\alpha}(x)\partial_{i}v^{\rho}_{y, \alpha}(x)dx.
\end{equation}
To proceed we observe that, by Proposition \ref{mollifiedgreens} applied to $G'^{(1)\rho}$, we have
\begin{equation*}
v_{y, \alpha}^{\rho}(x) = \fint_{B_{\rho}(y)}  G^{(1)}_{\alpha\gamma}(z, x)dz.
\end{equation*}
Thus, by the estimates \eqref{zerothboundcentral} (for $G^{(1)}$) and \eqref{firstderivative}, we may differentiate under the integral to get (cf. the proof of Lemma 4.1 in \cite{gt})
\begin{equation*}
\partial_{i}v^{\rho}_{y, \alpha}(x) = \fint_{B_{\rho}(y)} \partial^{(2)}_{i} G^{(1)}_{\alpha\gamma}(z, x) dz, \text{ for } x \in B_{1}.
\end{equation*}
Going back to \eqref{averagerepresentation}, we see that the right-hand side equals
\begin{equation}\label{doubleintegralrepresentation}
\int_{B_{1}} \left(  \fint_{B_{\rho}(y)} F^{i}_{\alpha}(x) \partial^{(2)}_{i}G^{(1)}_{\alpha\gamma}(z, x) dz \right) dx.
\end{equation}
By \eqref{c1symmetry} with $G'^{(1)}$ in place of $G^{(1)}$ and using Proposition \ref{greensbasic}(1), we see that $\|\partial_{i}^{(2)}G^{(1)}(\cdot, x)\|_{1; B_{1}}$ is bounded independent of $x$. Thus, since we also know that $F \in L^{p}$, we may switch the order of integration in \eqref{doubleintegralrepresentation} to get
\begin{align*}
\int_{B_{1}} \left(  \fint_{B_{\rho}(y)} F^{i}_{\alpha}(x) \partial^{(2)}_{i}G^{(1)}_{\alpha\gamma}(z, x) dz \right) dx
& = \fint_{B_{\rho}(y)} \left(  \int_{B_{1}} F^{i}_{\alpha}(x) \partial^{(2)}_{i}G^{(1)}_{\alpha\gamma}(z, x) dx \right) dz.
\end{align*}
Moreover, the inner-integral on the right-hand side is an $L^{1}$-function in $z$, and thus by the Lebesgue differentiation theorem and recalling \eqref{averagerepresentation}, for $H^{n}$-a.e. $y$ we have
\begin{equation}
\lim\limits_{\rho \to 0}\fint_{B_{\rho}(y)}u_{\gamma}(z)dz =  \int_{B_{1}} F^{i}_{\alpha}(x) \partial^{(2)}_{i}G^{(1)}_{\alpha\gamma}(y, x) dx.
\end{equation}
However, since $u \in L^{p}(B_{1}; \RR^{N})$, the left-hand side of the above identity equals $u_{\gamma}(y)$ for $H^{n}$-a.e. $y$. Hence, for $H^{n}$-a.e. $y$ there holds
\begin{equation*}
u_{\gamma}(y) = \int_{B_{1}} F^{i}_{\alpha}(x) \partial^{(2)}_{i}G^{(1)}_{\alpha\gamma}(y, x) dx.
\end{equation*}
\end{proof}
\begin{proof}[Proof of Proposition \ref{greensdifferentiable} (3)]
We will use the same notations as in the previous proof. Let $u$ be as in the hypothesis of (3) and use $v^{\rho}_{y}$ as a test function in the system $L^{(1)}u = f$. Then we get the following analogue of \eqref{averagerepresentation}.
\begin{equation}\label{averagedrepresentationzero}
\fint_{B_{\rho}(y)} u_{\gamma}(z)dz = \int_{B_{1}} f_{\alpha}(x)  v^{\rho}_{y, \alpha}(x)dx.
\end{equation}
By \eqref{mollificationidentity} with $G'^{(1)}$ in place of $G^{(1)}$, we find that
\begin{equation}
\int_{B_{1}} f_{\alpha}(x)  v^{\rho}_{y, \alpha}(x)dx = \int_{B_{1}} \left( \fint_{B_{\rho}(y)} f_{\alpha}(x)G_{\alpha\gamma}^{(1)}(z, x) dz \right) dx.
\end{equation}
As in the previous proof, the order of integration on the right-hand side can be switched, resulting in
\begin{equation}
\int_{B_{1}} f_{\alpha}(x)  v^{\rho}_{y, \alpha}(x)dx = \fint_{B_{\rho}(y)} \left( \int_{B_{1}} f_{\alpha}(x)G_{\alpha\gamma}^{(1)}(z, x) dx \right) dz.
\end{equation}
Combining this wit \eqref{averagedrepresentationzero}, letting $\rho$ go to zero and using the Lebesgue differentiation theorem, we see that for $H^{n}$-a.e. y $\in B_{1}$, 
\begin{equation}
u_{\gamma}(y) = \int_{B_{1}}f_{\alpha}(x)G^{(1)}_{\alpha\gamma}(y, x)dx.
\end{equation}
\end{proof}
\begin{rmk}
The same proof as the one given above shows that Proposition \ref{greensdifferentiable}(3) holds with $L$ in place of $L^{(1)}$ and $G$ in place of $G^{(1)}$.
\end{rmk}

At this point we are almost ready to prove Theorem \ref{greensgrowth}. The proof will be another perturbation argument, but this time based on estimates for $G^{(1)}$ instead of $E$. For that purpose, as in the proof of Theorem \ref{growthstepone}, we need to introduce a perturbation operator and show that locally it is a contraction mapping.
\begin{prop}\label{finalperturbationproperties}
Take a ball $B_{r}(x_{0}) \subset B_{1}$. For $u: B_{r}(x_{0}) \to \RR^{N}$ and $x \in B_{r}(x_{0})$, define 
\begin{align}\label{finalperturbation}
\left(T_{r}u \right)_{\gamma} (x) =\ & - \int_{B_{r}(x_{0})} \partial^{(2)}_{i}G^{(1)}_{\alpha\gamma}(x, y)B^{i}_{\alpha\beta}(y)u_{\beta}(y)dy\\
\nonumber&\ - K\int_{B_{r}(x_{0})} G^{(1)}_{\alpha\gamma} (x, y) u_{\alpha}(y)dy.
\end{align}
Then the following are true.
\begin{enumerate}
\item[(1)] If $u \in W^{1, p}(B_{r}(x_{0}); \RR^{N})$, then $v = T_{r}u$ is a weak solution in $W^{1, p}(B_{r}(x_{0}); \RR^{N})$ to
\begin{equation}\label{correctionsystem}
\left(L^{(1)}v \right)_{\alpha} = -\partial_{i}\left( B^{i}_{\alpha\beta}u_{\beta} \right) - Ku_{\alpha}.
\end{equation}

\item[(2)] Write $\|T_{r}\|_{p}$ for the operator norm of $T_{r}: W^{1, p}(B_{r}(x_{0}); \RR^{N}) \to W^{1, p}(B_{r}(x_{0}); \RR^{N})$ with respect to the norm $\|\cdot\|^{\ast}_{1, p; B_{r}(x_{0})}$. Then there exists $r_{1} = r_{1}(n, N, c_{0}, \lambdabar, \Lambdabar, \bar{\mu})$ such that whenever $r \leq r_{1}$, we have $\|T_{r}\|_{p} < 1/2$.
\end{enumerate}
\end{prop}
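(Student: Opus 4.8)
The plan is to recognise that $T_r u$ is nothing but the unique element of $W^{1,p}_0(B_r(x_0);\RR^N)$ solving the correction system \eqref{correctionsystem}, and then to extract both assertions from the $W^{1,p}$-theory for such systems by rescaling $B_r(x_0)$ to the unit ball. (Here $p$ is the fixed exponent used throughout, e.g.\ $p=n/(n-\bar{\mu})$.)

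For part (1), I would first check that the right-hand side of \eqref{correctionsystem} has data in the correct spaces: since $u\in W^{1,p}(B_r(x_0);\RR^N)\hookrightarrow L^p$ and the $B^i_{\alpha\beta}$ are bounded by (H1)--(H2), we have $B^i_{\alpha\beta}u_\beta\in L^p$, while $Ku_\alpha\in L^p\hookrightarrow L^{p_*}$ on the bounded domain $B_r(x_0)$. Because $L^{(1)}$ is coercive on $B_r(x_0)$ (functions in $W^{1,2}_0(B_r(x_0))$ extend by zero to $B_1$, so (H4) transfers), Morrey's $W^{1,p}$-existence theory (\cite{morreybook}, Theorem~6.4.8; cf.\ the remark following Proposition~\ref{greensbasic}) produces a unique $v\in W^{1,p}_0(B_r(x_0);\RR^N)$ solving \eqref{correctionsystem} weakly. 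Applying the representation formula Proposition~\ref{greensdifferentiable}(2) with $F^i_\alpha=-B^i_{\alpha\beta}u_\beta$ and Proposition~\ref{greensdifferentiable}(3) with $f_\alpha=-Ku_\alpha$, then adding, yields $v_\gamma=(T_r u)_\gamma$ for $H^n$-a.e.\ point of $B_r(x_0)$, which is exactly \eqref{finalperturbation}. (These formulas are stated on $B_1$, but they transfer to $B_r(x_0)$ under the affine change of variables $x\mapsto x_0+rx$, which converts $L^{(1)}$ into an operator of the type \eqref{scaledoperator}.) Hence $T_r u=v\in W^{1,p}(B_r(x_0);\RR^N)$ solves \eqref{correctionsystem}, which is (1).

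For part (2), write $v=T_r u=v_1+v_2$ with $v_1,v_2\in W^{1,p}_0(B_r(x_0);\RR^N)$ solving $(L^{(1)}v_1)_\alpha=-\partial_i(B^i_{\alpha\beta}u_\beta)$ and $(L^{(1)}v_2)_\alpha=-Ku_\alpha$. Rescaling by $\tilde v_j(x)=v_j(x_0+rx)$ and $\tilde u(x)=u(x_0+rx)$, a direct computation (as in the derivation of \eqref{scaledoperator}) gives
\begin{equation*}
\tilde L\tilde v_1=\partial_i\bigl(r\tilde F^i\bigr),\quad \tilde F^i_\alpha(x)=-B^i_{\alpha\beta}(x_0+rx)\tilde u_\beta(x);\qquad \tilde L\tilde v_2=-Kr^2\tilde u,
\end{equation*}
where $\tilde L$ is the rescaled operator \eqref{scaledoperator}, satisfying (H1)--(H3) with the same constants and (H4) with a uniform constant depending only on $n,N,c_0,\lambdabar,\Lambdabar,\bar{\mu}$ — the latter via the Poincar\'e inequality on $B_1$ together with the fact that $K$ was fixed large enough. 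The essential feature is that the rescaled data $r\tilde F^i$ and $-Kr^2\tilde u$ carry positive powers of $r$. The $W^{1,p}$-estimates for $\tilde L$ on $B_1$ then give
\begin{equation*}
\|\tilde v_1\|_{1,p;B_1}\le Cr\|\tilde F\|_{p;B_1}\le C\Lambdabar r\|\tilde u\|_{p;B_1},\qquad \|\tilde v_2\|_{1,p;B_1}\le CKr^2\|\tilde u\|_{p_*;B_1}\le CKr^2\|\tilde u\|_{p;B_1},
\end{equation*}
the last step using $p_*<p$ and $|B_1|<\infty$. Since the scaling definition of the weighted norm gives $\|w\|^{\ast}_{1,p;B_r(x_0)}=\|w(x_0+r\cdot)\|_{1,p;B_1}$, and in particular $\|\tilde u\|_{p;B_1}=r^{-n/p}\|u\|_{p;B_r(x_0)}\le\|u\|^{\ast}_{1,p;B_r(x_0)}$, we obtain
\begin{equation*}
\|T_r u\|^{\ast}_{1,p;B_r(x_0)}=\|\tilde v_1+\tilde v_2\|_{1,p;B_1}\le C(\Lambdabar r+Kr^2)\,\|u\|^{\ast}_{1,p;B_r(x_0)},
\end{equation*}
and choosing $r_1=r_1(n,N,c_0,\lambdabar,\Lambdabar,\bar{\mu})$ so small that $C(\Lambdabar r_1+Kr_1^2)<\tfrac12$ yields $\|T_r\|_p<\tfrac12$ for all $r\le r_1$.

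The main obstacle is the bookkeeping of boundary conditions: it is essential that $T_r u$ be read as a solution of \eqref{correctionsystem} with \emph{zero} boundary values on $B_r(x_0)$ (equivalently, that $G^{(1)}$ in \eqref{finalperturbation} be the Green matrix of $L^{(1)}$ on $B_r(x_0)$ itself), for only then does the rescaled equation force $\|T_r u\|^{\ast}_{1,p}$ to be controlled by a \emph{positive} power of $r$ times $\|u\|^{\ast}_{1,p}$; otherwise the term built from $B^i$, which is $O(1)$ rather than $o(1)$ in contrast to the term built from $A(x_0)-A$ in Proposition~\ref{perturbationsmall}, would not yield a contraction. A secondary, routine point is the stability of the coercivity hypothesis (H4) under rescaling, which relies on the Poincar\'e inequality on $B_1$ and the largeness of $K$.
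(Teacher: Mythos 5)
There is a genuine gap: you have misidentified which boundary-value problem $T_{r}u$ solves. The Green matrix $G^{(1)}$ appearing in \eqref{finalperturbation} is the Green matrix of $L^{(1)}$ on the \emph{unit ball} $B_{1}$ (that is how it is defined in Appendix B, and Proposition~\ref{greensdifferentiable}(2)(3) are statements about solutions in $W^{1,p}_{0}(B_{1};\RR^{N})$). Consequently, when one plugs $F^{i}_{\alpha}=-B^{i}_{\alpha\beta}u_{\beta}\chi_{B_{r}(x_{0})}$ and $f_{\alpha}=-Ku_{\alpha}\chi_{B_{r}(x_{0})}$ into those representation formulas, the resulting function --- which is exactly $T_{r}u$ extended to $B_{1}$ --- lies in $W^{1,p}_{0}(B_{1};\RR^{N})$, \emph{not} in $W^{1,p}_{0}(B_{r}(x_{0});\RR^{N})$; its restriction to $B_{r}(x_{0})$ will in general be nonzero on $\partial B_{r}(x_{0})$. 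Your claimed identity ``$v_{\gamma}=(T_{r}u)_{\gamma}$ for the unique $v\in W^{1,p}_{0}(B_{r}(x_{0}))$'' is therefore false, and the parenthetical suggestion that the formulas ``transfer to $B_{r}(x_{0})$ under the affine change of variables'' does not hold: rescaling produces the Green matrix of $\tilde{L}$ on $B_{1}$, which is \emph{not} $G^{(1)}$ at rescaled arguments (the two solve Dirichlet problems on different domains). Note also that the proposition only asserts $T_{r}u\in W^{1,p}(B_{r}(x_{0}))$, without the subscript $0$, which is a deliberate hint that no boundary condition on $\partial B_{r}(x_{0})$ is being claimed.

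This breaks part (2) entirely as you have argued it: your rescaling step uses the global $W^{1,p}_{0}$ estimate on $B_{1}$ for $\tilde{v}_{1},\tilde{v}_{2}$, which requires those functions to vanish on $\partial B_{1}$, i.e.\ requires $v_{1},v_{2}\in W^{1,p}_{0}(B_{r}(x_{0}))$. Since that is false, the estimate does not apply. Moreover, the closing heuristic --- that the contraction ``would not yield'' with $G^{(1)}$ being the Green matrix on $B_{1}$ --- is mistaken. The correct mechanism (and the one the paper uses) does not need any rescaling of the domain at all. One applies the global $W^{1,p}_{0}$ estimate directly on $B_{1}$ to the \emph{extended} $T_{r}u\in W^{1,p}_{0}(B_{1})$. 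The crucial structural observation is that the right-hand side of \eqref{correctionsystem} involves only $u$ and \emph{not} $Du$, so the estimate gives
\begin{equation*}
\|DT_{r}u\|_{p;B_{1}}\leq C\bigl(\Lambdabar\|u\|_{p;B_{r}(x_{0})}+K\|u\|_{p_{\ast};B_{r}(x_{0})}\bigr)\leq C(\Lambdabar+Kr)\|u\|_{p;B_{r}(x_{0})}.
\end{equation*}
The smallness factor then emerges when converting to the scaled norm: one has $\|u\|_{p;B_{r}(x_{0})}=r^{n/p}\bigl(r^{-n/p}\|u\|_{p;B_{r}(x_{0})}\bigr)\leq r^{n/p}\|u\|^{\ast}_{1,p;B_{r}(x_{0})}$, so $r^{1-n/p}\|DT_{r}u\|_{p;B_{r}(x_{0})}\leq Cr\,\|u\|^{\ast}_{1,p;B_{r}(x_{0})}$, and the $L^{p}$ part is handled similarly via the Sobolev inequality $\|T_{r}u\|_{p^{\ast};B_{1}}\leq C\|DT_{r}u\|_{p;B_{1}}$ and H\"older on $B_{r}(x_{0})$. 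The factor of $r$ comes from this mismatch of weights in $\|\cdot\|^{\ast}_{1,p}$, not from the smallness of the data $B$. In short: replace the attempted rescaling and the false identification in $W^{1,p}_{0}(B_{r}(x_{0}))$ by a direct application of the $B_{1}$-estimates to the extended function $T_{r}u$, and the argument closes.
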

\begin{proof}
Assertion (1) follows straight from Proposition \ref{greensdifferentiable}(2)(3). Thus we will focus on proving assertion (2). To that end, notice that the integrals in \eqref{finalperturbation} in fact make sense for $x \in B_{1}$, and Proposition \ref{greensdifferentiable} tells us that the resulting function on $B_{1}$, still denoted $T_{r}u$, is the weak solution in $W^{1, p}_{0}(B_{1}; \RR^{N})$ to \eqref{correctionsystem}, with the right-hand side extended to be zero outside of $B_{r}(x_{0})$. Thus, the global $W^{1, p}$-estimates implies that
\begin{align}
\| D T_{r}u \|_{p; B_{1}} &\leq C\left( \Lambdabar \|u\|_{p; B_{r}(x_{0})} + K \| u \|_{p_{\ast}; B_{r}(x_{0})}\right) \\
\nonumber & \leq C(\Lambdabar + Kr) \|u\|_{p; B_{r}(x_{0})}.
\end{align}
Moreover, since $T_{r}u \in W^{1, p}_{0}(B_{1}; \RR^{N})$, by the Sobolev inequality we have $\|T_{r}u\|_{p^{\ast}; B_{1}} \leq C\| D T_{r}u \|_{p; B_{1}}$, and thus
\begin{align*}
r^{-n/p}\| T_{r}u \|_{p; B_{r}(x_{0})} &\leq r^{1 - n/p}\| T_{r}u \|_{p^{\ast}; B_{r}(x_{0})} \leq r^{1 - n/p}\| T_{r}u\|_{p^{\ast}; B_{1}}\\
\nonumber &\leq Cr^{1 - n/p}\| D T_{r}u \|_{p; B_{1}} \leq Cr r^{-n/p}\| u \|_{p; B_{r}(x_{0})}.
\end{align*}
From the above inequalities and the definition of $\|\cdot\|^{\ast}_{1, p; B_{r}(x_{0})}$, we arrive at
\begin{equation}
\|T_{r}u\|^{\ast}_{1, p;B_{r}(x_{0})} \leq Cr \| u \|^{\ast}_{1, p; B_{r}(x_{0})}.
\end{equation}
Taking $r_{1} = (2C)^{-1}$ completes the proof.
\end{proof}

\begin{proof}[Proof of Theorem \ref{greensgrowth}] 
Again we assume that $y  = 0$. Moreover, we use the same notations as in the proof of Theorem \ref{growthstepone}. Finally, when writing convolution integrals, we will often omit the Latin and Greek indices. 

Taking a radius $r \leq \min\{r_{1}/4, R_{0}/2, 1/8\}$, with $R_{0}$ given by Theorem \ref{growthstepone} and $r_{1}$ by Proposition \ref{finalperturbationproperties}, we write $G_{\gamma}$ as a perturbation of $G^{(1)}$ by letting 
\begin{equation}
w = G_{\gamma} - T_{2r}G_{\gamma} - G^{(1)}_{\gamma},
\end{equation}
where we take $x_{0} = 0$ in the definition of $T_{2r}$. Then a straightforward computation shows that $L^{(1)}w = 0$ on $B_{2r}$. Next, by our choice of $r$, we have $\| T_{2r} \|_{p} < 1/2$. Thus, as in the proof of Theorem \ref{growthstepone}, we can write
\begin{equation}\label{Gdecomposition}
G_{\gamma} = G^{(1)}_{\gamma} + \sum\limits_{l = 1}^{\infty}T_{2r}^{l}G^{(1)}_{\gamma} + \sum\limits_{l = 0}^{\infty}T_{2r}^{l}w. 
\end{equation}
\begin{claim*}
$\| w \|^{\ast}_{1, q; B_{2r}} \leq Cr^{1 - n/p}$.
\end{claim*}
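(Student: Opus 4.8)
The claim is the exact analogue of Lemma~\ref{estimatesonw}, but with $G^{(1)}$ playing the role previously played by the constant-coefficient solution $E$, and with the perturbation operator now being the one from Proposition~\ref{finalperturbationproperties}. So the plan is to repeat the proof of Lemma~\ref{estimatesonw} almost verbatim, replacing the growth estimate \eqref{constantcoefficientgrowth} for $E$ by the estimates \eqref{zerothboundcentral} and \eqref{firstderivative} for $G^{(1)}$ (valid on the relevant scale since $r\le R_{0}/2$), and replacing Proposition~\ref{perturbationsmall} by Proposition~\ref{finalperturbationproperties}(2) (valid since $r\le r_{1}/4$). As before I would estimate $\|w\|^{\ast}_{1,q;B_{r}}$ and $\|w\|^{\ast}_{1,q;B_{2r}\setminus B_{r}}$ separately.

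For $\|w\|^{\ast}_{1,q;B_{r}}$: since $L^{(1)}w=0$ on $B_{2r}$, interior $W^{1,p}$--estimates give $\|w\|^{\ast}_{1,q;B_{r}}\le C\|w\|^{\ast}_{1,p;B_{2r}}$, and then $\|w\|^{\ast}_{1,p;B_{2r}}\le \|G_{\gamma}\|^{\ast}_{1,p;B_{2r}}+\|T_{2r}G_{\gamma}\|^{\ast}_{1,p;B_{2r}}+\|G^{(1)}_{\gamma}\|^{\ast}_{1,p;B_{2r}}$. The first and third terms are bounded by $Cr^{1-n/p}$ using Proposition~\ref{greensbasic}(1) (applied to both $G$ and $G^{(1)}$) together with the Sobolev inequality, exactly as in \eqref{smallballscaled}; the middle term is then $\le Cr^{1-n/p}$ by Proposition~\ref{finalperturbationproperties}(2). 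For $\|w\|^{\ast}_{1,q;B_{2r}\setminus B_{r}}$, I again split $w=G_{\gamma}-T_{2r}G_{\gamma}-G^{(1)}_{\gamma}$. The terms $\|G_{\gamma}\|^{\ast}_{1,q;B_{2r}\setminus B_{r}}$ and $\|G^{(1)}_{\gamma}\|^{\ast}_{1,q;B_{2r}\setminus B_{r}}$ are controlled by interior elliptic estimates on the annulus together with $L^{(1)}$-harmonicity away from $0$ (using \eqref{zerothboundcentral} for $G^{(1)}$ and Proposition~\ref{greensbasic}(1)(2) for $G$), giving $\le Cr^{1-n/p}$. For $\|T_{2r}G_{\gamma}\|^{\ast}_{1,q;B_{2r}\setminus B_{r}}$ I would break each of the two integrals in \eqref{finalperturbation} into a part over $B_{r/2}$ and a part over $B_{2r}\setminus B_{r/2}$, mirroring the treatment of $T^{II}_{2r}G^{(1)}_{\gamma}$ in the proof of Lemma~\ref{estimatesonw}: on $B_{r/2}$ the kernels $G^{(1)}(x,\cdot)$ and $\partial^{(2)}G^{(1)}(x,\cdot)$ are $C^{1}$ with the pointwise bounds $|G^{(1)}|\le Cr^{2-n}$, $|\partial G^{(1)}|\le Cr^{1-n}$ from \eqref{zerothboundcentral}--\eqref{firstderivative}, so one differentiates under the integral and uses $\|G_{\gamma}\|_{1,1;B_{r/2}}\le C$; on $B_{2r}\setminus B_{r/2}$ one uses Proposition~\ref{greensdifferentiable}(2)(3)-type $W^{1,q}$-estimates together with $\|G_{\gamma}\|^{\ast}_{1,q;B_{3r}\setminus B_{r/2}}\le Cr^{1-n/p}$ (interior estimates plus $L^{(1)}G_{\gamma}=0$ away from $0$ -- note $L^{(1)}G_\gamma = (L^{(1)}-L)G_\gamma$, which is supported where the lower-order coefficients act, hence still vanishes near $0$ since $LG_\gamma=0$ there; more precisely $G_\gamma$ solves $LG_\gamma=0$ on $B_1\setminus\{0\}$ and $L^{(1)}$ differs from $L$ only in lower-order terms, so the needed interior regularity still applies). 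Collecting, $\|T_{2r}G_{\gamma}\|^{\ast}_{1,q;B_{2r}\setminus B_{r}}\le Cr^{1-n/p}$, and adding the three pieces yields $\|w\|^{\ast}_{1,q;B_{2r}\setminus B_{r}}\le Cr^{1-n/p}$.

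Combining the small-ball and annular estimates gives $\|w\|^{\ast}_{1,q;B_{2r}}\le Cr^{1-n/p}$, which is the claim (recall $1-n/p=1+\bar\mu-n$... actually $n/p=n-\bar\mu$ so $1-n/p=1-n+\bar\mu$, consistent with the role this plays later). The main obstacle I anticipate is purely bookkeeping: verifying that the interior $C^{1,\bar\mu}$- and $W^{1,p}$-estimates for $L^{(1)}$ genuinely apply to $G_{\gamma}$ on annuli around $0$ -- i.e.\ that $G_\gamma$, though only defined as the $\cap_{r<n/(n-1)}W^{1,r}_0$ solution of $LG_\gamma=e_\gamma\delta_0$, is a bona fide $W^{1,q}_{\mathrm{loc}}$ solution of the homogeneous system away from the pole so that Schauder/Calder\'on--Zygmund theory is legitimate -- and in keeping the scaling exponents straight through the $\|\cdot\|^{\ast}$-norms. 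Both are routine given Proposition~\ref{greensbasic}; no new idea is needed beyond the template already executed for $G^{(1)}$.
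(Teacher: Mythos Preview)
Your overall plan is correct and tracks the paper's proof closely: split into $B_{r}$ and $B_{2r}\setminus B_{r}$, use interior regularity and the triangle inequality, and handle $G_{\gamma}$, $G^{(1)}_{\gamma}$, and $T_{2r}G_{\gamma}$ separately. The small-ball estimate and the annular estimates for $G_{\gamma}$ and $G^{(1)}_{\gamma}$ are exactly as in the paper.

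There is, however, one step that does not quite go through as you describe. For the $B_{r/2}$-portion of $T^{I}_{2r}G_{\gamma}$ (the integral with kernel $\partial^{(2)}G^{(1)}$), you propose to differentiate under the integral in $x$ using the pointwise bounds from \eqref{zerothboundcentral}--\eqref{firstderivative}. But to control $\|D_{x}(\cdot)\|_{q;B_{2r}\setminus B_{r}}$ this way you would need a bound of the form $|D_{x}\,\partial^{(2)}_{i}G^{(1)}(x,y)|\le Cr^{-n}$ for $x\in B_{2r}\setminus B_{r}$, $y\in B_{r/2}$, i.e.\ a \emph{mixed} second-derivative estimate on $G^{(1)}$. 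Theorem~\ref{growthstepone} only gives first derivatives, and under (H1) the leading coefficients are merely $C^{0,\bar\mu}$, so $L^{(1)}$-harmonic functions are only $C^{1,\bar\mu}$; the mixed partial is not directly available. (Your method does work for $T^{II}_{2r}G_{\gamma}$, whose kernel is $G^{(1)}$ itself, since then only first derivatives are needed.) The paper sidesteps this by observing that the $B_{r/2}$-portion of $T^{I}_{2r}G_{\gamma}$, call it $\varphi_{2}$, is the $W^{1,p}_{0}(B_{1})$-solution of $L^{(1)}\varphi_{2}=-\partial_{i}(B^{i}G_{\gamma}\chi_{B_{r/2}})$ and is therefore $L^{(1)}$-harmonic on $B_{4r}\setminus B_{r/2}$; interior $W^{1,q}$-estimates on the annulus then give $\|\varphi_{2}\|^{\ast}_{1,q;B_{2r}\setminus B_{r}}\le C\|\varphi_{2}\|^{\ast}_{1,p;B_{3r}\setminus B_{2r/3}}\le Cr^{1-n/p}\|\varphi_{2}\|_{1,p;B_{1}}\le Cr^{1-n/p}$, using only the global $W^{1,p}$ bound and Proposition~\ref{greensbasic}(1). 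This replaces your kernel-differentiation step with a cleaner PDE argument.

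One minor remark: your aside ``$L^{(1)}G_{\gamma}=(L^{(1)}-L)G_{\gamma}$ \ldots\ vanishes near $0$'' is not right---$(L^{(1)}-L)G_{\gamma}=-\partial_{i}(B^{i}G_{\gamma})-KG_{\gamma}$ is nonzero everywhere---but as you then correctly note, what matters is that $LG_{\gamma}=0$ away from $0$, and the interior $W^{1,q}$ estimates depend only on (H1)--(H3), so they apply to $G_{\gamma}$ via the operator $L$.
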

\begin{proof}[Proof of the claim]
The proof is very similar to that of Lemma \ref{estimatesonw}. Therefore we will only emphasize the necessary modifications and sketch the rest of the argument.

As in the proof of Lemma \ref{estimatesonw}, we estimate separately the norm of $w$ over $B_{r}$ and $B_{2r}\setminus B_{r}$. For $\| w\|^{\ast}_{1, q; B_{r}}$, since $L^{(1)}w = 0$ on $B_{2r}$, we have
\begin{equation}
\|w\|^{\ast}_{1, q; B_{r}} \leq C\|w\|^{\ast}_{1, p; B_{2r}}\leq C\left( \| G^{(1)}_{\gamma} \|_{1, p; B_{2r}}^{\ast} + \| G_{\gamma} \|_{1, p; B_{2r}}^{\ast} + \| T_{2r}G_{\gamma} \|_{1, p; B_{2r}}^{\ast}\right).
\end{equation}
Again following \cite{fuchs}, p. 520, we get $\| G^{(1)}_{\gamma} \|_{1, p; B_{2r}}^{\ast} + \| G_{\gamma} \|_{1, p; B_{2r}}^{\ast} \leq Cr^{1-n/p}$. Since $T_{2r}$ is a bounded operator with respect to $\|\cdot\|_{1, p; B_{2r}}^{\ast}$, a similar estimate holds for $\| T_{2r}G_{\gamma} \|_{1, p; B_{2r}}^{\ast}$, and therefore we obtain $\| w \|_{1, q; B_{r}}^{\ast} \leq Cr^{1-n/p}$.

To estimate $\|w\|_{1, q; B_{2r}\setminus B_{r}}$, note that, by definition and the triangle inequality,
\begin{equation}\label{newwdecomposition}
\|w\|_{1, q; B_{2r}\setminus B_{r}}^{\ast} \leq \| G_{\gamma} \|^{\ast}_{1, q; B_{2r}\setminus B_{r}} + \| G^{(1)}_{\gamma} \|^{\ast}_{1, q; B_{2r}\setminus B_{r}} + \|T_{2r}G_{\gamma}\|^{\ast}_{1, q; B_{2r}\setminus B_{r}}.
\end{equation}
The first and second terms on the right-hand side are treated as in \cite{fuchs} (Lemma 4.1) with the help of Proposition \ref{greensbasic} and interior $W^{1, p}$-estimates, and we get 
\begin{equation}\label{neweasyannulus}
\|G^{(1)}_{\gamma}\|^{\ast}_{1, q; B_{2r} \setminus B_{r}} + \|G_{\gamma}\|^{\ast}_{1, q; B_{2r}\setminus B_{r}} \leq Cr^{1-n/p}.
\end{equation} 
The third term on the right-hand side of \eqref{newwdecomposition} requires some care, as the estimates we have for $G^{(1)}$ are not as precise as the ones we have for $E$. For convenience, we denote the two integrals in the definition of $T_{2r}G_{\gamma}$ by $T^{I}_{2r}G_{\gamma}$ and $T^{II}_{2r}G_{\gamma}$, respectively. 

For $T^{I}_{2r}G_{\gamma}$, by definition we have
\begin{align*}
T^{I}_{2r}G_{\gamma}(x) =& - \int_{B_{2r} \setminus B_{r/2}} \partial_{i}^{(2)}G^{(1)}_{\gamma}(x, y) B(y)G_{\gamma}(y)dy\\
&\ - \int_{B_{r/2}} \partial_{i}^{(2)}G^{(1)}_{\gamma}(x, y)B(y)G_{\gamma}(y)dy\\
&\ \equiv \varphi_{1}(x) + \varphi_{2}(x).
\end{align*} 
By Proposition \ref{greensdifferentiable} and elliptic estimates, applied with $F^{i}_{\alpha}(y) = B^{i}_{\alpha\beta}(y)G_{\gamma\beta}(y)\chi_{B_{2r}\setminus B_{r/2}}(y)$, we have $\|\varphi_{1}\|_{1, q; B_{1}} \leq C\Lambdabar \| G_{\gamma} \|_{1, q; B_{2r}\setminus B_{r/2}}$.
Thus we infer that
\begin{align*}
\| \varphi_{1} \|^{\ast}_{1, q; B_{2r}\setminus B_{r}} &\leq Cr^{1-n/q}\| \varphi_{1} \|_{1, q; B_{1}} \leq Cr^{1 - n/q}\| G_{\gamma} \|_{1, q; B_{2r}\setminus B_{r/2}},
\end{align*}
and the right-most term is bounded by $Cr^{1-n/p}$ by a version of \eqref{neweasyannulus}.

As for $\varphi_{2}$, note that again by Proposition \ref{greensdifferentiable}, this time applied with  $F^{i}_{\alpha}(y) = B^{i}_{\alpha\beta}(y)G_{\gamma\beta}(y)\chi_{B_{r/2}}(y)$, the function $\varphi_{2}$ is a weak solution to $L^{(1)}u = 0$ on $B_{4r}\setminus B_{r/2}$. Thus, using elliptic estimates and Propositions \ref{greensbasic} and \ref{greensdifferentiable}, we have
\begin{align*}
\|\varphi_{2}\|^{\ast}_{1, q; B_{2r}\setminus B_{r}} &\leq C\| \varphi_{2} \|^{\ast}_{1, p; B_{3r}\setminus B_{2r/3}} \leq Cr^{1 - n/p}\|\varphi_{2}\|_{1, p; B_{1}}\\
&\leq C\Lambdabar r^{1 - n/p}\| G_{\gamma} \|_{1, p; B_{r/2}} \leq Cr^{1-n/p}.
\end{align*}
Combining the bounds for $\varphi_{1}$ and $\varphi_{2}$, we arrive at
\begin{equation}
\|T_{2r}^{I}G_{\gamma}\|^{\ast}_{1, q; B_{2r}\setminus B_{r}} \leq Cr^{1 - n/p}. 
\end{equation}
By similar reasoning, we get $\| T_{2r}^{II}G_{\gamma} \|^{\ast}_{1, q; B_{2r} \setminus B_{r}} \leq Cr^{1-n/p}$ and hence $\|T_{2r}G_{\gamma}\|^{\ast}_{1, q; B_{2r}\setminus B_{r}} \leq Cr^{1 - n/p}$.

In view of \eqref{newwdecomposition} and the estimates derived above, we have $\|w\|^{\ast}_{1, q; B_{2r}\setminus B_{r}} \leq Cr^{1 - n/p}$, and the claim is proved upon recalling $\|w\|^{\ast}_{1, q; B_{r}} \leq Cr^{1 - n/p}$.
\end{proof}
From the claim and our choice of $r$, we see that 
\begin{equation}
\left\| \sum\limits_{l=0}^{\infty} T_{2r}^{l}w \right\|_{1, q; B_{2r}} \leq \sum\limits_{l = 0}^{\infty} \|T_{2r}\|_{q}^{l}\|w\|_{1, q; B_{2r}} \leq Cr^{1- n/p}.
\end{equation}
Hence, as before, the Sobolev embedding then yields 
\begin{equation}\label{finalwsup}
\left|\sum\limits_{l=0}^{\infty} T_{2r}^{l}w\right|_{0; B_{2r}} \leq Cr^{1 - n/p} = Cr^{1 + \bar{\mu} - n}.
\end{equation}
We next treat the term $\sum\limits_{l = 1}^{\infty}T_{2r}^{l}G^{(1)}_{\gamma}$. Below we write $u_{l}$ for $T_{2r}^{l}G^{(1)}_{\gamma}$. Then by definition, 
\begin{align}\label{finaliteration}
\left( u_{l}\right)_{\gamma} (x) =\ & - \int_{B_{2r}} \partial^{(2)}_{i}G^{(1)}_{\alpha\gamma}(x, y)B^{i}_{\alpha\beta}(y)u_{l-1, \beta}(y)dy\\
\nonumber&\ - K\int_{B_{2r}} G^{(1)}_{\alpha\gamma} (x, y) u_{l-1, \alpha}(y)dy.
\end{align}
\begin{claim*}
For $l = 0, 1, 2, \cdots$, and $x \in B_{2r}\setminus \{0\}$ the following estimate holds.
\begin{equation}\label{finalinduction}
|u_{l}(x)| \leq C(Cr^{\bar{\mu}})^{l}|x|^{2-n}.
\end{equation}
\end{claim*}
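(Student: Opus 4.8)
The plan is to prove \eqref{finalinduction} by induction on $l$, following closely the template of Lemma \ref{estimatesonE}. For the base case $l = 0$ we have $u_{0} = G^{(1)}_{\gamma}(\cdot, 0)$, whose decay is controlled by Theorem \ref{growthstepone}: applying \eqref{zerothboundcentral} to $G^{(1)}$ with the auxiliary radius there taken comparable to $|x|$ (so that $x$ lies in the required neighbourhood of $0$), and using that $|x| \le 2r$ with $r$ as small as needed, one obtains $|u_{0}(x)| \le C|x|^{2-n}$ on $B_{2r}\setminus\{0\}$; here the point is that $2-n$ is the genuine singularity exponent of the Green matrix of a second-order system with continuous leading coefficients, so the lower-order correction term can be absorbed against $C|x|^{2-n}$ (and, if \eqref{zerothboundcentral} alone is not sharp enough for this, one invokes the standard a priori bound $|G^{(1)}(x,y)| \le C|x-y|^{2-n}$).

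For the inductive step, assume \eqref{finalinduction} holds with $l-1$ in place of $l$. Using \eqref{finaliteration} I would split $u_{l} = u_{l,1} - u_{l,2}$, where $u_{l,1}$ is the convolution of $\partial^{(2)}_{i}G^{(1)}$ against $B^{i}_{\alpha\beta}u_{l-1,\beta}$ and $u_{l,2}$ is $K$ times the convolution of $G^{(1)}$ against $u_{l-1}$. The derivative of $G^{(1)}$ in its second argument is controlled by transferring \eqref{firstderivative}: by \eqref{c1symmetry}, $\partial^{(2)}_{i}G^{(1)}_{\alpha\gamma}(x,y) = \partial^{(1)}_{i}G'^{(1)}_{\gamma\alpha}(y,x)$, and since $L'^{(1)}$ satisfies (H1) to (H4) just as $L^{(1)}$ does, Theorem \ref{growthstepone} applies to $G'^{(1)}$ and gives $|\partial^{(2)}_{i}G^{(1)}(x,y)| \le C(|x-y|^{1-n} + |x-y|^{-1}R^{1+\bar{\mu}-n})$. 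Plugging these bounds and the induction hypothesis into the two integrals, I would estimate the resulting pure-pole convolutions $\int_{B_{2r}} |x-y|^{1-n}|y|^{2-n}\,dy$ and $\int_{B_{2r}} |x-y|^{2-n}|y|^{2-n}\,dy$ by the scaled form of \eqref{poleconvolution}; in every case the outcome is bounded by $C r^{\bar{\mu}}|x|^{2-n}$ once one uses $|x|,|y| \le 2r$, so that a spare power of $|x|$ (or a logarithm) can be traded for a factor $r^{\bar{\mu}}$. The constant $K$ and the $L^{\infty}$-bound $\Lambdabar$ on $B$ are fixed structural constants and are absorbed into $C$; the correction terms proportional to $R^{1+\bar{\mu}-n}$ are handled in the same way after taking $R$ comparable to the diameter of the domain, and their exponents balance exactly against $r^{\bar{\mu}}|x|^{2-n}$. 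This yields $|u_{l}(x)| \le C r^{\bar{\mu}}\cdot C(Cr^{\bar{\mu}})^{l-1}|x|^{2-n} = C(Cr^{\bar{\mu}})^{l}|x|^{2-n}$, closing the induction; requiring $r$ small then guarantees $Cr^{\bar{\mu}} < 1/2$, so that $\sum_{l} u_{l}$ converges as needed in \eqref{Gdecomposition}.

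The main obstacle, compared with Lemma \ref{estimatesonE}, is that the ``unperturbed'' kernel here is $G^{(1)}$ rather than the exactly homogeneous fundamental solution $E$, so there is no clean homogeneity scaling and one must keep careful track of the lower-order correction terms $\sim R^{1+\bar{\mu}-n}$ (and their derivative analogues) throughout; in particular the base case $l = 0$ and the absorption of these terms into $C|x|^{2-n}$ is the delicate point, since $1+\bar{\mu}-n \le 2-n$. A secondary technical nuisance is that the convolution exponents in the $u_{l,2}$-integral become borderline or super-critical in low dimensions ($n = 3, 4$), so \eqref{poleconvolution} produces a logarithm or a positive power of the domain radius rather than a pure negative power of $|x|$; each of these must be re-expressed, using $|x| \le 2r$, as a bound of the form $C r^{\bar{\mu}}|x|^{2-n}$, which works but only by a thin margin in the exponents.
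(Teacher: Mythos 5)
Your proof is correct and follows the same route as the paper's: induction on $l$, with the base case from Theorem~\ref{growthstepone} (taking $R = R_{0}$ fixed, so the correction $R_{0}^{1 + \bar{\mu} - n}$ is a universal constant absorbable against $|x|^{2-n} \geq R_{0}^{2-n}$ on $B_{2r}$), and the inductive step obtained by inserting the $G^{(1)}$-bounds into the recursion \eqref{finaliteration}, applying \eqref{poleconvolution}, and trading the surviving power of $|x| \leq 2r$ for $r^{\bar{\mu}}$. The one simplification the paper makes, worth noting, is to estimate $|x-y|^{2-n} \leq 4r\,|x-y|^{1-n}$ at the outset, so that only the single convolution $\int_{B_{2r}} |x-y|^{1-n}|y|^{2-n}\,dy$ (with exponent sum $3$) remains --- this sidesteps the borderline $n = 4$ case that your second, separated integral would produce.
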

\begin{proof}[Proof of the claim]
For $l = 0$, $u_{l}$ reduces to $G^{(1)}_{\gamma}$ and \eqref{finalinduction} follows from Theorem \ref{growthstepone} and the fact that $r < R_{0}/2$.  Next, assuming \eqref{finalinduction} for $l-1$, by the recursive relation \eqref{finalinduction} we obtain
\begin{align*}
|(u_{l})_{\gamma}(x)| \leq&\ \Lambdabar \int_{B_{2r}} |D G^{(1)}_{\gamma}(x, y)||u_{l-1}(y)|dy+ K\int_{B_{2r}} |G^{(1)}_{\gamma}(x, y)||u_{l-1}(y)|dy\\
\leq&\ C\Lambdabar\int_{B_{2r}} (|x - y|^{1 - n} + |x - y|^{-1}R_{0}^{1 + \bar{\mu} - n})(Cr^{\bar{\mu}})^{l-1}|y|^{2-n}dy\\
& + K\int_{B_{2r}}(|x - y|^{2-n} + R_{0}^{1 + \bar{\mu} - n}) (Cr^{\bar{\mu}})^{l-1}|y|^{2-n}dy\\
\leq & C(Cr^{\bar{\mu}})^{l-1} \int_{B_{2r}} (\Lambdabar + Kr)|x - y|^{1-n} |y|^{2-n}dy,
\end{align*}
where in the last step we used the inequalities $r \leq R_{0}/2 < R_{0}$ and $|x - y| \leq 4r$. Recalling \eqref{poleconvolution}, we infer from the above string of inequalities that
\begin{equation}
|(u_{l})_{\gamma}(x)| \leq C(Cr^{\bar{\mu}})^{l-1} |x|^{3-n} \leq C(Cr^{\bar{\mu}})^{l}|x|^{2 - n},
\end{equation}
where in the second inequality above we estimated $|x|^{3-n}$ by $|x|^{2-n}r^{\bar{\mu}}$. This completes the induction step, and the claim is proved.
\end{proof}

We now conclude the proof of Theorem \ref{greensgrowth}. Choosing 
\begin{equation*}
R_{1} = \min\{R_{0}/2, 1/8, r_{1}/4, (2C)^{1/\bar{\mu}}\},
\end{equation*}
where $C$ is given by the last claim, we see from \eqref{finalinduction} that $|u_{l}(x)| \leq C2^{-l}|x|^{2-n}$. Summing from $l = 1$ to $\infty$, we get
\begin{equation}
\left|\sum\limits_{l = 1}^{\infty}u_{l}(x)\right| \leq C|x|^{2-n}. 
\end{equation}
Recalling \eqref{Gdecomposition} and \eqref{finalwsup}, we conclude that,
\begin{equation}
|G_{\gamma}(x)| \leq C|x|^{2-n} + Cr^{1 + \bar{\mu} - n} \text{ for $r \leq R_{1}$ and $x \in B_{2r}\setminus \{0\}$, }
\end{equation}
which is precisely the desired conclusion.
\end{proof}

\section{The Hodge Laplacian with a Lipschitz metric}

In this appendix we again work on the unit ball $B_{1}$ and collect some results concerning the bilinear form $\cD_{g}$ and the Hodge Laplacian when the metric $g$ lies in one of the classes from Section 2.1. We also demonstrate below how the results in the previous appendix can be applied to the Hodge Laplacian. 

We begin by noting that since Theorem 7.7.7 of \cite{morreybook} applies when $g \in \cM_{\lambda, \Lambda}$, as pointed out in Remark \ref{lipschitzhodge}, we have that $\cH_{r, \ft}(B_{1}) = \{0\}$ for $0 < r < n$, as is the case when $g$ is smooth. It follows from this absence of non-trivial harmonic forms, the Garding inequality (Theorem 7.5.1 of \cite{morreybook}), and a standard argument by contradiction that, if $g \in \cM^{+}_{\mu, \lambda, \Lambda}$, then there exists $c_{0} = c_{0}(n, \mu, \lambda, \Lambda, r) > 0$ such that
\begin{equation}\label{hodgecoercive}
\cD_{g}(\zeta, \zeta) \geq c_{0}\|\zeta\|_{1, 2; M}^{2} \text{ for all }\zeta \in W^{1, 2}_{r, \ft}(B_{1}),
\end{equation}
Consequently, we obtain the following result.
\begin{lemm}\label{hodgeinvertible}
Suppose $g \in \cM_{\lambda, \Lambda}$. Then for all $l$ in the dual of $W^{1, 2}_{r, \ft}(B_{1})$, there exists an unique $\omega \in W^{1, 2}_{r, \ft}(B_{1})$ such that
\begin{equation}\label{hodgeinverse}
\cD_{g}(\omega, \zeta) = l(\zeta).
\end{equation}
If we assume in addition that $g \in \cM^{+}_{\mu, \lambda, \Lambda}$, then $\|\omega\|_{1, 2; M} \leq C \|l\|$, where $C = C(n, \mu, \lambda, \Lambda, r)$.
\end{lemm}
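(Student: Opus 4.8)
\textbf{Proof plan for Lemma \ref{hodgeinvertible}.}

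The plan is to apply the Lax--Milgram theorem to the bilinear form $\cD_{g}$ on the Hilbert space $H \equiv W^{1,2}_{r,\ft}(B_{1})$. The two hypotheses to check are boundedness and coercivity of $\cD_{g}$. For boundedness, I would estimate directly from the definition \eqref{harmonicdefi}: since $g \in \cM_{\lambda,\Lambda}$ the pointwise inner products on forms induced by $g$ are comparable to the Euclidean ones with constants depending only on $n,\lambda,\Lambda$, and $d\omega$, $d^{\ast}\omega$ are expressible through the components of $\nabla\omega$ and the (Lipschitz, hence bounded) Christoffel symbols; thus $|\cD_{g}(\omega,\zeta)| \leq C\|\omega\|_{1,2;B_{1}}\|\zeta\|_{1,2;B_{1}}$. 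For coercivity, I would first establish \eqref{hodgecoercive}. The ingredients are: (i) the G\r{a}rding-type inequality, Theorem 7.5.1 of \cite{morreybook}, which holds for a Lipschitz metric and gives $\cD_{g}(\zeta,\zeta) \geq c\|\zeta\|_{1,2;B_{1}}^{2} - C\|\zeta\|_{2;B_{1}}^{2}$; and (ii) the vanishing of harmonic fields, $\cH_{r,\ft}(B_{1}) = \{0\}$, which is valid for $g \in \cM_{\lambda,\Lambda}$ by Theorem 7.7.7 of \cite{morreybook} (see Remark \ref{lipschitzhodge}). A standard compactness--contradiction argument then upgrades G\r{a}rding to \eqref{hodgecoercive}: if no uniform $c_{0}$ worked, one would get a sequence $\zeta_{j} \in H$ with $\|\zeta_{j}\|_{1,2;B_{1}} = 1$ and $\cD_{g}(\zeta_{j},\zeta_{j}) \to 0$; by Rellich a subsequence converges in $L^{2}$, G\r{a}rding forces the convergence to be strong in $W^{1,2}$ to a limit $\zeta_{\infty}$ with $\|\zeta_{\infty}\|_{1,2;B_{1}} = 1$ and $\cD_{g}(\zeta_{\infty},\zeta_{\infty}) = 0$, hence $\zeta_{\infty} \in \cH_{r,\ft}(B_{1}) = \{0\}$, contradicting $\|\zeta_{\infty}\|_{1,2;B_{1}} = 1$.

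Once boundedness and \eqref{hodgecoercive} are in hand, Lax--Milgram gives, for every bounded linear functional $l$ on $H$, a unique $\omega \in H$ with $\cD_{g}(\omega,\zeta) = l(\zeta)$ for all $\zeta \in H$, which is exactly \eqref{hodgeinverse}; this part requires no regularity beyond $g \in \cM_{\lambda,\Lambda}$, since \eqref{hodgecoercive} is available in that generality too (the statement singles out $\cM^{+}_{\mu,\lambda,\Lambda}$ only to pin down the explicit dependence of the constant). Testing \eqref{hodgeinverse} with $\zeta = \omega$ and using coercivity yields $c_{0}\|\omega\|_{1,2;B_{1}}^{2} \leq \cD_{g}(\omega,\omega) = l(\omega) \leq \|l\|\,\|\omega\|_{1,2;B_{1}}$, whence $\|\omega\|_{1,2;B_{1}} \leq c_{0}^{-1}\|l\|$; tracking $c_{0}$ through the argument for $g \in \cM^{+}_{\mu,\lambda,\Lambda}$ gives the asserted constant $C = C(n,\mu,\lambda,\Lambda,r)$.

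The main obstacle is making the compactness--contradiction argument for \eqref{hodgecoercive} fully rigorous with only a Lipschitz metric: one must be careful that G\r{a}rding's inequality and the Rellich embedding are genuinely applicable in this setting (they are, as all the relevant results in Chapter 7 of \cite{morreybook} are stated for Lipschitz metrics, per Remark \ref{lipschitzhodge}), and that the bilinear form $\cD_{g}$ is continuous under the $W^{1,2}$-convergence used to pass to the limit. A secondary technical point is verifying that the constant $c_{0}$ produced this way depends only on $n,\mu,\lambda,\Lambda,r$ and not on finer features of $g$; this follows because all the estimates feeding the argument are uniform over the class $\cM^{+}_{\mu,\lambda,\Lambda;r}$, but it should be noted explicitly. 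Everything else is routine functional analysis.
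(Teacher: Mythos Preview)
Your proposal is correct and follows exactly the approach the paper itself sketches: the paper states \eqref{hodgecoercive} as a consequence of G\r{a}rding's inequality (Theorem 7.5.1 of \cite{morreybook}), the vanishing $\cH_{r,\ft}(B_{1}) = \{0\}$ via Theorem 7.7.7 of \cite{morreybook}, and ``a standard argument by contradiction,'' then records Lemma~\ref{hodgeinvertible} as an immediate consequence without further proof. Your write-up simply fills in those standard details (Lax--Milgram, the compactness--contradiction step, testing against $\omega$ for the norm bound), and your remarks on the uniformity of $c_{0}$ over the class $\cM^{+}_{\mu,\lambda,\Lambda}$ are appropriate caveats the paper leaves implicit.
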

\begin{rmk}\label{symmetry}
If $g \in \cM_{\mu, \lambda, \Lambda}^{+}$ and if we take $l(\cdot) = (\xi, \cdot)$ where $\xi$ satisfies $\tau^{\ast}\xi = \xi$, then by uniqueness, the solution $\omega$ to \eqref{hodgeinverse} lies in $W^{1, 2, +}_{r, \ft}$.
\end{rmk}

We proceed to describe how the results in Appendix B apply to the Hodge Laplacian associated to a metric in $\cM^{+}_{\mu, \lambda, \Lambda}$. We will only be interested in the case of $2$-forms. Below, we let $N = n(n-1)/2$, fix a basis $(e_{\alpha} = dx^{k_{\alpha}} \wedge dx^{l_{\alpha}})_{1 \leq \alpha \leq N}$ for $\bigwedge^{2}\RR^{n}$ and write two forms in terms of their components, e.g. $\varphi = \varphi_{\alpha}e_{\alpha} = \varphi_{k_{\alpha}l_{\alpha}}dx^{k_{\alpha}}\wedge dx^{l_{\alpha}}$. We begin with the following simple observation.
\begin{lemm}\label{ellipticform}
For $g \in \cM_{\lambda, \Lambda}$, $\varphi \in W^{1, 2}_{2, \ft}(B_{1})$ and $\zeta \in W^{1, 2}_{2, 0}(B_{1})$, there holds
\begin{equation}\label{goodellipticform}
\cD_{g}(\varphi, \zeta) =  \int_{B_{1}}(A^{ij}_{\alpha \beta} \partial_{j}\varphi_{\beta} + B^{i}_{\alpha\beta}\varphi_{\beta})\partial_{i}\zeta_{\alpha} - \zeta_{\alpha}(C^{i}_{\alpha\beta}\partial_{i}\varphi_{\beta} + D_{\alpha\beta}\varphi_{\beta})dx,
\end{equation}
where $B^{i}_{\alpha\beta}, C^{i}_{\alpha\beta}$ and $D_{\alpha\beta}$ are expressions involving $g$ and its Christoffel symbols, and the leading coefficients are given by
\begin{equation}\label{leading}
A^{ij}_{\alpha\beta} = \sqrt{\det(g)} g^{ij}g^{k_{\alpha}k_{\beta}}g^{l_{\alpha}l_{\beta}}.
\end{equation}
\end{lemm}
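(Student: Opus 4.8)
\textbf{Proof proposal for Lemma \ref{ellipticform}.}

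The plan is to compute $\cD_g(\varphi,\zeta)$ directly from the definition \eqref{harmonicdefi}, namely $\cD_g(\varphi,\zeta) = \int_{B_1}\langle d\varphi, d\zeta\rangle + \langle d^\ast\varphi, d^\ast\zeta\rangle\,\dvol_g$, and to reorganize the resulting expression into the divergence form \eqref{goodellipticform} by integrating by parts all terms that are not already of the stated shape. The key point is that the only place where two derivatives land on $\varphi$ and one derivative on $\zeta$ comes from the "principal part" of $\langle d\varphi,d\varphi\rangle$ and $\langle d^\ast\varphi,d^\ast\varphi\rangle$, while every other term either has one derivative on each (contributing to $B^i_{\alpha\beta}$ and $C^i_{\alpha\beta}$) or no derivative on one of them (contributing to $D_{\alpha\beta}$); all of the lower-order coefficients will be polynomial in $g^{ij}$, $\sqrt{\det g}$ and the Christoffel symbols $\Gamma^k_{ij}$, hence merely Lipschitz, which is exactly the regularity tolerated by the hypotheses (H1)--(H3) of Appendix B.

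First I would write, in coordinates, $d\varphi$ as the antisymmetrization of $\partial_i\varphi_{k_\alpha l_\alpha}$ and $d^\ast\varphi$ via $(d^\ast\varphi)_j = -g^{ik}\nabla_i\varphi_{kj}$, expand $\nabla_i\varphi_{kj} = \partial_i\varphi_{kj} - \Gamma^m_{ik}\varphi_{mj} - \Gamma^m_{ij}\varphi_{km}$, and substitute into the two quadratic forms using the induced inner product $\langle\cdot,\cdot\rangle$ on $\bigwedge^2 T^\ast M$, which is a polynomial in the $g^{ij}$. Collecting the terms in $\langle d\varphi,d\zeta\rangle + \langle d^\ast\varphi,d^\ast\zeta\rangle$ that are bilinear in $\partial\varphi$ and $\partial\zeta$ with no undifferentiated factors, one gets a term of the form $\sqrt{\det g}\,g^{ij}g^{k_\alpha k_\beta}g^{l_\alpha l_\beta}\partial_j\varphi_\beta\partial_i\zeta_\alpha$ (after absorbing the combinatorial factors coming from the choice of basis $(e_\alpha)$), which is precisely \eqref{leading}; this is the standard computation that the Hodge Laplacian on $p$-forms has principal symbol $|\xi|^2_g\cdot\mathrm{Id}$ scaled by the volume density. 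Then I would integrate by parts once in every remaining term so that no derivative of $\zeta$ appears except through the leading term and the $B^i_{\alpha\beta}\varphi_\beta$ term; since $\zeta \in W^{1,2}_{2,0}(B_1)$ has zero trace, no boundary contribution arises, and the fact that $\varphi \in W^{1,2}_{2,\ft}(B_1)$ means all manipulations are justified by density of smooth forms and the product/chain rules for Lipschitz $\times\ W^{1,2}$ functions.

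The main obstacle is purely bookkeeping: one must verify that after the integration by parts the leading coefficient is not contaminated by lower-order corrections — i.e. that differentiating the Lipschitz factors $\sqrt{\det g}\,g^{ij}\cdots$ produces only terms that are genuinely lower order (one derivative total on $\varphi,\zeta$ combined) and can be soaked into $C^i_{\alpha\beta}$ and $D_{\alpha\beta}$ — and that the antisymmetry of $\varphi$ and the combinatorics of summing over $\alpha<\beta$ versus over all index pairs are handled consistently so that \eqref{leading} comes out with the right constant. I expect no conceptual difficulty here; the regularity hypotheses are met because $g\in\cM_{\lambda,\Lambda}$ has Lipschitz components, so $g^{ij}$, $\sqrt{\det g}$ are Lipschitz (hence in $C^{0,\bar\mu}$ for any $\bar\mu\le 1$, giving (H1)--(H2) for the leading part), while (H3) follows from the ellipticity bound (l2) applied to the symbol of the Hodge Laplacian. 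I would close by simply recording the shape of the identity, leaving the explicit (and unenlightening) formulas for $B^i_{\alpha\beta}, C^i_{\alpha\beta}, D_{\alpha\beta}$ unwritten, as the statement only asserts their existence and dependence on $g$ and its Christoffel symbols.
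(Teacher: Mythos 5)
Your approach has a genuine gap, and it is exactly where you dismiss the difficulty as "purely bookkeeping."

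When you expand $\langle d\varphi, d\zeta\rangle + \langle d^\ast\varphi, d^\ast\zeta\rangle$ directly in coordinates, the collection of terms bilinear in $\partial\varphi$ and $\partial\zeta$ does \emph{not} equal $\sqrt{\det g}\,g^{ij}g^{k_\alpha k_\beta}g^{l_\alpha l_\beta}\partial_j\varphi_\beta\partial_i\zeta_\alpha$. That expression is the principal part of the \emph{rough} Laplacian $\nabla^\ast\nabla$; the direct expansion of $dd^\ast + d^\ast d$ yields that plus cross-derivative terms of the schematic type $f(g)\,\partial_i\varphi_j\,\partial_j\zeta_i$ which are not pointwise equal to a rough-Laplacian contribution. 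Your appeal to "the standard computation that the Hodge Laplacian has principal symbol $|\xi|^2\,\mathrm{Id}$" conflates the symbol as an endomorphism of forms with the tensor $A^{ij}_{\alpha\beta}$ as a quadratic form on $\RR^{nN}$: these are different objects. Concretely, if you take the direct leading coefficient and test it on a non-separable matrix $\xi_{i\alpha}$ — for instance, in the flat $1$-form model, the symmetric matrix $\xi_{ij} = \delta_{i1}\delta_{j2} + \delta_{i2}\delta_{j1}$ — the quadratic form $A^{ij}_{\alpha\beta}\xi_{i\alpha}\xi_{j\beta}$ vanishes. So the $A$ you would obtain fails (H3), the strong Legendre condition, and the entire Green-matrix machinery of Appendix B (via Remark \ref{fitfundamentalsolution}) would not apply. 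Your sentence claiming "(H3) follows from the ellipticity bound (l2) applied to the symbol of the Hodge Laplacian" is therefore incorrect.

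This is repairable, but the repair is a real argument, not bookkeeping. To convert the cross terms $\int f\,\partial_i\varphi_j\,\partial_j\zeta_i$ into $\int f\,\partial_j\varphi_j\,\partial_i\zeta_i$ plus admissible lower-order terms, you must integrate by parts \emph{twice}, passing through an intermediate expression with a second derivative on $\zeta$ (or $\varphi$), and only then do the second-order terms cancel, leaving pieces involving $\partial f$ (which is $L^\infty$ for Lipschitz $g$ — this is why the hypothesis tolerates it). You also have to justify this manipulation for $\varphi,\zeta$ merely in $W^{1,2}$, which means approximating by smooth forms and checking the limit on both sides. The paper sidesteps this entirely by going through the Weitzenb\"ock formula: write $\cD_g(\varphi,\zeta) = \int\langle\varphi,\Delta\zeta\rangle = \int\langle\nabla\varphi,\nabla\zeta\rangle + \int\langle\varphi,\cF\zeta\rangle$, where the first term produces \eqref{leading} cleanly, and the curvature term $\cF$ (which contains $\nabla\Gamma$, i.e.\ second derivatives of $g$) is reduced to Lipschitz-tolerable lower order by a single integration by parts moving one derivative off the Christoffel symbol. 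Both routes reach the same place, but if you want to pursue yours, you must carry out the cross-term cancellation explicitly and cannot simply read off \eqref{leading} from the principal symbol.
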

\begin{rmk}\label{fitfundamentalsolution}
In particular, the coefficients satisfy 
\begin{enumerate}
\item[(1)] $A^{ij}_{\alpha\beta} \in C^{0, 1}(B_{1})$ and $B^{i}_{\alpha\beta}, C^{i}_{\alpha\beta}, D_{\alpha\beta} \in L^{\infty}(B_{1})$, with norms bounded in terms of $n, \lambda$ and $\Lambda$. Moreover, for some constant $\lambdabar$ depending on $n, N, \lambda$ and $\Lambda$,
\begin{equation}
\lambdabar |\xi|^{2} \leq A^{ij}_{\alpha\beta}\xi_{i\alpha}\xi_{j\beta} \leq \lambdabar^{-1}|\xi|^{2}, \text{ for all }x \in B_{1} \text{ and }\xi = (\xi_{i\alpha}) \text{ in }\RR^{nN}.
\end{equation}
\item[(2)]
If $g \in \cM^{+}_{\mu, \lambda, \Lambda}$, then $A^{ij}_{\alpha\beta}\in C^{0, 1}(B_{1})$, while all the lower-order coefficients are $C^{0, \mu}$ on $B_{1}^{+}$ and $B_{1}^{-}$ (but may not be continuous across $\{x^{n} = 0\}$). Furthermore, 
\begin{equation}\label{goodellipticformhalf}
\cD_{g}(\varphi, \zeta) =  2\int_{B_{1}^{+}}(A^{ij}_{\alpha \beta} \partial_{j}\varphi_{\beta} + B^{i}_{\alpha\beta}\varphi_{\beta})\partial_{i}\zeta_{\alpha} - \zeta_{\alpha}(C^{i}_{\alpha\beta}\partial_{i}\varphi_{\beta} + D_{\alpha\beta}\varphi_{\beta})dx,
\end{equation}
for $\varphi \in W^{1, 2, +}_{2, \ft}(B_{1})$ and $\zeta \in W^{1, 2, +}_{2, 0}(B_{1})$.
\end{enumerate}
\end{rmk}
Assuming that $g \in \cM^{+}_{\mu, \lambda, \Lambda}$, then we see by \eqref{hodgecoercive} and Remark \ref{fitfundamentalsolution} that the elliptic operator associated to the bilinear form \eqref{goodellipticform} is precisely of the type considered in Appendix B, and therefore we have the following result.

\begin{lemm}
\label{hodgegreengrowth} 
Suppose $g \in \cM^{+}_{\mu, \lambda, \Lambda}$. The operator associated to $\cD_{g}$ has a fundamental solution $G: B_{1} \times B_{1} \to \RR^{N^{2}}$ (recall that $N = n(n-1)/2$) in the sense that given $p \in (1, \infty)$ and $f = (f_{\alpha})_{1 \leq \alpha \leq N} \in L^{p_{\ast}}(B_{1}; \RR^{N})$, with $p_{\ast} = np/(n + p)$, the unique solution $u \in W^{1, p}_{0}(B_{1}; \RR^{N})$ to the system 
\begin{equation}
\cD_{g}(u, \zeta) = (f, \zeta) \text{ for all }\zeta \in C^{1}_{0}(B_{1}; \RR^{N}),
\end{equation}
 is given for $H^{n}$-a.e. $x \in B_{1}$ by
\begin{equation}\label{hodgegreensrepresentation}
u_{\gamma}(x) = \int_{B_{1}} G_{\alpha\gamma}(x, y) u_{\alpha}(y)dy.
\end{equation}
Moreover, there exists constants $C_{1}, C_{2} > 0$ and $ 1/8 \geq R_{1} > 0$, depending only on $n, c_{0}, \lambda, \Lambda$ and $\mu$, such that
\begin{equation}\label{hodgezerothbound}
|G(x, y)| \leq C_{1}|x - y|^{2-n} + C_{2} R_{2}^{1 + \bar{\mu} - n},
\end{equation}
whenever $x , y \in B_{R_{1}}$ and $x \neq y$. 
\end{lemm}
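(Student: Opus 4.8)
The plan is to deduce Lemma~\ref{hodgegreengrowth} as a direct corollary of the machinery already assembled in Appendix~B, simply verifying that the elliptic system attached to $\cD_g$ satisfies the structural hypotheses (H1)--(H4). First I would invoke Lemma~\ref{ellipticform} and Remark~\ref{fitfundamentalsolution}(2) to write, for $\varphi\in W^{1,2,+}_{2,\ft}(B_1)$ and $\zeta\in W^{1,2,+}_{2,0}(B_1)$, the bilinear form $\cD_g$ in the divergence form \eqref{goodellipticformhalf} with leading coefficients $A^{ij}_{\alpha\beta}$ as in \eqref{leading} and lower-order coefficients built out of $g$, its Christoffel symbols, and $\nabla\log\det g$. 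Since $g\in\cM^+_{\mu,\lambda,\Lambda}$, its components are $C^{1,\mu}$ on $B_1^+$ and the coefficients $A^{ij}_{\alpha\beta}$ are Lipschitz (hence in $C^{0,\bar\mu}$ for $\bar\mu=\mu$) while $B^i_{\alpha\beta},C^i_{\alpha\beta},D_{\alpha\beta}$ are bounded on $B_1^+$; this gives (H1) and (H2) with $\Lambdabar$ depending only on $n,\lambda,\Lambda$ (and $\mu$ for the H\"older seminorm). Ellipticity (H3) is exactly Remark~\ref{fitfundamentalsolution}(1). Coercivity (H4) is \eqref{hodgecoercive}, which holds for metrics in $\cM^+_{\mu,\lambda,\Lambda}$ and yields the constant $c_0$.

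With (H1)--(H4) in hand on $B_1^+$ (or, after the even reflection built into the definition of $\cM^+_{\mu,\lambda,\Lambda}$, on all of $B_1$ with coefficients that are $C^{0,\bar\mu}$ on each of $B_1^\pm$), the operator associated to $\cD_g$ is precisely of the type \eqref{divergenceoperator} treated in Appendix~B. Then the existence and uniqueness of the solution $u\in W^{1,p}_0(B_1;\RR^N)$ to $\cD_g(u,\zeta)=(f,\zeta)$, for $f\in L^{p_*}$, is the $W^{1,p}$-solvability recorded there, and the representation formula \eqref{hodgegreensrepresentation} is the content of Proposition~\ref{greensdifferentiable}(3) together with the Remark following its proof (which states that Proposition~\ref{greensdifferentiable}(3) holds with $L$ and $G$ in place of $L^{(1)}$ and $G^{(1)}$). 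Finally, the pointwise bound \eqref{hodgezerothbound} is Corollary~\ref{growthnearzero} applied to this operator, with $R_1$ there playing the role of $R_2$; the constants $C_1,C_2,R_1$ depend only on $n,N,c_0,\lambdabar,\Lambdabar,\bar\mu$, and since $N=n(n-1)/2$ and $\lambdabar,\Lambdabar,\bar\mu$ are controlled by $n,\lambda,\Lambda,\mu$, the dependence collapses to $n,c_0,\lambda,\Lambda,\mu$ as claimed.

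The only genuine point requiring care — and the step I expect to be the main (minor) obstacle — is the reflection bookkeeping: the coefficients of the reflected system are not continuous across $\{x^n=0\}$, only $C^{0,\bar\mu}$ on each half-ball. One must check that Appendix~B never used continuity of the lower-order coefficients across a hyperplane, only their $L^\infty$ bounds and the $C^{0,\bar\mu}$ regularity of the \emph{leading} coefficients $A^{ij}_{\alpha\beta}$ (which \emph{are} globally $C^{0,1}$, being built from $g^{ij}$ and $\det g$, themselves continuous after even reflection by Definition~\ref{boundaryclass}(2)). Inspecting the perturbation operators $T_r$ in Appendix~B, the frozen-coefficient comparison is done only for the top-order part $A^{ij}_{\alpha\beta}(x_0)-A^{ij}_{\alpha\beta}(y)$, whose $C^{0,\bar\mu}$-control survives; the lower-order terms enter only through their $L^\infty$ norms and the convolution estimates \eqref{poleconvolution}. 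Hence all of Theorem~\ref{greensgrowth}, Corollary~\ref{growthnearzero} and Proposition~\ref{greensdifferentiable} go through verbatim, and Lemma~\ref{hodgegreengrowth} follows. I would write the proof as a short paragraph: cite Lemma~\ref{ellipticform}, Remark~\ref{fitfundamentalsolution}, and \eqref{hodgecoercive} to verify (H1)--(H4); then quote Proposition~\ref{greensdifferentiable}(3) (and the Remark after it) for \eqref{hodgegreensrepresentation} and Corollary~\ref{growthnearzero} for \eqref{hodgezerothbound}, noting the reduction of constants.
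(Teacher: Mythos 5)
Your proposal is correct and follows essentially the same route as the paper: the paper's discussion immediately preceding Lemma \ref{hodgegreengrowth} and the remark following it do precisely what you describe — verify (H1)--(H4) via Lemma \ref{ellipticform}, Remark \ref{fitfundamentalsolution}, and \eqref{hodgecoercive}, then quote Proposition \ref{greensdifferentiable}(3) for the representation formula and Corollary \ref{growthnearzero} for the growth bound. Your extra caution about the lower-order coefficients being merely $L^\infty$ (discontinuous across $\{x^n=0\}$) is exactly the point Remark \ref{fitfundamentalsolution}(2) already flags, and your check that Appendix~B only uses $L^\infty$ control on those terms is the right thing to verify.
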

\begin{rmk}
The representation formula \eqref{hodgegreensrepresentation} follows from Proposition \ref{greensdifferentiable}(3), whereas the upper bound \eqref{hodgezerothbound} of course comes from Corollary \ref{growthnearzero}.
\end{rmk}

\section{Proof of Proposition \ref{fermiproperties}}

Letting $(x^{1}, \cdots, x^{n-1}, t = x^{n})$ be Fermi coordinates centered at some $p \in \partial M$, we identify $\tilde{B}^{+}_{2\rho_{0}}(p)$ with $B^{+}_{2\rho_{0}} \subseteq \RR^{n}$ and introduce the following conventions:
\begin{itemize}
\item Unless otherwise stated, all tensor norms ($|\cdot|$) and covariant derivatives ($\nabla$) will be taken with respect to $g$. 
\item $\widetilde{\nabla}$ denotes the covariant derivative on $T_{2\rho_{0}}$ induced by $g$.
\item $h$ denotes the second fundamental form with respect to $g$ of $T_{2\rho_{0}}$ in $B^{+}_{2\rho_{0}}$.
\item $\delta$ denotes the Euclidean metric.
\item The Latin indices $i, j, \cdots$ will run from $1$ to $n-1$, whereas the Greek indices $\alpha, \beta, \cdots$ range from $1$ to $n$.
\end{itemize}
Also, in all the proofs below we denote by $C$ any constant which depends only those parameters listed in the statements, and $c_{n}$ denotes any dimensional constant. We now describe the proof of Proposition \ref{fermiproperties}, which is adapted from \cite{hamilton} and will be accomplished through a number of lemmas. 

\begin{lemm}\label{tensorODE} Let $S = S_{\alpha_{1}, \cdots, \alpha_{q}}dx^{\alpha_{1}}\cdots dx^{\alpha_{q}}$ be a smooth tensor on $B^{+}_{2\rho_{0}}$ and let $A_{0} = \sup_{|x'| \leq \rho_{0} }|S|(x', 0)$.
\begin{enumerate}
\item[(1)] If $S$ satisfies the differential inequality
\begin{equation*}
|\nabla_{t}S|(x', t) \leq C_{1}|S|^{2}(x', t) + C_{2} \text{ on }B^{+}_{2\rho_{0}}
\end{equation*}
then there exists $t_{0} \in (0, \rho_{0}]$, depending only on $C_{1}, C_{2}$ and $A_{0}$, such that
\begin{equation*}
|S|(x', t) \leq 2\left( A_{0} + C_{2}t\right) \text{ on } T_{\rho_{0}} \times [0, t_{0}] \equiv C^{+}_{\rho_{0}, t_{0}}.
\end{equation*}

\item[(2)] Let $t_{0}$ be given by part (1). If $S$ satisfies the differential inequality
\begin{equation*}
|\nabla_{t}S|(x', t) \leq C_{1}|S|(x', t) + C_{2} \text{ on } C^{+}_{\rho_{0}, t_{0}},
\end{equation*}
then in fact we have
\begin{equation*}
|S|(x', t) \leq C\left( A_{0} + t \right) \text{ on }C^{+}_{\rho_{0}, t_{0}},
\end{equation*}
where $C$ depends only on $C_{1}, C_{2}$ and $t_{0}$.
\end{enumerate}
\end{lemm}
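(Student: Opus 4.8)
The plan is to prove Lemma~\ref{tensorODE} by a Gr\"onwall/continuity argument applied to the scalar function $f(x',t) = |S|(x',t)$ along the $t$-lines, exploiting the fact that in Fermi coordinates the $t$-curves are unit-speed geodesics so that $\partial_t$ is parallel along them. First I would record the elementary pointwise inequality $\bigl|\partial_t |S|\bigr| \le |\nabla_t S|$, which holds wherever $|S| \ne 0$ and (in the weak/Lipschitz sense) everywhere; this converts the tensorial hypothesis into a differential inequality for the scalar $f$. For part~(1), fixing $x'$ with $|x'| \le \rho_0$ and writing $\phi(t) = f(x',t)$, the hypothesis gives $|\phi'(t)| \le C_1 \phi(t)^2 + C_2$ with $\phi(0) \le A_0$. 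I would then run a standard continuity (bootstrap) argument: let $t_0 \in (0,\rho_0]$ be the supremum of times up to which $\phi \le 2(A_0 + C_2 t)$ holds; on that interval $\phi \le 2(A_0 + C_2\rho_0)$, so $\phi' \le C_1 \cdot 4(A_0+C_2\rho_0)^2 + C_2$, and integrating from $0$ gives $\phi(t) \le A_0 + \bigl(4C_1(A_0+C_2\rho_0)^2 + C_2\bigr)t$. Choosing $t_0$ small enough (depending only on $C_1, C_2, A_0$, e.g. so that $4C_1(A_0+C_2\rho_0)^2 \le C_2$ after possibly shrinking $\rho_0$, or more directly so that the coefficient of $t$ is $\le 2C_2$) forces the strict inequality $\phi(t) < 2(A_0 + C_2 t)$ on $[0,t_0]$, so the bootstrap set is both open and closed and the bound propagates. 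The choice of $t_0$ is uniform in $x'$ since the constants are.

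For part~(2), the hypothesis is now linear in $|S|$: $|\phi'(t)| \le C_1 \phi(t) + C_2$ on $[0,t_0]$ with $\phi(0) \le A_0$. This is a textbook Gr\"onwall situation: from $\phi'(t) \le C_1\phi(t) + C_2$ one gets $\phi(t) \le e^{C_1 t}\bigl(A_0 + \tfrac{C_2}{C_1}(1 - e^{-C_1 t})\bigr) \le e^{C_1 t_0} A_0 + \tfrac{C_2}{C_1}(e^{C_1 t_0} - 1)$, and since $e^{C_1 t_0} - 1 \le C_1 t_0 e^{C_1 t_0}$, the second term is $\le C_2 t_0 e^{C_1 t_0}$; bounding $t \le t_0$ where needed and absorbing constants gives $\phi(t) \le C(A_0 + t)$ with $C$ depending only on $C_1, C_2, t_0$. (One can equally well first shrink to get $|S|(x',t) \le C(A_0 + t)$ for $t$ small and then note the whole slab $[0,t_0]$ is covered since $t_0$ is fixed.) The case $\phi' \ge -C_1\phi - C_2$ only gives a lower bound, which is not needed; the upper Gr\"onwall bound is all that part~(2) asserts.

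The only genuinely delicate point is the justification that $f = |S|$ satisfies the scalar differential inequality despite possibly vanishing or being merely Lipschitz. I would handle this in the usual way: on the open set $\{|S| > 0\}$ the function $|S|$ is smooth with $\partial_t |S| = \langle \nabla_t S, S\rangle/|S|$, whence $|\partial_t |S|| \le |\nabla_t S|$; at points where $|S| = 0$ one argues with $|S|^2$ (which is smooth) and $\partial_t |S|^2 = 2\langle \nabla_t S, S\rangle$, or simply notes $|S|$ is Lipschitz in $t$ with a.e.\ derivative bounded by $|\nabla_t S|$, and integrates. The rest is routine ODE comparison, and the uniformity of all constants in $x'$ is immediate because the hypotheses are assumed uniformly on $B^+_{2\rho_0}$. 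So the main obstacle is essentially bookkeeping rather than conceptual: making the bootstrap in part~(1) airtight with a $t_0$ that depends only on the allowed parameters, and being careful that shrinking $\rho_0$ (if needed) does not circularly affect $A_0$. I expect no serious difficulty, and the lemma then feeds directly into estimating the metric components $g_{\alpha\beta}$, the Christoffel symbols, and $h$ along the normal direction in the subsequent lemmas leading to Proposition~\ref{fermiproperties}.
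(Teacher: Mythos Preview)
Your proposal is correct and follows essentially the same strategy as the paper: reduce to a scalar ODE for $|S|$ along the $t$-lines and integrate. The only technical differences are that the paper regularizes with $f(t)=\sqrt{|S|^2+\eta^2}$ (then sends $\eta\to 0$) to sidestep the differentiability of $|S|$ at zeros, and in part~(1) it bounds $M(t)=\sup_{s\le t}f(s)$ via the quadratic inequality $M(t)\le (A_0+\eta+C_2 t)+C_1 M(t)^2 t$ and reads off $t_0$ from the smaller root of the quadratic, rather than running a bootstrap; these are cosmetic variations of your argument.
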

\begin{proof}
For part (1), borrowing an idea from the proof of Corollary 4.8 in \cite{hamilton}, we fix $x' \in T_{\rho_{0}}$ and define the auxiliary function $f(t) = \sqrt{|S|^{2}(x', t) + \eta^{2}}$, where $\eta \in (0, 1)$ is some parameter to be send to zero later. Differentiating $f$ and using the given inequality, we find that
\begin{align*}
|f'| &= \frac{|\langle S, \nabla_{t}S \rangle|}{\sqrt{|T|^{2} + \eta^{2}}} \leq |\nabla_{t}S| \\
&\leq C_{1}|S|^{2} + C_{2}\\
&\leq C_{1}f^{2} + C_{2}.
\end{align*}
The fundamental theorem of Calculus then yields, for all $0 < s \leq t \leq \rho_{0}$, that
\begin{equation*}
f(s) \leq f(0) + t\left( C_{1}\sup\limits_{s \leq t}f(s)^{2} + C_{2} \right).
\end{equation*}
Letting $M(t) = \sup_{s \leq t}f(s)$ and taking supremum on the left-hand side, we obtain
\begin{align*}
M(t) &\leq f(0) + t\left( C_{1}M(t)^{2} + C_{2} \right)\\
&\leq \left( A_{0} + \eta + C_{2}t \right) + C_{1}M(t)^{2} t.
\end{align*}
We may now choose $t_{0}$ depending only on $C_{1}, C_{2}$ and $A_{0}$ such that
\begin{equation*}
1 - 4C_{1}t\left( A_{0} + 1 + C_{2}t \right) > 0 \text{ for }t \leq t_{0}.
\end{equation*}
Then by the continuity of $M(t)$ on $[0, \rho_{0}]$ we must have
\begin{equation*}
M(t) \leq \frac{1 - \sqrt{1 - 4C_{1}t\left( A_{0} + \eta + C_{2}t \right)}}{2C_{1}t} \leq 2(A_{0} + \eta + C_{2}t) \text{ for }t \leq t_{0}.
\end{equation*}
Sending $\eta$ to zero and recalling the definition of $f$, we complete the proof of part (1).

For part (2), again we fix $x' \in T_{\rho_{0}}$, introduce $f(t) = \sqrt{|S|^{2} + \eta^{2}}$ and find after differentiating that 
\begin{equation*}
|f'(t)| \leq C_{1}f(t) + C_{2} = C_{1}\left( f(t) + C_{2}/C_{1} \right)\text{ for }t \leq t_{0},
\end{equation*}
which implies by the Gronwall inequality that
\begin{equation*}
f(t) \leq e^{C_{1}t_{0}}\left( f(0) + Ct \right) \text{ for }t \leq t_{0},
\end{equation*}
with $C$ depending only on $C_{1}$ and $C_{2}$. The proof is complete upon sending $\eta$ to zero and recalling that definition of $f$.
\end{proof}

\begin{lemm}\label{hessiant}
Let $A_{0} = \sup_{T_{\rho_{0}}} |h|$ and let $B_{0} = \sup_{C^{+}_{\rho_{0}, \rho_{0}}}|R|$. Then there exists $t_{0} < \rho_{0}$ and a constant $C$, both depending only on $n, A_{0}, B_{0}$, such that 
\begin{equation}\label{hessiantbound}
|\nabla^{2}t| \leq C \text{ on }C^{+}_{\rho_{0}, t_{0}}.
\end{equation}
\end{lemm}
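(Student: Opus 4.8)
The plan is to use the standard fact that in Fermi coordinates the Laplacian of the distance function $t$ equals (minus) the mean curvature of the level sets, and more precisely that $\nabla^{2}t$ restricted to a level set is the second fundamental form of that level set, while $\nabla^{2}t$ has a vanishing normal component in the $\partial_{t}$-direction. Thus everything reduces to a Riccati-type evolution equation for the shape operator of the level sets $\{t = \text{const.}\}$ along the normal geodesics $s \mapsto (x', s)$, and then to an application of Lemma \ref{tensorODE} to the tensor $S = \nabla^{2}t$ (or equivalently the shape operator of the level sets).

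First I would record the basic identities. Since $t$ is a distance function, $|\nabla t| \equiv 1$ and $\nabla_{\nabla t}\nabla t = 0$, so $\nabla^{2}t(\partial_{t}, \cdot) = 0$; hence $|\nabla^{2}t|$ is controlled by the norm of the shape operator $W$ of the level hypersurfaces, and on $T_{\rho_{0}}$ itself $|\nabla^{2}t|(x',0) = |h|(x',0) \le A_{0}$. Next I would derive the Riccati equation: differentiating $\nabla^{2}t$ along $\partial_{t}$ and commuting derivatives produces
\begin{equation*}
\nabla_{t}(\nabla^{2}t) = -(\nabla^{2}t)^{2} - R(\cdot, \partial_{t}, \partial_{t}, \cdot),
\end{equation*}
where $(\nabla^{2}t)^{2}$ denotes the square in the sense of the induced metric (contraction of two copies). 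In coordinate/tensor form this reads $|\nabla_{t}(\nabla^{2}t)| \le |\nabla^{2}t|^{2} + |R|$, which is exactly the hypothesis of Lemma \ref{tensorODE}(1) with $C_{1} = c_{n}$ (a dimensional constant coming from the contraction) and $C_{2} = B_{0}$, and with $A_{0} = \sup_{T_{\rho_{0}}}|h|$. Applying Lemma \ref{tensorODE}(1) yields a $t_{0} \le \rho_{0}$, depending only on $n, A_{0}, B_{0}$, such that $|\nabla^{2}t|(x',t) \le 2(A_{0} + B_{0}t) \le 2(A_{0} + B_{0}\rho_{0}) \le C$ on $C^{+}_{\rho_{0}, t_{0}}$.

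That already gives a bound, but to match the statement cleanly (and for later use in the induction in Appendix D) I would then feed this back in: now that $|\nabla^{2}t|$ is bounded on $C^{+}_{\rho_{0},t_{0}}$, the Riccati inequality can be rewritten as $|\nabla_{t}(\nabla^{2}t)| \le C_{1}'|\nabla^{2}t| + C_{2}'$ with $C_{1}' = c_{n}\sup_{C^{+}_{\rho_{0},t_{0}}}|\nabla^{2}t|$ and $C_{2}' = B_{0}$, whence Lemma \ref{tensorODE}(2) gives the refined estimate $|\nabla^{2}t|(x',t) \le C(A_{0} + t)$, in particular $|\nabla^{2}t| \le C$ on $C^{+}_{\rho_{0},t_{0}}$ with $C$ depending only on $n, A_{0}, B_{0}$. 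Shrinking $\rho_{0}$ if necessary so that $C^{+}_{\rho_{0},\rho_{0}} \subset B^{+}_{2\rho_{0}}$ and relabeling $t_{0}$ completes the proof of \eqref{hessiantbound}.

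\textbf{Main obstacle.} The only genuinely delicate point is deriving the Riccati equation $\nabla_{t}(\nabla^{2}t) = -(\nabla^{2}t)^{2} - R(\cdot,\partial_{t},\partial_{t},\cdot)$ carefully in the Fermi coordinate frame — i.e. correctly handling the commutation of $\nabla_{t}$ with the two spatial derivatives in $\nabla^{2}t$ and verifying that the curvature term that appears is precisely the $(0,2)$-tensor $X \mapsto R(X,\partial_{t},\partial_{t},X)$, whose norm is bounded by $|R|_{g} = B_{0}$. Everything downstream is a mechanical invocation of Lemma \ref{tensorODE}; the curvature bookkeeping and the verification that the "squared" term has the sign and the contraction structure required to apply part (1) of that lemma are where care is needed, but this is a classical computation (cf. the treatment in \cite{hamilton}) and presents no serious difficulty.
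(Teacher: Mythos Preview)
Your proposal is correct and follows essentially the same approach as the paper: both derive the Riccati-type equation $\nabla_{t}(\nabla^{2}t) + (\nabla^{2}t)^{2} + R(\cdot,\partial_{t},\partial_{t},\cdot) = 0$ (the paper obtains it by applying $\nabla^{2}_{i,j}$ to $|\nabla t|^{2}=1$), note that the normal components $\nabla^{2}_{t,\cdot}t$ vanish, and then apply Lemma~\ref{tensorODE}(1) with initial data $|h|\le A_{0}$ and forcing term bounded by $B_{0}$.

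Two minor remarks: your second pass invoking Lemma~\ref{tensorODE}(2) is unnecessary---part~(1) already gives $|\nabla^{2}t|\le 2(A_{0}+B_{0}t_{0})$ on $C^{+}_{\rho_{0},t_{0}}$, and since $t_{0}=t_{0}(n,A_{0},B_{0})$ this is the desired bound; and in the chain $2(A_{0}+B_{0}t)\le 2(A_{0}+B_{0}\rho_{0})$ you should write $t_{0}$ rather than $\rho_{0}$ to keep the constant depending only on $n,A_{0},B_{0}$.
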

\begin{proof}
Since straight lines in the $t$-direction parametrize unit-speed geodesics on $M$, we have $\nabla_{t}\nabla t = 0$ on $B^{+}_{\rho_{0}}$, and thus
\begin{equation}\label{hessiankernel}
\nabla^{2}_{t, t}t = \nabla^{2}_{i, t}t = 0.
\end{equation}
To estimate $\nabla^{2}_{i, j}t$, we apply $\nabla^{2}_{i, j}$ to the equation $1 = |\nabla t|^{2}$ to obtain
\begin{align}\label{nabla3t}
 0 & = \nabla^{3}_{i, j, t}t \nabla_{t}t + g^{rs}\nabla^{2}_{i, r}t \nabla^{2}_{j, s}t\\
\nonumber &= \nabla^{3}_{i, t, j}t \nabla_{t}t + g^{rs}\nabla^{2}_{i, r}t \nabla^{2}_{j, s}t\\
\nonumber   &= \left( \nabla^{3}_{t, i, j}t + R_{itj}^{\alpha}\nabla_{\alpha}t \right) \nabla_{t}t +  g^{rs}\nabla^{2}_{i, r}t \nabla^{2}_{j, s}t\\\nonumber
   &= \nabla^{3}_{t, i, j}t + R_{itjt} +  g^{rs}\nabla^{2}_{i, r}t \nabla^{2}_{j, s}t,
\end{align}
where in the last equality we used $\nabla_{i}t = 0$ and $\nabla_{t}t = 1$. Letting $S = \nabla^{2}_{i, j}t dx^{i}dx^{j}$, we find that
\begin{equation*}
|\nabla_{t}S| \leq c_{n}B_{0} + c_{n}|S|^{2} \text{ on }B^{+}_{2\rho_{0}},
\end{equation*}
and that $\sup_{|x'| \leq \rho_{0}}|S|(x', 0) = \sup_{|x'| \leq \rho_{0}}|h|(x') \leq A_{0}$. Therefore by Lemma \ref{tensorODE}(1) we conclude that there exists a $t_{0} \leq \rho_{0}$, depending only on $n, A_{0}$ and $B_{0}$ such that 
\begin{equation}
|\nabla^{2}_{ij}tdx^{i}dx^{j}| \leq C.
\end{equation}
Combining this with \eqref{hessiankernel}, we are done.
\end{proof}

\begin{lemm}\label{nablaqt}
Let $t_{0}$ be as in Lemma \ref{hessiant}. For each $q$, let $A_{q} = \sup_{T_{\rho_{0}}}|\widetilde{\nabla}^{q}h|$ and $B_{q} = \sup_{C^{+}_{\rho_{0}, \rho_{0}}}|\nabla^{q}R|$. Then for each $q \geq 2$, there exists a constant $C$, depending only on $n, q$, $\{A_{m}\}_{0 \leq m \leq q-2}$ and $\{B_{m}\}_{0 \leq m \leq q-2}$ such that
\begin{equation}\label{nablaqtbound}
|\nabla^{q}t| \leq C \text{ on }C^{+}_{\rho_{0}, t_{0}}.
\end{equation}
\end{lemm}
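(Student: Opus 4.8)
The plan is to induct on $q$, using the same strategy that produced Lemmas \ref{hessiant} and \ref{nablaqt} for $q=2$: differentiate the eikonal identity $|\nabla t|^2 = 1$ enough times to obtain a first-order ODE (in the $t$-direction) for the relevant component tensor, verify that the initial data on $T_{\rho_0}$ is controlled by the intrinsic derivatives of the second fundamental form, and then invoke Lemma \ref{tensorODE}(2) to propagate the bound into the slab $C^+_{\rho_0, t_0}$. The base case $q=2$ is Lemma \ref{hessiant}. So assume \eqref{nablaqtbound} holds for all orders $2, 3, \dots, q-1$ with the stated constants, and consider order $q$.

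First I would decompose $\nabla^q t$ by how many of the $q$ indices equal $n$ (the $t$-direction). Since $\nabla_t \nabla t = 0$ (straight lines are unit-speed geodesics), any component with a $t$ in the last slot after commuting derivatives vanishes, just as in \eqref{hessiankernel}; more precisely, repeated use of $\nabla^2_{t,t}t = \nabla^2_{i,t}t = 0$ together with the commutation formula $\nabla^3_{i,t,j}t = \nabla^3_{t,i,j}t + R{}^{\alpha}{}_{itj}\nabla_\alpha t$ lets one trade every $t$-slot for either a curvature term or a reduction in order. Thus it suffices to bound the purely tangential components $S_{i_1\cdots i_q} = \nabla^q_{i_1\cdots i_q} t$. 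To get an evolution equation for $S$, apply $\nabla^{q}_{i_1 \cdots i_q}$ (all tangential) to $1 = |\nabla t|^2 = g^{\alpha\beta}\nabla_\alpha t\,\nabla_\beta t$. The top-order term is $2\,\nabla^{q+1}_{i_1\cdots i_q, t}t \cdot \nabla_t t$; commuting the $t$ past the tangential indices as above produces $\nabla_t S$ plus a sum of terms each of which is a contraction of curvature (and its covariant derivatives up to order $q-2$) with lower-order Hessians of $t$. The remaining terms from the Leibniz expansion are products $\nabla^{a}t \ast \nabla^{b}t$ with $2 \le a, b \le q-1$ and $a+b = q+2$, hence controlled by the induction hypothesis and Lemma \ref{hessiant}. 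This yields a pointwise inequality of the form
\begin{equation*}
|\nabla_t S|(x', t) \le C_1 |S|(x', t) + C_2 \quad \text{on } C^+_{\rho_0, t_0},
\end{equation*}
where $C_1, C_2$ depend only on $n, q$, the already-controlled quantities $\{B_m\}_{0 \le m \le q-2}$, and the lower-order bounds on $\nabla^m t$ for $m \le q-1$ (which in turn depend on $\{A_m\}_{0\le m\le q-3}$, $\{B_m\}_{0\le m\le q-3}$) — so ultimately on $n, q$, $\{A_m\}_{m\le q-2}$, $\{B_m\}_{m\le q-2}$ as claimed.

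Next I would identify the initial data. On $T_{\rho_0}$, where $t = 0$, the purely tangential Hessian $\nabla^2_{ij}t$ restricts to the second fundamental form $h$, and I claim that $S|_{t=0} = \nabla^q_{i_1\cdots i_q}t|_{T_{\rho_0}}$ is, modulo lower-order terms, the intrinsic derivative $\widetilde\nabla^{q-2}h$; the discrepancy between the ambient tangential derivatives $\nabla_i$ and the induced connection $\widetilde\nabla_i$ on $T_{\rho_0}$ is governed by $h$ itself (Gauss formula), so it contributes only terms involving $\widetilde\nabla^m h$ with $m \le q-3$ and products thereof, all bounded by $A_0, \dots, A_{q-2}$. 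Hence $\sup_{|x'|\le\rho_0}|S|(x',0) \le C(n,q,\{A_m\}_{m\le q-2})$. This is exactly the hypothesis $A_0$ needed in Lemma \ref{tensorODE}(2). Applying that lemma (with $t_0$ the radius fixed in Lemma \ref{hessiant}) gives $|S| \le C(A_0 + t) \le C$ on $C^+_{\rho_0, t_0}$, and combining with the vanishing/curvature reduction of the mixed components completes the bound \eqref{nablaqtbound} for order $q$.

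The main obstacle I anticipate is purely bookkeeping rather than conceptual: carefully tracking the commutators when moving the $t$-slot through $q$ tangential slots, so as to confirm that no term of order $q+1$ survives except the single $\nabla_t S$ term, and that every curvature factor appears with at most $q-2$ covariant derivatives. A clean way to organize this is to argue by a secondary induction (on the number of tangential slots the $t$-derivative must be commuted past), using the Ricci identity once per step; each commutation lowers the differential order of $t$ by one at the cost of one curvature factor, which is why $\nabla^{q-2}R$ and $\widetilde\nabla^{q-2}h$ are the highest-order geometric quantities entering. A secondary, minor point is to make sure the slab half-width $t_0$ can be taken to be the \emph{same} $t_0$ as in Lemma \ref{hessiant} for all $q$; this is fine because Lemma \ref{tensorODE}(2) does not shrink $t_0$ further — only part (1), used in the base case, selects it. Once \eqref{nablaqtbound} is established for all $q$, the estimates \eqref{fermicomparable} and \eqref{fermiderivative} of Proposition \ref{fermiproperties} follow by expressing the metric components $g_{ij}$ in Fermi coordinates through $t$ and its derivatives (the Gauss lemma gives $g_{in} = \delta_{in}$, and $\partial_t g_{ij}$ is read off from $\nabla^2 t$), integrating in $t$ from the initial values on $T_{\rho_0}$, and shrinking $\rho_0$ if necessary so that \eqref{fermicomparable} holds.
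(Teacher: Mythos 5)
Your proposal follows essentially the same route as the paper: differentiate the eikonal identity $|\nabla t|^2 = 1$ $q$ times, commute the $t$-derivative to the front to obtain a transport inequality $|\nabla_t S| \le C_1 |S| + C_2$ on $C^+_{\rho_0, t_0}$ where $S = \nabla^q t$, and propagate from the initial data on $T_{\rho_0}$ via Lemma~\ref{tensorODE}(2), inducting on $q$. The one organizational difference is that you split $\nabla^q t$ into purely tangential and mixed components and reduce the latter separately, whereas the paper estimates the full tensor $S = \nabla^q t$ at once and packages the initial-data computation as a standalone claim; both work, and you are also right that $t_0$ need not shrink with $q$ since part~(2) of Lemma~\ref{tensorODE}, unlike part~(1), does not reselect the slab width.

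One inaccuracy worth flagging: when estimating the initial data you assert that $\nabla^q_{i_1\cdots i_q}t|_{t=0}$ is controlled solely by $\{\widetilde\nabla^m h\}_{m\le q-2}$. That is not quite right. Rewriting the ambient tangential derivative $\nabla_{i_1}$ acting on $(\nabla^{q-1}t)_{i_2\cdots i_q}$ as $\widetilde\nabla_{i_1}$ plus corrections produces terms involving the \emph{mixed} components $\nabla^{q-1}_{i_2\cdots t \cdots i_q}t|_{t=0}$, and the recursion expressing those components (which comes from the differentiated eikonal relation after commuting $\nabla_t$ to the front) introduces $\{\nabla^m R(x',0)\}_{m \le q-3}$; this is precisely what the paper's Claim~\ref{initialcondition} records. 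Your final constant already allows dependence on $\{B_m\}_{m\le q-2}$, so the omission is harmless to the conclusion, but the intermediate statement that the initial data is bounded purely by the $A_m$ should be amended.
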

\begin{proof}
The proof proceeds by induction on $q$. The case $q = 2$ is contained in Lemma \ref{hessiant}. Assuming that Lemma \ref{nablaqt} holds up to some $q - 1 \geq 2$, we want to prove it for $q$. We begin by observing that, applying $\nabla^{q}_{\alpha_{1}, \cdots, \alpha_{q}}$ to both sides of $1 = \langle \nabla t, \nabla t \rangle$, we obtain
\begin{equation}\label{nablaqeikonal}
0 = \nabla^{q + 1}_{\alpha_{1}, \cdots, \alpha_{q}, t}t + \sum\limits_{\lambda = 1}^{q}g^{\alpha\beta}\nabla^{q}_{\cdots, \widehat{\alpha_{\lambda}}, \cdots, \alpha}t\nabla^{2}_{\alpha_{\lambda}, \beta}t + \sum\limits_{3 \leq m \leq q - 1}\nabla^{m}t \ast \nabla^{q + 2 - m}t.
\end{equation}
Rearranging and switching the order of differentiation, we have
\begin{equation}\label{nablaqODE}
\nabla^{q + 1}_{t, \alpha_{1}, \cdots, \alpha_{q}}t = - \sum\limits_{\lambda = 1}^{q}g^{\alpha\beta}\nabla^{q}_{\cdots, \widehat{\alpha_{\lambda}}, \cdots, \alpha}t\nabla^{2}_{\alpha_{\lambda}, \beta}t + \Phi(\{\nabla^{m}t\}_{m \leq q - 1}, \{\nabla^{m}R\}_{m \leq q - 2}), 
\end{equation}
where we've used $\Phi(\cdots)$ to denote an expression involving only the tensor within the parentheses, their products, and various contractions (with respect to $g$) thereof. Letting $S = \nabla^{q}t$, then by the induction hypotheses and \eqref{nablaqODE}, we find that
\begin{equation*}
|\nabla_{t}S| \leq C\left( |S| + 1\right) \text{ on }C^{+}_{\rho_{0}, t_{0}}.
\end{equation*}
Therefore by Lemma \ref{tensorODE}(2), we infer that
\begin{equation}\label{nablaqtODEbound}
|\nabla^{q}t| \leq C\left( 1 + \sup\limits_{|x'| \leq \rho_{0}}|\nabla^{q}t|(x', 0) \right) \text{ on }C^{+}_{\rho_{0}, t_{0}}.
\end{equation}
To bound the initial value, we need the following result.
\begin{claim}\label{initialcondition} For each $q \geq 2$, we can express $\nabla^{q}t(x', 0)$ in terms of $\{\widetilde{\nabla}^{m}h(x')\}_{0 \leq m \leq q - 2}$ and $\{\nabla^{m}R(x', 0)\}_{0 \leq m \leq q - 3}$ (the expression does not involve the curvature tensor when $q  = 2$).
\end{claim}
The proof of Claim \ref{initialcondition} will be given after the proof of Lemma \ref{nablaqt}. Assuming the claim for now, we easily see that $\sup_{|x'| \leq 2\rho_{0}}|\nabla^{q}t|(x', 0)$ is bounded in terms of $n, q, \{A_{m}\}_{0 \leq m \leq q - 2}$ and $\{B_{m}\}_{0 \leq m \leq q - 2}$. Plugging this back into \eqref{nablaqtODEbound} completes the proof.
\end{proof}
\begin{proof}[Proof of Claim \ref{initialcondition}]
We first recall that for $q  = 2$ we have the following identities:
\begin{equation}\label{initialcondition2}
\nabla^{2}_{ij}t(x', 0) = h_{ij}(x'),\ \nabla^{2}_{i, t}t = \nabla^{2}_{t, t}t = 0.
\end{equation}
Next, notice that applying various $(q - 1)$-th order covariant derivatives to the identity $1 = \langle \nabla t, \nabla t \rangle$ and performing some routine calculations, we get the following expressions for $\nabla^{q}t$ on $\{t = 0\}$.
\begin{align*}
\nabla^{q}_{i_{1},\cdots, i_{q}}t &= \widetilde{\nabla}_{i_{1}}\left( \nabla^{q - 1}t \right)_{i_{2}, \cdots, i_{q-1}} + \sum\limits_{\lambda = 1}^{q}\nabla^{2}_{i_{1}, i_{\lambda}}t \nabla^{q - 1}_{i_{2}, \cdots, \widehat{i}_{\lambda}, t, \cdots}t.\\
\nabla^{q}_{t, i_{1},\cdots, i_{q - 1}}t &=  - \sum\limits_{\lambda = 1}^{q - 1} g^{rs}\nabla^{q - 1}_{i_{1}, \cdots, \widehat{i}_{\lambda}, \cdots, i_{q - 1}, r} t \nabla^{2}_{i_{\lambda}, s}t + \Phi(\{ \nabla^{m}t \}_{m \leq q  -2}, \{\nabla^{m}R\}_{m \leq q - 3}).\\
\nabla^{q}_{t, t, i_{1}, \cdots, i_{q - 2}}t &= -\sum\limits_{\lambda = 1}^{q-2} g^{rs}\nabla^{q-1}_{t, i_{1}, \cdots, \widehat{i}_{\lambda}, \cdots, i_{q-2}, r}t \nabla^{2}_{i_{\lambda}, s}t + \Phi(\{ \nabla^{m}t \}_{m \leq q - 2}, \{\nabla^{m}R\}_{m \leq q - 3}).\\
&\vdots
\end{align*}
Using the above relations, which determines $\nabla^{q}t(x', 0)$ in terms of $\nabla^{q- 1}t(x', 0)$ and $\{\nabla^{m}R\}_{m \leq q - 3}$, we may start from \eqref{initialcondition2} and argue inductively to complete the proof of Claim \ref{initialcondition}. The details will be omitted.
\end{proof}

We are now ready to derive the estimates asserted in Proposition \ref{fermiproperties}. The fact that these estimates hold on $T_{r_{0}}$ for $r_{0}$ sufficiently small follows from Theorem 4.10 of \cite{hamilton}, and we will extend them in the $t$-direction with the help of Lemmas \ref{hessiant} and \ref{nablaqt}. We begin with the estimate \eqref{fermicomparable}.
\begin{lemm}\label{comparable}
There exists $r_{0} < \rho_{0}$ and a constant $C$, both depending only on $n, A_{0}$ and $B_{0}$, such that 
\begin{equation}\label{closeness}
|g - \delta| \leq C\tilde{r}, \text{ on }C^{+}_{r_{0}} \equiv C^{+}_{r_{0}, r_{0}}.
\end{equation}
\begin{equation}
\label{comparison}
\frac{1}{2}\delta_{ij}\xi^{i}\xi^{j} \leq g_{ij}\xi^{i}\xi^{j} \leq 2\delta_{ij}\xi^{i}\xi^{j}\text{ on }C^{+}_{r_{0}}, \text{ for all }\xi \in \RR^{n}.
\end{equation}
\end{lemm}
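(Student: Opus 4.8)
The plan is to prove Lemma \ref{comparable} as a direct consequence of the tensor-ODE machinery established in Lemmas \ref{tensorODE}, \ref{hessiant} and \ref{nablaqt}, combined with the standard Fermi-coordinate initial conditions on $T$. Recall that in Fermi coordinates one always has $g_{\alpha n}(x',t) = \delta_{\alpha n}$ identically (straight $t$-lines are unit-speed geodesics meeting $T$ orthogonally), so the only components that need estimating are $g_{ij}$ for $i,j \leq n-1$, and on $T$ itself $g_{ij}(x',0)$ is the metric on $\partial M$ in geodesic normal coordinates centered at $p$. By Theorem 4.10 of \cite{hamilton} (or the classical expansion of a metric in normal coordinates), there is a radius $r_1 \leq \rho_0$ and a constant $C$, depending only on $n$, $A_0$ and $B_0$, such that $|g_{ij}(x',0) - \delta_{ij}| \leq C|x'|^2 \leq C\tilde r$ and $|\partial_k g_{ij}(x',0)| \leq C$ on $T_{r_1}$.

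First I would record the $t$-evolution of $g_{ij}$. Since $\partial_t g_{ij} = 2 g(\nabla_{\partial_i}\partial_n,\partial_j)$ plus symmetrization, and more usefully $\nabla^2 t = \nabla_{\partial_\alpha}\partial_\beta$-type quantities control $\partial_t g_{\alpha\beta}$, one has pointwise on $C^+_{\rho_0,\rho_0}$ an identity of the schematic form $\partial_t g_{\alpha\beta} = g \ast \nabla^2 t$ (the second fundamental form of the level sets of $t$). Hence $|\partial_t g|(x',t) \leq c_n |g|(x',t)\,|\nabla^2 t|(x',t)$. Applying Lemma \ref{hessiant}, there is $t_0 \leq \rho_0$ (depending only on $n,A_0,B_0$) with $|\nabla^2 t| \leq C$ on $C^+_{\rho_0,t_0}$, so $|\partial_t g| \leq C|g|$ there. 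A Gronwall argument in $t$ then gives $|g(x',t)| \leq e^{Ct_0}|g(x',0)| \leq C'$ on $C^+_{\rho_0,t_0}$, i.e. $g$ is bounded; feeding this bound back into $|\partial_t g| \leq C|g| \leq C$ and integrating in $t$ from $t=0$ yields $|g(x',t) - g(x',0)| \leq Ct$ on $C^+_{\rho_0,t_0}$. Combining with the normal-coordinate estimate on $T$, $|g(x',t) - \delta| \leq |g(x',t) - g(x',0)| + |g(x',0) - \delta| \leq Ct + C|x'| \leq C\tilde r$ on $C^+_{r_0,t_0}$ with $r_0 = \min\{r_1, t_0\}$, which is \eqref{closeness}.

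Then \eqref{comparison} follows at once: shrinking $r_0$ if necessary so that $C r_0 \leq 1/2$ in \eqref{closeness}, one has $|g_{ij}(x)\xi^i\xi^j - \delta_{ij}\xi^i\xi^j| = |(g_{ij} - \delta_{ij})\xi^i\xi^j| \leq |g - \delta|\,|\xi|^2 \leq \tfrac12|\xi|^2$ on $C^+_{r_0}$, giving $\tfrac12 \delta_{ij}\xi^i\xi^j \leq g_{ij}\xi^i\xi^j \leq 2\delta_{ij}\xi^i\xi^j$. (One should note $\tilde B^+_{r_0}(p) \subset C^+_{r_0,r_0}$ for the relevant range, or just work on the cylinder and then restrict to the Fermi half-ball $B^+_{r_0}$, which is all that is needed for \eqref{fermicomparable}.) The derivative bounds \eqref{fermiderivative} are handled by the same scheme at one higher order: differentiate $g_{\alpha\beta}$ in $t$ repeatedly, express $\partial_t \partial^\beta g$ schematically through $\{\nabla^m t\}$ and lower-order data, invoke Lemma \ref{nablaqt} to bound $\nabla^q t$ on a possibly smaller cylinder, use the normal-coordinate bounds on $T$ for the initial data, and integrate in $t$; this is routine once Lemmas \ref{hessiant} and \ref{nablaqt} are in hand, so I would only sketch it.

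The main obstacle is bookkeeping rather than conceptual: one must verify carefully that the $t$-derivatives of the metric components can indeed be written purely in terms of $\nabla^2 t$ (and, at higher order, of $\{\nabla^m t\}_{m\le q}$ and curvature), with no hidden dependence that would break the Gronwall step — this is where the special structure of Fermi coordinates ($g_{\alpha n} \equiv \delta_{\alpha n}$, radial geodesics) is essential. A secondary point of care is that all the radii $t_0, r_0$ produced along the way depend only on $n$ and the curvature/second-fundamental-form bounds, so that the final $r_0$ in Proposition \ref{fermiproperties} is universal in the sense required; tracking this through Lemmas \ref{hessiant} and \ref{nablaqt} is straightforward but must be stated.
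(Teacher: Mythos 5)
Your proof is essentially the same as the paper's. Both hinge on the same three ingredients: (i) the bound $|\nabla^2 t| \leq C$ from Lemma \ref{hessiant}, (ii) the normal-coordinate estimate $|g(x',0) - \delta| \leq C|x'|^2$ on $T$ from Theorem 4.9 of \cite{hamilton}, and (iii) an ODE/Gronwall argument in the $t$-direction. The only difference is formalism: the paper sets $e_{ij} = \delta_{ij} - g_{ij}$, computes the $g$-covariant derivative $\nabla_t e$, and invokes Lemma \ref{tensorODE}(2) to get $|e|_g(x',t) \leq C(|x'|^2 + t)$ working entirely in the $g$-norm, whereas you work with the coordinate partial derivative $\partial_t g_{ij}$ and Gronwall in the Euclidean norm. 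The paper's route is slightly cleaner because it never has to convert between the $g$-norm (in which $|\nabla^2 t|\leq C$ is stated) and the coordinate/Euclidean norm; your route introduces that conversion, which is where your factor ``$|g|$'' comes from and why your Gronwall step needs the extra observation that $|g|$ stays bounded. One small imprecision worth flagging: the identity is $\partial_t g_{ij} = 2\nabla^2_{ij}t$ exactly, not a contraction ``$g \ast \nabla^2 t$''; the $|g|$ in your inequality $|\partial_t g| \leq c_n|g||\nabla^2 t|$ is entirely a norm-conversion artifact (relating the Euclidean Frobenius norm of the matrix $(\nabla^2_{ij}t)$ to its $g$-norm), not present in the pointwise tensor identity. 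This does not break the argument, but it is cleaner either to state it that way explicitly or to follow the paper and work with $\nabla_t e$ in the $g$-norm throughout, which is what Lemma \ref{tensorODE} was built for.
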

\begin{proof}
We first notice that
\begin{equation}\label{deltaODE}
\nabla_{t}\delta_{ij} = -\Gamma_{ti}^r \delta_{rj} - \Gamma_{tj}^{r}\delta_{ir} = -g^{rs}\left(\delta_{js} \nabla^{2}_{i, r}t + \delta_{is}\nabla^{2}_{j, r}t \right).
\end{equation}
Thus, letting $e_{ij} = \delta_{ij} - g_{ij}$ and $e_{it} = e_{tt} \equiv 0$, it follows that
\begin{equation*}
\nabla_{t}e_{ij} = -g^{rs}\left( e_{js}\nabla^{2}_{i, r}t + e_{is}\nabla^{2}_{j, r}t \right)  - 2\nabla^{2}_{i, j}t.
\end{equation*}
Then by Lemma \ref{hessiant} and Lemma \ref{tensorODE} with $e_{ij}dx^{i}dx^{j}$ in place of $S$ and with any small radius $r_{0}$ in place of $\rho_{0}$, we see that
\begin{equation}
|e|(x', t) \leq C\left( \sup_{|x'| \leq r_{1}}|e|(x', 0) + t\right) \text{ on }C^{+}_{r_{0}, t_{0}},
\end{equation}
but Theorem 4.9 of \cite{hamilton} implies that if we choose $r_{0}$ small enough depending on $n, A_{0}, B_{0}$ (the dependence on $|h|$ enters via the Gauss equation), then $|e|(x', 0) \leq C|x'|^{2}$ for all $|x'| \leq r_{0}$, with $C$ depending on the same parameters as $r_{0}$. Therefore, we conclude that for the above choice of $r_{0}$, there holds
\begin{equation}
|e|(x', t) \leq C\left( |x'|^{2} + t \right) \text{ on }C^{+}_{r_{0}, t_{0}},
\end{equation}
from which it's easy to see that \eqref{closeness} and \eqref{comparison} hold on $C^{+}_{r_{0}}$ for small enough $r_{0}$.
\end{proof}

\begin{lemm}\label{nablaqdelta}
Let $r_{0}$ be as in the previous lemma, and the constants $A_{0}, B_{0}, \cdots$ be as in Lemma \ref{nablaqt}. For each $q \geq 0$ there exists a constant $C$ depending only on $n, q$, $\{A_{m}\}$ and $\{B_{m}\}$ such that 
\begin{equation}\label{metricq}
|\nabla^{q}\delta| \leq C \text{ on }C^{+}_{r_{0}}.
\end{equation}
\end{lemm}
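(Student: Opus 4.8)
The plan is to prove \eqref{metricq} by induction on $q$, running in parallel with (and in fact as an easy consequence of) the bounds on $\nabla^{q}t$ already obtained in Lemmas \ref{hessiant} and \ref{nablaqt}. The base cases $q = 0$ and $q = 1$ are immediate: for $q = 0$ we have $|\delta|\leq c_{n}$ pointwise since the components of $\delta$ in Fermi coordinates are bounded by $1$ together with the norm comparison \eqref{comparison}, and for $q = 1$ we note that $\nabla_{t}\delta_{ij}$ is given by the formula \eqref{deltaODE}, which expresses $\nabla\delta$ purely in terms of $g^{-1}$ (bounded by \eqref{comparison}) and $\nabla^{2}t$ (bounded by Lemma \ref{hessiant}), while $\nabla_{i}\delta_{\alpha\beta}$ is likewise a contraction of $g^{-1}$ with $\nabla^{2}t$ after writing Christoffel symbols in terms of $\nabla^{2}t$ as in \eqref{deltaODE}. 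So $|\nabla\delta|\leq C$ on $C^{+}_{r_{0}}$.

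For the inductive step, suppose \eqref{metricq} holds for all orders up to $q-1$. I would differentiate \eqref{deltaODE} (and its analogue for the $i$-direction) $q-1$ more times. Schematically, $\nabla^{q}\delta$ is a sum of terms each of which is a contraction, with respect to $g$, of some $\nabla^{a}\delta$ with $a\leq q-1$ and some $\nabla^{b}(\nabla^{2}t) = \nabla^{b+2}t$ with $b+2\leq q+1$; one also picks up factors of $\nabla^{c}g^{-1}$, but derivatives of $g^{-1}$ can be re-expressed through derivatives of $g=\delta-e$ — equivalently through lower-order $\nabla^{m}\delta$ and the Christoffel symbols, hence again through $\nabla^{m}t$, $m\le$ current order — so they do not introduce anything new. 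Concretely, the cleanest bookkeeping is: the only genuinely ``new'' highest-order quantity appearing when we differentiate $q$ times is $\nabla^{q+1}t$ (coming from $\nabla^{q-1}$ of $\nabla^{2}t$), and that is bounded by $C$ on $C^{+}_{r_{0},t_{0}}$ by Lemma \ref{nablaqt} (applied with $q+1$ in place of $q$), with $C$ depending only on $n$, $q$, $\{A_{m}\}_{m\le q-1}$ and $\{B_{m}\}_{m\le q-1}$. Every other factor in every term is either a bounded metric contraction ($g$, $g^{-1}$, bounded by \eqref{comparison}) or a $\nabla^{a}\delta$ or $\nabla^{b}t$ of order strictly below the top, hence bounded by the inductive hypothesis and Lemma \ref{nablaqt}. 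Summing finitely many such bounded terms gives $|\nabla^{q}\delta|\leq C$ on $C^{+}_{r_{0}}\subseteq C^{+}_{r_{0},t_{0}}$ (after possibly shrinking $r_{0}$ so that $r_{0}\le t_{0}$, which only further constrains $r_{0}$ by the same parameters), completing the induction.

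The main obstacle is purely organizational rather than analytical: one must set up the Leibniz-type expansion of $\nabla^{q}(\delta)$ carefully enough to see that no term of order exceeding the inductive data actually occurs, and in particular that derivatives of $g^{-1}$, the Christoffel symbols, and the volume form do not sneak in an uncontrolled $\nabla^{q+2}t$ or a $\nabla^{q+1}\delta$. The key structural fact making this work is that $\Gamma$ itself is (up to $g^{-1}$ contractions) just $\nabla^{2}t$ evaluated appropriately — a consequence of the eikonal equation $|\nabla t|^{2}\equiv 1$ and $\nabla_{t}\nabla t = 0$ used throughout this appendix — so ``one derivative of the metric'' always costs ``one extra derivative of $t$'', and Lemma \ref{nablaqt} supplies exactly those bounds. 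Once that accounting is in place the estimate \eqref{metricq} follows, and this in turn yields \eqref{fermiderivative} (hence Proposition \ref{fermiproperties}), since $\partial^{k}g_{\alpha\beta}$ can be recovered from $\{\nabla^{m}\delta\}_{m\le k}$ together with the Christoffel symbols, i.e. from $\{\nabla^{m}\delta\}_{m\le k}$ and $\{\nabla^{m}t\}_{m\le k+1}$, all of which are now bounded on $C^{+}_{r_{0}}$ by constants of the asserted type. I will remark at the end that this completes the proof of Proposition \ref{fermiproperties}.
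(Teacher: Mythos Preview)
Your argument has a genuine gap at the very point you flag as ``purely organizational'': the claim that ``$\Gamma$ itself is (up to $g^{-1}$ contractions) just $\nabla^{2}t$'' is false. In Fermi coordinates $t=x^{n}$, so $\nabla^{2}_{\alpha\beta}t=-\Gamma^{n}_{\alpha\beta}$; together with $g_{n\alpha}=\delta_{n\alpha}$ this recovers $\Gamma^{n}_{ij}$ and $\Gamma^{k}_{in}$, but \emph{not} the purely tangential symbols $\Gamma^{k}_{ij}=\tfrac12 g^{kl}(\partial_{i}g_{jl}+\partial_{j}g_{il}-\partial_{l}g_{ij})$. These are exactly what appear in $\nabla_{i}\delta_{jk}=-\Gamma^{k}_{ij}-\Gamma^{j}_{ik}$, so your $q=1$ base case already fails in the tangential directions, and the Leibniz bookkeeping for higher $q$ inherits the same uncontrolled terms. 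No amount of commuting derivatives or rewriting $\nabla g^{-1}$ fixes this: the tangential first derivatives of $g_{ij}$ are simply not encoded in $\{\nabla^{m}t\}$.

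The paper's proof sidesteps this by using the $t$-evolution only: it differentiates \eqref{deltaODEfull} to get a linear ODE $\nabla_{t}(\nabla^{q}\delta)=\nabla^{q}\delta\ast\nabla^{2}t+\Phi(\{\nabla^{m}t\}_{m\le q+2},\{\nabla^{m}\delta\}_{m\le q-1},\{\nabla^{m}R\}_{m\le q-2})$, applies Lemma~\ref{tensorODE}(2), and then bounds the initial data $|\nabla^{q}\delta|(x',0)$ separately. That last step is where the missing tangential information is supplied: the initial value is expressed in terms of $\{\widetilde{\nabla}^{m}\delta(x',0)\}_{m\le q}$, $\{\widetilde{\nabla}^{m}h\}_{m\le q-1}$, and $\{\nabla^{m}R(x',0)\}_{m\le q-2}$, and the intrinsic bounds $|\widetilde{\nabla}^{m}\delta(x',0)|\le C$ come from Hamilton's Theorem~4.10 applied to the boundary hypersurface. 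Your approach would need an analogous external input for the tangential part; as written it does not have one.
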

\begin{proof}
The proof is again by induction on $q$. For $q = 0$ this desired estimate follows from Lemma \ref{comparable}. Next, assuming that the lemma holds up to some $q - 1 \geq 0$, we want to prove it for $q$.

We begin with some straightforward computations. Note the following more general version of \eqref{deltaODE}
\begin{equation}\label{deltaODEfull}
\nabla_{t}\delta_{\alpha\beta} = -g^{\gamma\sigma}\left( \nabla^{2}_{\alpha,\sigma}t \delta_{\gamma\beta} + \nabla^{2}_{\beta, \sigma}t\delta_{\alpha\gamma} \right).
\end{equation}
Applying $\nabla^{q}_{\alpha_{1},\cdots, \alpha_{q}}$ to this equation, we find that
\begin{align*}
\nabla^{q + 1}_{\alpha_{1}, \cdots, \alpha_{q}, t}\delta_{\alpha, \beta} =& -(\nabla^{q}_{\alpha_{1}, \cdots, \alpha_{q}}\delta_{\gamma\beta} \nabla^{2}_{\alpha, \sigma}t + \nabla^{q}_{\alpha_{1},\cdots, \alpha_{q}}\delta_{\alpha\gamma} \nabla^{2}_{\beta, \sigma}t)g^{\gamma\sigma} \\
& + \Phi(\{\nabla^{m}t\}_{m \leq q + 2}, \{ \nabla^{m}\delta \}_{m \leq q - 1})\\
\Downarrow &\\
\nabla^{q + 1}_{t, \alpha_{1}, \cdots, \alpha_{q}}\delta_{\alpha\beta} =&  -(\nabla^{q}_{\alpha_{1}, \cdots, \alpha_{q}}\delta_{\gamma\beta} \nabla^{2}_{\alpha, \sigma}t + \nabla^{q}_{\alpha_{1},\cdots, \alpha_{q}}\delta_{\alpha\gamma} \nabla^{2}_{\beta, \sigma}t)g^{\gamma\sigma}\\
& + \Phi(\{\nabla^{m}t\}_{m \leq q + 2}, \{ \nabla^{m}\delta \}_{m \leq q - 1}, \{\nabla^{m}R\}_{m \leq q  -2}).
\end{align*}
Thus, by the induction hypotheses and Lemmas \ref{hessiant}, \ref{nablaqt} and \ref{tensorODE}(2), we infer that
\begin{equation}\label{nablaqODEbound}
|\nabla^{q}\delta|(x', t) \leq C\left( \sup\limits_{|x'| \leq r_{0}}|\nabla^{q}\delta|(x', 0) + 1\right) \text{ on }C^{+}_{r_{0}}.
\end{equation}
Next, arguing inductively as in the proof of Claim \ref{initialcondition}, but starting instead with the identities
\begin{align*}
\nabla_{i}\delta_{jk}(x', 0) &= \widetilde{\nabla}_{i}\delta_{jk}(x', 0),\\
\nabla_{t}\delta_{ij}(x', 0) &= -g^{rs}\left( \delta_{is}(x', 0)h_{jr}(x')+ \delta_{js}(x', 0)h_{ir}(x') \right),\\
\nabla_{i}\delta_{tj}(x, 0) &= -g^{rs}h_{is}(x')\delta_{rj}(x', 0) + h_{ij}(x'),
\end{align*}
and passing to higher orders by differentiating the equation \eqref{deltaODEfull} instead of $\langle \nabla t,\nabla t\rangle = 1$, we may show that $\nabla^{q}\delta(x', 0)$ is expressed in terms of $\{\widetilde{\nabla}^{m}\delta(x', 0)\}_{m \leq q}$, $\{\widetilde{\nabla}^{m}h(x')\}_{m \leq q - 1}$ and $\{\nabla^{m}R(x', 0)\}_{m \leq q - 2}$, whose norms on $\{|x'| \leq r_{0}\}$ are all bounded in terms of the parameters listed in the statement of Lemma \ref{nablaqdelta}. For $h, R$ and their derivatives this follows from definition, whereas for $\{\widetilde{\nabla}^{m}\delta \}_{m \leq q-1}$ this follows from Theorem 4.10 of \cite{hamilton}. Combining this with the estimate \eqref{nablaqODEbound}, we are done.
\end{proof}

The estimate \eqref{fermiderivative} now follows from Lemma \ref{comparable} and \ref{nablaqdelta} in the same way Corollary 4.11 follows from Theorem 4.10 in \cite{hamilton}.
\bibliographystyle{amsalpha}
\bibliography{BBOfreeboundary}
\end{document}